\newcommand\N{{\mathbb N}}
\newcommand\R{{\mathbb R}}
\newcommand\mH{{\mathcal H}}
\newcommand\Sph{{\mathbb S}}
\newcommand\dee{\partial}
\renewcommand\:{\colon}
\newtheorem{theorem}{Theorem}[section]
\newtheorem{corollary}[theorem]{Corollary}
\newtheorem{lemma}[theorem]{Lemma}
\newtheorem{proposition}[theorem]{Proposition}
\theoremstyle{definition}
\theoremstyle{remark}
\newtheorem{remark}[theorem]{Remark}
\newtheorem{question}[theorem]{Question}
\numberwithin{equation}{section}
\newcommand{\ep}{\epsilon}
\newcommand{\del}{\delta}
\newcommand{\reals}{\mathbb{R}}
\newcommand{\ints}{\mathbb{Z}}
\newcommand{\nats}{\mathbb{N}}
\newcommand{\comps}{\mathbb{C}}
\newcommand{\disk}{{\mathbb{D}^2}}
\newcommand{\sphere}{{\mathbb{S}^2}}
\newcommand{\Sircle}{{\mathbb{S}^1}}
\newcommand{\nbhd}{\mathcal{N}}
\newcommand{\inv}{^{-1}}
\newcommand{\ovl}{\overline}
\newcommand{\into}{\hookrightarrow}
\newcommand{\til}{\widetilde}
\newcommand{\wh}{\widehat}
\newcommand{\Hdim}{\mathcal{H}}
\newcommand{\subeq}{\subseteq}
\newcommand{\supeq}{\supseteq}
\newcommand{\bslash}{\backslash}
\newcommand{\mand}{\quad \hbox{and} \quad}
\newcommand{\eequiv}{\overset{e}{\sim}}
\newcommand{\mbf}{\mathbf}
\def\diam{\operatorname{diam}}
\def\card{\operatorname{card}}
\def\dist{\operatorname{dist}}
\def\im{\operatorname{im}}
\def\cl{\operatorname{cl}}
\def\XXint#1#2#3{{\setbox0=\hbox{$#1{#2#3}{\int}$} 
\vcenter{\hbox{$#2#3$}}\kern-.5\wd0}}
\begin{document}
\title{Quasisymmetric Koebe Uniformization}
\author{S. Merenkov}
\address{Department of Mathematics, University of Illinois at Urbana-Champaign, 1409 W Green Street, Urbana, IL 61801, USA}
\thanks{S.~M. was supported by NSF grants DMS-0653439 and DMS-1001144}
\email{merenkov@illinois.edu}
\author{K. Wildrick}
\address{Mathematisches Institut, Universit\"at Bern, Sidlerstrasse 5, 3012 Bern, Switzerland}
\thanks{K.~W. was supported by Academy of Finland grants 120972 and 128144}
\email{kevin.wildrick@math.unibe.ch}
\begin{abstract}
We study a quasisymmetric version of the classical Koebe uniformization theorem in the context of Ahlfors regular metric surfaces. In particular, we prove that an Ahlfors 2-regular metric surface $X$ homeomorphic to a finitely connected domain in the standard 2-sphere $\Sph^2$ is quasisymmetrically equivalent to a circle domain in $\Sph^2$ if and only if $X$ is linearly locally connected and its completion is compact. We also give a counterexample in the countably connected case.   
\end{abstract}

\subjclass[2000]{30C65}
\date{}
\maketitle

\section{Introduction}

Uniformization problems are amongst the oldest and most important problems in mathematical analysis.  A premier example is the measurable Riemann mapping theorem, gives a robust existence theory for quasiconformal mappings in the plane. A quasiconformal mapping between domains in a Euclidean space is a homeomorphism that sends infinitesimal balls to infinitesimal ellipsoids of uniformly bounded ellipticity. The theory of quasiconformal mappings has been one of the most fruitful in analysis, yielding applications to hyperbolic geometry, geometric group theory, complex dynamics, partial differential equations, and mathematical physics. 

In the past few decades, many aspects of the theory of quasiconformal mappings have been extended to apply to abstract metric spaces.  A key factor in these developments has been the realization that in metric spaces with controlled geometry, the infinitesimal condition imposed by quasiconformal mappings actually implies a stronger global condition called quasisymmetry \cite{Acta}. The fact that quasisymmetric mappings are required to have good behavior at all scales makes them well suited to metric spaces that a priori have no useful infinitesimal structure.

A homeomorphism $f\colon X \to Y$ of metric spaces is \emph{quasisymmetric} if there is a homeomorphism $\eta\colon [0,\infty) \to [0,\infty)$ such that if $x, y, z$ are distinct points of $X$, then 
$$\frac{d_Y(f(x),f(y))}{d_Y(f(x),f(z))}\leq \eta \left(\frac{d_X(x,y)}{d_X(x,z)}\right).$$
The homeomorphism $\eta$ is called a distortion function of $f$.  If we wish to emphasize that a quasisymmetric mapping has a particular distortion function $\eta$, we will call it $\eta$-quasisymmetric.  
   
Despite the highly developed machinery for quasiconformal analysis on metric spaces, an existence theory for quasisymmetric mappings on metric spaces analogous to that of conformal mappings on Riemann surfaces has only recently been explored.  The motivation for such results arises from geometric group theory \cite{B-K}, the dynamics of rational maps on the sphere \cite{Expand}, and the analysis of bi-Lipschitz mappings and rectifiable sets in Euclidean space \cite{QCJ}.

More than a decade after foundational results of Tukia and V\"ais\"al\"a in dimension one \cite{QS}, Bonk and Kleiner \cite{B-K} gave simple sufficient conditions for a metric space to be quasisymmetrically equivalent to the standard 2-sphere $\sphere$. 

\begin{theorem}[Bonk--Kleiner] \label{two sphere} Let $X$ be an Ahlfors $2$-regular metric space homeomorphic to the sphere $\sphere$.  Then $X$ is quasisymmetrically equivalent to the sphere $\sphere$ if and only if $X$ is linearly locally connected.
\end{theorem}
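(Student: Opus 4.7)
\medskip

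\noindent\textbf{Proof proposal.} The easy direction is that quasisymmetric maps preserve linear local connectedness and the round sphere $\sphere$ is clearly linearly locally connected; so if $X$ is quasisymmetrically equivalent to $\sphere$, it is LLC. I will therefore concentrate on the converse.

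The plan is to produce a quasisymmetric homeomorphism $f\colon X \to \sphere$ as a limit of a sequence of approximating maps obtained from circle packings. Fix a decreasing sequence of scales $r_k \to 0$. For each $k$, I choose a maximal $r_k$-separated net $N_k\subset X$ and construct a combinatorial triangulation $T_k$ on the vertex set $N_k$ whose edges record which $2r_k$-balls meet, suitably completed to a simplicial sphere. The LLC hypothesis ensures that these local graphs encode the topology of $X$ correctly at all scales, and Ahlfors $2$-regularity gives a uniform upper bound on the valence of $T_k$ (since only boundedly many disjoint balls of a fixed comparable radius can be packed into a larger ball). In particular, the triangulations $T_k$ have uniformly bounded combinatorial geometry.

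The next step is to realize each $T_k$ geometrically on $\sphere$. By the Koebe--Andreev--Thurston circle packing theorem for triangulations of $\sphere^2$, there exists a circle packing $P_k$ on $\sphere$ whose tangency graph is combinatorially isomorphic to $T_k$. Let $f_k\colon N_k \to \sphere$ send each net point to the center of the corresponding circle, and extend $f_k$ continuously over $X$ using the triangulation and, say, barycentric interpolation on corresponding spherical triangles. What I need is that the family $\{f_k\}$ is uniformly quasisymmetric, i.e., they admit a common distortion function $\eta$.

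The heart of the proof, and the main obstacle, is the proof of uniform quasisymmetry of the $f_k$. This requires a two-sided geometric control on the radii of the circles in $P_k$: adjacent circles should have comparable radii, and the radius of the circle around $x\in N_k$ should be comparable to $r_k$ in a uniform way, so that combinatorial distances in $T_k$ translate faithfully to metric distances on $\sphere$. The standard strategy is a modulus comparison: one identifies a suitable discrete (combinatorial) modulus on $T_k$ that controls both the metric modulus on $X$ (using $2$-regularity together with LLC to estimate moduli of path families via covering arguments) and the conformal modulus of the associated annuli on $\sphere$ (using that circle packings represent the conformal class of the sphere up to bounded distortion). Combining these two comparisons with LLC, one shows that if $f_k$ had very bad distortion on some triple $x,y,z\in N_k$, then some annulus would have mutually incompatible modulus estimates from the $X$-side and the $\sphere$-side, a contradiction. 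This is where Ahlfors $2$-regularity is used most crucially, since it provides the upper mass bound needed to estimate $2$-modulus on $X$.

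Once uniform quasisymmetry of $\{f_k\}$ is established, I normalize each $f_k$ so that three fixed points go to three fixed points of $\sphere$, and apply the Arzel\`a--Ascoli-type compactness theorem for families of uniformly quasisymmetric maps (due to Tukia and V\"ais\"al\"a). This yields a subsequential limit $f\colon X \to \sphere$ that is $\eta$-quasisymmetric. Since the mesh of $T_k$ tends to zero and the $f_k$ are approximately one-to-one at scale $r_k$, the limit $f$ is injective, and by invariance of domain between two topological spheres it is a homeomorphism, completing the proof.
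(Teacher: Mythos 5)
The paper does not prove Theorem~\ref{two sphere}; it is imported from Bonk and Kleiner's original paper \cite{B-K} as a black box. Your proposal is therefore best measured against the original proof, and on that score it is essentially a faithful outline of the Bonk--Kleiner argument: discretize $X$ at a sequence of scales via separated nets, build triangulations with bounded combinatorial geometry (the valence bound coming from $2$-regularity, the ability to triangulate faithfully coming from LLC), realize these triangulations on $\mathbb{S}^2$ via the Koebe--Andreev--Thurston circle packing theorem, show the resulting approximating maps are uniformly quasi-M\"obius by comparing combinatorial $2$-modulus on the graph with conformal modulus on $\mathbb{S}^2$ and with metric $2$-modulus in $X$ (this is where Ahlfors $2$-regularity is essential), and pass to a limit using the Tukia--V\"ais\"al\"a compactness theorem. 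You have correctly located the technical heart of the matter --- the two-sided modulus comparison that gives uniform distortion --- and you are right that this is where all the real work lies. That step is only sketched in your proposal, not proved; in particular, extracting a uniform lower bound on modulus from LLC alone (rather than just an upper bound from $2$-regularity) is delicate, and Bonk--Kleiner spend a substantial part of their paper on it. One small inaccuracy: Bonk--Kleiner actually establish uniform \emph{quasi-M\"obius} behavior of the approximating maps rather than quasisymmetry directly, and then use the general equivalence of the two notions for homeomorphisms of bounded spaces; your phrasing elides this, but it does not affect the structure of the argument. As a blind reconstruction of the proof strategy this is accurate; as a complete proof it leaves the central estimates unverified, which you acknowledge.
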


The condition that $X$ is linearly locally connected (LLC), which heuristically means that $X$ does not have cusps, is a quasisymmetric invariant. Ahlfors $2$-regularity, which states that the two-dimensional Hausdorff measure of a ball is uniformly comparable to the square of its radius, is not. See Section \ref{notation} for precise definitions. A version of Theorem \ref{two sphere} for metric spaces homeomorphic to the plane was derived in \cite{QSPlanes}, and a local version given in \cite{QSStructures}. 

In this paper, we seek a version of Theorem \ref{two sphere} for domains in $\sphere$.  The motivation for our inquiry comes from the Kapovich--Kleiner conjecture of geometric group theory, described in Section \ref{geo group}, and from analogous classical conformal uniformization theorems onto circle domains.

A \emph{circle domain} is a domain $\Omega \subeq \sphere$ such that each component of $\sphere \bslash \Omega$ is either a round disk or a point. In 1909 \cite{Koebe}, Koebe posed the following conjecture, known as the Kreisnormierungsproblem: every domain in the plane is conformally equivalent to a circle domain. In the 1920's \cite{Koebe2}, Koebe was able to confirm his conjecture in the finitely connected case.

\begin{theorem}[Koebe's uniformization onto circle domains]\label{Koebe's thm}  Let $\Omega \subeq \comps$ be a domain with finitely many complementary components.  Then $\Omega$ is conformally equivalent to a circle domain.  
\end{theorem}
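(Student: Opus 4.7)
The plan is to follow Koebe's iterative straightening scheme: at each step the Riemann mapping theorem is applied to uniformize one complementary component to a round disk, and the desired conformal map onto a circle domain is produced as a limit of the resulting compositions.

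After a preliminary M\"obius transformation we may assume $\infty \in \Omega$; let $E_1,\dots,E_n$ be the complementary components of $\Omega$ in $\overline{\comps}$. Any $E_i$ that is a single point already qualifies as a ``round'' boundary component and may be disregarded during the iteration, so assume each $E_i$ is a nondegenerate continuum. For each $i$, the set $\overline{\comps}\setminus E_i$ is simply connected, so by the Riemann mapping theorem there exists a unique conformal map
\[
\phi_i \colon \overline{\comps} \setminus E_i \longrightarrow \overline{\comps} \setminus \overline{D_i}
\]
onto the exterior of a closed round disk $D_i$, normalized by $\phi_i(\infty)=\infty$ and $\phi_i'(\infty)=1$. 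Restricted to $\Omega$, this map replaces $E_i$ by a round disk while distorting the other complementary components.

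Next I iterate. Set $\Omega_0 = \Omega$, cycle $i_k$ through $1,2,\dots,n$, and let $\phi^{(k)}$ be the Riemann map straightening the $i_k$-th complementary component of $\Omega_{k-1}$; define $\Omega_k = \phi^{(k)}(\Omega_{k-1})$. The composition $F_k = \phi^{(k)}\circ\cdots\circ\phi^{(1)}\colon \Omega\to\Omega_k$ is conformal with $F_k(\infty)=\infty$ and $F_k'(\infty)=1$. The classical area theorem for univalent functions normalized at $\infty$ shows that $\{F_k\}$ is a normal family on $\Omega$; passing to a subsequence yields a locally uniform limit $F$, which is univalent on $\Omega$ by Hurwitz's theorem together with the normalization $F'(\infty)=1$.

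The principal task is to prove that $F(\Omega)$ is a circle domain, i.e.\ that every complementary component of $F(\Omega)$ is a round disk or a point. The strategy is to introduce a non-negative ``non-circularity'' functional on the relevant space of admissible domains --- a sum over the complementary components of a quantity comparing each component with its straightening disk, measured via logarithmic capacity, harmonic measure, or conformal modulus --- show that this functional is non-increasing under each step of the iteration and strictly decreases unless $\overline{\comps}\setminus\Omega_k$ is already a union of disks and points, and then combine this with the Carath\'eodory-type convergence of complementary components inherited from the local uniform convergence of $F_k$ to conclude that the limit components are all round. The technical heart of the argument, and the hardest step, lies in establishing the strict monotonicity of this functional while simultaneously ruling out degeneration or merging of complementary components in the limit; these issues are controlled by the Koebe distortion theorem together with the normalization at $\infty$.
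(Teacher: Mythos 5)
The paper does not prove this theorem; it is a classical result attributed to Koebe (1920s, cited to \cite{Koebe2}) and is used only as background motivation for the quasisymmetric uniformization program. So there is no internal proof to compare against, and your proposal stands or falls on its own.

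Your sketch correctly reproduces the outline of Koebe's iterative ``Schmiegungsverfahren'': M\"obius-normalize so $\infty\in\Omega$, replace each complementary continuum by a round disk via the Riemann map of its exterior, cycle through the components, and pass to a limit. The normality claim needs a small patch --- the area theorem governs univalent maps on the exterior of a single disk, so you must first restrict each $F_k$ to a punctured neighborhood of $\infty$ in $\Omega$, apply the area theorem there, and then propagate bounds to all of $\Omega$ via Koebe distortion --- but this is routine. The genuine gap is the final step, which you yourself flag as ``the technical heart'' without carrying out. The appeal to an unspecified ``non-circularity functional'' that is non-increasing and strictly decreasing off the circle-domain locus is not enough: a strictly decreasing functional can stall at a positive value if its decrements shrink geometrically, so monotonicity by itself does not force the limit components to be round. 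What Koebe's argument actually requires is a \emph{quantitative} distortion lemma --- after one full cycle straightening all $n$ components, the eccentricity of each component (e.g.\ the ratio of circumradius to inradius, or the modulus of a surrounding annulus) is multiplied by a factor bounded away from $1$ depending only on $n$ and the initial separation of the components. That geometric contraction estimate is the hard content of the theorem, and without stating and proving it, the iteration could a priori converge to a non-circular domain. You also assert, but do not argue, that the complementary components of $\Omega_k$ neither degenerate to points nor merge along the iteration; the normalization at $\infty$ together with the Koebe $1/4$-theorem does control this, but it is a step that must be carried out. As written, the proposal is a correct roadmap to the classical proof rather than a proof.
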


We first state a quasisymmetric version of Koebe's theorem, which we attain as a consequence of our main result. Denoting the completion of a metric space $X$ by $\ovl{X}$, we define the \emph{metric boundary} of $X$ by $\partial{X}=\ovl{X}\bslash X$.  We say that a component of $\partial{X}$ is non-trivial if it contains more than one point.

\begin{theorem}\label{finite case} Let $(X,d)$ be an Ahlfors $2$-regular metric space that is homeomorphic to a domain in $\sphere$, and such that $\partial{X}$ has finitely many non-trivial components. Then $(X,d)$ is quasisymmetrically equivalent to a circle domain if and only if $(X,d)$ is linearly locally connected and the completion $\ovl{X}$ is compact. 
\end{theorem}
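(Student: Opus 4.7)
The necessity direction is relatively direct: a circle domain $\Omega \sub \sphere$ with finitely many complementary components is itself linearly locally connected and has compact closure in $\sphere$. Both conditions transfer to $X$ through a quasisymmetric homeomorphism, since such a map between bounded metric spaces extends to a homeomorphism of their completions, and linear local connectedness is a quasisymmetric invariant for bounded metric spaces.

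For sufficiency, the plan is to reduce to Theorem~\ref{two sphere} followed by Theorem~\ref{Koebe's thm}. First I would show that each non-trivial component of $\partial X$ is a Jordan curve in $\overline X$ and that each trivial one is an isolated point; this uses the topological hypothesis on $X$, the compactness of $\overline X$, and the local connectedness at the boundary forced by LLC. Next I would \emph{cap off} $\overline X$ by attaching, along each non-trivial boundary component $C_i$, a flat round disk $D_i \sub \comps$ whose radius is comparable to $\diam(C_i)$, identifying $\dee D_i$ with $C_i$ via a bi-Lipschitz parametrization obtained from the LLC of $X$. The result is a compact metric space $\hat X$ homeomorphic to $\sphere$. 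I would then verify that $\hat X$ is Ahlfors 2-regular and linearly locally connected. Ahlfors regularity is routine, since finitely many Ahlfors 2-regular pieces are glued along sets of measure zero. Linear local connectedness is the delicate point: for balls straddling an attachment circle $C_i$, joining arcs must be assembled by combining arcs in $X$ approaching $C_i$ with arcs in $D_i$, and producing uniform constants requires a careful quantitative argument using LLC of $X$ together with the compactness of $\overline X$.

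Granted $\hat X$ is Ahlfors 2-regular and LLC, Theorem~\ref{two sphere} produces a quasisymmetric homeomorphism $f \co \hat X \to \sphere$, and the image $f(X) \sub \sphere$ is a finitely connected domain whose complementary components are either points or Jordan domains bounded by quasicircles of uniformly controlled distortion. Theorem~\ref{Koebe's thm} then furnishes a conformal map $g \co f(X) \to \Omega^*$ onto a circle domain. Since each non-trivial component of $\partial f(X)$ is a quasicircle, $g$ extends to a quasisymmetric homeomorphism on the closures by standard planar quasiconformal theory (reflect $g$ across each quasicircle using a quasiconformal reflection to obtain a quasiconformal extension to a neighborhood, hence quasisymmetry on the boundary). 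The composition $g \circ f \co X \to \Omega^*$ is the desired quasisymmetric equivalence.

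The main obstacle is the capping step, and specifically the verification of LLC for $\hat X$ across the attachment circles. This requires that arcs in $X$ approaching each non-trivial boundary component can be joined with uniform detour, which must be extracted quantitatively from the hypotheses; once this is in place, the remaining ingredients are applications of existing theorems. A secondary point requiring care is the identification of each non-trivial boundary component of $\partial X$ as a Jordan curve to which a flat disk can be bi-Lipschitz attached, which again relies on combining LLC with compactness of $\overline X$.
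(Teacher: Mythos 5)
Your outline shares the paper's basic architecture — cap the boundary components with disks, apply Bonk--Kleiner to the resulting sphere, then push the image (whose boundary components are quasicircles) to a circle domain — but the capping step as you describe it contains a genuine gap, and it is exactly the point the paper expends most of its effort on.

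You propose to attach a \emph{flat round} disk $D_i$ to each non-trivial boundary component $C_i$ ``via a bi-Lipschitz parametrization obtained from the LLC of $X$.'' No such parametrization exists in general. The boundary uniformization result (Theorem~\ref{boundary uniformization}) only yields that each $C_i$ is a quasicircle, hence \emph{quasisymmetric} to $\Sircle$, and a quasicircle need not be bi-Lipschitz to a round circle: a snowflake-type quasicircle of Hausdorff dimension $>1$ is the standard obstruction, and nothing in the hypotheses rules it out. If the identification $\partial D_i \to C_i$ is merely quasisymmetric, the glued space $\hat X$ will typically fail to be Ahlfors $2$-regular near $C_i$, since the two sides of the seam carry incompatible measure distributions. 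The subsequent claim that ``Ahlfors regularity is routine, since finitely many Ahlfors $2$-regular pieces are glued along sets of measure zero'' is false without a bi-Lipschitz gluing and controlled geometry of the pieces; this is precisely the content of Theorem~\ref{regular gluing}, which requires conditions (A)--(C). The paper's workaround is the chain $\rm{ALLC} \Rightarrow$ porosity of each $C_i$ in $\ovl X$ (Corollary~\ref{bdry porosity ALLC}) $\Rightarrow$ Assouad dimension $<2$ (Theorem~\ref{porous dim}) $\Rightarrow$ bi-Lipschitz equivalence to a \emph{planar quasicircle} bounding an Ahlfors $2$-regular quasidisk (Herron--Meyer, Theorem~\ref{qcircle filling}/Remark~\ref{Herron Meyer remark}); one then glues that quasidisk, not a round disk. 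Note also that the paper first upgrades LLC to ALLC for finitely connected domains (Proposition~\ref{LLC gives ALLC}) — you will need this upgrade too, since LLC alone does not give the porosity you need.

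Two smaller points. First, the paper proves Theorem~\ref{finite case} as an immediate corollary of Theorem~\ref{finite rank} together with Proposition~\ref{LLC gives ALLC} (the finitely-connected hypothesis gives both the ALLC upgrade and automatic finiteness of the sum in condition~\eqref{planarity}); your proposal is in effect a direct re-derivation of the base case of that induction, which is a reasonable but longer route. Second, your final step differs from the paper's: you invoke classical Koebe uniformization (Theorem~\ref{Koebe's thm}) plus quasiconformal reflection across each boundary quasicircle, whereas the paper uses Bonk's quasisymmetric uniformization onto circle domains (Theorem~\ref{two sphere Koebe}). For finitely many quasicircles your route is viable, though you must apply Koebe to $\sphere$ minus the finitely many Jordan domains $f(D_i)$ rather than to $f(X)$ itself — if $\partial X$ has infinitely many trivial components, $f(X)$ has infinitely many complementary components and Koebe's theorem does not apply directly; after uniformizing the larger domain, restrict.
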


Theorem \ref{finite case} is only quantitative in the sense that the distortion function of the quasisymmetric mapping may be chosen to depend only on the constants associated to the various conditions on $X$ and the ratio of the diameter of $X$ to the minimum distance between components of $\partial{X}$.

In 1993, He and Schramm confirmed Koebe's conjecture in the case of countably many complementary components \cite{He}. In full generality the conjecture remains open.  A key tool in He and Schramm's proof was transifinte induction on the rank of the boundary of a domain $\Omega$ in $\sphere$, which measures the complexity with which components of the boundary converge to one another.  The rank of a collection of boundary components is defined via a canonical topology on the set of components of the boundary; see Section \ref{comps and ends}.  It is shown there that if a metric space $(X,d)$ is homeomorphic to a domain $\Omega$ in $\sphere$ and is linearly locally connected, then the natural topology on the set of components of the metric boundary is homeomorphic to the natural topology on the set of boundary components of $\Omega$. This allows us to define rank as in the classical setting.  We denote the topologized collection of components of $\partial{X}$ by $\mathcal{C}(X)$.

In the following statement, which is our main result, we consider quasisymmetric uniformization onto circle domains $\Omega$ with the property that $\mathcal{C}(\Omega)$  is \emph{uniformly relatively separated}, meaning there is a uniform lower bound on the \emph{relative distance}
$$\bigtriangleup(E,F)= \frac{\dist(E,F)}{\min\{\diam(E),\diam(F)\}},$$
between any pair of non-trivial boundary components. This condition is appears naturally in both classical quasiconformal analysis and geometric group theory. Moreover, we employ \emph{annular} linear local connectedness (ALLC), which is more natural than the $\rm{LLC}$ condition in this setting. 

\begin{theorem}\label{finite rank} Let $(X,d)$ be a metric space, homeomorphic to a domain in $\sphere$, such that the closure of the collection of non-trivial components of $\partial{X}$ is countable and has finite rank. Moreover, suppose that 
\begin{enumerate}
\item\label{2 reg condition} $(X,d)$ is Ahlfors $2$-regular,
\item\label{planarity} setting, for each integer $k \geq 0$, 
$$n_k= \sup \card\{E \in \mathcal{C}(X): E\cap B(x,r) \neq \emptyset \ \text{and}\ 2^{-k}r <\diam{E}\leq 2^{-k+1}r\},$$
where the supremum is taken over all $x \in X$ and $0<r<2\diam X$, 
it holds that 
$$\sum_{k=0}^{\infty}n_k 2^{-2k} < \infty.$$  
\end{enumerate}
Then $(X,d)$ is quasisymmetrically equivalent to a circle domain such that $\mathcal{C}(\Omega)$ is uniformly relatively separated if and only if $X$ has the following properties:
\begin{enumerate}\setcounter{enumi}{2}
\item\label{compact condition}  the completion $\ovl{X}$ is compact,
\item\label{ALLC condition} $(X,d)$ is annularly linearly locally connected,
\item\label{rel sep condition} $\mathcal{C}(X)$ is uniformly relatively separated.
\end{enumerate}
\end{theorem}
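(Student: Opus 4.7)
For the \emph{only if} direction, suppose $X$ is quasisymmetrically equivalent to a circle domain $\Omega\subset\sphere$ whose complementary components are uniformly relatively separated. Such an $\Omega$ has compact closure in $\sphere$ and is easily seen to be ALLC (round disks with a uniform separation bound give good ring connectedness around every center), while (5) holds by hypothesis. Under the standing Ahlfors $2$-regularity assumption on $X$, each of these three properties pulls back: compactness of $\overline{X}$ follows because an $\eta$-quasisymmetry between Ahlfors $2$-regular spaces is a uniformly continuous map between their completions; ALLC is a standard quasisymmetric invariant; and uniform relative separation of $\mathcal{C}(X)$ follows from the distortion bounds on diameters and distances built into the definition of $\eta$-quasisymmetry.

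The \emph{if} direction is the heart of the theorem, and my plan is to reduce it to Theorem \ref{two sphere} by capping off every non-trivial boundary component of $X$. Concretely: using compactness of $\overline{X}$ and ALLC, to each non-trivial component $E$ of $\partial X$ I would attach a metric topological closed disk $D_E$ of diameter comparable to $\diam E$ (for instance, a Euclidean half-disk glued along $\partial D_E = E$ via a bi-Lipschitz parametrization obtained from the ALLC condition), producing a metric space $\widehat{X}$ homeomorphic to $\sphere$. The summability condition (2) is designed precisely to make this capping preserve Ahlfors $2$-regularity: the added $2$-dimensional Hausdorff mass in a ball $B(x,r)$ is dominated by $\sum_k n_k (2^{-k+1}r)^2 \lesssim r^2\sum_k n_k 2^{-2k} < \infty$. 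The ALLC of $X$ together with the convex geometry of the caps gives LLC of $\widehat{X}$. Theorem \ref{two sphere} then yields an $\eta$-quasisymmetric homeomorphism $\phi\colon \widehat{X}\to\sphere$.

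Next, $\phi(X)$ is a domain in $\sphere$ whose complementary components are the topological closed disks $\phi(D_E)$. Quasisymmetric invariance transports the countability, finite rank, and uniform relative separation hypotheses from $\mathcal{C}(X)$ to the collection of components of $\sphere\bslash\phi(X)$. He and Schramm's extension of Koebe's theorem to countably connected domains produces a conformal map $\psi\colon \phi(X)\to\Omega$ onto a circle domain. The composition $\psi\circ\phi|_X\colon X\to\Omega$ is then the required homeomorphism; the final task is to show it is quasisymmetric and that $\mathcal{C}(\Omega)$ is uniformly relatively separated.

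The hardest step, I expect, is this last promotion of the classical conformal uniformization to a quasisymmetry. For finitely many components (the setting of Theorem \ref{finite case}), reflection across each circle and standard quasiconformal removability produce a global quasiconformal, hence quasisymmetric, extension of $\psi$ to $\sphere$. In the present countably connected, finite-rank setting the extension must be built inductively along the Cantor--Bendixson derivatives of $\overline{\mathcal{C}(X)}$: the base case applies Theorem \ref{finite case} to the finitely many isolated components, and at each successor stage the uniform relative separation condition (5) is what bounds the dilatation of the reflection step, while the finite rank bounds the number of compositions. Passage to the limit, with uniform constants at every step, produces the quasisymmetric map onto a circle domain and simultaneously certifies that $\mathcal{C}(\Omega)$ remains uniformly relatively separated. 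The delicate points will be setting up the capping procedure in Step 1 so that the conclusions of Step 2 hold quantitatively, and controlling the dilatation uniformly through the transfinite reflection argument in Step 4.
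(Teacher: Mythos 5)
Your ``only if'' direction is essentially correct (compactness and ALLC are quasisymmetric invariants via Proposition~\ref{BetterALLC} and \cite[Proposition~10.10]{LAMS}; uniform relative separation is preserved by Remark~\ref{rel sep preservation}; the $\rm{ALLC}$ property of the circle domain itself is Proposition~\ref{ALLC rel sep circ}). The ``if'' direction, however, has two genuine gaps, and the second is fatal as stated.

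First, the capping step: you propose to attach a disk $D_E$ along \emph{every} non-trivial component $E$ of $\partial X$ simultaneously and then assert $\widehat{X}\cong\sphere$. This is not known in general, and the paper explicitly poses it as an open question in Section~\ref{problems}. The issue is near accumulation points of $\mathcal{C}(X)$: when countably many gluing sets accumulate, the metric topology and the quotient topology on the glued space need not coincide (Remark~\ref{different topologies remark}), so the homeomorphism type of $\widehat{X}$ is unclear. The paper sidesteps this entirely by filling in only the \emph{isolated} non-trivial components at each stage (Lemma~\ref{together top}, via Theorem~\ref{isolate closure uniformization} and Lemma~\ref{annular neighborhood lemma}), which drops the Cantor--Bendixson rank of $\mathcal{C}(X)$ by one; the finite-rank hypothesis is there precisely so that this induction terminates. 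Your proposal does invoke a ``transfinite reflection argument along Cantor--Bendixson derivatives'' at the very end, but the induction needs to happen at the capping step, not after uniformization --- otherwise the intermediate space is not a sphere and Theorem~\ref{two sphere} does not apply.

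Second, the upgrade from conformal to quasisymmetric: you propose He--Schramm followed by quasiconformal reflections, bounding the dilatation using uniform relative separation and asserting that ``the finite rank bounds the number of compositions.'' That last claim is wrong: finite rank bounds the Cantor--Bendixson complexity, not the cardinality of $\mathcal{C}(X)$, which can be countably infinite even at rank $1$. A single ``stage'' of reflection already involves a Schottky-type group generated by infinitely many circles, and controlling its dilatation is itself a substantial theorem. The paper avoids this route entirely: once the filled-in space $Z$ is quasisymmetrically identified with $\sphere$, the images of the boundary components are uniformly relatively separated uniform quasicircles (by Theorem~\ref{boundary uniformization}, Corollary~\ref{bdry porosity ALLC}, and Remark~\ref{rel sep preservation}), and one applies Bonk's Theorem~\ref{two sphere Koebe}, which is already a quasisymmetric uniformization of quasicircle families and hands you both the circle domain and the required separation directly, with no detour through conformal maps or reflections. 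You would be well served to replace your He--Schramm step with an appeal to Theorem~\ref{two sphere Koebe}, and to restructure your capping step as an induction on rank with only the isolated components filled at each stage.
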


Theorem \ref{finite rank} is quantitative in the sense that the distortion function of the quasisymmetric mapping may be chosen to depend only on the constants associated to the various conditions on $X$, and vice-versa.

A key tool in our proof is the following similar uniformization result of Bonk, which is valid for subsets of $\sphere$ \cite{CarpetUnif}.
\begin{theorem}[Bonk]\label{two sphere Koebe} Let $\{S_i\}_{i \in \nats}$ be a collection of uniformly relatively separated uniform quasicircles in $\sphere$ that bound disjoint Jordan domains.  Then there is a quasisymmetric homeomorphism $f \colon \sphere \to \sphere$ such that for each $i \in \nats$, the set $f(S_i)$ is a round circle in $\sphere$. 
\end{theorem}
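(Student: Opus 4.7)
The plan is to reduce the countable case to the finite case by approximation, and to handle the finite case using Koebe's classical uniformization theorem together with quasiconformal reflection across each quasidisk.

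For the finite case, fix $n$ and let $D_i$ denote the Jordan domain bounded by $S_i$ for $i=1,\ldots,n$. The complement $\Omega_n = \sphere \setminus \bigcup_{i=1}^n \ovl{D}_i$ is a finitely connected domain, so by Koebe's theorem (Theorem \ref{Koebe's thm}) there is a conformal map $g_n$ from $\Omega_n$ onto a circle domain $\Omega_n^*$. Because each $S_i$ is a $K$-quasicircle with $K$ independent of $i$, each Jordan domain $\ovl{D}_i$ is a $K'$-quasidisk, and similarly each complementary round disk in $\Omega_n^*$ is a disk. Using quasiconformal reflection across the boundaries of both the $\ovl{D}_i$ and the round disks complementary to $\Omega_n^*$, the conformal map $g_n$ can be assembled into a $K''$-quasiconformal homeomorphism $f_n : \sphere \to \sphere$ sending each $S_i$ to a round circle, where $K''$ depends only on $K$. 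Standard quasiconformal theory on $\sphere$ then makes $f_n$ into an $\eta$-quasisymmetric map with $\eta$ depending only on $K$, and \emph{not} on $n$ or on the locations of the $S_i$.

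Next I would pass to the limit. Pick three points $p_1,p_2,p_3 \in \sphere \setminus \bigcup_{i \in \nats} \ovl{D}_i$ and post-compose each $f_n$ with a Mobius transformation (which preserves round circles) so that $f_n(p_j)=p_j$ for $j=1,2,3$. The uniform distortion function $\eta$ combined with this normalization makes $\{f_n\}$ and $\{f_n^{-1}\}$ equicontinuous on $\sphere$, so Arzela--Ascoli provides a subsequential uniform limit $f : \sphere \to \sphere$, which is an $\eta$-quasisymmetric homeomorphism. For fixed $i$ and $n \geq i$, the image $f_n(S_i)$ is a round circle. The uniform relative separation and uniform quasicircle hypotheses, together with the three-point normalization, force these round circles to have radii and centers lying in a compact subset of the space of round circles bounded away from degenerate limits. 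Passing to the Hausdorff limit, $f(S_i)$ is itself a round circle.

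The main obstacle is the quantitative one: obtaining the distortion function $\eta$ for $f_n$ uniformly in $n$. Once this is secured the limiting argument is essentially automatic. The required uniform bound rests on the uniform relative separation of $\{S_i\}$, which guarantees that annuli separating pairs of complementary components of $\Omega_n$ have moduli bounded below by a constant independent of $n$. This is what allows the quasiconformal reflections and the Koebe map $g_n$ to be patched together with uniformly controlled dilatation; without relative separation, the $g_n$ could concentrate distortion near clusters of nearly touching quasicircles as $n$ grows. The crux of the proof is therefore a modulus estimate showing that relative separation is preserved, quantitatively, under the Koebe uniformization of $\Omega_n$.
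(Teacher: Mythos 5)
You should first be aware that the paper does not prove this statement at all: Theorem \ref{two sphere Koebe} is quoted from Bonk's work \cite{CarpetUnif} and used as a black box, so there is no internal proof to compare against; your attempt has to stand on its own.

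As it stands, it does not: the entire difficulty of the theorem is concentrated in the one sentence asserting that the Koebe map $g_n$ ``can be assembled into a $K''$-quasiconformal homeomorphism $f_n$ of $\sphere$, where $K''$ depends only on $K$.'' To glue quasiconformal extensions into the quasidisks $D_i$ you need the boundary correspondence $g_n|_{S_i}\colon S_i \to (\text{round circle})$ to be quasisymmetric with uniform constants; only then do reflection/Beurling--Ahlfors-type extensions yield a global map of controlled dilatation (and one must also invoke removability of the quasicircles for the glued map to be quasiconformal across them). That uniform boundary control is exactly where the uniform relative separation has to enter quantitatively, and your statement that $K''$ depends only on $K$ and ``\emph{not} on $n$ or on the locations of the $S_i$'' cannot be right as phrased: any finite family of disjoint quasicircles is relatively separated for \emph{some} constant, so if the bound depended on $K$ alone the separation hypothesis would be vacuous, whereas in fact the distortion of $g_n$ near two components whose relative distance degenerates is not controlled by $K$. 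You concede this at the end, saying the crux is ``a modulus estimate showing that relative separation is preserved under the Koebe uniformization,'' but (i) that estimate is never proved, and (ii) it is not by itself sufficient — preservation of relative separation of the image circles does not yield uniform quasisymmetry of $g_n$ up to each $S_i$, which is the statement actually needed and which requires its own annulus/modulus argument. This missing uniform finite-case estimate is essentially the whole content of Bonk's theorem, so what you have is a correct reduction scheme (finite case with uniform data, three-point normalization, equicontinuity of normalized $\eta$-quasisymmetric families, Arzel\`a--Ascoli, and nondegeneracy of the limiting circles because the limit map is injective — that part is fine) wrapped around an unproven core.
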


This result, which is also quantitative, allows us to conclude that if a metric space $(X,d)$ as in the statement of Theorem \ref{finite rank} satisfies conditions \eqref{2 reg condition}-\eqref{rel sep condition}, then it is quasisymmetrically equivalent to a circle domain with uniformly relatively separated boundary circles as soon as there is any quasisymmetric embedding of $X$ into $\sphere$. Thus, producing such an embedding is the main focus of this paper. 

It is of great interest to know if conditions \eqref{2 reg condition} and \eqref{planarity} can be replaced with conditions that are quasisymmetrically invariant. By snowflaking, i.e., raising the metric to power $0 < \alpha < 1$, the sphere $\sphere=\R^2\cup\{\infty\}$  in one direction only, say, in the direction of $x$-axis, one produces a metric space homeomorphic to $\sphere$ that satisfies all the assumptions of Theorem \ref{finite rank} (and Theorem~\ref{two sphere}) except for Ahlfors $2$-regularity, but fails to be quasisymmetrically equivalent to $\sphere$.  On the other hand, not every quasisymmetric image of $\sphere$ is Ahlfors $2$-regular, as is seen by the usual snowflaking of the standard metric on $\sphere$.

Our second main result is the existence of a metric space satisfying all assumptions of Theorem \ref{finite rank}, except for condition \eqref{planarity}, that fails to quasisymmetrically embed in $\sphere$.

\begin{theorem}\label{example} There is a metric space $(X,d)$, homeomorphic to a domain in $\sphere$, with the following properties
\begin{itemize}
\item $\partial{X}$ has rank $1$,
\item $X$ is Ahlfors $2$-regular,
\item the completion $\ovl{X}$ is compact,
\item $X$ is annularly linearly locally connected,
\item the components of $\partial{X}$ are uniformly relatively separated,
\item there is no quasisymmetric embedding of $X$ into $\sphere$.
\end{itemize}
\end{theorem}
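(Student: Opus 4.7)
I would construct the counterexample $X$ by replacing, near a single accumulation point $p\in\sphere$, a sequence of shrinking round disks $\{B_k\}_{k\geq 1}$ by metric ``bubbles,'' each hosting approximately $4^k$ boundary circles of relative diameter $\sim 2^{-k}$. This would yield a planar-topology metric space satisfying all hypotheses of Theorem~\ref{finite rank} \emph{except} the area-type condition \eqref{planarity}. The obstruction to any quasisymmetric embedding into $\sphere$ then comes from an area-packing argument in $\sphere$, leveraging the quantitative regularity of quasisymmetric images of $2$-regular Loewner spaces.

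\textbf{Construction and positive properties.} Choose disjoint round disks $B_k=B(p_k,r_k)\subset\sphere$ with $p_k\to p$ and $\sum r_k^2<\infty$. Replace each $B_k$ by a ``bubble'' $\mathcal{B}_k$: homeomorphically a disk with $4^k$ uniformly separated round subdisks of relative diameter $\asymp 2^{-k}$ removed, metrized as a rescaled piece of a round sphere with $\diam\mathcal{B}_k=r_k$. Glue $\mathcal{B}_k$ to $\sphere\setminus B_k$ along $\partial B_k$, matching boundary circles without creating cusps. The resulting $X$ is homeomorphic to a planar domain (the complement in $\sphere$ of countably many disks), with non-trivial components of $\partial X$ accumulating only at $p$, so $\mathcal{C}(X)$ has rank $1$. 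Ahlfors $2$-regularity follows from the local spherical model in each bubble and a triangle-inequality argument at the gluings; ALLC and uniform relative separation come from the sphere-like geometry of each piece and built-in padding; $\overline X$ is compact since $\sum r_k^2<\infty$. At the scale of $B_k$, however, $\mathcal{B}_k$ displays $4^k$ boundary components of diameter $\asymp 2^{-k}r_k$, giving $n_k\gtrsim 4^k$ and hence $\sum n_k 2^{-2k}=\infty$, violating \eqref{planarity}.

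\textbf{Obstruction to embedding.} Suppose $f\colon X\to\sphere$ is $\eta$-quasisymmetric. Extend $f$ continuously to $\overline X$ using compactness and quasisymmetric distortion control. Since quasisymmetry preserves ALLC and uniform relative separation, the images $\{f(E)\}$ of the non-trivial boundary components form a uniformly relatively separated family of uniform quasicircles in $\sphere$; each bounds a Jordan domain $V_E\subset\sphere$ with $\operatorname{area}(V_E)\gtrsim\diam(f(E))^2$, and the $V_E$'s are pairwise disjoint, giving $\sum_E \diam(f(E))^2\lesssim\operatorname{area}(\sphere)<\infty$. For the contradiction, I argue this sum must diverge. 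By Tyson's theorem, $f(X)$ is Ahlfors $2$-regular in $\sphere$, so $\eta$ can be taken of polynomial type in the relevant range. The three-point quasisymmetry inequality, applied bubble by bubble with a reference scale fixed inside $f(\mathcal{B}_k)$, yields $\diam(f(E))\gtrsim\diam(f(\mathcal{B}_k))/\eta(2^k)$ for each of the $4^k$ components $E\subset\mathcal{B}_k$, so the bubble's contribution to the area sum is $\gtrsim 4^k\,\diam(f(\mathcal{B}_k))^2/\eta(2^k)^2$. Combining with a lower bound on $\sum_k\diam(f(\mathcal{B}_k))^2$ derived from the $2$-regularity of $f(X)$ and the intrinsic sizes $r_k$ of the bubbles, one forces the total to diverge.

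\textbf{Main obstacle.} The hardest point is this final quantitative step: carefully aligning the geometric data of the bubbles (the count $4^k$ at scale $2^{-k}$) with the polynomial-type distortion estimates afforded by Tyson's theorem to ensure divergence of $\sum_E\diam(f(E))^2$, rather than merely finiteness with a large constant. A secondary difficulty is the construction itself — each bubble must be glued to $\sphere\setminus B_k$ in a way that preserves uniform Ahlfors $2$-regularity and ALLC across $\partial B_k$ without cusps. The calibration between the bubble's internal complexity and the external scale $r_k$ is where the sharpness of condition \eqref{planarity} in Theorem~\ref{finite rank} is tested, and it is the crux of the counterexample.
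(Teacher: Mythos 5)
Your proposal takes a fundamentally different route from the paper. The paper builds $X$ from a nested sequence of slit squares $Q_n$ that concentrate complexity toward a single corner, so that after rescaling, the space converges (in the Gromov--Hausdorff sense) to a fixed limit object: the slit carpet $S_2$ of~\cite{sM09}. The non-embeddability then comes cleanly from a weak-tangent argument: a quasisymmetric embedding $f\colon X\to\sphere$ would induce a quasisymmetric embedding of every weak tangent of $\ovl{X}$ into the plane, but $S_2$ is Ahlfors $2$-regular and carries a curve family of positive $2$-modulus, whereas a quasisymmetric image of $S_2$ in $\reals^2$ would have to be porous (its peripheral circles being uniform quasicircles, via Theorem~\ref{porous quasicircles}) and hence have Assouad dimension strictly less than $2$, a contradiction by~\cite[Theorem 15.10]{LAMS}. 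No quantitative control on distortion functions is needed; the modulus invariant does all the work.

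Your area-packing argument, by contrast, requires a step that I do not think you can complete, and you half-acknowledge this in your ``main obstacle'' paragraph. The contribution of the $k$th bubble to $\sum_E\diam(f(E))^2$ is, by your estimate, $\gtrsim 4^k\,\diam(f(\mathcal{B}_k))^2/\eta(2^k)^2$. Even granting that Tyson's theorem forces $\eta$ to be of power type (which itself needs care, since $X$ is a domain with complicated boundary and one must verify a Loewner-type hypothesis), you have no lower bound whatsoever on $\diam(f(\mathcal{B}_k))$. A quasisymmetric map is free to shrink the successive bubbles at any rate it pleases --- say $\diam(f(\mathcal{B}_k))\sim 100^{-k}$ --- and the disjointness of the $f(\mathcal{B}_k)$ in $\sphere$ only constrains $\sum_k\diam(f(\mathcal{B}_k))^2<\infty$, which is far from the lower bound you need. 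The phrase ``a lower bound on $\sum_k\diam(f(\mathcal{B}_k))^2$ derived from the $2$-regularity of $f(X)$ and the intrinsic sizes $r_k$'' is the crux, and it is precisely the kind of scale-by-scale quantitative estimate that quasisymmetry does not supply: quasisymmetry controls ratios at a fixed location and scale, not the absolute size of the image of a small ball sitting far from the reference point. This is exactly why the paper works with a renormalization-invariant quantity (modulus on a weak tangent) rather than raw area. If you want to salvage your approach, you would need to arrange the bubbles so that the rescaled spaces $2^n\ovl{X}_n$ converge to a carpet with positive-modulus curve families, at which point you are effectively reinventing the paper's argument.
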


We now outline the proof of Theorem \ref{finite rank} and the structure of the paper. In Section \ref{comps and ends} we establish a topological characterization of the boundary components of a metric space as the \emph{space of ends} of the underlying topological space, at least in the presense of some control on the geometry of the space.  This allows us to develop a notion of rank, and in Section \ref{crosscuts}, a theory of cross-cuts analogous to the classical theory.  A key tool in this development is the following purely topological statement: every domain in $\sphere$ is homeomorphic to a domain in $\sphere$ with totally disconnected complement. This folklore theorem is proven in Section \ref{TotDisCon}. In Section~\ref{boundary uniformization section}, we use cross-cuts and a classical topological recognition theorem for $\Sircle$ to uniformize the boundary components of $X$.  The resulting theorem, which generalizes \cite[Theorem 1.3]{QSPlanes}, may be of independent interest:

\begin{theorem}\label{boundary uniformization} Suppose that $X$ is a metric space that is homeomorphic to a domain in $\sphere$, has compact completion, and satisfies the $\lambda$-$\rm{LLC}$ condition for some $\lambda \geq 1$.  Then each non-trivial component of $\partial{X}$ is a topological circle satisfying the $\lambda'$-$\rm{LLC}$ condition for some $\lambda' \geq 1$ depending only on $\lambda$. In particular, if the space $X$ is additionally assumed to be doubling, then each non-trivial component of $\partial{X}$ is quasisymmetrically equivalent to $\Sircle$ with distortion function depending only on $\lambda$ and the doubling constant. 
\end{theorem}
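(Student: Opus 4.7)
The plan has three stages: first, show that each non-trivial component $E$ of $\partial X$ is a topological circle; second, establish a quantitative LLC condition on $E$; third, invoke the Tukia--V\"ais\"al\"a one-dimensional uniformization theorem in the doubling case.

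For the first stage, I would apply the classical topological characterization of $\Sircle$ as a non-degenerate metric Peano continuum in which every pair of distinct points separates the space into exactly two components. Compactness and connectedness of $E$ are immediate from the hypothesis that $\ovl X$ is compact. Local connectedness at a point $p \in E$ follows from the upper LLC condition applied to sequences $p_n \to p$ in $X$ together with Hausdorff subsequential limits of the resulting small continua in $\ovl X$. The two-point separation property is the core step; here I would use the cross-cut theory developed in Section \ref{crosscuts}. For distinct $p, q \in E$, sequences of cross-cuts of $X$ with endpoints on $E$ tending to $p$ and $q$ separate a neighborhood of $E$ into two open pieces by planarity of the underlying topological model, and a limit argument combined with the upper LLC condition shows that $E \setminus \{p, q\}$ has exactly two components.

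For the second stage, given distinct $p, q \in E$ I would pick $p_n \to p$ and $q_n \to q$ in $X$, apply the upper LLC condition to produce continua $K_n \subset X$ with $p_n, q_n \in K_n$ and $\diam K_n \leq \lambda\, d(p_n, q_n)$, and take a Hausdorff subsequential limit $K \subset \ovl X$. Then $K$ is a continuum of diameter at most $\lambda\, d(p, q)$ containing $p$ and $q$. Using that $E$ is now known to be a topological circle, $E \setminus \{p, q\}$ has exactly two arc components, and I would show that the one of smaller diameter is contained in a ball of radius $\lambda'\, d(p,q)$ about $p$ by combining the diameter bound on $K$ with an iterated application of the upper LLC condition to cover the arc. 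The lower LLC condition for $E$ follows by a dual argument, using the lower LLC condition in $X$ together with cross-cuts to confine the continuum to the correct side of a small ball around a prescribed center.

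Stage three is immediate. The subspace metric on $E$ is doubling whenever $X$ is, so by the Tukia--V\"ais\"al\"a theorem \cite{QS}, a doubling metric topological circle satisfying LLC is quasisymmetric to $\Sircle$ with distortion depending only on the data. The principal obstacle is the separation property in stage one: planar topology of the underlying model domain and metric LLC geometry must be combined through cross-cuts to produce a quantitative two-point separation of $E$. This is also the place where compactness of $\ovl X$ is essential, guaranteeing that cross-cut sequences and continuum limits actually converge inside $\ovl X$.
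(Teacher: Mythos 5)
Your overall architecture (recognition theorem for $\Sircle$, cross-cuts, then Tukia--V\"ais\"al\"a) is the same as the paper's, and stages one and three are sound in outline. The genuine gap is in the first half of stage two, where you try to obtain the $\rm{LLC}_1$ property of $E$ from a Hausdorff limit of $\rm{LLC}_1$ continua in $X$.

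A small continuum $K \subset \ovl{X}$ containing $p$ and $q$ with $\diam K \leq \lambda\, d(p,q)$ tells you nothing about the diameters of the two arcs of $E \setminus \{p,q\}$, because $K$ may ``short-cut'' through the interior $X$ and avoid $E$ altogether. Concretely, let $E$ be a Jordan curve with a long, thin finger poked inward at the top, so that near the finger's mouth $p$ and $q$ are Euclidean-close while both arcs of $E\setminus\{p,q\}$ have diameter far larger than $d(p,q)$, and let $X$ be a planar domain having $E$ as a boundary component with the finger region belonging to $X$. The $\rm{LLC}_1$ condition is easily satisfied: the continua $K_n$ simply cross the mouth of the finger from the annular side, and the Hausdorff limit $K$ stays small. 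Thus the conclusion you want fails, and there is no way to repair it using $\rm{LLC}_1$ of $X$ alone; a hypothetical ``iterated application of the upper LLC condition to cover the arc'' cannot produce arcs of $E$ inside small balls, since that condition only supplies continua in $X$, not in $E$. What rules out this configuration is the $\rm{LLC}_2$ hypothesis on $X$, which such finger examples violate. Accordingly, the paper's proof of the $\rm{LLC}_1$ property of $E$ (Proposition~\ref{locally connected}) uses \emph{both} $\til{\rm{LLC}}_1$ and $\til{\rm{LLC}}_2$ of $X$, together with the separating properties of cross-cuts: it builds a cross-cut $\gamma_{pq}$ from $p$ to a far point, selects points $a$ and $b$ on opposite sides, joins them by a second cross-cut $\gamma_{ab}$ inside a controlled ball, and then traps the arc $\ovl{W}\cap E$ between them, with $\til{\rm{LLC}}_2$ used both to derive a contradiction via the crossing lemma (Lemma~\ref{crossing}) and to modify $\gamma_{ab}$ to a cross-cut that avoids a small ball around $p$. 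Your proposed dichotomy (upper LLC of $X$ gives upper LLC of $E$, lower gives lower) does not hold; you need both conditions on $X$ and a planar separation argument to obtain even the $\rm{LLC}_1$ property of $E$.
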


Suppose that $(X,d)$ is an Ahlfors $2$-regular and $\rm{ALLC}$ metric space that is homeomorphic to a domain in $\sphere$. Section \ref{completion uniformization section} shows that the completion $\ovl{X}$ is homeomorphic to the closure of an appropriately chosen circle domain. In Section \ref{porosity section} we prove general theorems implying that each non-trivial component of $\partial{X}$ has Assouad dimension strictly less than $2$, and hence, up to a bi-Lipschitz mapping, is the boundary of a planar quasidisk. We describe a general gluing procedure in Section \ref{gluing section}, and use it to ``fill in" the non-trivial components of $\partial{X}$. The resulting space $\wh{X}$ is $\rm{ALLC}$, but not always Ahlfors $2$-regular.  In order to guarantee this, we impose condition \eqref{planarity} of Theorem~\ref{finite rank}.  The assumption of finite rank allows us to reduce to the case that there are only finitely many components of $\partial{X}$. We show that $\wh{X}$ is homeomorphic to $\sphere$, and apply Theorem \ref{two sphere}.  The problem is now planar, and the above mentioned theorem of Bonk now yields the desired result. Section \ref{putting it together} summarizes our work and provides a formal proof of Theorems \ref{finite case} and \ref{finite rank}. 

Section \ref{counter-example section} is dedicated to proving Theorem \ref{example}. We conclude in Section \ref{problems} with discussion of related open problems.

\subsection{Acknowledgements}

We wish to thank Mario Bonk, Peter Feller, Pekka Koskela, John Mac\-kay, Daniel Meyer, Raanan Schul, and Jeremy Tyson for useful conversations and critical comments.  Some of the research leading to this work took place while the second author was visiting the University of Illinois at Urbana-Champaign and while both authors were visiting the State University of New York at Stony Brook. We are very thankful for the hospitality of those institutions. Also, the first author wishes to thank the Hausdorff Research Institute for Mathematics in Bonn, Germany, for its hospitality during the Rigidity program in the Fall 2009.

\section{Relationship to Gromov hyperbolic groups}\label{geo group}

In geometric group theory, there is a natural concept of a hyperbolic group due to Gromov.  These abstract objects share many features of Kleinian groups, including a boundary at infinity.  The boundary $\partial_{\infty}G$ of a Gromov hyperbolic group is equipped with a natural family of quasisymmetrically equivalent metrics. The structure of this boundary is categorically linked to the structure of the group: a large-scale bi-Lipschitz mapping between Gromov hyperbolic groups induces a quasisymmetric mapping between the corresponding boundaries at infinity, and vice versa.

One of the premier problems in geometric group theory is Cannon's conjecture, which states that for every Gromov hyperbolic group $G$ with boundary at infinity $\partial_{\infty}G$ homeomorphic to $\sphere$, there exists a discrete, co-compact, and isometric action of $G$ on hyperbolic 3-space.  In other words, if a Gromov hyperbolic group has the correct boundary at infinity, then it arises naturally from the corresponding situation in hyperbolic geometry.  By a theorem of Sullivan \cite{Sullivan}, Cannon's conjecture is equivalent to the following statement: if $G$ is a Gromov hyperbolic group, then $\partial_{\infty}G$ is homeomorphic to $\sphere$ if and only if $\partial_{\infty}G$ is quasisymmetrically equivalent to $\sphere$. If $\partial_\infty G$ is homeomorphic to $\sphere$, then each natural metric on $\partial_\infty G$ is $\rm{LLC}$ and Ahlfors $Q$-regular for some $Q \geq 2$.  If $Q=2$, then Theorem \ref{two sphere} confirms Cannon's conjecture. Indeed, even stronger statements are known \cite{ConfDim}.

Closely related to Cannon's conjecture is the Kapovich--Kleiner conjecture, which states that for every Gromov hyperbolic group $G$ with boundary at infinity $\partial_{\infty}G$ homeomorphic to the Sierpi\'nski carpet, there exists a discrete, co-compact, and isometric action of $G$ on a convex subset of hyperbolic 3-space with totally geodesic boundary.  The Kapovich--Kleiner conjecture is implied by Cannon's conjecture, and is equivalent to the following statement: if $G$ is a Gromov hyperbolic group, then $\partial_{\infty}G$ is homeomorphic to the Sierpi\'nski carpet if and only if $\partial_{\infty}G$ is quasisymmetrically equivalent to a round Sierpi\'nski carpet, i.e., to a subset of $\sphere$ that is homeomorphic to the Sierpi\'nski carpent and has peripheral curves that are round circles. 

Theorem \ref{finite rank} can be seen as a uniformization result for domains that might approximate a Sierpi\'nski carpet arising as the boundary of a hyperbolic group. If $\partial_\infty G$ is homeomorphic to the Sierpi\'nski carpet, then it is $\rm{ALLC}$ and the peripheral circles are uniformly separated uniform quasicirlces. Recent work of Bonk  established the Kapovich--Kleiner conjecture in the case that $\partial_\infty G$ can be quasisymmetrically embedded in $\mathbb{S}^2$; see \cite{CarpetUnif}. As noted by Bonk--Kleiner in \cite{MarioICM}, this is true when the Assouad dimension of $\partial_\infty G$ is strictly less than two, a hypothesis analgous to condition \eqref{planarity} in Theorem \ref{finite rank}. This observation and its proof provided ideas that will be used in Section \ref{gluing section}.

\section{Notation and basic results}\label{notation}
\subsection{Metric Spaces}
We are often concerned with conditions on a mapping or space that involve constants or distortion functions.  These constants or distortion functions are refered to as the \emph{data} of the conditions.  A theorem is said to be \emph{quantitative} if the data of the conclusions of theorem depend only on the data of the hypotheses.  In the proof of quantitative theorems, given non-negative quantities $A$ and $B$, we will employ the notation $A \lesssim B$ if there is a quantity $C \geq 1$, depending only on the data of the conditions in the hypotheses, such that $A \leq CB$.  We write $A \simeq B$ if $A \lesssim B$ and $B \lesssim A$.  

We will often denote a metric space $(X,d)$ by $X$.  Given a point $x \in X$ and a number $r>0$, we define the open and closed balls centered at $x$ of radius $r$ by
$$B_{(X,d)}(x, r) = \{y \in X: d(x,y) < r\} \mand \ovl{B}_{(X,d)}(x,r)= \{y \in X: d(x,y) \leq r\}.$$
For $0 \leq r < R$, we denote the open annulus centered at $x$ of inner radius $r$ and outer radius $R$ by
$$A_{(X,d)}(x,r,R) = \{y \in X: r < d(x,y) < R\}.$$
Note that when $r=0$, this corresponds to $B_{(X,d)}(x,R) \bslash \{x\}.$

Where it will not cause confusion, we denote $B_{(X,d)}(x,r)$ by $B_{X}(x,r)$, $B_d(x,r)$, or $B(x,r)$.  A similar convention is used for all other notions which depend implicitly on the underlying metric space. 

We denote the completion of a metric space $X$ by $\ovl{X}$, and define the metric boundary of $X$ by $\partial{X} =\ovl{X}\bslash X$.  These notions are not to be confused with their topological counterparts. 

For $\ep > 0$, the \emph{$\ep$-neighborhood} of a subset $E \subeq X$ is given by
$$\nbhd_{\ep}(E) = \bigcup_{x \in E} B(x, \ep).$$
The \emph{diameter} of $E$ is denoted by $\diam(E),$ and the distance between two subsets $E, F \subeq X$ is denoted by $\dist(E,F).$ If at least one of $E$ and $F$ has finite diameter, then the \emph{relative distance} of $E$ and $F$ is defined by 
$$\bigtriangleup(E,F)= \frac{\dist(E,F)}{\min\{\diam E, \diam F\}},$$
with the convention that $\bigtriangleup(E,F)=\infty$ if at least one of $E$ and $F$ has diameter $0$.

\begin{remark}\label{rel sep preservation} If $f \colon X \to Y$ is a quasisymmetric homeomorphism of metric spaces, and $E$ and $F$ are subsets of $X$, then 
$$\bigtriangleup(f(E),f(F)) \simeq \bigtriangleup(E,F).$$
This is easily seen using \cite[Proposition 10.10]{LAMS}. \end{remark}
 
Let $\sphere=\{(x,y,z)\in\R^3\colon x^2+y^2+z^2=1\}$ be the standard 2-sphere equipped with the restriction of the Euclidean metric on $\R^3$.
We say that $\Omega$ is a \emph{domain in $\sphere$} if it is an open and connected subset of $\sphere$. We always consider a domain in $\sphere$ as already metrized, i.e., equipped with the restriction of the standard spherical metric.  

\subsection{Dimension and measures} 

A metric space $X$ is \emph{doubling} if there is a constant $N \in \nats$ such that for any $x \in X$ and $r>0$, the ball $B(x,r)$ can be covered by at most $N$ balls of radius $r/2$.  This condition is quantitatively equivalent to the existence of constants $\alpha\geq 0$ and $C \geq 1$ such that $X$ is \emph{$(\alpha,C)$-homogeneous}, meaning that for every $x \in X$, and $0<r\leq R$, the ball $B(x,R)$ can be covered by at most $C(R/r)^{\alpha}$ balls of radius $r$.  The infinimum over all $\alpha$ such that $X$ is $(\alpha,C)$-homogeneous for some $C \geq 1$ is called the \emph{Assouad dimension} of $X$.  Hence, doubling metric spaces are precisely those metric spaces with finite Assouad dimension.   

%

In a doubling metric space, some balls may have lower Assouad dimension than the entire space.  To rule out this kind of non-homogeneity, one often employs a much stricter notion of finite-dimensionality.  The metric space $(X,d)$ is \emph{Ahlfors $Q$-regular}, $Q \geq 0$, if there is a constant $K \geq 1$ such that for all $x \in X$ and $0<r<\diam{X}$,
\begin{equation}\label{Ahlfors reg def}\frac{r^Q}{K} \leq \Hdim^Q_X(\ovl{B}(x,r)) \leq Kr^Q,\end{equation}
where $\Hdim^Q_X$ denotes the $Q$-dimensional Hausdorff measure on $X$. It is quantitatively equivalent to instead require that \eqref{Ahlfors reg def} hold for all open balls of radius less that $2\diam{X}$.  The existence of any Borel regular outer measure on $X$ that satisfies \eqref{Ahlfors reg def} quanitatively implies that $X$ is Ahlfors $Q$-regular; see \cite[Exercise 8.11]{LAMS}.

\begin{remark}\label{extend reg to boundary} Suppose that $(X,d)$ is Ahlfors $Q$-regular, $Q \geq 0$. Given $S \subeq \partial{X}$, the space $(X\cup S,d)$ is again Ahlfors $Q$-regular, quantitatively.  This is proven as in \cite[Lemma 2.11]{QSPlanes}. 
\end{remark}
\subsection{Connectivity conditions}

Here we describe various conditions that control the existence of ``cusps" in a metric space by means of connectivity. The basic concept of such conditions arose from the theory of quasiconformal mappings in the plane, where they play an important role as invariants. 

Let $\lambda \geq 1$.  A metric space $(X,d)$ is $\lambda$-linearly locally connected ($\lambda$-$\rm{LLC}$) if for all $a \in X$ and  $r >0$, the following two conditions are satisfied:
\begin{itemize}
\item[(i)]  for each pair of distinct points $x,y \in B(a,r)$, there is a continuum $E \subeq B(a,\lambda r)$ such that $x,y\in E$,  
\item[(ii)] for each pair of distinct points $x,y \in X\bslash B(a,r)$, there is a continuum $E \subeq X\bslash B(a, r/\lambda)$ such that $x,y \in E$. 
\end{itemize}
Individually, conditions $(i)$ and $(ii)$ are referred to as the $\lambda$-$\rm{LLC}_1$ and $\lambda$-$\rm{LLC}_2$ conditions, respectively. 

The $\rm{LLC}$ condition extends in a particularly nice way to the completion of a metric space.  We say that a metric space $(X,d)$ is $\lambda$-$\til{\rm{LLC}}$ if for all $a \in \ovl{X}$ and $r > 0$ the following conditions are satisfied:  
\begin{itemize} 
\item[(i)] For each pair of distinct points $x,y \in B_{\ovl{X}}(a,r)$, there is an embedding $\gamma\colon [0,1] \to \ovl{X}$ such that
$\gamma(0)=x$, $\gamma(1)=y$, $\gamma|_{(0,1)} \subeq X$, and $\im\gamma \subeq B_{\ovl{X}}(a,\lambda r),$  
\item[(ii)] For each pair of distinct points  $x,y \in \ovl{X}\bslash B_{\ovl{X}}(a,r)$, there is an embedding $\gamma\colon [0,1] \to \ovl{X}$ such that
$\gamma(0)=x$, $\gamma(1)=y$, $\gamma|_{(0,1)} \subeq X$, and $\im\gamma \subeq \ovl{X} \bslash B_{\ovl{X}}(a,r/\lambda).$
\end{itemize}
Individually, conditions $(i)$ and $(ii)$ are referred to as the $\lambda$-$\til{\rm{LLC}}_1$ and $\lambda$-$\til{\rm{LLC}}_2$ conditions, respectively. 

If a metric space $X$ is $\lambda$-$\til{\rm{LLC}}$, then it is also $\lambda$-$\rm{LLC}$.  The next proposition states that the two conditions are quantitatively equivalent for the spaces in consideration in this paper.  

\begin{proposition}\label{Better LLC}  Let $i \in \{1,2\}$, and let $(X,d)$ be a locally compact and locally path-connected metric space that satisfies the $\lambda$-$LLC_i$ condition.   Then $X$ is $\lambda'$-$\til{LLC}_i$, where $\lambda'$ depends only on $\lambda$.    
\end{proposition}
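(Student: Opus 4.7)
The plan is to upgrade the continua produced by $\lambda$-$\rm{LLC}_i$ into arcs in $\ovl{X}$ whose interior lies in $X$, working in three stages: an interior case where $a,x,y$ all lie in $X$, an extension to boundary endpoints, and finally a perturbation argument for the case of a boundary center.

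\emph{Step 1 (interior case, $a,x,y\in X$).} Local compactness together with density of $X$ in $\ovl{X}$ force $X$ to be open in $\ovl{X}$: a compact neighborhood $K\subeq X$ of $p\in X$ is closed in $\ovl{X}$, and density forces any $\ovl{X}$-open $V'$ with $V'\cap X\subeq K$ to lie in $K\subeq X$. The continuum $E$ produced by $\rm{LLC}_i$ is compact in the $\ovl{X}$-open set $X$, so $\mathcal{N}_\epsilon(E)\subeq X$ for some $\epsilon>0$. Let $U$ be the connected component of $\mathcal{N}_\epsilon(E)$ containing $E$; local connectivity makes $U$ open, and, being an open connected subset of the locally path-connected space $X$, $U$ is path-connected. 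Any path from $x$ to $y$ in $U$ contains an arc with the same endpoints by the classical Hausdorff arc-in-path theorem. Choosing $\epsilon$ small in $\lambda,r$, the arc lies in $B_X(a,2\lambda r)$ for $\til{\rm{LLC}}_1$, or in $X\bslash B_X(a,r/(2\lambda))$ for $\til{\rm{LLC}}_2$.

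\emph{Step 2 (boundary endpoints, $a\in X$).} Suppose $x\in\partial{X}$; $y$ is symmetric, and if both lie in $\partial{X}$ their tails are built separately and joined by a Step-1 arc in the middle. Build a tail $\gamma_x\colon[0,1]\to\ovl{X}$ with $\gamma_x(0)\in X$, $\gamma_x(1)=x$, $\gamma_x([0,1))\subeq X$, and $\im\gamma_x\subeq B_{\ovl{X}}(x,\epsilon)$ for a prescribed $\epsilon>0$: pick $x_n\in X$ with $x_n\to x$, and for each $n$ connect $x_n$ to $x_{n+1}$ by a path $\gamma_n$ in $X$ of shrinking diameter. For $i=1$ this uses Step 1 at center $x_n$ with radius comparable to $d(x_n,x_{n+1})$, giving $\diam\gamma_n\lesssim\lambda\,d(x_n,x)$; for $i=2$ it uses an open path-connected neighborhood of $x_n$ supplied by local path-connectivity, with parameters chosen inductively so that $x_{n+1}$ lies in that neighborhood. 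Concatenation extends to $x$ by continuity. Connect the innermost endpoints by a Step-1 arc. A geometric series estimate places the concatenated path in $B_{\ovl{X}}(a,\lambda' r)$ (resp.\ its complement) for $\lambda'$ depending only on $\lambda$, provided $\epsilon$ is chosen as a suitable fraction of $r$. The arc-in-path theorem then produces an arc whose image lies in that of the path, and since only $x$ and $y$ may be $\partial{X}$-points in the path's image, injectivity of the arc forces its interior into $X$.

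\emph{Step 3 (boundary center, $a\in\partial{X}$).} Pick $a'\in X$ with $d(a,a')$ small (density of $X$ in $\ovl{X}$). The inclusions $B_{\ovl{X}}(a',r-d(a,a'))\subeq B_{\ovl{X}}(a,r)\subeq B_{\ovl{X}}(a',r+d(a,a'))$ reduce $\til{\rm{LLC}}_i$ at $(a,r)$ to the same property at $(a',r\pm d(a,a'))$, handled by Step 2; taking $d(a,a')\lesssim r$ preserves $\lambda'$ up to a $\lambda$-dependent factor.

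\emph{Main obstacle.} The central technical difficulty lies in Step 2, particularly in the $i=2$ case where $\rm{LLC}_2$ does not directly supply small continua between nearby approximating points. One must rely on local path-connectivity at each interior $x_n$, arranging inductively that the path-connected open neighborhood $V_n$ of $x_n$ extends close enough to $x$ for an appropriate $x_{n+1}\in X\cap V_n$ near $x$ to exist; this is possible because $V_n$ is $\ovl{X}$-open (as $X$ is open in $\ovl{X}$) and $X$ is dense in $\ovl{X}$, but the parameter bookkeeping to make $d(x_n,x)$ small relative to the inradius of $V_n$ at $x_n$ requires attention. Once the tail is in place, the remaining containment bounds are geometric series estimates and the promotion to an arc via the arc-in-path theorem is routine.
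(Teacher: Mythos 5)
Your proof takes essentially the same route as the paper: your Step~1 is precisely the ``key ingredient'' the paper records, and Steps~2--3 are what the paper calls the ``straightforward details.'' Step~1 is correct, Step~2 for $i=1$ is correct (the $\rm{LLC}_1$ tail-building works because $\rm{LLC}_1$ really does produce continua of controlled diameter near $x_n$), and Step~3 is correct modulo Step~2.

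The gap is in Step~2 for $i=2$, exactly where you flag the ``main obstacle,'' but the difficulty is structural rather than a matter of bookkeeping, and the fix you propose does not close it. Local path-connectivity at $x_n$ gives you the freedom to choose an open path-connected $V_n\ni x_n$ \emph{as small as you like}, but gives you no control in the other direction: $V_n$ may be a tiny ball about $x_n$ of radius far less than $d(x_n,x)$, in which case $V_n$ contains no point appreciably closer to $x$ than $x_n$ is, and the inductive choice of $x_{n+1}\in V_n$ with $d(x_{n+1},x)$ small is simply impossible. The observation that ``$V_n$ is $\ovl{X}$-open and $X$ is dense in $\ovl{X}$'' does not repair this; openness of $V_n$ says nothing about whether $V_n$ reaches toward $x$. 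In short, $\rm{LLC}_2$ controls connectivity \emph{outside} balls and supplies no small continua, and local path-connectivity of $X$ is a statement about interior points, so neither tool, as you have deployed them, yields the shrinking chain of paths your tail requires.

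Two remarks on how the gap is actually handled. First, in every application of this proposition in the paper, the full $\rm{LLC}$ condition (both halves) is available; in that case the tail near a boundary endpoint can be built with the $\rm{LLC}_1$ argument from your Step~2, $i=1$ case, and then the $\til{\rm{LLC}}_2$ containment is checked for the already-constructed arc. Second, if one really insists on deriving $\til{\rm{LLC}}_2$ from $\rm{LLC}_2$ alone (as the statement of the proposition asserts), one must first establish that under local compactness, local path-connectivity, and $\rm{LLC}_2$, every point of $\partial X$ is arc-accessible from $X$ with quantitative control; this is not a routine consequence of local path-connectivity and needs its own argument.
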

 
\begin{proof}  The key ingredient is the following statement:  If $U\subeq X$ is an open subset of $X$, and $E \subeq U$ is a continuum, then any pair of points $x,y \in E$ are contained in an arc in  $U$.  The details are straightforward and left to the reader. 
\end{proof}

Let $\lambda \geq 1$. A metric space $(X,d_X)$ is \emph{$\lambda$-annularly linearly locally connected} ($\lambda$-$\rm{ALLC}$) if for all points $a \in X$ and all $0\leq r<R$, each pair of distinct points in the annulus $A(a,r,R)$ is contained in a continuum in the annulus $A(a,r/\lambda, \lambda R).$ 

The $\rm{ALLC}$ condition forbids local cut-points in addition to ruling out cusps. For example, the standard circle $\mathbb{S}^1$ is $\rm{LLC}$ but not $\rm{ALLC}$. In our setting, the $\rm{ALLC}$ condition is a more natural assumption, and is in some cases equivalent to the $\rm{LLC}$ condition. 

We omit the proofs of the following three statements. The first is based on decomposing an arbitrary annulus into dyadic annuli. The second uses the fact that in a connected space, any distinct pair of points is contained in annulus around some third point. The third states that the $\rm{ALLC}$ condition extends to the boundary as in Proposition \ref{BetterALLC}. 

\begin{lemma}\label{ALLC r 2r} Let $\lambda \geq 1$. Suppose that a connected metric space $(X,d)$ satisfies the condition that for all points $a \in X$ and all $r>0$, each pair of disctinct points in the annulus $A(a,r,2r)$ is contained in a continuum in the annulus $A(a,r/\lambda, 2\lambda r).$ Then $X$ satisfies the $\lambda$-$\rm{ALLC}$ condition.
\end{lemma}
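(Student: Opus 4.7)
The plan is a standard dyadic concatenation argument, using the connectedness of $X$ to build a chain of intermediate points at controlled scales and invoking the hypothesis on each link of the chain. Fix $a \in X$, $0 \leq r < R$, and distinct points $x, y \in A(a, r, R)$. Write $s = d(a, x)$ and $t = d(a, y)$, and assume without loss of generality that $s \leq t$.

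I would first handle the \emph{thin} case, where $t/s \leq 3/2$. Choosing a scale $\rho$ slightly less than $s$ (say $\rho \in (2t/3, s)$, which is a nondegenerate interval) one has $x, y \in A(a, \rho, 2\rho)$, and the hypothesis directly produces a continuum in $A(a, \rho/\lambda, 2\lambda \rho)$. A short computation shows this lies inside $A(a, r/\lambda', \lambda' R)$ for a constant $\lambda'$ comparable to $\lambda$. For the \emph{thick} case $t/s > 3/2$, I would interpolate. Since $X$ is connected, the function $d(a, \cdot) \colon X \to [0, \infty)$ has connected image, so for any value in $[s, t]$ there exists a point of $X$ at that distance from $a$. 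I choose a finite sequence of scales $s = s_0 < s_1 < \cdots < s_n = t$ with consecutive ratios bounded by $3/2$, and pick $p_i \in X$ with $d(a, p_i) = s_i$, taking $p_0 = x$ and $p_n = y$. Applying the thin-case argument to each consecutive pair $(p_{i-1}, p_i)$ yields a continuum $E_i$ containing both, lying in an annulus $A(a, \rho_{i-1}/\lambda, 2\lambda \rho_{i-1})$ with $\rho_{i-1} \simeq s_{i-1}$. The union $E = \bigcup_{i=1}^n E_i$ is then a continuum containing $x$ and $y$, as a finite union of continua with nonempty consecutive intersections.

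The final step is tracking constants. The innermost link $E_1$ reaches down only to radius $\simeq s/\lambda \geq r/\lambda$, and the outermost link $E_n$ reaches out only to radius $\simeq \lambda t \leq \lambda R$, so $E \subseteq A(a, r/\lambda', \lambda' R)$ with $\lambda'$ depending only on $\lambda$. I expect the main obstacle to be the boundary bookkeeping: ensuring the auxiliary scales $\rho_i$ place consecutive points $p_{i-1}, p_i$ in the \emph{open} annulus $A(a, \rho_i, 2\rho_i)$ as required by the hypothesis, and guaranteeing that the innermost and outermost pieces do not spill past the desired radii $r/\lambda'$ and $\lambda' R$. These are purely technical issues that are resolved by the small slack built into the ratio $3/2 < 2$; the substantive content of the lemma is just the combination of connectedness with the concatenation of finitely many continua.
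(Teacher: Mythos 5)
Your approach matches the paper's stated strategy (the authors omit the proof but explicitly describe it as ``decomposing an arbitrary annulus into dyadic annuli''), and the substantive content --- using connectedness of $X$ to find interpolating points at all intermediate distances from $a$, applying the hypothesis link by link, and concatenating --- is correct. The boundary bookkeeping you flag is indeed the only technical wrinkle, and the slack ratio $3/2 < 2$ handles it exactly as you anticipate.

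One discrepancy worth noting, which is really a quirk of the paper's statement rather than a defect in your argument: the lemma as printed claims the conclusion is exactly $\lambda$-$\rm{ALLC}$, whereas your proof yields $\lambda'$-$\rm{ALLC}$ for a $\lambda'$ a bounded multiple of $\lambda$. This cannot be tightened to $\lambda$ by the chaining argument. Consider the thin case $R/2 < r < s \leq t < R$: the hypothesis with parameter $\rho \in (t/2, s)$ yields a continuum inside $A(a,\rho/\lambda, 2\lambda\rho)$, and for this annulus to sit inside $A(a, r/\lambda, \lambda R)$ one needs simultaneously $\rho \geq r$ and $\rho \leq R/2$, which is impossible when $r > R/2$. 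So what the argument really shows is $\lambda'$-ALLC with $\lambda' = 2\lambda$ (or so), quantitatively in $\lambda$. Given the conventions laid out in Section~3 of the paper, where quantitative dependence of constants on the data is the operative notion throughout, this is evidently the intended reading; the lemma is used elsewhere (e.g.\ in the proof of Proposition~3.8) purely as a reduction step, and the exact constant is irrelevant. So your acknowledgment of a constant ``comparable to $\lambda$'' is both honest and fully adequate.
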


\begin{lemma}\label{ALLC gives LLC} Suppose that $(X,d)$ is a connected metric space that satisfies the $\lambda$-$\rm{ALLC}$ condition. Then $(X,d)$ satisfies the $2\lambda$-$\rm{LLC}$ condition.
\end{lemma}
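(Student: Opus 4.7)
The plan is to verify the $2\lambda$-$\mathrm{LLC}_1$ and $2\lambda$-$\mathrm{LLC}_2$ conditions separately, both as essentially direct consequences of the $\lambda$-$\mathrm{ALLC}$ hypothesis, with connectedness of $X$ needed only in one degenerate subcase.

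For $\mathrm{LLC}_2$, I would fix $a \in X$, $r > 0$, and distinct points $x, y \in X \setminus B(a, r)$, so that $d(a, x), d(a, y) \ge r > r/2$. Choosing any $R > \max\{d(a, x), d(a, y)\}$ places $x$ and $y$ in $A(a, r/2, R)$. The $\lambda$-$\mathrm{ALLC}$ hypothesis then supplies a continuum through $x$ and $y$ in the expanded annulus $A(a, r/(2\lambda), \lambda R)$, which is contained in $X \setminus B(a, r/(2\lambda))$, as required for $2\lambda$-$\mathrm{LLC}_2$.

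For $\mathrm{LLC}_1$, I take $a \in X$, $r > 0$, and distinct $x, y \in B(a, r)$. The easy subcase is when both $x \ne a$ and $y \ne a$: then $x, y \in A(a, 0, r) = B(a, r) \setminus \{a\}$, and $\lambda$-$\mathrm{ALLC}$ directly produces a continuum in $A(a, 0, \lambda r) \subseteq B(a, 2\lambda r)$. The main obstacle is the remaining degenerate subcase $x = a$, since $a$ is never an element of an annulus centered at itself, so $\mathrm{ALLC}$ cannot be applied to connect $y$ to $a$ directly. Here I would use connectedness of $X$: assuming $X$ contains more than one point (else the statement is vacuous), the singleton $\{a\}$ is closed but cannot also be open, hence there is a sequence $\{z_n\}$ in $B(a, r) \setminus \{a\}$ with $z_n \to a$. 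Applying $\lambda$-$\mathrm{ALLC}$ to each pair $(z_n, y)$ in the annulus $A(a, 0, r)$ produces continua $E_n \subseteq A(a, 0, \lambda r)$ through $z_n$ and $y$. Since the $E_n$ all contain the common point $y$, the union $\bigcup_n E_n$ is connected; its closure additionally contains $a$ as the limit of $\{z_n\}$, remains connected, and lies in the closed ball $\{w \in X : d(a, w) \le \lambda r\} \subseteq B(a, 2\lambda r)$. This closure is the required continuum, completing the verification of $2\lambda$-$\mathrm{LLC}_1$.
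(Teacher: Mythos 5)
Your verification of the $\mathrm{LLC}_2$ condition is correct, and so is the non-degenerate case of $\mathrm{LLC}_1$ (both $x\neq a$ and $y\neq a$): in each instance you place the two points inside a single annulus centered at $a$ and apply $\lambda$-$\mathrm{ALLC}$ once, which is exactly the right move and yields the constant $2\lambda$.

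The gap is in the degenerate $\mathrm{LLC}_1$ subcase $x=a$. After producing continua $E_n\subseteq A(a,0,\lambda r)$ through $z_n$ and $y$, you pass to the closure of $\bigcup_n E_n$ and declare it the required continuum. That set is indeed connected (the $E_n$ share the point $y$, and $a$ is a limit of the $z_n\in E_n$) and bounded, but it need not be \emph{compact}: the lemma is stated for an arbitrary connected metric space satisfying $\mathrm{ALLC}$, which need not be proper or even locally compact, so a closed bounded subset of $X$ can fail to be compact. The $\mathrm{ALLC}$ hypothesis gives no control over the $E_n$ beyond their location, so there is no way to force total boundedness of the union. Since the paper's $\mathrm{LLC}_1$ condition asks for a continuum, i.e.\ a compact connected set (and compactness is genuinely used downstream, e.g.\ in the proof of Proposition \ref{Better LLC}), this step does not close. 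Everything you wrote is fine in the locally compact setting relevant to the paper's applications, but not at the stated level of generality.

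The paper omits the proof but records the intended idea: in a connected space, any distinct pair of points lies in an annulus around some \emph{third} point. For $x=a$ this lets you avoid the infinite union entirely. Since $X$ is connected with at least two points, $a$ is not isolated, so you may choose a single auxiliary point $z$ with $0<d(a,z)=\delta$ as small as you like. With $s=d(a,y)\in(0,r)$ and $\delta$ small (say $\delta<s/2$), both $a$ and $y$ lie in $A\bigl(z,\delta/2,\,s+2\delta\bigr)$, and one application of $\lambda$-$\mathrm{ALLC}$ centered at $z$ produces a single continuum $E$ through $a$ and $y$ contained in $B\bigl(z,\lambda(s+2\delta)\bigr)\subseteq B\bigl(a,\lambda s+(1+2\lambda)\delta\bigr)$. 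Taking $\delta\leq \lambda r/(1+2\lambda)$ gives $E\subseteq B(a,2\lambda r)$. This yields a genuine continuum in one step and closes the gap. Your argument for $\mathrm{LLC}_2$ and for the non-degenerate $\mathrm{LLC}_1$ case can stay as written.
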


\begin{proposition}\label{BetterALLC} Suppose that $(X,d)$ is a locally compact and locally path-connected metric space that satisfies the $\lambda$-$\rm{ALLC}$ condition. Then there is a quantity $\lambda' \geq 1$, depending only on $\lambda$, such that for all $a \in \ovl{X}$ and $0\leq r<R$, for each pair of distinct points $x,y \in A_{\ovl{X}}(a,r,R)$, there is an embedding $\gamma\colon [0,1] \to \ovl{X}$ such that
$\gamma(0)=x$, $\gamma(1)=y$, $\gamma|_{(0,1)} \subeq X$, and 
$$\im\gamma \subeq A_{\ovl{X}}(a,r/\lambda',\lambda'R).$$  
\end{proposition}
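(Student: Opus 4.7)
The plan is to mirror the proof of Proposition \ref{Better LLC}, additionally handling the complication that both the center $a$ and the endpoints $x,y$ may lie in $\partial X$. The key ingredient is the same tool invoked in Proposition \ref{Better LLC}: in a locally compact, locally path-connected space, any two distinct points in a continuum $E\subseteq U$ (with $U\subseteq X$ open) can be joined by an embedded arc inside $U$.

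First I would dispose of the case $x,y\in X$. If $a\in X$, the $\lambda$-$\rm ALLC$ condition directly produces a continuum containing $x$ and $y$ in the open annulus $U=A_X(a,r/\lambda,\lambda R)$, and the arc-from-continuum tool upgrades it to an embedded arc $\gamma\subseteq U\subseteq A_{\overline X}(a,r/\lambda,\lambda R)$. If $a\in\partial X$, I approximate $a$ by $a'\in X$ with $d(a,a')$ small relative to $d(a,x)-r$ and $R-d(a,y)$; the annulus provided by $\rm ALLC$ at $a'$ sits inside a mildly enlarged annulus around $a$, giving an embedded arc in $A_{\overline X}(a,r/\lambda'',\lambda''R)$ for some $\lambda''$ depending only on $\lambda$.

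The remaining work is extending to boundary endpoints. Suppose $x\in\partial X$; the case $y\in\partial X$ is symmetric. Fix a geometrically decreasing sequence $\rho_n\downarrow 0$ with $B_{\overline X}(x,2\lambda\rho_0)\subseteq A_{\overline X}(a,r,R)$, and choose $x_n\in X$ with $d(x,x_n)<\rho_n/2$. Inductively, using the previous step now applied with center $x$, construct embedded arcs $\gamma_n\subseteq X$ from $x_n$ to $x_{n+1}$ lying in $A_X(x,\rho_{n+1}/(2\lambda),\lambda\rho_n)$; at each stage, local compactness lets me shrink $\rho_{n+1}$ further so that $\gamma_n$ avoids the compact union $\bigcup_{k<n}\gamma_k$ of previously constructed arcs. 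Concatenating the $\gamma_n$ and attaching $x$ as the limit point yields an embedding of a half-open interval into $\overline X$ whose interior lies in $X$, with limit endpoint $x$ and image contained in $B_{\overline X}(x,2\lambda\rho_0)\subseteq A_{\overline X}(a,r/\lambda',\lambda'R)$. Performing the same construction at $y$ if needed, and joining the resulting inner endpoints by the interior case above, gives the desired arc; a final trimming, again via local compactness, ensures that the three assembled arcs meet only at their common endpoints.

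The main obstacle is bookkeeping: arranging simultaneously that the concatenated path is a genuine embedding (injective and continuous at the accumulation point $x$), that its image stays within the target annulus $A_{\overline X}(a,r/\lambda',\lambda'R)$, and that the three arcs assembled at the end meet only where intended. Local compactness and local path-connectedness are exactly what make these choices possible at each inductive step, but the parameters $\rho_n$ must be tuned both to the geometry of the target annulus and to the previously constructed arcs to keep everything consistent.
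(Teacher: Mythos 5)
The paper omits the proof of this proposition entirely (it is one of the ``three statements'' whose proofs are left to the reader), and its only hint is the proof of Proposition~\ref{Better LLC}, namely the continuum-to-arc upgrade in open subsets of $X$. Your proposal is exactly the natural way to fill in those omitted details --- upgrade the $\rm{ALLC}$ continuum to an arc in the interior, perturb the center $a$ into $X$ when $a\in\partial X$, and reach boundary endpoints via a telescoping chain of arcs in shrinking annuli --- so you are taking the same route the paper intends.

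One step as written does not hold up: you claim that by shrinking $\rho_{n+1}$, local compactness lets $\gamma_n$ avoid the compact union $\bigcup_{k<n}\gamma_k$. That cannot be literally true, since $\gamma_n$ must share the endpoint $x_n$ with $\gamma_{n-1}$, and both $\gamma_{n-1}$ and the target annulus for $\gamma_n$ crowd toward $x$, so merely decreasing $\rho_{n+1}$ gives no control over intersections away from $x_n$. Fortunately, this claim is dispensable: as you note at the end, one should simply concatenate the (possibly overlapping) arcs into a continuous path $\sigma\colon[0,1]\to\ovl X$ with $\sigma(0)=x$ and $\sigma((0,1])\subeq X$; the image $\im\sigma$ is a Peano continuum by Hahn--Mazurkiewicz, so $x$ and $x_0$ are joined by an embedded arc inside $\im\sigma$, which automatically has its interior in $X$. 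The same final arc-extraction also glues the two boundary half-arcs and the middle interior arc into a single embedding, so the inductive disjointness claim should be dropped rather than defended. With that correction, and a slightly more careful bookkeeping of the inner radii (the inner bound on $\gamma_n$ really depends on $d(x,x_n)$ and $d(x,x_{n+1})$, not on $\rho_{n+1}$ directly --- but this doesn't affect the resulting $\lambda'$, which depends only on $\lambda$), your argument is sound.
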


There is a close connection between the $\rm{ALLC}$ condition and the uniform relative separation of the components of the boundary of a given metric space. The following statement addresses only circle domains, but a more general result is probably valid. We postpone the proof until Section \ref{TotDisCon}.

\begin{proposition}\label{ALLC rel sep circ} Let $\Omega$ be a circle domain.  Then $\Omega$ satisfies the $\rm{ALLC}$ condition if and only if the components of $\partial{\Omega}$ are uniformly relatively separated, quantitatively.
\end{proposition}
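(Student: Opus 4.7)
The plan is to prove each direction by an explicit geometric argument that exploits the round shape of the complementary disks.

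For the forward implication, suppose $\Omega$ is $\lambda$-$\rm{ALLC}$, and let $E, F$ be two non-trivial components of $\partial\Omega$ with $\diam E \leq \diam F$. Set $r = \tfrac12\diam E$, $d = \dist(E,F)$, and $\epsilon = \bigtriangleup(E,F) = d/(2r)$; we may assume $\epsilon$ is small. Let $a$ be the midpoint of the shortest segment from $E$ to $F$, and let $\hat v$ be a unit vector in the tangent plane at $a$ perpendicular to the line through the centers of $E$ and $F$. With the intermediate scale $\rho = \sqrt{rd}$, define $p = a + \rho \hat v$ and $q = a - \rho \hat v$; a direct check shows $p, q \in \Omega$ and $p, q \in A(a, \rho/2, 2\rho)$. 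Any path from $p$ to $q$ in $\Omega$ must cross the axis joining the centers of $E$ and $F$. The only portion of this axis that lies in $\Omega$ within distance $2r$ of $a$ is the neck between $E$ and $F$, which has width $d$; every other crossing point lies at distance at least $2r$ from $a$. Applying $\lambda$-$\rm{ALLC}$ to the annulus $A(a, \rho/2, 2\rho)$ forces at least one of $d/2 \geq \rho/(2\lambda)$ (corridor crossing) or $2r \leq 2\lambda\rho$ (around-crossing), and the balanced choice $\rho = \sqrt{rd}$ makes both equivalent to $\lambda \geq \sqrt{r/d} = 1/\sqrt{2\epsilon}$. Thus $\epsilon \gtrsim \lambda^{-2}$, which is uniform relative separation quantitatively.

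For the reverse implication, assume the components of $\partial\Omega$ are uniformly $\delta$-relatively separated, and fix $a \in \Omega$, $0 \leq r < R$, and $p, q \in A(a, r, R)$. I would begin with a canonical path $\gamma_0$ in the spherical annulus $A_\sphere(a, r, R)$ from $p$ to $q$, built out of radial segments and an arc of a concentric circle. Where $\gamma_0$ crosses a complementary disk $D_i$, I replace the chord $\gamma_0 \cap D_i$ by an arc of $\partial D_i$, choosing whenever possible the arc on the side of the chord opposite $a$ so that the rerouted piece stays at distance at least $\dist(a, \gamma_0 \cap D_i) \geq r$ from $a$. Uniform relative separation controls these detours through a packing argument: if two complementary disks $D, D'$ satisfy $\diam D, \diam D' \gtrsim R$ and both lie within distance $r/\lambda$ of $a$, then $\bigtriangleup(D, D') \lesssim 1/\lambda$, contradicting $\delta$-separation once $\lambda \gtrsim 1/\delta$. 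Hence at most one ``large nearby'' disk can occur, and $\gamma_0$ may be chosen a priori to hug its boundary if present; the remaining detours are then at scales at which the far-side arc produces only bounded enlargement of the annulus.

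I expect the main obstacle to be the upper radial bound in the reverse direction. The far-side arc choice keeps the rerouted piece from approaching $a$, but it may extend the path to distance $d(a, c_{D_i}) + \rho_{D_i}$ from $a$, which for disks of radius $\rho_{D_i} \gg R$ is potentially much larger than $R$. The dual near-side arc stays close to the chord but may come dangerously close to $a$. Balancing these two regimes---particularly in the borderline case of a single large disk tangent to the inner sphere $\partial B(a, r)$---requires the delicate packing and rerouting argument, leveraging the convexity of round disks and the fact that each chord, as a subsegment of $\gamma_0 \subeq A(a,r,R)$, is automatically bounded away from $a$ by $r$.
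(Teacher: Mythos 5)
The paper proves only the implication that uniform relative separation implies $\rm{ALLC}$, explicitly leaving the converse as an exercise, so your two halves fall into different categories.

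For the half the paper does prove (your ``reverse implication''), your route by explicit path-rerouting is genuinely different from the paper's, and as written it does not close. The paper avoids path surgery entirely: using Lemma~\ref{ALLC r 2r} it reduces to showing that the open set $A_\Omega(p, r/\Lambda, 2\Lambda r)$ is connected, then pushes forward by the surjection $h\colon \ovl{\Omega}\to\sphere$ of Corollary~\ref{good map}, which collapses each boundary circle to a single point. Choosing $\Lambda$ so that $2c^{-1} < 2\Lambda^2-1$, uniform relative separation guarantees that at most one $E_i$ meets both $\ovl{B}_\sphere(p,r/\Lambda)$ and $\sphere\bslash B(p,2\Lambda r)$; hence $h(A_\Omega(p,r/\Lambda,2\Lambda r))$ is a region bounded by two Jordan curves touching in at most one point, minus a compact totally disconnected set, which is connected by dimension theory (\cite[Theorem IV.4]{Hurewicz}). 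Your sketch, by contrast, leaves precisely the obstruction you name --- the far-side arc escaping the outer radius versus the near-side arc approaching $a$ --- unresolved; no balancing is actually carried out, and it is far from clear that a single canonical $\gamma_0$ can be chosen so that the many detours compound within controlled radii. The topological argument makes all of this unnecessary.

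For the half the paper leaves as an exercise (your ``forward implication''), your scaling $\rho=\sqrt{rd}$ and the dichotomy at the axis crossing (neck of width $d$ versus distance $\geq 2r$) are the right idea, and the numerics do give $\bigtriangleup(E,F)\gtrsim \lambda^{-2}$. The gap is that you never verify that $a$, $p$, $q$ lie in $\Omega$: the region between $E$ and $F$ may be partly occupied by a third complementary disk $G$, and $a$ in particular could sit in the interior of $G$, outside even $\ovl{\Omega}$, where the $\rm{ALLC}$ hypothesis (and its extension in Proposition~\ref{BetterALLC}) does not apply. This must be addressed before the argument is complete --- for instance by showing that any such $G$ has $\diam G \lesssim d \ll \rho$ and perturbing $a$ into $\Omega$ while adjusting the annular radii, or by running the whole argument with the pair $(G,E)$ in place of $(E,F)$.
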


In the case of metric spaces that are homeomorphic to domains in $\mathbb{S}^2$ with finitely many boundary components, the $\rm{LLC}$-condition may be upgraded to the $\rm{ALLC}$-condition. Again, we postpone the proof until Section \ref{TotDisCon}.

\begin{proposition}\label{LLC gives ALLC} Let $(X,d)$ be a metric space homeomorphic to a domain in $\sphere$, and assume that the boundary $\partial{X}$ has finitely many components. If $(X,d)$ is $\lambda$-$\rm{LLC}$, $\lambda \geq 1$, then it is $\Lambda$-$\rm{ALLC}$, where $\Lambda$ depends on $\lambda$ and the ratio of the diameter of $X$ to the minimum distance between components of $\partial{X}$.  
 \end{proposition}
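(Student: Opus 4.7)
The proof proceeds via Lemma~\ref{ALLC r 2r}: it suffices to show that every pair of points in an annulus $A(a,r,2r)$ lies in a continuum contained in $A(a,r/\Lambda, 2\Lambda r)$ for an appropriate constant $\Lambda$. Let $d_0 = \min_{i\neq j}\dist(\partial_i X, \partial_j X)$, setting $d_0 = \diam X$ if $\partial X$ has at most one component, and let $\rho := \diam(X)/d_0$. I would take $\Lambda$ to be a suitably large multiple of $\lambda\rho$, and split into two scale regimes.

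In the \emph{large-scale regime} $r \geq d_0/(4\lambda)$, the outer ball $B(a, 2\Lambda r)$ contains all of $X$ as soon as $\Lambda \geq 2\lambda\rho$. The continuum provided by the $\lambda$-LLC$_2$ condition then lies in $X \setminus B(a, r/\lambda) \subseteq A(a, r/\Lambda, 2\Lambda r)$, finishing this case.

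In the \emph{small-scale regime} $r < d_0/(4\lambda)$, the ball $B(a, 2\lambda r)$ has diameter less than $d_0$, so its closure in $\ovl X$ meets at most one component of $\partial X$. Applying $\lambda$-LLC$_1$, I extract a continuum $E \subseteq B(a, 2\lambda r)$ containing $x$ and $y$. If $E$ avoids $B(a, r/\Lambda)$ we are done; otherwise I would replace the portions of $E$ entering $B(a, r/\Lambda)$ by rerouted pieces built from LLC$_2$ at scale $r/\Lambda$, arranged to remain inside $B(a, 2\lambda r)$. Concatenation then yields the desired continuum.

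The \emph{main obstacle} is the rerouting step: one must join two points on (or near) $\partial B(a, r/\Lambda)$ by a continuum in $B(a, 2\lambda r) \setminus B(a, r/(\lambda\Lambda))$. The $\lambda$-LLC$_2$ condition alone furnishes a continuum in $X \setminus B(a, r/(\lambda\Lambda))$, but it may exit the outer ball $B(a, 2\lambda r)$. The finite-boundary-component hypothesis is precisely what enables clipping: since $B(a, 2\lambda r)$ abuts at most one component of $\partial X$, its closure in $\ovl X$ is topologically a disk (or a disk with a single boundary arc removed from its frontier), and within such a disk-like region planar topology, through the Jordan curve theorem, allows the LLC$_2$ continuum to be clipped to the part that stays inside $B(a, 2\lambda r)$. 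Making this clipping quantitative and checking compatibility of the resulting constants is the heart of the proof, and it is the source of the dependence of $\Lambda$ on both $\lambda$ and $\diam X/d_0$: without a uniform bound on the topological complexity of $B(a, 2\lambda r)$, the rerouted arc could be forced arbitrarily far from $a$.
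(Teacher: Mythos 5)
Your scale split at $r = d_0/(4\lambda)$ (with $d_0$ the minimal separation of boundary components, or $\diam X$ when there is at most one) matches the paper's, and your large-scale argument is actually a valid simplification of the paper's: once $\Lambda > 2\lambda\cdot\diam(X)/d_0$ and $r \geq d_0/(4\lambda)$, the ball $B(a,2\Lambda r)$ exhausts $X$, so the $\rm{LLC}_2$ continuum lands in the target annulus with no further work. The paper instead splices a small-scale continuum into an $\rm{LLC}_1$ arc for this regime, which is more involved; your observation is a clean shortcut here.

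The gap is in the small-scale regime. You correctly isolate the difficulty---$\rm{LLC}_1$ controls the outside, $\rm{LLC}_2$ the inside, and one needs both simultaneously---but the ``rerouting/clipping'' you invoke is exactly the step that requires proof, and the justification you sketch rests on an assertion with no basis: that the closure of $B(a,2\lambda r)$ in $\overline{X}$ is a topological disk once it meets at most one component of $\partial{X}$. Metric balls in a space homeomorphic to a planar domain can be multiply connected or otherwise complicated even when they meet a single boundary component; nothing in the $\rm{LLC}$ hypothesis prevents this. Without a disk structure, clipping an $\rm{LLC}_2$ continuum to $B(a,2\lambda r)$ has no reason to leave a single connected piece that still joins the two endpoints. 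The paper's proof avoids rerouting entirely: it sets $A=S(a,2\lambda r)$ and $B=\overline{B}(a,r/(2\lambda))$, notes that $\rm{LLC}_1$ places the two target points in the same component of $X\setminus A$ while $\rm{LLC}_2$ places them in the same component of $X\setminus B$, and then invokes the boundary version of Janiszewski's theorem (Lemma~\ref{boundary janiszewski}) to conclude they lie in the same component of $X\setminus(A\cup B)$, which sits inside the desired annulus. The restriction $r<d_0/(4\lambda)$ is used exactly to verify the Janiszewski hypothesis that at most one component of $\partial{X}$ meets $\overline{A}\cup\overline{B}$. Your Jordan-curve heuristic is gesturing at this theorem, but you neither formulate nor prove a version of it, and it---not the shape of the ball---is where the finitely-many-components hypothesis actually does its work.
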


\section{The space of boundary components of a metric space}\label{comps and ends}

In this section we assume that $(X,d)$ is a connected, locally compact metric space with the additional property that the completion $\ovl{X}$ is compact.  Note that as $X$ is locally compact, it is an open subset of $\ovl{X}$. Hence $\partial{X}$ is closed in $\ovl{X}$ and hence compact.

\subsection{Boundary components and ends}

Of course, the topological type of $\partial{X}$ depends on the specific metric $d$. However, the goal of this section is to show that under a simple geometric condition, the collection $\mathcal{C}(X)$ of \textit{components} of $\partial{X}$ depends only on the topological type of $X$.  

We define an equivalence relation $\sim$ on $\partial{X}$ by declaring that $x \sim y$ if and only if $x$ and $y$ are contained in the same component of $\partial{X}$.  Then there is a bijection between $\mathcal{C}(X)$ and the quotient $\partial{X}/\sim$, and hence we may endow $\mathcal{C}(X)$ with the quotient topology. Since $\partial{X}$ is compact, the space $\mathcal{C}(X)$ is compact as well.  Given a compact set $K \subeq X$ and a component $U$ of $X\bslash K$, denote
$$\mathcal{C}(K,U)=\{E \subeq \mathcal{C}(X): E \subeq (\partial{U} \cap \partial{X})\}.$$

Let $\mathcal{U}(X)$ denote the collection of sequences $\{x_i\} \subeq X$ with the property that for every compact set $K \subeq X$, there is a number $N \in \nats$ and a connected subset $U$ of $X\bslash K$ such that 
$$\{x_i\}_{i \geq N} \subeq U.$$
Define an equivalence relation $\eequiv$ on $\mathcal{U}(X)$ by $\{x_i\} \eequiv \{y_i\}$ if and only if the sequence $\{x_1,y_1,x_2,y_2,\hdots\}$ is in $\mathcal{U}(X)$.  An equivalence class $E$ defined by $\eequiv$ is called an \textit{end} of $X$, and we denote the collection of ends of $X$ by $\mathcal{E}(X)$.  

Given a compact subset $K \subeq X$ and a component $U$ of $X\bslash K$, define
$$\mathcal{E}(K,U) = \{[\{x_i\}] : \{x_i\} \in \mathcal{U}(X) \ \text{and}\ \exists \ N \in \nats \ \text{such that}\ \{x_i\}_{i\geq N} \subeq U\},$$
That this set is well-defined follows from the definition of the equivalence relation on $\mathcal{U}(X)$.  Let $\mathcal{B}$ be the collection of all such sets. 

\begin{proposition}\label{topology on ends} The collection $\mathcal{B}$ generates a unique topology on $\mathcal{E}(X)$ such that every open set is a union of sets in $\mathcal{B}$.
\end{proposition}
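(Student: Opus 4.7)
The plan is to show that $\mathcal{B}$ satisfies the two defining conditions of a topological basis, from which the existence and uniqueness of the generated topology is a standard fact of general topology.

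First, I would verify that $\mathcal{B}$ covers $\mathcal{E}(X)$. Given an end $E$ represented by $\{x_i\} \in \mathcal{U}(X)$, choose any compact set $K \subeq X$; by the defining property of $\mathcal{U}(X)$, there exist $N \in \nats$ and a component $U$ of $X\bslash K$ with $\{x_i\}_{i \geq N} \subeq U$, so $E \in \mathcal{E}(K,U)$.

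The main step is verifying the intersection property: given $\mathcal{E}(K_1,U_1), \mathcal{E}(K_2,U_2) \in \mathcal{B}$ and an end $E$ lying in both, I would produce a basis element $\mathcal{E}(K,U)$ containing $E$ and contained in $\mathcal{E}(K_1,U_1)\cap \mathcal{E}(K_2,U_2)$. Set $K = K_1 \cup K_2$, which is compact. Pick a representative $\{x_i\}$ of $E$; by membership in $\mathcal{U}(X)$, there exist $N \in \nats$ and a component $U$ of $X\bslash K$ with $\{x_i\}_{i\geq N} \subeq U$. Since $U$ is connected and disjoint from $K_1$, it lies in a single component of $X\bslash K_1$. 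By possibly enlarging $N$ and using that some tail of $\{x_i\}$ is contained in $U_1$ (this is where the well-definedness of $\mathcal{E}(K_1,U_1)$ under $\eequiv$, remarked upon in the statement of the definition, is needed), this component must be $U_1$, so $U \subeq U_1$. The same argument gives $U \subeq U_2$. Consequently, any end with a representative whose tail lies in $U$ automatically has a tail in both $U_1$ and $U_2$, i.e., $\mathcal{E}(K,U) \subeq \mathcal{E}(K_1,U_1) \cap \mathcal{E}(K_2,U_2)$, and clearly $E \in \mathcal{E}(K,U)$.

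Once these two properties are established, the standard theorem on bases for topologies produces a unique topology whose open sets are exactly unions of members of $\mathcal{B}$. The only place requiring care is the step above where one transfers the property ``some tail lies in $U_1$'' from a given representative to the concatenated sequence used to define $\eequiv$; this amounts to observing that if $\{x_1,y_1,x_2,y_2,\ldots\} \in \mathcal{U}(X)$ and one interleaved subsequence has a tail in $U_1$, then the other must also (since for compact $K_1$, both subsequential tails eventually share a common component of $X\bslash K_1$). I expect this small bookkeeping about representatives to be the only genuine subtlety; the rest is routine.
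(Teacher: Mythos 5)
Your proposal is correct and follows essentially the same approach as the paper: take $K = K_1 \cup K_2$, extract a component $U$ of $X \setminus K$ capturing a tail of a representative of $E$, and use the fact that the connected set $U$ must sit inside $U_1$ and $U_2$ by comparing components via the tail of the same representative. The paper compresses the well-definedness of $\mathcal{E}(K_i,U_i)$ under $\eequiv$ into a one-line remark; you spell that bookkeeping out, which is a harmless (arguably helpful) expansion of the same argument.
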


\begin{proof} We employ the standard criteria for proving generation \cite[Section 13]{Munkres}. As $X$ is connected, taking $K=\emptyset$ shows that $\mathcal{B}$ contains $\mathcal{E}(X)$.  Thus it suffices to show that given compact subsets $K_1$ and $K_2$ of $X$ and components $U_1$ and $U_2$ of $X\bslash K_1$ and $X\bslash K_2$ respectively, and given an end 
$$E \in \mathcal{E}(K_1,U_1) \cap \mathcal{E}(K_2,U_2),$$
there is a compact set $K$ and a component $U$ of $X \bslash K$ such that 
\begin{equation}\label{end topology crit} E \in \mathcal{E}(K,U) \subeq \left(\mathcal{E}(K_1,U_1) \cap \mathcal{E}(K_2,U_2)\right). \end{equation}

Let $\{x_i\} \in \mathcal{U}(X)$ represent the end $E$.  By definition of $\mathcal{U}(X)$, there is a component $U$ of $X\bslash K$, where $K=K_1 \cup K_2$, such that $\{x_i\}_{i \geq N} \subeq U$ for some $N \in \nats$. This implies that $E \in \mathcal{E}(K,U)$.  By assumption, there is a number $M \in \nats$ such that $\{x_i\}_{i \geq M}$ is contained in $U_1 \cap U_2$.  Since $U$ is a connected subset of $X \bslash K_1$ and $X \bslash K_2$, it follows that $U \subeq U_1 \cap U_2$.  This implies \eqref{end topology crit}. 
\end{proof}

We set the topology on $\mathcal{E}(X)$ to be that given by Proposition \ref{topology on ends}. It follows quickly from the definitions that $\mathcal{E}(X)$ is a Hausdorff space.

\begin{remark}\label{ends are topological} As ends are defined purely in topological terms, a homeomorphism $h \colon X \to Y$ to some other topological space $Y$ induces a homeomorphism $\phi \colon \mathcal{E}(X) \to \mathcal{E}(Y)$.  This homeomorphism is natural in the sense that a sequence $\{x_i\} \in \mathcal{U}(X)$ represents the end $E \in \mathcal{E}(X)$ if and only $\{h(x_i)\} \in \mathcal{U}(Y)$ represents the end $\phi(E) \in \mathcal{E}(Y)$. 
\end{remark}

To relate the ends of a metric space to the components of its metric boundary, we need an elementary result regarding connectivity. Let $\ep>0$ and let $x$ and $y$ be points in any metric space $Z$.  An \textit{$\ep$-chain} connecting $x$ to $y$ in $Z$ is a sequence $x=z_0,z_1,\hdots,z_n=y$ of points in $Z$ such that $d(z_j,z_{j+1})\leq \ep$ for each $j=0,\hdots,n-1$.  In compact spaces, the existence of arbitrarily fine chains can detect connectedness. We leave the proof of the following statement to that effect to the reader. 

\begin{lemma}\label{chain connect}  Let $x$ and $y$ be points in a metric space $Z$. If $x$ and $y$ lie in the same component of $Z$, then for every $\ep>0$ there is an $\ep$-chain in $Z$ connecting $x$ to $y$.  The converse statement holds if $Z$ is compact. 
\end{lemma}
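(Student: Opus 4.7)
My plan is to handle the two implications separately; the forward direction is a clopen-set argument that uses nothing more than the triangle inequality, while the converse is where compactness enters essentially.

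For the forward direction, I fix $\ep > 0$ and consider
$$A_\ep = \{z \in Z : \text{there is an }\ep\text{-chain in }Z\text{ from }x\text{ to }z\}.$$
This set is open, since if $z \in A_\ep$ and $d(w,z)\leq \ep$, appending $w$ to an $\ep$-chain terminating at $z$ produces one terminating at $w$. The same reasoning shows $Z\bslash A_\ep$ is open, so $A_\ep$ is a clopen subset of $Z$ containing $x$. Hence the component of $x$ is contained in $A_\ep$, and since $y$ lies in this component, $y \in A_\ep$, giving the desired $\ep$-chain.

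For the converse, I assume $Z$ is compact and that $\ep$-chains from $x$ to $y$ exist for every $\ep > 0$. Arguing by contradiction, suppose $x$ and $y$ lie in distinct components. The key topological input is that in a compact Hausdorff space, components coincide with quasi-components, so there exists a clopen set $U \subeq Z$ with $x \in U$ and $y \notin U$. Then $U$ and $Z\bslash U$ are disjoint nonempty compact sets, whence $\delta := \dist(U, Z\bslash U) > 0$. For any $\ep < \delta$, an $\ep$-chain $x = z_0, z_1, \dots, z_n = y$ cannot have a consecutive pair with one point in $U$ and the other in $Z\bslash U$, so the entire chain lies in $U$. This contradicts $y \notin U$, completing the proof.

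The main obstacle is the converse, and the essential input is the identification of components with quasi-components in a compact Hausdorff space, together with the positivity of the distance between disjoint compact sets. Without compactness the converse fails: in $\mathbb{Q} \subeq \mathbb{R}$, every pair of points is joined by arbitrarily fine $\ep$-chains, yet $\mathbb{Q}$ is totally disconnected. Everything else is a direct unpacking of the definitions.
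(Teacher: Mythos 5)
Your proof is correct. The paper leaves this lemma to the reader, so there is no proof to compare against; your clopen-set argument for the forward direction and your appeal to the coincidence of components and quasi-components in a compact Hausdorff space (together with the positive distance between disjoint compact sets) for the converse are both standard and sound, and the $\mathbb{Q}$ counterexample cleanly shows compactness cannot be dropped.
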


%

An end always defines a unique boundary component. 

\begin{proposition}\label{end gives component}  If $\{x_i\}$ and $\{y_i\}$ are Cauchy sequences representing the same end of $X$, then they represent points in the same component of $\partial{X}$. 
\end{proposition}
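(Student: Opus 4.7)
My plan is to prove the proposition by showing that the limit points $x_\infty$ and $y_\infty$ of the two Cauchy sequences can be joined by arbitrarily fine chains in $\partial X$, and then invoking Lemma \ref{chain connect} together with compactness of $\partial X$.

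The first step is to exploit the local compactness of $X$ and the compactness of $\ovl X$ in order to produce a useful exhaustion: for each $\delta>0$ the set
\[ K_\delta = \{z \in X : \dist(z,\partial X) \geq \delta\} \]
is closed in $\ovl X$ and disjoint from $\partial X$, hence compact in $X$. Given $\eta>0$, I will choose $\delta$ and an auxiliary $\ep$ with $2\delta+\ep<\eta$. By hypothesis, the interleaved sequence $\{x_1,y_1,x_2,y_2,\ldots\}$ lies in $\mathcal U(X)$, so applied to the compact set $K_\delta$ it yields an index $N$ and a single connected subset $U$ of $X\setminus K_\delta$ containing all $x_i,y_i$ with $i\geq N$.

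Next I will invoke the elementary fact that in any connected metric space, any two points can be joined by an $\ep$-chain. (Proof sketch: fix one endpoint and show that the set of points reachable by an $\ep$-chain is both open and closed.) Applying this inside the connected set $U$, I obtain for each large $i$ an $\ep$-chain $x_i=w_0,w_1,\ldots,w_n=y_i$ with every $w_j\in U$, hence $\dist(w_j,\partial X)<\delta$. Using that $\partial X$ is compact, I then choose, for each $j$, a point $w_j'\in \partial X$ with $d(w_j,w_j')<\delta$. The triangle inequality gives $d(w_j',w_{j+1}')<2\delta+\ep<\eta$, and for $i$ chosen so that $d(x_i,x_\infty)<\delta$ and $d(y_i,y_\infty)<\delta$, the chain
\[ x_\infty,\ w_0',\ w_1',\ \ldots,\ w_n',\ y_\infty \]
is an $\eta$-chain in $\partial X$ joining $x_\infty$ and $y_\infty$.

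Since $\eta>0$ was arbitrary and $\partial X$ is compact, Lemma \ref{chain connect} then yields that $x_\infty$ and $y_\infty$ lie in the same component of $\partial X$. The only potential obstacle I anticipate is the step in which points of $U$ are replaced by nearby boundary points — one must verify that the chain still sits inside $\partial X$ and that its mesh does not blow up. This is handled by choosing $\delta$ (and the density of the original chain $\ep$) small enough from the outset and noting that each $w_j$ automatically lies in the $\delta$-neighborhood of $\partial X$ because it belongs to the component $U$ of $X\setminus K_\delta$, making the approximation step essentially automatic.
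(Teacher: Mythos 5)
Your proof is correct and follows essentially the same strategy as the paper's: exhaust $X$ by compacta of the form $\{z : \dist(z,\partial X) \geq \delta\}$, use the end hypothesis to obtain a connected set $U$ near $\partial X$ containing the tails, build an $\ep$-chain in $U$, push it to $\partial X$, and conclude via the compactness direction of the chain lemma. The only cosmetic difference is that you re-derive the forward direction of Lemma~\ref{chain connect} inline (the clopen argument for $\ep$-reachability in a connected set) rather than citing it, and you parameterize with $\delta$ and $\ep$ where the paper uses $\ep/3$ throughout.
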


\begin{proof}  Denote by $x$ and $y$ the points in $\partial{X}$ defined by $\{x_i\}$ and $\{y_i\}$, respectively.  By Lemma \ref{chain connect}, in order to show that $x$ and $y$ are contained in a single component of $\partial{X}$, it suffices to find an $\ep$-chain in $\partial{X}$ connecting $x$ to $y$ for every $\ep>0$. To this end, fix $\ep>0$.  Let 
$K$ be the compact subset of $X$ defined by  
\begin{equation}\label{ends components 1} K = X \bslash\nbhd_{\ovl{X}}\left(\partial{X}, \ep/3\right).\end{equation}  
By assumption, we may find $N \in \nats$ so large that $x_N$ and $y_N$ lie in a connected subset of $X \bslash K$, and that $d(x,x_N)$ and $d(y,y_N)$ are both less than $\ep/3$.  By Lemma~\ref{chain connect}, we may find an $\ep/3$-chain $x_N=z_0,\hdots,z_n=y_N$ in $X\bslash K$.  By \eqref{ends components 1}, for each $j=1,\hdots,n-1$ we may find a point $z_j' \in \partial{X}$ such that $d(z_j,z_j')<\ep/3$.  The triangle inequality now implies that $x,z_1',\hdots, z_{n-1}',y$ is an $\ep$-chain in $\partial{X}$, as required.
\end{proof}

\begin{remark}\label{define Phi} Proposition \ref{end gives component} allows us to define a map $\Phi \colon \mathcal{E}(X) \to \mathcal{C}(X)$ as follows.  Let $E \in \mathcal{E}(X)$ and let $\{x_i\}_{i \in \nats} \in \mathcal{U}(X)$ be a sequence representing $E$. As $\ovl{X}$ is compact, we may find a Cauchy subsequence $\{x_{i_j}\}$ of $\{x_i\}$ that represents a point in some boundary component $E' \in \mathcal{C}(X)$.  Set $\Phi(E)=E'$.  This is well defined by Proposition \ref{end gives component}.  
\end{remark}

We now consider when a boundary component $E \in \mathcal{C}(X)$ corresponds to an end.  
For subspaces of $\sphere$, this is always the case. The key tool in the proof of this is the following purely topological fact, which is mentioned in the proof of \cite[Lemma~2.5]{B-K}.

\begin{proposition}\label{good exhaustion} Each domain $\Omega$ in $\sphere$ may be written as a union of open and connected subsets $\Omega_1 \subeq \Omega_2 \subeq \hdots$ of $\Omega$ such that for each $i \in \nats$, the closure of $\Omega_i$ is a compact subset of $\Omega$ and $\partial{\Omega_i}$ is a finite collection of pairwise disjoint Jordan curves.
\end{proposition}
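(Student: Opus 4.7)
My plan is to use the smooth structure on $\sphere$ together with Sard's theorem to carve out smoothly bounded subregions of $\Omega$, and then to pass to suitable connected components.

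First I would build a compact exhaustion of $\Omega$ by \emph{connected} compact subsets $\tilde{K}_1 \subeq \tilde{K}_2 \subeq \cdots$ with $\bigcup_n \tilde{K}_n = \Omega$. Starting from the standard exhaustion $L_n = \{x \in \Omega : d(x,\sphere \bslash \Omega) \geq 1/n\}$ (interpreted as $\sphere$ in the trivial case $\Omega = \sphere$), each $L_n$ is compact but typically has several components. I enlarge $L_n$ to a connected compact set $\tilde{K}_n \subeq \Omega$ by adjoining finitely many arcs in $\Omega$ that join pairs of components, which is possible because $\Omega$, as an open connected subset of the manifold $\sphere$, is path-connected.

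Next, for each $n$ I use smooth bump functions on $\sphere$ to produce $f_n \co \sphere \to [0,1]$ smooth with $f_n \equiv 1$ on $\tilde{K}_n$ and $\supp f_n$ a compact subset of $\Omega$. By Sard's theorem the regular values of $f_n$ are dense in $[0,1]$, so I pick a regular value $c_n \in (0,1)$ and set $U_n = f_n^{-1}((c_n,1])$. Then $\tilde{K}_n \subeq U_n$, $\ovl{U_n} \subeq \supp f_n \subeq \Omega$ is compact, and $\partial U_n = f_n^{-1}(c_n)$ is a properly embedded smooth $1$-submanifold of the compact manifold $\sphere$, hence a finite disjoint union of smooth Jordan curves. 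Consequently $\ovl{U_n}$ is a compact $2$-manifold with boundary, and I take $\Omega_n$ to be the component of $U_n$ containing the connected set $\tilde{K}_n$. As a component of $\ovl{U_n}$, the set $\Omega_n$ is open and connected, has compact closure in $\Omega$, and has $\partial \Omega_n$ a finite pairwise disjoint collection of Jordan curves.

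Finally, I would enforce the nesting $\Omega_n \subeq \Omega_{n+1}$ inductively: having constructed $\Omega_n$, I arrange in the first step that $\tilde{K}_{n+1} \supeq \ovl{\Omega_n} \cup L_{n+1}$ (again enlarging to be connected), so that $f_{n+1} \equiv 1$ on $\tilde{K}_{n+1} \supeq \ovl{\Omega_n}$ forces $\ovl{\Omega_n} \subeq U_{n+1}$; since $\Omega_n$ is connected and meets $\tilde{K}_{n+1}$, it lies in the same component of $U_{n+1}$ as $\tilde{K}_{n+1}$, namely $\Omega_{n+1}$. The exhaustion $\bigcup_n \Omega_n = \Omega$ then follows from $\tilde{K}_n \subeq \Omega_n$. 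The main obstacle is precisely this interplay of connectedness with the smooth construction: sublevel sets can split or merge unpredictably between successive $n$, but the path-connected enlargement combined with the inductive engulfing of $\ovl{\Omega_n}$ by the next $1$-set controls this. The analytic core---Sard's theorem plus the fact that the boundary of a compact smooth surface-with-boundary is a finite disjoint union of circles---is standard.
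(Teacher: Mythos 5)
The paper does not actually supply its own proof of Proposition~\ref{good exhaustion}; it cites it as a known fact from the proof of \cite[Lemma~2.5]{B-K}. So there is no argument in the text to compare yours against line by line. Your smooth-function-plus-Sard construction is a standard and correct way to establish this folklore result, and the overall architecture is sound: the regular level set $f_n^{-1}(c_n)$ is a compact smooth $1$-manifold, hence a finite disjoint union of smooth Jordan curves, and since $c_n$ is a regular value each of these circles bounds $\{f_n > c_n\}$ on exactly one side, so the boundary of the chosen component $\Omega_n$ is indeed a subcollection of pairwise disjoint Jordan curves. The inductive engulfing of $\ovl{\Omega_n}$ in $\tilde{K}_{n+1}$ correctly forces $\Omega_n \subeq \Omega_{n+1}$, and $\bigcup_n \Omega_n = \Omega$ because $L_n \subeq \tilde{K}_n \subeq \Omega_n$.

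One small point deserves a word: you say you pass from $L_n$ to a connected compact $\tilde{K}_n$ by ``adjoining finitely many arcs that join pairs of components.'' That phrasing tacitly assumes $L_n$ has only finitely many components, which is not obvious for an arbitrary compact subset of a surface. The standard repair is to first cover the compact set $L_n$ by finitely many connected open sets $V_1,\dots,V_m$ whose closures are compact in $\Omega$; then $L_n \cup \ovl{V_1} \cup \cdots \cup \ovl{V_m}$ is compact with at most $m$ components, and those finitely many components can be joined by arcs in the path-connected set $\Omega$. With that adjustment, the argument is complete. Also note a typo near the end: $\Omega_n$ is a component of $U_n$, not of $\ovl{U_n}$, though the intended meaning is clear. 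Finally, your treatment of the degenerate case $\Omega = \sphere$ (where $\partial\Omega_n$ is the empty collection of Jordan curves) is fine.
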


\begin{proposition}\label{planar component gives end} Let $\Omega$ be a domain in $\sphere$. If $x$ and $y$ are points in the same component of $\partial{\Omega}$, then any Cauchy sequences representing $x$ and $y$  are in $\mathcal{U}(\Omega)$ and represent the same end of $\Omega$.  
\end{proposition}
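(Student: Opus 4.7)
My approach is to apply the good exhaustion $\Omega=\bigcup_j \Omega_j$ provided by Proposition \ref{good exhaustion} in order to reduce the problem to a finite combinatorial picture. Given a Cauchy sequence $\{x_n\}$ in $\Omega$ converging to $x\in\partial\Omega$ and a compact set $K\subeq\Omega$, I pick $j$ with $K\subeq\Omega_j$, so that $\Omega\setminus\overline{\Omega_j}\subeq\Omega\setminus K$. The task is then to locate the tail of $\{x_n\}$ in a single component of $\Omega\setminus\overline{\Omega_j}$; likewise, for the end-equivalence statement, I need to show that tails of Cauchy sequences to $x$ and to $y$ land in the same component.

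The structural input is a description of these components. Since $\partial\Omega_j$ consists of finitely many disjoint Jordan curves $\gamma_1,\ldots,\gamma_n$, the Jordan--Schoenflies theorem gives that $\sphere\setminus\overline{\Omega_j}$ is a disjoint union of $n$ open Jordan domains $D_1,\ldots,D_n$ with $\partial D_k=\gamma_k$ and $\overline{D_k}\cap\overline{D_l}=\emptyset$ for $k\neq l$. The main obstacle is then the following claim, which I expect to be the heart of the argument: for each $k$, the intersection $D_k\cap\Omega$ is connected. Granting this, the components of $\Omega\setminus\overline{\Omega_j}$ are exactly the sets $D_k\cap\Omega$, and in particular there are only finitely many of them.

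To prove the claim, let $V$ be a component of $D_k\cap\Omega$. Because $\Omega$ is locally connected, $V$ is clopen in $D_k\cap\Omega$, so its boundary in $\Omega$ lies in $(\overline{D_k}\cap\Omega)\setminus(D_k\cap\Omega)=\gamma_k$. If this boundary were empty, $V$ would be clopen in the connected set $\Omega$ and hence equal to $\Omega$, which is impossible since $V\subeq D_k$ while $\overline{\Omega_j}\subeq\Omega$ is disjoint from $D_k$. Thus $\overline{V}$ meets $\gamma_k$. On the other hand, since $\gamma_k\subeq\Omega$ and $\Omega$ is open, each $p\in\gamma_k$ has a spherical ball contained in $\Omega$ whose intersection with $D_k$ is a connected half-disk; that half-disk lies in some single component $V^*$ of $D_k\cap\Omega$, and as $p$ varies along the connected curve $\gamma_k$ this assignment is locally constant, hence constant. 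Any component $V$ accumulating on $\gamma_k$ must therefore coincide with $V^*$, proving that $D_k\cap\Omega$ is connected.

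With the claim established, the finish is immediate. Since $\partial\Omega\subeq\bigsqcup_k D_k$ and the $D_k$ are pairwise disjoint, $x$ lies in a unique $D_k$, and $x_n\to x$ forces the tail of $\{x_n\}$ into $D_k\cap\Omega$, a connected subset of $\Omega\setminus K$; this shows $\{x_n\}\in\mathcal{U}(\Omega)$. For the end-equivalence claim, the boundary component containing both $x$ and $y$ is a connected subset of the disjoint union $\bigsqcup_k D_k$, so it lies entirely in a single $D_k$. Hence the interlaced tails of any two Cauchy sequences representing $x$ and $y$ are eventually contained in the same connected set $D_k\cap\Omega$, proving $\{x_n\}\eequiv\{y_n\}$.
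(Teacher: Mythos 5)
Your overall strategy is the same as the paper's: pass to the good exhaustion from Proposition~\ref{good exhaustion}, observe that the complement of $\overline{\Omega_j}$ consists of finitely many Jordan domains $D_k$, reduce the end-equivalence to the statement that each $D_k\cap\Omega$ is connected, and then note that the boundary component containing $x$ and $y$ sits in a single $D_k$. (The paper phrases this with the closed component $U_{i_0}$ of $\sphere\setminus\Omega_{i_0}$ rather than the open $D_k$, but since $\gamma_k\subseteq\Omega$ this is an immaterial difference.) Where you diverge is in the proof that $D_k\cap\Omega$ is connected: the paper runs the exhaustion a second time, noting that $U_{i_0}\cap\Omega_j$ is a simply connected domain with finitely many disjoint closed topological disks removed and hence path-connected, and then takes the increasing union over $j$; you instead give a local argument along $\gamma_k$.

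The local argument has a gap at the claim that ``each $p\in\gamma_k$ has a spherical ball contained in $\Omega$ whose intersection with $D_k$ is a connected half-disk.'' Proposition~\ref{good exhaustion} only guarantees that $\partial\Omega_j$ consists of Jordan curves, with no smoothness or polygonality, and for a general Jordan curve the intersection of a \emph{metric} ball with one of the complementary Jordan domains need not be connected --- this is not a consequence of the Jordan--Schoenflies theorem, which produces a homeomorphism of $\sphere$ carrying $\gamma_k$ to a round circle but gives no control over the images or preimages of metric balls. Your argument would go through if you replaced the spherical ball either by a Schoenflies collar of $\gamma_k$ contained in $\Omega$ (take $N\cong\gamma_k\times[-1,1]\subseteq\Omega$ with $N\cap D_k\cong\gamma_k\times[-1,0)$ connected, so every component of $D_k\cap\Omega$ accumulating on $\gamma_k$ meets $N\cap D_k$), or by the preimage under the Schoenflies homeomorphism of a small round ball; alternatively one could argue that the exhaustion may be chosen with polygonal boundary, but that is not asserted in Proposition~\ref{good exhaustion}. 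As written, the half-disk claim is an unjustified step, and it is exactly the step the paper's second application of the exhaustion is designed to avoid.

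Two smaller points: you should say explicitly that $\overline{D_k}\subseteq\sphere\setminus\Omega_j$ (so $D_k\cap K=\emptyset$), which you are implicitly using, and when you write that ``the components of $\Omega\setminus\overline{\Omega_j}$ are exactly the sets $D_k\cap\Omega$'' you should restrict to the $k$ for which $D_k\cap\Omega\neq\emptyset$. These are cosmetic; the substantive issue is the half-disk claim.
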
  

\begin{proof} The metric boundary $\partial{\Omega}$ coincides with the usual topological boundary of $\Omega$ in $\sphere$. Let $\{\Omega_i\}$ denote the exhaustion of $\Omega$ provided by Proposition \ref{good exhaustion}.  Since $x$ and $y$ are in the same component of $\partial{\Omega}$, for each $i \in \nats$ they belong to a single simply connected component $U_i$ of $\sphere \bslash \Omega_i$. Let $\{x_i\}$ and $\{y_i\}$ be Cauchy sequences representing $x$ and $y$ respectively. 

Suppose that $K$ is a compact subset of $\Omega$.  We may find $i_0 \in \nats$ so large that $\Omega_{i_0} \supeq K$.  Then $U_{i_0} \cap \Omega$ does not intersect $K$.  Moreover, there is a number $N \in \nats$ such that 
$$\left(\{x_i\}_{i \geq N} \cup \{y_i\}_{i \geq N}\right) \subeq U_{i_0} \cap \Omega.$$
It now suffices to show that $U_{i_0} \cap \Omega$ is connected. Let $a$ and $b$ be points in $U_{i_0} \cap \Omega$.  We may find $j \geq i_0$ so large that $a$ and $b$ are contained in $\Omega_j$.  The set $U_{i_0} \cap \Omega_j$ is a simply connected domain with finitely many disjoint closed topological disks removed from its interior, and is therefore path-connected.  Thus $a$ and $b$ may be connected by a path inside $U_{i_0} \cap \Omega_j$, and hence inside $U_{i_0} \cap \Omega$.  
\end{proof}

The proof given above does not even pass to metric spaces that are merely homeomorphic to a domain in $\sphere$, as it need not be the case that the completion of such a space embeds topologically in $\sphere$.  However, under an additional assumption controlling the geometry of $X$, we can give a different proof. 

\begin{proposition}\label{LLC1 component gives end} Suppose that $X$ satisfies the $\rm{LLC}_1$ condition.  If $x$ and $y$ are points in the same component of $\partial{X}$, then any Cauchy sequences representing $x$ and $y$ are in $\mathcal{U}(X)$ and represent the same end of $X$.  
\end{proposition}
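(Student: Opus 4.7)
My plan has two parts, and the common thread is the elementary observation that if $K \subeq X$ is compact, then $\partial X$ is closed in the compact space $\ovl{X}$ and disjoint from $K$, so $\delta_K := \dist_{\ovl X}(K, \partial X) > 0$. This positive ``buffer'' is what will allow the $\rm{LLC}_1$ continua to be pinned inside $X \bslash K$.

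First I would handle the claim that every Cauchy sequence $\{x_i\} \subeq X$ with limit $x \in \partial X$ lies in $\mathcal{U}(X)$. Fix a compact $K \subeq X$ and let $\lambda \geq 1$ be the $\rm{LLC}_1$ constant. Choose $N$ large enough that $d(x_i, x) < \delta_K/(4\lambda)$ for all $i \geq N$; the triangle inequality then gives $d(x_i, x_j) < \delta_K/(2\lambda)$. Applying the $\lambda$-$\rm{LLC}_1$ condition at $x_i$ with radius $\delta_K/(2\lambda)$ produces a continuum $E_{i,j} \subeq B_X(x_i, \delta_K/2)$ containing both $x_i$ and $x_j$. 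Since $d_{\ovl X}(x_i, K) \geq \delta_K - \delta_K/(4\lambda) \geq 3\delta_K/4$, the ball $B_X(x_i, \delta_K/2)$ is disjoint from $K$, whence $E_{i,j} \subeq X \bslash K$. Thus $\{x_i\}_{i \geq N}$ is contained in a single component of $X \bslash K$, which is precisely the condition $\{x_i\} \in \mathcal{U}(X)$.

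Next I would show that if $\{x_i\}$ and $\{y_i\}$ are Cauchy sequences with limits $x, y$ in a common component of $\partial X$, then $\{x_i\} \eequiv \{y_i\}$. Fix a compact $K$, write $\delta = \delta_K$, and choose $\ep > 0$ so small that $(3\lambda + 1)\ep < \delta$. Since $\partial X$ is compact, Lemma~\ref{chain connect} supplies an $\ep$-chain $x = z_0, z_1, \ldots, z_n = y$ in $\partial X$. For each $1 \leq k \leq n-1$, select $w_k \in X$ with $d(w_k, z_k) < \ep$, and for $i, j$ large enough that $d(x_i, x) < \ep$ and $d(y_j, y) < \ep$, set $w_0 = x_i$ and $w_n = y_j$. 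Consecutive terms then satisfy $d(w_k, w_{k+1}) < 3\ep$, so $\rm{LLC}_1$ applied at $w_k$ with radius $3\ep$ yields a continuum $E_k \subeq B_X(w_k, 3\lambda \ep)$ joining $w_k$ and $w_{k+1}$. The choice of $\ep$ guarantees $d_{\ovl X}(w_k, K) \geq \delta - \ep > 3\lambda \ep$, so $E_k \subeq X \bslash K$, and the union $E_0 \cup \cdots \cup E_{n-1}$ is a connected subset of $X \bslash K$ joining $x_i$ to $y_j$. Combined with the first part, this shows that tails of both sequences eventually lie in one common component of $X \bslash K$; hence the interleaved sequence $\{x_1, y_1, x_2, y_2, \ldots\}$ is in $\mathcal{U}(X)$, and $\{x_i\} \eequiv \{y_i\}$.

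The main difficulty is purely a bookkeeping matter: each invocation of $\rm{LLC}_1$ inflates the radius by a factor of $\lambda$, so the scale $\ep$ of the boundary chain must be chosen small enough relative to $\delta_K/\lambda$ that every continuum produced stays in $X \bslash K$. Once the scale is fixed correctly, no topological subtlety remains, which explains why the bare $\rm{LLC}_1$ hypothesis suffices—no planar embedding of $\ovl{X}$ as in Proposition~\ref{planar component gives end} is needed.
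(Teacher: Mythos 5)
Your proof is correct and follows essentially the same strategy as the paper's: using Lemma~\ref{chain connect} to produce an $\ep$-chain in the boundary component, pushing each chain point into $X$, and applying the $\rm{LLC}_1$ condition to build a continuum avoiding the given compact set $K$. The only difference is presentational: the paper factors this construction into a separate Lemma~\ref{connected nbhd of bdry} (``$\nbhd_{\ovl{X}}(E,\ep)\cap X$ lies in a connected subset of $\nbhd_{\ovl{X}}(E,3\lambda\ep)\cap X$'') and then invokes it, whereas you carry out the chain-and-continuum argument inline, separately for a single sequence and for the interleaved pair.
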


\begin{lemma}\label{connected nbhd of bdry} Suppose that $X$ satisfies the $\lambda$-$\rm{LLC}_1$ condition for some $\lambda \geq 1$.  Let $E$ be a connected subset of $\partial{X}$ and let $\ep>0$.  Then $\nbhd_{\ovl{X}}(E,\ep) \cap X$ is contained in a connected subset of $\nbhd_{\ovl{X}}(E,3\lambda \ep) \cap X$.  
\end{lemma}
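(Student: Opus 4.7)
The plan is to first reduce the lemma to the claim that any two points $x, y$ in $\nbhd_{\ovl X}(E, \ep) \cap X$ lie in a common continuum contained in $\nbhd_{\ovl X}(E, 3\lambda\ep) \cap X$. Once this is established, I fix a basepoint $x_0 \in \nbhd_{\ovl X}(E, \ep) \cap X$ (the case of empty neighborhood being trivial) and take the union $F$ of the continua $C_{x_0, y}$ over all $y$; each $C_{x_0, y}$ contains $x_0$, so $F$ is connected, contains $\nbhd_{\ovl X}(E, \ep) \cap X$, and lies in $\nbhd_{\ovl X}(E, 3\lambda\ep) \cap X$. Since $\nbhd_{\ovl X}(\ovl E, \ep) = \nbhd_{\ovl X}(E, \ep)$ and the closure of a connected set is connected, I would also replace $E$ by its closure in $\partial X$, making $E$ compact.

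To build the continuum joining two such points $x$ and $y$, I would pick $a_x, a_y \in E$ with $d(x, a_x), d(y, a_y) < \ep$, and for a small parameter $\eta > 0$ to be fixed later, invoke Lemma~\ref{chain connect} on the connected space $E$ to produce an $\eta$-chain $a_x = z_0, z_1, \ldots, z_n = a_y$ in $E$. Since each $z_j \in \partial X$ is a limit point of $X$, I may choose $w_j \in X$ with $d(w_j, z_j) < \eta$ for $1 \leq j \leq n - 1$, and set $w_0 = x$, $w_n = y$. Applying the $\lambda$-$\rm{LLC}_1$ condition at $w_{j-1}$ with radius slightly larger than $d(w_{j-1}, w_j)$ yields a continuum $C_j \subeq X$ containing both $w_{j-1}$ and $w_j$ and lying in a ball about $w_{j-1}$ of radius arbitrarily close to $\lambda d(w_{j-1}, w_j)$. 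Consecutive continua share the point $w_j$, so $C = C_1 \cup \cdots \cup C_n$ is a continuum from $x$ to $y$ in $X$.

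The main obstacle is verifying that every $C_j$ actually lies in $\nbhd_{\ovl X}(E, 3\lambda\ep)$. For interior steps $2 \leq j \leq n-1$ the triangle inequality gives $d(w_{j-1}, w_j) \leq 3\eta$, so $C_j$ sits inside $\nbhd_{\ovl X}(E, 4\lambda\eta + o(1))$, which is harmless. The critical case is the two boundary steps $j = 1$ and $j = n$, where $d(w_{j-1}, w_j)$ is of order $\ep$ rather than $\eta$. Direct bookkeeping gives
\[ C_1 \subeq \nbhd_{\ovl X}\bigl(E,\ (\lambda + 1)d(x, a_x) + 2\lambda\eta + o(1)\bigr), \]
and the strict inequality $d(x, a_x) < \ep$ together with $\lambda \geq 1$ (so that $(\lambda + 1)\ep \leq 2\lambda\ep$) leaves slack of order $\lambda\ep$ in which to absorb the $O(\eta)$ error into the target radius $3\lambda\ep$. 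A symmetric estimate applies to $C_n$. Thus any sufficiently small choice of $\eta$ completes the proof.
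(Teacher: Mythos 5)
Your proof is correct and takes essentially the same route as the paper's: build a fine chain along $E$, lift its interior vertices into $X$, and apply the $\lambda$-$\rm{LLC}_1$ condition link by link, concatenating the resulting continua. Your refinement of using a separate, sufficiently small chain parameter $\eta$ (rather than reusing $\ep$, as the paper does) is worth keeping: the paper's version, read literally, only lands the connecting continuum inside $\nbhd_{\ovl X}(E,(3\lambda+1)\ep)$, whereas your bookkeeping delivers the stated $3\lambda\ep$ cleanly.
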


\begin{proof} It suffices to show that if $x$ and $y$ are points in $\nbhd_{\ovl{X}}(E,\ep) \cap X$, then there is a continuum containing $x$ to $y$ inside of $\nbhd_{\ovl{X}}(E,3\lambda \ep) \cap X$.  Let $x'$ and $y'$ be points in $E$ such that $d(x,x')<\ep$ and $d(y,y')<\ep$.  By Lemma \ref{chain connect}, there is an $\ep$-chain $x'=z'_0,\hdots,z'_n=y'$ in $E$.  For each $j=1,\hdots,n-1$, find a point $z_j \in X$ such that $d(z_j,z'_j)<\ep$.  The triangle inequality implies that for $j=0,\hdots, n-1$,
$$z_{j+1} \in B_{X}(z_j, 3\ep).$$
Repeatedly applying the $\lambda$-$\rm{LLC}_1$ condition and concatenating now yields the desired result.
\end{proof}

\begin{proof}[Proof of Proposition \ref{LLC1 component gives end}] Let $x$ and $y$ be points in a connected subset $E$ of $\partial{X}$, and let $\{x_i\}$ and $\{y_i\}$ be Cauchy sequences in $X$ corresponding to $x$ and $y$, respectively. Let $K$ be a compact subset of $X$.  As $\ovl{X}$ is compact, we may find $\ep>0$ such that $\dist(E,K)>3\lambda\ep$.  Let $N \in \nats$ be so large that 
$$\{x_i\}_{i \geq N} \cup \{y_i\}_{i \geq N} \subeq \nbhd_{\ovl{X}}(E,\ep) \cap X.$$
Lemma \ref{connected nbhd of bdry} now implies the desired results.
\end{proof}

\begin{remark}\label{some thing needed} Proposition \ref{LLC1 component gives end} is not true without some control on the geometry of $X$.  The following example was pointed out to us by Daniel Meyer.  Let $(r,\theta,z)$ denote cylindrical coordinates on $\reals^3$, and set
$$X = \{(r,\theta, z): (r-1)^2 + z^2 = 1\} \bslash \{(0,0,0)\}.$$
Equipped with the standard metric inherited from $\reals^3$, the space $X$ is homeomorphic to a punctured disk, and hence has two ends.  However, the metric boundary of $X$ consists only of the point $\{(0,0,0)\}$. 
\end{remark}

The following statement is the main result of this section. We note that in the case that $X$ is a domain in $\sphere$, the statement is mentioned in \cite{He}.

\begin{theorem}\label{ends components summary} Suppose that $X$ is either a domain in $\sphere$ or satisfies the $\rm{LLC}_1$ condition.  Then the map $\Phi \colon \mathcal{E}(X) \to \mathcal{C}(X)$ defined in Remark \ref{define Phi} is a homeomorphism  that is natural in the sense that a Cauchy sequence $\{x_i\} \in \mathcal{U}(X)$ represents the end $E \in \mathcal{E}(X)$ if and only if it represents a point on the boundary component $\phi(E) \in \mathcal{C}(X)$.  
\end{theorem}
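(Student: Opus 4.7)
The plan is to construct an explicit inverse $\Psi\colon \mathcal{C}(X)\to \mathcal{E}(X)$ and then verify that both $\Phi$ and $\Psi$ are continuous. Define $\Psi(E')$ to be the end represented by any Cauchy sequence in $X$ converging to a point of $E'$; well-definedness and bijectivity follow from Propositions \ref{end gives component}, \ref{planar component gives end}, and \ref{LLC1 component gives end}, which together say that every Cauchy sequence converging to any point of $E'$ represents the same end, so $\Psi$ is well-defined and $\Phi,\Psi$ are mutually inverse. The naturality statement is then immediate: one implication is the defining property of $\Phi$ applied to the given Cauchy sequence itself, and the other is Propositions \ref{planar component gives end}/\ref{LLC1 component gives end} applied to any Cauchy sequence whose limit lies in $\Phi(E)$.

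For continuity of $\Phi$ at an end $E$ with $\Phi(E)=E'$, start with an open neighborhood $V$ of $E'$ in $\mathcal{C}(X)$, which by the quotient topology corresponds to a saturated open set $\til V\subeq \partial X$ containing $E'$. Since $\partial X$ is compact Hausdorff, its components coincide with its quasi-components, so there is a clopen set $W\subeq \partial X$ with $E'\subeq W\subeq \til V$. The disjoint compacta $W$ and $\partial X\bslash W$ are separated in $\ovl X$ by some $\delta>0$, and I will take
$$K = X\bslash \bigl(\nbhd_{\ovl X}(W,\delta/3)\cup \nbhd_{\ovl X}(\partial X\bslash W,\delta/3)\bigr),$$
which is closed in $\ovl X$ and disjoint from $\partial X$, hence compact in $X$. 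Because $\delta/3+\delta/3<\delta$, the two neighborhoods in the definition of $K$ are disjoint, so $X\bslash K$ splits as a disjoint union of two open sets; the component $U$ of $X\bslash K$ containing a tail of a Cauchy representative of $E$ lies in the piece adjacent to $W$, and hence $\ovl U\cap \partial X\subeq W\subeq \til V$. The clopenness of $W$ forces $\Phi(\til E)\subeq W$ for every $\til E\in \mathcal{E}(K,U)$, so $\Phi(\mathcal{E}(K,U))\subeq V$.

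For continuity of $\Psi$ at $E'$ relative to a basic neighborhood $\mathcal{E}(K,U)$ of $\Psi(E')=E$, I must find an open neighborhood of $E'$ in $\mathcal{C}(X)$ such that every component $E''$ in it satisfies $U(K,E'')=U$, where $U(K,E'')$ is the component of $X\bslash K$ containing the tails of Cauchy representatives of $\Psi(E'')$. In the $\rm{LLC}_1$ case, choose $\ep>0$ small enough that $B_{\ovl X}(p,3\lambda\ep)\cap K=\emptyset$ for every $p\in E'$; applying the $\lambda$-$\rm{LLC}_1$ condition at nearby interior points produces, for any pair of points in $B_{\ovl X}(p,\ep)\cap X$, a continuum lying in $B_{\ovl X}(p,3\lambda\ep)\cap X\subeq X\bslash K$. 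Hence $B_{\ovl X}(p,\ep)\cap X$ is contained in a single component of $X\bslash K$, which must be $U$ because $p\in \ovl U$; the saturated open subset of $\partial X$ contained in $B_{\ovl X}(E',\ep/2)$ then provides the required neighborhood. In the planar case, I invoke the Jordan-curve exhaustion from Proposition \ref{good exhaustion}: for $i_0$ large enough that $K\subeq \Omega_{i_0}$, let $U_{i_0}$ be the component of $\sphere\bslash \Omega_{i_0}$ containing $E'$. The connectedness of $U_{i_0}\cap \Omega$ established inside the proof of Proposition \ref{planar component gives end} forces $U_{i_0}\cap \Omega\subeq U$, and the set of components of $\partial \Omega$ contained in the interior of $U_{i_0}$ is an open neighborhood of $E'$ that $\Psi$ sends into $\mathcal{E}(K,U)$.

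The main obstacle is the continuity of $\Psi$ in the planar case, since a planar domain need not be locally connected at a boundary point and a naive small-ball argument can fail. The remedy is to trade the missing local connectedness for the global Jordan-curve structure furnished by the exhaustion $\{\Omega_i\}$, which exhibits, around each boundary component $E'$, a canonical topological neighborhood within which every approaching component of $\partial \Omega$ is routed through the same component of $\Omega\bslash K$.
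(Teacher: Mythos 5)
Your proof is correct, and it takes a genuinely different (and somewhat more laborious) route than the paper. Both proofs establish bijectivity of $\Phi$ from Propositions \ref{end gives component}, \ref{planar component gives end}, and \ref{LLC1 component gives end} in essentially the same way. Where they diverge is in establishing that the bijection is a homeomorphism: the paper observes that $\mathcal{C}(X)$ is compact and $\mathcal{E}(X)$ is Hausdorff, so it suffices to show $\Phi^{-1}$ is continuous; this is done by proving $\Phi(\mathcal{E}(K,U)) = \mathcal{C}(K,U)$ and invoking a separately-proven lemma (Lemma \ref{point nbhd}~(iii)) that $\mathcal{C}(K,U)$ is open. You instead prove continuity of \emph{both} $\Phi$ and $\Psi$ directly, which is logically redundant once bijectivity is known: either one of your two continuity arguments together with the compact-to-Hausdorff lemma would complete the proof. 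Your continuity-of-$\Phi$ argument is new relative to the paper and is a nice contribution; it exploits the fact that in the compact Hausdorff space $\partial X$ components coincide with quasi-components, so one can trap a given boundary component inside a clopen set sitting inside any prescribed saturated open set, and then build a compact $K$ whose complement is split by a $\delta/3$-separation of the two clopen halves. Your continuity-of-$\Psi$ argument re-derives the content of Lemma \ref{point nbhd}~(i) in the $\rm{LLC}_1$ case directly from the definition (rather than by contradiction as in the paper) and, in the planar case, routes through Proposition \ref{good exhaustion} precisely as the paper suggests one should. One phrase worth tightening: ``the saturated open subset of $\partial X$ contained in $B_{\ovl X}(E',\ep/2)$'' should be spelled out as ``a clopen subset $W' \subeq \partial X$ with $E' \subeq W' \subeq B_{\ovl X}(E',\ep/2)\cap\partial X$, which exists again by the quasi-component fact''; otherwise it is not clear which saturated open set you mean nor why it exists.
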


In the proof of the following lemma, we consider only the case that $X$ satisfies the $\rm{LLC}_1$ condition. If $X$ is a domain in $\sphere$, a proof is easily constructed using Proposition \ref{good exhaustion}.

\begin{lemma}\label{point nbhd} Suppose that $X$ is a domain in $\sphere$ or satisfies the $\rm{LLC}_1$ condition. Let $K$ be a compact subset of $X$ and let $U$ be a component of $X\bslash K$. Then the following statements hold:
\begin{itemize}
\item[(i)] for each $x \in \ovl{U} \cap \partial X$, there is a number $\del>0$ such that $B_{\ovl{X}}(x,\del)\cap X \subeq U$,
\item[(ii)] if $E \in \mathcal{C}(X)$ intersects $\partial{U} \cap \partial{X}$, then $E \in \mathcal{C}(K,U),$
\item[(iii)] the set $\mathcal{C}(K,U)$ is open in $\mathcal{C}(X)$.
\end{itemize}
\end{lemma}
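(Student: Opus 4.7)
The plan is to treat the three parts sequentially under the $\lambda$-$\rm{LLC}_1$ hypothesis, with the planar case following routinely from the exhaustion of Proposition~\ref{good exhaustion}. For part (i), since $K$ is a compact subset of the open set $X$ and $x \in \partial X$, the distance $\dist_{\ovl X}(x, K)$ is strictly positive. I would choose $\del>0$ with $3\lambda\del < \dist_{\ovl X}(x, K)$ and apply Lemma~\ref{connected nbhd of bdry} to the singleton continuum $\{x\}$ at scale $\del$: the set $B_{\ovl X}(x,\del) \cap X$ is contained in a connected subset of $\nbhd_{\ovl X}(\{x\}, 3\lambda\del) \cap X$, which is disjoint from $K$. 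Hence $B_{\ovl X}(x,\del) \cap X$ lies in a single component $V$ of $X\bslash K$; since $x \in \ovl U$, the ball meets $U$, forcing $V = U$.

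For part (ii) I would combine part (i) with Proposition~\ref{LLC1 component gives end}. Pick $x \in E \cap \partial U$ and any $y \in E$, and choose Cauchy sequences $\{x_i\} \to x$ and $\{y_i\} \to y$ in $X$. Proposition~\ref{LLC1 component gives end} guarantees that both sequences lie in $\mathcal{U}(X)$ and represent the same end of $X$, so there exist $N \in \nats$ and a component $W$ of $X\bslash K$ with $\{x_i\}_{i \geq N} \cup \{y_i\}_{i \geq N} \subeq W$. By part (i) there is a neighborhood of $x$ in $\ovl X$ whose intersection with $X$ lies in $U$; since $x_i \to x$, the tail of $\{x_i\}$ lies in $U$, forcing $W = U$. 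Hence $y_i \in U$ eventually, giving $y \in \ovl U$. Since $y \in \partial X$ as well, $y \in \partial U \cap \partial X$, so $E \subeq \partial U \cap \partial X$.

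For part (iii), I would show that the preimage $\pi^{-1}(\mathcal{C}(K,U))$ under the quotient map $\pi\co \partial X \to \mathcal{C}(X)$ equals $\partial U \cap \partial X$ and is open. The identification is immediate from (ii): $y \in \pi^{-1}(\mathcal{C}(K,U))$ means the component of $y$ in $\partial X$ is contained in $\partial U \cap \partial X$, and by (ii) this is equivalent to the component merely meeting $\partial U \cap \partial X$, which is the same as $y$ itself being in $\partial U \cap \partial X$. To verify openness, fix $y \in \partial U \cap \partial X$ and invoke part (i) to obtain $\del > 0$ with $B_{\ovl X}(y,\del) \cap X \subeq U$; any $y' \in B_{\ovl X}(y, \del) \cap \partial X$ is the limit of a Cauchy sequence eventually inside this ball, hence eventually in $U$, so $y' \in \ovl U \cap \partial X = \partial U \cap \partial X$. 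There is no significant obstacle beyond carefully distinguishing balls and closures taken in $X$ from those taken in $\ovl X$: the geometric content is carried entirely by Lemma~\ref{connected nbhd of bdry} and Proposition~\ref{LLC1 component gives end}.
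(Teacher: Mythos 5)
Your proof is correct. Parts (i) and (iii) are essentially the paper's arguments: for (i) you package the direct $\rm{LLC}_1$ connectedness argument into an application of Lemma~\ref{connected nbhd of bdry} at the singleton $\{x\}$, and for (iii) you identify $\pi^{-1}(\mathcal{C}(K,U))$ with $\partial U \cap \partial X$ and deduce openness from (i), exactly as the paper (implicitly) does.

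The genuine difference is in part (ii). The paper's argument is purely a covering-and-connectedness argument: statement (i) shows that the sets $E \cap \partial V \cap \partial X$, as $V$ ranges over the components of $X \bslash K$, are pairwise disjoint and relatively open in $E$, and (by the same part-(i) reasoning applied to an arbitrary point of $E$) they cover $E$; since $E$ is connected, it lies entirely in one of them. You instead route through Proposition~\ref{LLC1 component gives end}: two points of $E$ define the same end, so tails of Cauchy sequences approaching them eventually lie in a common component $W$ of $X \bslash K$, which part (i) forces to equal $U$. Both routes are valid and rely on the same underlying geometry (Lemma~\ref{connected nbhd of bdry} is the engine behind Proposition~\ref{LLC1 component gives end} anyway), and there is no circularity since Proposition~\ref{LLC1 component gives end} is established before this lemma and independently of it. The paper's version is self-contained within (i) and slightly lighter; yours makes explicit use of the ends machinery that the lemma is ultimately being built to support, which some readers might find conceptually cleaner, at the cost of importing a heavier proposition.
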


\begin{proof} We assume that $X$ satisfies the $\lambda$-$\rm{LLC}_1$ condition for some $\lambda \geq 1$. 

Suppose that statement (i) is not true. Then for all sufficiently small $\del>0$ we may find points $a,b \in B_{\ovl{X}}(x,\del) \cap X$ such that $a \in U$ and $b$ is in some other component of $X \bslash K$. Using the $\lambda$-$\rm{LLC}_1$ condition to connect $a$ to $b$ inside of $B_X(a,2\lambda \del)$ produces a point of $K$ in the ball $B_{\ovl{X}}(x, 3\lambda\del)$. Letting $\del$ tend to $0$ produces a contradiction with the assumption that $K$ is a compact subset of $X$.  

Statement (i) implies that collection 
$$\{E \cap \partial{V}\cap\partial{X} : V \ \text{is a component of $X\bslash K$}\}$$
consists of pairwise disjoint open subsets of $E$.  Hence the connectedness of $E$ proves statement (ii).

Now, recall that $\mathcal{C}(X)$ is endowed with the quotient topology. Hence, by statement (ii), in order to show that $\mathcal{C}(K,U)$ is open in $\mathcal{C}(X)$, it suffices to show that $\bigcup_{E \in \mathcal{C}(K,U)} E$ is open in $\partial{X}$. This follows from statements (i) and (ii). 
\end{proof}

\begin{proof}[Proof of Theorem \ref{ends components summary}]
Proposition \ref{planar component gives end} or \ref{LLC1 component gives end} shows that $\Phi$ is injective. Given $E' \in \mathcal{C}(X)$ and a Cauchy sequence $\{x_i\}_{i \in \nats}$ representing a point in $E'$, Proposition \ref{planar component gives end} or \ref{LLC1 component gives end} also state that $\{x_i\}_{i \in \nats}$ is in $\mathcal{U}(X)$ and hence represents an end $E \in \mathcal{E}(X)$.  By definition, this implies that $\Phi(E)=E'$, and so $\Phi$ is surjective.

We now check that the bijection $\Phi$ is a homeomorphism.  Since $\mathcal{C}(X)$ is compact and $\mathcal{E}(X)$ is Hausdorff, this is true if $\Phi\inv$ is continuous. Hence, by Lemma~\ref{point nbhd} (iii) and the defintion of the toplogy on $\mathcal{E}(X)$,  it suffices to show that for any compact set $K \subeq X$ and any component $U$ of $X\bslash K$, 
$$\Phi(\mathcal{E}(K,U))=\mathcal{C}(K,U).$$
Let $E $ be an end in $\mathcal{E}(K,U)$. By definition, the limit of any Cauchy sequence representing $E$ lies in $\partial{U} \cap \partial{X}$. Hence, $\Phi(E)$ intersects $\partial{U} \cap \partial{X}$, and so Lemma \ref{point nbhd} (ii) shows that $\Phi(E) \in \mathcal{C}(K,U)$.  Now, let $E' \in \mathcal{C}(K,U)$ and choose a Cauchy sequence $\{x_i\}_{i \in \nats}$ representing a point in $E'$.  Lemma \ref{point nbhd} (i) implies that there is $N \in \nats$ such that  $\{x_i\}_{i \geq N}$ is contained in $U$. Again, Proposition \ref{planar component gives end} or \ref{LLC1 component gives end} states that $\{x_i\}$ is in $\mathcal{U}(X)$ and hence represents an end $E$. Thus, by definition, $E \in \mathcal{E}(K,U)$ and $\Phi(E)=E'$.
\end{proof}

\subsection{Rank}

We breifly recall the notion of rank as discussed in \cite{He}. Let $\mathcal{T}$ be a countable, compact, and Hausdorff topological space.  Set $\mathcal{T}^0=\mathcal{T}$, and for each $n\geq 1$, set $\mathcal{T}^n$, to be the set of non-isolated points of $\mathcal{T}^{n-1}$, and endow $\mathcal{T}^n$ with the subspace topology. This process can be continued using transfinite induction to define $\mathcal{T}^\alpha$ for each ordinal $\alpha$, though we will not have need for this. For each ordinal $\alpha$, the space $\mathcal{T}^\alpha$ is again countable, compact, and Hausdorff. By the Baire category theorem, there is a unique ordinal $\alpha$ such that $\mathcal{T}^\alpha$ is finite and non-empty; this ordinal is defined to be the \emph{rank} of $\mathcal{T}$. 

Let $(X,d)$ be a metric space that is either a domain in $\sphere$ or satisfies the $\rm{LLC}_1$- condition.  By Theorem \ref{ends components summary}, the space of boundary components $\mathcal{C}(X)$ is homeomorphic to the space of ends $\mathcal{E}(X)$. As mentioned above, the former is clearly compact and the latter clearly Hausdorff, hence both are compact Hausdorff spaces.  Hence, if $\mathcal{S}$ is closed and countable subset of $\mathcal{C}(X)$, the rank of $\mathcal{S}$ is defined.

\section{Domains with totally disconnected complement}\label{TotDisCon}

The simplest possible structure of a boundary component is that it consists of a single point.  The aim of this section is to show that if we are only concerned with the topological properties, we may always assume this is the case.  

\begin{proposition}\label{point boundary} Every domain in $\sphere$ is homeomorphic to a domain in $\sphere$ that has totally disconnected complement. 
\end{proposition}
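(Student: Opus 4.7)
My plan is to invoke R.~L.~Moore's decomposition theorem: any upper semi-continuous decomposition of $\sphere$ into continua, none of which separates $\sphere$, yields a quotient space homeomorphic to $\sphere$. With the right decomposition this provides a continuous surjection $\sphere \to \sphere$ whose restriction to $\Omega$ is a homeomorphism onto a domain with totally disconnected complement. Concretely, writing $K = \sphere \setminus \Omega$, I take $\mathcal{G}$ to be the decomposition whose elements are the connected components of $K$ together with the singletons $\{x\}$ for $x \in \Omega$.

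Two properties then need to be verified. First, no component $C$ of $K$ separates $\sphere$: if some component $W$ of $\sphere \setminus C$ were disjoint from $\Omega$ (note that $\Omega$, being connected, lies entirely in one component of $\sphere \setminus C$), then $W$ would be a non-empty open connected subset of $K$ and hence would be contained in a single component $C' \neq C$ of $K$. But the frontier of $W$ in $\sphere$ is non-empty (as $W$ is a proper open subset of the connected space $\sphere$), is contained in $C$ (points of $\partial W$ cannot lie in another open component of $\sphere \setminus C$), and is contained in $\overline{W} \subeq C'$ since $C'$ is closed; this gives the contradiction $\emptyset \neq \partial W \subeq C \cap C' = \emptyset$. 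Second, $\mathcal{G}$ is upper semi-continuous: singletons $\{x\} \subeq \Omega$ have arbitrarily small saturated neighborhoods given by balls in $\Omega$, while for a component $C$ of $K$ and an open $U \supeq C$ the standard fact that components of a compact Hausdorff space have neighborhood bases of relatively clopen sets produces a relatively clopen $W \subeq K$ with $C \subeq W \subeq U$, and then $V := U \setminus (K \setminus W)$ is a saturated open neighborhood of $C$ contained in $U$.

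Moore's theorem now produces a continuous surjection $\pi \colon \sphere \to \sphere$ formed by composing the quotient map $q \colon \sphere \to \sphere/\mathcal{G}$ with a homeomorphism $\sphere/\mathcal{G} \to \sphere$. Every subset of $\Omega$ is $\mathcal{G}$-saturated, so $\pi|_\Omega$ is open as a map into $\sphere$, and it is injective because distinct points of $\Omega$ correspond to distinct singleton elements of $\mathcal{G}$; hence $\pi|_\Omega$ is a homeomorphism onto an open connected set $\Omega' \subeq \sphere$. The complement $\sphere \setminus \Omega'$ is $\pi(K)$, the continuous image of a compact set into a Hausdorff space, so $\pi|_K$ is a quotient map onto its image. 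This image is thus the quotient of the compact Hausdorff space $K$ by its connected components, which is always totally disconnected since in compact Hausdorff spaces components and quasi-components coincide. The main obstacle is really the non-separation step, which might at first appear to fail (a Jordan curve component of $K$ would \emph{a priori} separate $\sphere$); the argument above resolves this because the other complementary disk would have to be engulfed by another component of $K$, impossible since distinct components are disjoint closed sets.
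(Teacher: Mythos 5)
Your proof is correct, and it takes a genuinely different route from the paper. The paper first invokes Koebe's slit uniformization (Theorem~\ref{slits}), conformally mapping $\Omega$ onto a domain whose complementary components are points or horizontal line segments; it then checks upper semi-continuity of the slit decomposition by a hands-on geometric argument (Lemma~\ref{usc}) and notes that the non-separation hypothesis of Moore's theorem is automatic because arcs and points do not separate $\sphere$. You instead work directly with the decomposition $\mathcal{G}$ of $\sphere$ into the components of $K = \sphere \setminus \Omega$ together with singletons in $\Omega$, and you supply the two Moore hypotheses by pure point-set topology: upper semi-continuity follows from the fact that in the compact Hausdorff space $K$ each component has a neighborhood base of relatively clopen sets (equivalently, components and quasi-components coincide), and the non-separation of a component $C$ follows from your frontier argument showing that a residual complementary component would have to be swallowed by a second, disjoint component of $K$. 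Your closing step, that $\pi(K)$ is totally disconnected because $\pi|_K$ is a closed quotient map with fibers the components of $K$ and the component-quotient of a compact Hausdorff space is totally disconnected, is also correct and mirrors (in more explicit form) what the paper gets for free from the slit picture. The trade-off: the paper leans on a nontrivial complex-analytic tool but then the decomposition is concrete and the checks are short; your version is self-contained and purely topological, at the cost of having to verify the Moore hypotheses for an arbitrary compact complement. Both are valid, and yours is arguably the more natural proof of what is, after all, a purely topological statement.
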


To prove Proposition \ref{point boundary}, we employ the theory of decomposition spaces \cite{Daverman}.  A \emph{decomposition} of a topological space $S$ is simply a partition of $S$.  The \emph{non-degenerate} elements of a decomposition are those elements of the partition that contain at least two points.  The \emph{decomposition space} $S/G$ associated to a decomposition $G$ of a topological space $S$ is the topological quotient of $S$ obtained by, for each $g \in G$, identifying the points of $g$.  A decomposition $G$ is an \emph{upper semi-continuous decomposition} if each element is compact, and given any $g \in G$ and any open set $U \subeq S$ containing $g$, there is another open set $V$ containing $g$ with the property that if $g' \in G$ intersects $V$, then $g' \subeq U$.  If $G$ is an upper semi-continuous decomposition of a separable metric space $S$, then $S/G$ is a separable and metrizable space. \cite[Proposition~I.2.2]{Daverman}.

We will use one powerful theorem from classical decomposition space theory.  It identifies decompositions of $\sphere$ that are homeomorphic to $\sphere$ itself \cite{Moore}.  

\begin{theorem}[Moore]\label{moore} Suppose that $G$ is an upper semi-continuous decomposition of $\sphere$ with the property that for each $g \in G$, both $g$ and $\sphere \bslash g$ are connected.  Then $S/G$ is homeomorphic to $\sphere$. 
\end{theorem}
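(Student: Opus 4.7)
The plan is to verify the hypotheses of a classical topological characterization of $\sphere$ (a Peano continuum is homeomorphic to $\sphere$ if and only if it contains a simple closed curve, every simple closed curve in it separates it, and no arc in it separates it). Let $q \colon \sphere \to \sphere/G$ denote the quotient map. Because $G$ is upper semi-continuous with compact members, $q$ is a continuous closed surjection, and by \cite[Proposition I.2.2]{Daverman} the space $\sphere/G$ is separable and metrizable.

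First, I would verify that $\sphere/G$ is a Peano continuum. Compactness and connectedness transfer from $\sphere$ via continuity of $q$, and local connectedness follows from the standard fact that a closed quotient of a locally connected space by an upper semi-continuous decomposition with connected fibers is again locally connected: given an open neighborhood $V$ of $q(g)$, one produces a smaller saturated open neighborhood of $g$ whose component containing $g$ projects to a connected open set inside $V$. Ruling out separating points is then immediate from the hypothesis: if $q(g)$ disconnected $\sphere/G$ into nonempty open sets $U_1$ and $U_2$, then $q^{-1}(U_1)$ and $q^{-1}(U_2)$ would disconnect $\sphere \setminus g$, contradicting its assumed connectedness. For the existence of a separating simple closed curve, I would take a round circle $J \subseteq \sphere$ in sufficiently general position with respect to the non-degenerate fibers so that $q(J)$ is still a simple closed curve; the Jordan curve theorem in $\sphere$ combined with connectedness of the fibers shows that $\sphere/G \setminus q(J)$ has exactly two components, coming from the two Jordan domains of $J$.

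The main obstacle is showing that no arc $\alpha \subseteq \sphere/G$ separates the quotient. Setting $A = q^{-1}(\alpha)$, the goal is to prove that $\sphere \setminus A = q^{-1}(\sphere/G \setminus \alpha)$ is connected. The set $A$ is compact and saturated, but it is an uncountable union of fibers $g$ of potentially complicated arrangement. I would approximate $\alpha$ by saturated open neighborhoods $V_\ep \supseteq \alpha$ whose preimages shrink to $A$ in Hausdorff distance (using upper semi-continuity), and for each $\ep$ show that any two points of $\sphere \setminus A$ can be joined in $\sphere \setminus q^{-1}(V_\ep)$ by combining a planar Jordan-curve argument on thin tubular neighborhoods of polygonal approximations to $\alpha$ with repeated use of the hypothesis that each $\sphere \setminus g$ is connected. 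Passing $\ep \to 0$ extracts a path in $\sphere \setminus A$.

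Once the three topological conditions of the characterization theorem are verified for $\sphere/G$, it follows that $\sphere/G$ is homeomorphic to $\sphere$. The step I expect to be most delicate is the no-arc-separation property, because it requires careful control of the decomposition over an entire continuum of fibers rather than a single one.
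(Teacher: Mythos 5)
The paper does not prove Moore's theorem; it cites it as a classical result \cite{Moore} and relies on the decomposition-space background in \cite{Daverman}, so there is no paper proof to compare against. Your strategy -- verify a topological characterization of $\sphere$ for $\sphere/G$ -- is in the spirit of Moore's original 1925 argument, and the easy parts are correct: compactness, connectedness, local connectedness, and the absence of cut points all pass to a monotone closed quotient. But there are two genuine gaps, and they sit exactly where the content of the theorem lies. First, the characterization you invoke requires that \emph{every} simple closed curve in $\sphere/G$ separates it, whereas you only argue that one particular curve $q(J)$, the image of a round circle, does; an arbitrary simple closed curve in $\sphere/G$ has preimage an a priori wild continuum in $\sphere$, and showing it separates is essentially as hard as the arc case that you flag. (Even the "general position" claim for $J$ is not automatic: upper semi-continuity does not force the union of nondegenerate fibers to be topologically thin -- it can have positive measure -- so one cannot simply wiggle a round circle off of them.) Second, and as you yourself acknowledge, the no-arc-separates step is given only as a program, not a proof: $q^{-1}(\alpha)$ is a continuum assembled from an uncountable family of individually non-separating fibers, and the hypothesis that each fiber has connected complement does not a priori pass to the union. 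The sketch involving "thin tubular neighborhoods of polygonal approximations" combined with a limiting argument is not carried out and does not explain how the fiberwise hypothesis globalizes.

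Since both of those are precisely where Moore's theorem is difficult, the proposal does not yet constitute a proof. For reference, the standard modern route (the one implicit in the paper's citation of Daverman) avoids the characterization theorem entirely: one shows that $G$ satisfies Bing's shrinking criterion -- for every $\varepsilon>0$ and saturated open cover there is a self-homeomorphism of $\sphere$, supported close to that cover, shrinking all decomposition elements below diameter $\varepsilon$ -- and concludes that $q$ is a near-homeomorphism. Cellularity of the fibers, which in $\sphere$ is equivalent to connectedness of both $g$ and $\sphere\setminus g$, is what drives the shrinking argument. If you prefer to complete the characterization route you sketched, you would need to consult Moore's original proof or Bing's proof of the Kline sphere characterization, where the arc and simple-closed-curve separation properties are established.
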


We also employ a powerful theorem of classical complex analysis.  It states that any domain in $\sphere$ can be mapped conformally (and hence homeomorphically) to a \emph{slit domain}, i.e., to a domain in $\sphere$ that is either complete, or whose complementary components are points or compact horizontal line segments in $\reals^2$ \cite[V.2]{Goluzin}.

\begin{theorem}\label{slits} Any domain in $\sphere$ is conformally equivalent to a slit domain.  
\end{theorem}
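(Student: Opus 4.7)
The plan is to follow the classical extremal-function approach due to Hilbert and Koebe. First I would reduce to the case $\infty \in \Omega$ by postcomposing with a M\"obius transformation (the case $\Omega = \sphere$ being trivial). Working in a uniformizer at $\infty$, introduce the class $\mathcal{F}$ of univalent meromorphic functions $f \colon \Omega \to \sphere$ whose Laurent expansion near $\infty$ has the normalized form
$$f(z) = z + \frac{b_1}{z} + \frac{b_2}{z^2} + \cdots.$$
An area-theorem estimate for multiply connected domains yields a uniform bound on $|b_1|$ and, more generally, local boundedness of $\mathcal{F}$ on $\Omega$. Montel's theorem then shows that $\mathcal{F}$ is compact in the topology of locally uniform convergence, so the extremal problem of maximizing $\Re(b_1)$ over $\mathcal{F}$ admits an extremal function $f^*$.

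The core step is a variational argument showing that $f^*(\Omega)$ must be a horizontal slit domain. Suppose for contradiction that some non-degenerate component $K_0$ of $\sphere \bslash f^*(\Omega)$ is not a horizontal segment. Then the classical Koebe slit mapping theorem for complements of a single continuum supplies a conformal map $g$ of $\sphere \bslash K_0$ onto $\sphere$ minus a horizontal slit, normalized so that $g(w) = w + c_1/w + \cdots$ at $\infty$. A direct area computation exploiting the non-horizontal shape of $K_0$ shows $\Re(c_1) > 0$. Then $g \circ f^*$ belongs to $\mathcal{F}$ and has strictly larger $\Re(b_1)$ than $f^*$, contradicting extremality. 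Degenerate (point) components are automatically admissible, so the only possible complementary components of $f^*(\Omega)$ are horizontal segments and points.

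For infinitely connected $\Omega$, I would exhaust $\Omega$ by finitely connected subdomains $\Omega_1 \subeq \Omega_2 \subeq \cdots$ as in Proposition \ref{good exhaustion}, apply the finite case to produce normalized slit mappings $f_n$ on $\Omega_n$, and extract a locally uniformly convergent subsequence via the normal family property. Hurwitz's theorem preserves univalency of the limit $f^*$, and a Carath\'eodory-style kernel convergence argument (applied to each component of the complement) identifies every limiting complementary component as either a horizontal segment or a single point.

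The main obstacle will be the variational step in the second paragraph: one must both establish the existence of a perturbation $g$ with $\Re(c_1) > 0$ whenever $K_0$ fails to be a horizontal segment, and ensure that $g \circ f^*$ remains univalent on the entire multiply connected $\Omega$, even though $g$ is initially only defined on the simply connected complement of $K_0$. The limiting argument for infinitely connected domains is more delicate to set up carefully but is comparatively routine once the finite case is in hand.
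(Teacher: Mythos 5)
The paper does not prove this theorem; it cites \cite[V.2]{Goluzin}, where it appears as the classical parallel-slit uniformization theorem (Hilbert, Koebe, Gr\"otzsch, de~Possel). Your sketch reconstructs exactly that extremal-problem argument, and the plan is sound.

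Two remarks on the points you flag as potential obstacles. The worry about univalency of $g\circ f^*$ is not actually an issue: $K_0$ is a full complementary component of $f^*(\Omega)$, so $f^*(\Omega)\subseteq\sphere\setminus K_0$, hence $g$ is defined and injective on a domain containing the image of $f^*$, and a composite of injections is injective; the multiple connectivity of $\Omega$ plays no role in that step. The real content of the variational step is the inequality $\Re(c_1)\geq 0$ with equality precisely when $K_0$ is already a horizontal segment: writing $p(w)=\alpha w+\alpha_0+\alpha_1 w^{-1}+\cdots$ for the Riemann map of $\{|w|>1\}$ onto $\sphere\setminus K_0$ and matching Laurent coefficients of $g\circ p$ against the Joukowski slit map $\alpha(w+w^{-1})+\alpha_0$ gives $c_1=\alpha^2-\alpha\alpha_1$, while the area theorem gives $|\alpha_1|\leq\alpha$, so $\Re(c_1)\geq 0$ with equality forcing $p$ itself to be a slit map. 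The other genuinely nontrivial point, which you name but attribute a bit loosely, is local boundedness of $\mathcal{F}$: the area theorem on a neighborhood $\{|z|>R\}$ of $\infty$ controls the tail coefficients, and for a fixed finite $z_0\in\Omega$ one applies the Bieberbach bound $|a_2|\leq 2$ to $\zeta\mapsto 1/(f(1/\zeta)-f(z_0))$ on a small disk about $0$ to bound $|f(z_0)|$ uniformly over $f\in\mathcal{F}$. With that in place, the extremal argument already applies to an arbitrary domain, so your separate exhaustion-plus-kernel-convergence passage is a legitimate alternative route (and is closer to de~Possel's original treatment) but is not strictly required.
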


\begin{lemma}\label{usc} If $\Omega \subeq \sphere$ is a slit domain, then the components of $\reals^2\bslash \Omega$ containing at least two points form the non-degenerate elements of an upper semi-continuous decomposition of $\sphere$.  
\end{lemma}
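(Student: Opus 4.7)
My plan is to construct the decomposition $G$ explicitly, observe that its elements are compact, and then verify the upper semi-continuity condition by reducing to a classical fact about components in compact metric spaces. Set $K = \sphere \setminus \Omega$, which is compact since $\Omega$ is open. I will take $G$ to consist of (a) the components of $K$ that contain at least two points, and (b) the singletons $\{p\}$ for each $p$ that is either a point of $\Omega$ or a point component of $K$. By construction, $G$ is a partition of $\sphere$ whose non-degenerate elements are exactly the multi-point components of $K$, and each element is compact (components of a compact space are closed in it, and singletons are trivially compact).

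To verify upper semi-continuity, fix an element $g \in G$ and an open $U \sub \sphere$ with $g \sub U$. If $g = \{p\}$ with $p \in \Omega$, then since $\Omega$ is open one may take $V = U \cap \Omega$: no element of $G$ that is a component of $K$ meets $V$, and every singleton of $G$ meeting $V$ lies inside $U$ automatically. So the interesting case is when $g$ is a component of $K$, possibly degenerate.

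For this case I will invoke the classical fact that in a compact Hausdorff space the components coincide with the quasi-components, so that any component admits a neighborhood basis of clopen subsets. Applied to $K$ and the open set $U \cap K$, this furnishes a clopen subset $W$ of $K$ with $g \sub W \sub U \cap K$. Then $W$ and $K \setminus W$ are disjoint compact subsets of $\sphere$, so normality of $\sphere$ produces disjoint open sets $V_1, V_2 \sub \sphere$ with $W \sub V_1$ and $K \setminus W \sub V_2$. I will set $V = V_1 \cap U$. Because $V_1 \cap V_2 = \emptyset$, we have $V \cap K \sub W$, hence any element of $G$ meeting $V$ is either a singleton in $\Omega \cap V \sub U$ or a component of $K$ meeting $W$, and in the latter case, since $W$ is a union of components of $K$, this element must be contained in $W \sub U$. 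This verifies the condition.

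The main conceptual obstacle is really just making sure that the clopen separation works on the nose in $K$ and lifts properly to open separation in $\sphere$; this is where compactness and normality are both essential. I note that the argument nowhere uses that the non-degenerate components are horizontal segments, so the lemma in fact holds for any domain in $\sphere$; the slit hypothesis will only become relevant when Moore's theorem (Theorem \ref{moore}) is applied, which requires the additional property that each component and its complement in $\sphere$ be connected.
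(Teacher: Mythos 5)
Your proof is correct, and it is genuinely different from the one in the paper. The paper's argument is hands-on and exploits the slit structure directly: it normalizes $g$ to $[0,1]\times\{0\}$, places short ``gate'' segments $L\times\{0\}$ and $R\times\{0\}$ inside $\Omega\cap U$ just past the endpoints of $g$, and shows that if upper semi-continuity failed, a sequence of nearby slits $g_n$ escaping $U$ would be forced by connectedness to cross one of these gates, accumulating at a point of the open set $\Omega$ --- a contradiction. Your argument instead invokes the classical fact that in a compact Hausdorff space components coincide with quasi-components, so that a component of the compact set $K=\sphere\setminus\Omega$ has a neighborhood basis of sets that are clopen in $K$; a clopen $W$ with $g\subseteq W\subseteq U\cap K$ is then separated from $K\setminus W$ by normality, and the fact that $W$ saturates components of $K$ finishes the verification. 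This is cleaner and, as you note, shows the conclusion for an arbitrary domain in $\sphere$ with no hypothesis on the shape of the complementary components (this more general statement is indeed well known; see, e.g., \cite[Proposition I.2.3]{Daverman}). The paper's elementary route buys self-containment and avoids quoting the quasi-component theorem, at the cost of being tied to the slit geometry. One minor point worth stating explicitly in your write-up: you work with $K=\sphere\setminus\Omega$ while the lemma speaks of $\reals^2\setminus\Omega$; for a slit domain either $\infty\in\Omega$, in which case these agree, or $\{\infty\}$ is a degenerate singleton and the discrepancy does not affect the collection of non-degenerate elements. Your handling of the trivial singleton case $g=\{p\}\subseteq\Omega$ and your derivation of a clopen basis from the quasi-component identity (via compactness of $K\setminus O$) are both correct.
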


\begin{proof} Let $G$ denote the decomposition of $\sphere$ whose non-degenerate elements are those components of $\reals^2 \bslash \Omega$ that contain at least two points.  Let $g\in G$, and let $U \subeq \sphere$ be an open set containing $g$.
Without loss of generality we may assume that $g=[0,1] \times \{0\}$ and that $U$ is an open and bounded subset of $\reals^2$ containing $g$.  

Set 
$$l = \max\{x : (x,0) \in \reals^2\bslash U\ \text{and}\ x<0\} \ \text{and}\ r = \min\{x : (x,0) \in \reals^2\bslash U\ \text{and}\ x>1\}.$$
Since $\Omega \cap U$ is open and $g$ is a compact and connected subset of $U$, there are closed, non-degenerate intervals $L,R \subeq \reals$ such that 
$$(L \times \{0\})\subeq ((l,0) \times \{0\})\cap \Omega \cap U\ \text{and}\ (R \times \{0\}) \subeq ((1,r) \times \{0\})\cap \Omega \cap U.$$  
If $G$ is not upper semi-continuous, then for every $n \in \nats$, we may find some horizontal line segment $g_n \in G$ and points $(x_n,y_n), (x_n',y_n) \in \reals^2$ such that 
$$(x_n,y_n) \in g_n \cap \nbhd(g, 1/n) \ \text{and} (x_n',y_n) \in g_n \cap (\sphere\bslash U).$$
After passing to a subsequence, we may assume that $(x_n,y_n)$ tends to a point of $g$.  Moreover, passing to another subsequence if needed, we may assume that either 
$$\limsup_{n \to \infty} x_n' \leq l \ \text{or} \liminf_{n \to \infty} x_n' \geq r.$$
We consider the latter case; a similar argument applies in the former.  For sufficiently large $n$, the point $x_n'$ is greater than any point in $R$, while $x_n$ is less than any point in $R$.  By the connectedness of $g_n$, we conclude that there is a point $(z_n, y_n) \in g_n$ with $z_n \in R$.  After passing to yet another subsequence, we may assume that $(z_n, y_n)$ converges to a point in $R \times \{0\}$.  This is a contradiction as $\Omega$ is open. See Figure \ref{USC}.\end{proof}

\begin{figure}[h]
\begin{center}
\psfrag{l}{$(l,0)$}
\psfrag{L}{$L$}
\psfrag{0}{$g=[0,1]\times \{0\}$}
\psfrag{U}{$U$}
\psfrag{R}{$R$}
\psfrag{r}{$(r,0)$}
\psfrag{g}{$g_n$}
\includegraphics[width=.50\textwidth]{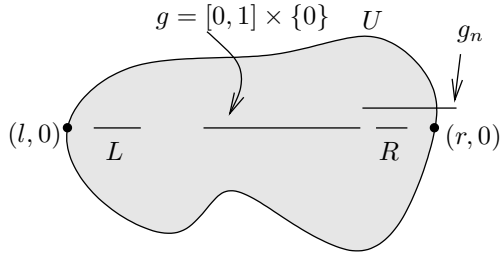}
\caption{If upper semi-continuity fails.}
\label{USC}
\end{center}
\end{figure}

\begin{proof}[Proof of Proposition \ref{point boundary}] Let $\Omega$ be a domain in $\sphere$.  By Theorem \ref{slits}, there is a homeomorpism $h_0 \colon \Omega \to \Omega_0$, where $\Omega_0$ is a slit domain.  By Lemma \ref{usc}, the components of $\sphere \bslash \Omega_0$ with at least two points form the non-degenerate elements of an upper semi-continuous decomposition $G$ of $\sphere$.  As each element of $G$ is either a point or a compact line segment in $\reals^2$, the hypotheses of Theorem \ref{moore} are satisfied.  Thus there is a homeomorphism $h_1 \colon \sphere/G \to \sphere.$  Let $\pi \colon \sphere \to \sphere/G$ denote the standard projection map.  By definition $\pi|_{\Omega_0}$ is a homeomorphism and $\pi(\sphere\bslash \Omega_0)$ is totally disconnected.  Thus $h_1 \circ \pi \circ h_0 \colon \Omega \to \sphere$ is a homeomorphism, and the image of $\Omega$ under this map has totally disconnected complement.  
\end{proof}

The ends of a domain in $\sphere$ with totally disconnected complement are particularly easy to understand: they are in bijection with the points of the complement, which are precisely the boundary components.

\begin{proposition}\label{point ends}  Suppose $\Omega$ is a domain in $\sphere$ with totally disconnected complement.  Then there is a homeomorphism $\phi \colon \mathcal{E}(\Omega) \to \partial{\Omega}$ with the property that a sequence $\{x_n\} \in \mathcal{U}(\Omega)$ represents the end $E \in \mathcal{E}(\Omega)$ if and only if it converges to $\phi(E)$.  \end{proposition}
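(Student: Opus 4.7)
The plan is to reduce the statement to Theorem \ref{ends components summary} via the observation that, when $\sphere \bslash \Omega$ is totally disconnected, the space of boundary components $\mathcal{C}(\Omega)$ is naturally homeomorphic to $\partial \Omega$ itself.

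First, since $\partial{\Omega} \subeq \sphere\bslash\Omega$ is totally disconnected, every component of $\partial \Omega$ is a singleton. The quotient map $\pi \colon \partial \Omega \to \mathcal{C}(\Omega)$ given by $p \mapsto \{p\}$ is therefore a continuous bijection. The set $\partial \Omega$ is compact (closed in $\sphere$), and $\mathcal{C}(\Omega)$ is Hausdorff, as it was shown in Section \ref{comps and ends} to be homeomorphic to $\mathcal{E}(\Omega)$ via the map $\Phi$ of Remark \ref{define Phi}. Thus $\pi$ is a homeomorphism, and I would \emph{define} $\phi \colon \mathcal{E}(\Omega) \to \partial \Omega$ by $\phi = \pi^{-1} \circ \Phi$. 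Theorem \ref{ends components summary} applies because $\Omega$ is a domain in $\sphere$, so $\phi$ is a homeomorphism.

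The remaining task is the naturality: a sequence $\{x_n\} \in \mathcal{U}(\Omega)$ represents $E \in \mathcal{E}(\Omega)$ if and only if $x_n \to \phi(E)$. Here there is something a bit beyond a direct quotation of Theorem \ref{ends components summary}, since the theorem's naturality clause concerns only Cauchy sequences, while $\mathcal{U}(\Omega)$ contains non-Cauchy sequences as well. For the forward direction, set $p = \phi(E)$ and let $\{x_n\}$ represent $E$. Because $\ovl{\Omega}$ is compact, every subsequence of $\{x_n\}$ has a Cauchy sub-subsequence; such a sub-subsequence still represents $E$ by definition of the relation $\eequiv$, so by Theorem \ref{ends components summary} it represents a point on $\Phi(E) = \{p\}$ and hence converges to $p$. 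Since every subsequence of $\{x_n\}$ has a further subsequence converging to $p$, the whole sequence $\{x_n\}$ converges to $p$ in $\ovl{\Omega}$.

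For the converse, suppose $x_n \to p \in \partial\Omega$. Then $\{x_n\}$ is Cauchy and represents a point of the boundary component $\{p\} \in \mathcal{C}(\Omega)$. By Proposition \ref{planar component gives end}, $\{x_n\}$ belongs to $\mathcal{U}(\Omega)$ and determines some end $E' \in \mathcal{E}(\Omega)$, and the naturality clause of Theorem \ref{ends components summary} forces $\Phi(E') = \{p\}$, so $E' = \phi^{-1}(p)$. The only mildly delicate step in the whole argument is the subsequence extraction in the forward direction; beyond this, the proof is just a careful unpacking of the identifications.
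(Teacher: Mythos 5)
Your proof is correct and takes the same route as the paper: identify $\mathcal{C}(\Omega)$ with $\partial\Omega$ via the singleton-component map and then invoke Theorem \ref{ends components summary}. The paper's own proof is essentially a one-liner that does exactly this and stops; you have additionally noticed and repaired a genuine (if small) gap in deriving the stated naturality property, namely that $\mathcal{U}(\Omega)$ contains non-Cauchy sequences while the naturality clause of Theorem \ref{ends components summary} speaks only of Cauchy ones, and your subsequence-extraction argument (every subsequence has a Cauchy sub-subsequence representing the same end, hence converging to $\phi(E)$, so the whole sequence converges) is the right way to close it.
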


\begin{proof} Since a totally disconnected subset of $\sphere$ cannot have interior, we see that $\sphere\bslash \Omega = \partial{\Omega}.$ Moreover, it is clear that $\mathcal{C}(\Omega)$ and $\partial{\Omega}$ are naturally homeomorphic. Hence Theorem \ref{ends components summary} provides the desired homeomorphism.
\end{proof}
 
The following statement transfers the work of this section to the general setting. For the remainder of this section, we assume that $(X,d)$ is a metric space that has compact completion and is  homeomorphic to a domain in $\sphere$.

\begin{corollary} \label{good map} Suppose that $X$ satisfies the $\rm{LLC}_1$ condition or is a domain in $\sphere$. Then there is a continuous surjection $h \colon \ovl{X} \to \sphere$ such that $h|_{X}$ is a homeomorphism onto a domain $\Omega$ with totally disconnected complement. Moreover, the map $h$ is constant on each boundary component $E\in \mathcal{C}(X)$, and for any $\ep>0$, there is $\ep' > 0$ such that 
\begin{equation}\label{good map estimate}h\inv(B_{\sphere}(h(E),\ep')) \subeq \nbhd_{\ovl{X}}(E,\ep).\end{equation}
Finally, $h$ induces a homeomorphism from $\mathcal{C}(X)$ to $\partial\Omega$. 
\end{corollary}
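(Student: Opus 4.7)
My plan is to first unwind the hypotheses to obtain a concrete target. Since $X$ is homeomorphic to a domain in $\sphere$, Proposition \ref{point boundary} produces a homeomorphism $f \colon X \to \Omega$ where $\Omega \subeq \sphere$ is a domain with totally disconnected complement. I will then extend $f$ to a continuous surjection $h \colon \ovl{X} \to \sphere$ by collapsing each component of $\partial{X}$ to the corresponding point of $\partial{\Omega}$.

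The definition of $h$ on $\partial{X}$ proceeds via ends. Since $X$ satisfies the hypothesis of Theorem \ref{ends components summary} (either it is $\rm{LLC}_1$ or a domain in $\sphere$), combining that theorem with Remark \ref{ends are topological} applied to $f$ and Proposition \ref{point ends} applied to $\Omega$ yields a chain of natural homeomorphisms
$$\mathcal{C}(X) \xrightarrow{\Phi_X\inv} \mathcal{E}(X) \xrightarrow{f_\ast} \mathcal{E}(\Omega) \xrightarrow{\phi_\Omega} \partial{\Omega},$$
whose composition I denote by $\psi$. Declare $h|_X = f$ and $h|_E \equiv \psi(E)$ for every $E \in \mathcal{C}(X)$. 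Surjectivity onto $\sphere$ is then immediate, since $\Omega\cup\partial{\Omega}=\sphere$ when $\sphere\bslash\Omega$ is totally disconnected, and the final assertion of the corollary is precisely that $\psi$ is a homeomorphism, which it is by construction.

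The main obstacle is establishing continuity of $h$ at points of $\partial{X}$. The key case is a Cauchy sequence $\{z_n\} \subeq X$ with $z_n \to z \in E \in \mathcal{C}(X)$ in $\ovl{X}$: by Proposition \ref{planar component gives end} in the domain case or Proposition \ref{LLC1 component gives end} in the $\rm{LLC}_1$ case, $\{z_n\}$ belongs to $\mathcal{U}(X)$ and represents the end $\Phi_X\inv(E)$. The naturality in Remark \ref{ends are topological} transfers this to $\{f(z_n)\} \in \mathcal{U}(\Omega)$ representing $f_\ast\Phi_X\inv(E)$, and Proposition \ref{point ends} then forces $f(z_n) \to \psi(E) = h(z)$. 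A general sequence $\{z_n\}\subeq\ovl{X}$ converging to $z$ is handled by selecting, for each $z_n \in E_n \in \mathcal{C}(X)$, an approximant $w_n \in X$ with both $d(w_n,z_n)<1/n$ and $d_{\sphere}(f(w_n),h(z_n))<1/n$; such $w_n$ exists by applying the interior case above to a Cauchy sequence in $X$ converging to $z_n$ within $E_n$.

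Finally, the estimate \eqref{good map estimate} follows by a compactness argument: if it failed for some $\ep>0$, there would exist $\{z_n\}\subeq\ovl{X}$ with $h(z_n)\to\psi(E)$ and $\dist(z_n,E)\geq\ep$, and a subsequential limit $z\in\ovl{X}\bslash E$ would, by continuity of $h$, satisfy $h(z)=\psi(E)$. This contradicts that $h\inv(\psi(E))=E$, which follows from the injectivity of $\psi$ together with the construction of $h$.
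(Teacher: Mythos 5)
Your proposal is correct and follows essentially the same route as the paper's proof: obtain the base homeomorphism from Proposition \ref{point boundary}, build the map $\psi$ on boundary components via the chain $\mathcal{C}(X)\to\mathcal{E}(X)\to\mathcal{E}(\Omega)\to\partial\Omega$ coming from Theorem \ref{ends components summary}, Remark \ref{ends are topological}, and Proposition \ref{point ends}, use naturality of ends for continuity, and establish \eqref{good map estimate} by a compactness/contradiction argument. You spell out the sequential continuity check and the identity $h\inv(\psi(E))=E$ a bit more explicitly than the paper does, but the structure and key lemmas are identical.
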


\begin{proof}  We address only the case that $X$ satisfies the $\rm{LLC}_1$ condition.  We have assumed that $X$ is homeomorphic to a domain in $\sphere$.  Hence by Proposition~\ref{point boundary}, there is a homeomorphism $h \colon X \to \Omega$, where $\Omega \subeq \sphere$ is a domain with totally disconnected complement.  

By Theorem \ref{ends components summary} and Remark \ref{ends are topological}, there is a homeomorphism $\phi_0 \colon \mathcal{C}(X) \to \mathcal{E}(\Omega)$ with the property that a Cauchy sequence $\{x_i\} \in \mathcal{U}(X)$ converges to a point of the boundary component $E \in \mathcal{C}(X)$ if and only if $\{h(x_i)\}$ represents the end $\phi_0(E)$.  Moreover, Proposition \ref{point ends} provides a homeomorphism $\phi_1 \colon \mathcal{E}(\Omega) \to \partial{\Omega}$ with the property that a sequence $\{y_i\} \in \mathcal{U}(X)$ represents the end $E \in \mathcal{E}(\Omega)$ if and only if it converges to the boundary point $\phi_1(E) \in \partial{\Omega}$. We define the extension of $h$ to $\partial{X}$ by setting $h(x) = \phi_1 \circ \phi_0(E)$, where $E \in \mathcal{C}(X)$ is the boundary component containing $x \in \partial{X}$.  The naturality properties of $\phi_0$ and $\phi_1$ ensure that $h \colon \ovl{X} \to \sphere$ so defined is continuous.  As $h|_{X}$ is a homeomorphism onto $\Omega$ and $\partial{\Omega} = \sphere\bslash \Omega$, the definitions show that the extended map is a surjection.  

Now, let $E \in \mathcal{C}(X)$ and $\ep>0$.  By construction (or from the fact that $h|_{X}$ is a homeomorphism and $\sphere \bslash \Omega$ is totally disconnected), the set $h(E)$ consists of a single point in $\partial{\Omega}$. Suppose that there is no $\ep' >0$ such that \eqref{good map estimate} holds. Then there is a sequence of points 
$$x_n  \in \ovl{X} \bslash \nbhd_{\ovl{X}}(E,\ep)$$
such that $\{h(x_n)\}$ converges to $h(E) \in \partial{\Omega}$. By compactness and the fact that $h|_{X}$ is a homeomorphism onto $\Omega$, the sequence $\{x_n\}$ has a limit point $x \in \partial{X}\bslash \nbhd_{\ovl{X}}(E,\ep)$.  This means that $x$ lies in some boundary component $F \neq E$.  However, the continuity of $h$ implies that $h(F)=h(x)=h(E)$, which contradicts the fact that $\phi_1 \circ \phi_0$ is injective.

Corollary \ref{ends components summary} implies that $\mathcal{C}(X)$ is naturally  homeomorphic to $\mathcal{E}(X)$, and Remark \ref{ends are topological} shows that $\mathcal{E}(X)$ is naturally homeomoprhic to $\mathcal{E}(\Omega)$. Proposition \ref{point ends} now yields the final statement of the theorem.
\end{proof}

As an application of Proposition \ref{point boundary}, we prove Proposition \ref{ALLC rel sep circ}, which relates the $\rm{ALLC}$ condition and the relative separation of boundary components, and Proposition \ref{LLC gives ALLC}, which improves the $\rm{LLC}$ condition to the $\rm{ALLC}$ condition.

\begin{proof}[Proof of Proposition \ref{ALLC rel sep circ}] Let $\Omega$ be a circle domain, and suppose that there is a number $c >0$ such that the components $\{E_i\}_{i \in I}$ of $\partial \Omega$ satisfy 
\begin{equation}\label{rel sep in circ proof}\bigtriangleup(E_i,E_j) \geq c\end{equation}
whenever $i \neq j \in I$.  Fix $\Lambda \geq 1$ so large that $2c\inv < 2\Lambda^2-1$. We will show that $\Omega$ is $\Lambda$-$\rm{ALLC}$. Let $p \in \Omega$ and $r>0$. It suffices to show that $$A_\Omega(p,r/\Lambda,2\Lambda r)$$ 
is connected.   

Let $h \colon \ovl{\Omega} \to \sphere$ be the continuous surjection provided by Corollary \ref{good map}.  The complement of $h(\Omega)$ is a compact and totally disconnected set, and hence has topological dimension $0$ \cite[Section II.4]{Hurewicz}. The definition of $\Lambda$ and \eqref{rel sep in circ proof} guarantee that there is at most one index $i \in I$ such that $E_i$ intersects both $\ovl{B}_\sphere(p,r/\Lambda)$ and $\sphere \bslash B(p,2\Lambda r).$  This implies that $h(A_\Omega(p,r/\Lambda,2\Lambda r))$ is the complement, in the domain bounded by two Jordan curves that touch at no more than one point, of a set of topological dimension $0$.  It is therefore connected \cite[Theorem IV.4]{Hurewicz}. See Figure~\ref{CircALLC}.

\begin{figure}[h]
\begin{center}
\psfrag{p}{$p$}
\psfrag{r}{$r/\Lambda$}
\psfrag{R}{$2\Lambda r$}
\psfrag{h}{$h$}
\psfrag{hp}{$h(p)$}
\includegraphics[width=.75\textwidth]{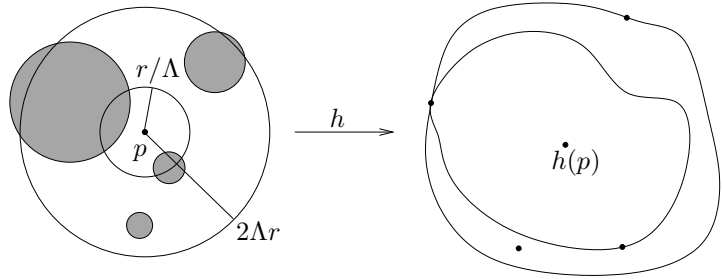}
\caption{The $\rm{ALLC}$ property of circle domains.}
\label{CircALLC}
\end{center}
\end{figure}
 
We leave the converse statement as an exercise for the reader, as it is not needed in this paper. 
\end{proof}

In the proof of Proposition \ref{LLC gives ALLC}, we will use the following separation theorem of point-set topology \cite[Section V.9]{Hurewicz}. 

\begin{theorem}[Janiszewski]\label{Janiszewski theorem} Suppose that $A$ and $B$ are closed subsets of $\sphere$ such that $\card(A \cap B) \leq 1$. If $y$ and $z$ are points of $\sphere$  that lie in the same component of $\sphere \bslash A$ and in the same component of $\sphere \bslash B$, then $y$ and $z$ lie in the same component of $\sphere \bslash (A \cup B)$. 
\end{theorem}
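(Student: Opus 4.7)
The plan is to deduce the theorem from the reduced singular Mayer--Vietoris sequence applied to the open sets $U = \sphere \bslash A$ and $V = \sphere \bslash B$. The bridge between topology and homology is the elementary observation that for any open subset $W \subeq \sphere$ and points $p, q \in W$, the class $[p] - [q]$ vanishes in $\tilde{H}_0(W)$ if and only if $p$ and $q$ lie in the same component of $W$ (using that path-components and components agree for open subsets of a manifold). Since $U \cap V = \sphere \bslash (A \cup B)$, the hypothesis is exactly that $[y] - [z] = 0$ in both $\tilde{H}_0(U)$ and $\tilde{H}_0(V)$, and the desired conclusion is that $[y] - [z] = 0$ in $\tilde{H}_0(U \cap V)$.

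The crucial computation is the shape of $U \cup V = \sphere \bslash (A \cap B)$. Since $\card(A \cap B) \leq 1$, this set is either all of $\sphere$ or is homeomorphic to $\R^2$ via stereographic projection from the missing point; in either case it is path-connected and simply connected, so
$$\tilde{H}_0(U \cup V) = \tilde{H}_1(U \cup V) = 0.$$
The relevant Mayer--Vietoris fragment
$$\tilde{H}_1(U \cup V) \to \tilde{H}_0(U \cap V) \to \tilde{H}_0(U) \oplus \tilde{H}_0(V) \to \tilde{H}_0(U \cup V)$$
then collapses to an injection $\tilde{H}_0(U \cap V) \hookrightarrow \tilde{H}_0(U) \oplus \tilde{H}_0(V)$, and the class $[y] - [z]$, which has trivial image on the right by hypothesis, necessarily vanishes on the left as well.

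The only step with genuine content is the identification of $U \cup V$ as simply connected, which is precisely where the assumption $\card(A \cap B) \leq 1$ enters; this also exhibits the theorem as sharp, since if $A \cap B$ contained two points then $U \cup V$ could be an annular domain with nontrivial $\tilde{H}_1$ and the connecting homomorphism into $\tilde{H}_0(U \cap V)$ would obstruct the argument. Everything else is formal diagram chasing. A purely point-set proof avoiding algebraic topology is possible but considerably more delicate, relying on continuum-theoretic surgery to modify a path from $y$ to $z$ in $\sphere \bslash A$ so as to avoid $B$; given that the theorem is classical and cited from Hurewicz, the Mayer--Vietoris route seems preferable.
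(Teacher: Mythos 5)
Your Mayer--Vietoris argument is correct. The reductions are all sound: since $y,z$ lie in $U\cap V$, the hypothesis is exactly that $[y]-[z]$ maps to zero in $\tilde H_0(U)\oplus\tilde H_0(V)$; the identification $U\cup V=\sphere\bslash(A\cap B)$ together with $\card(A\cap B)\leq 1$ gives $\tilde H_1(U\cup V)=0$; and exactness then forces $[y]-[z]=0$ in $\tilde H_0(U\cap V)$. The translation between vanishing of $[y]-[z]$ and lying in a common component is legitimate because open subsets of $\sphere$ are locally path-connected, so components and path-components coincide. The Mayer--Vietoris sequence applies since $U,V$ are open in $U\cup V$.

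Be aware, however, that the paper does not prove this statement: it cites Janiszewski's theorem as a classical result from Hurewicz--Wallman and uses it as a black box in the proof of Proposition~\ref{LLC gives ALLC}. So there is no in-paper proof to compare against. Your argument is a clean, self-contained modern proof of the classical fact, and it is essentially the standard homological proof of Janiszewski's theorem (the same mechanism underlies the usual proof that a Jordan arc does not separate $\sphere$). The classical point-set proofs, which is likely what Hurewicz--Wallman present, proceed instead by delicate continuum-theoretic surgery; the homological route is shorter and makes the role of the hypothesis $\card(A\cap B)\leq 1$ transparent, namely that it is exactly what kills $\tilde H_1(U\cup V)$. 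Your sharpness remark is also correct: for instance $A=\partial\D$ and $B=[-1,1]$ give $A\cap B=\{-1,1\}$, and the points $\pm i/2$ satisfy the hypotheses but lie in distinct components of $\sphere\bslash(A\cup B)$.
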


\begin{lemma}\label{boundary janiszewski} Let $(X,d)$ be a metric space homeomorphic to a domain in $\sphere$ that satisfies the $\rm{LLC}_1$-condition, and let $A$ and $B$ be disjoint closed subsets of $X$. Let $\mathcal{C}$ be the collection of components that intersect $\ovl{A} \cap \ovl{B}$, and assume that $\card \mathcal{C} \leq 1$. If $u, v \in X$ are in the same component of $X\bslash A$ and in the same component of $X\bslash B$, then they are in the same component of $X \bslash (A \cup B)$.  
\end{lemma}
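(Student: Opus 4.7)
My plan is to reduce the lemma to the classical Janiszewski separation theorem (Theorem~\ref{Janiszewski theorem}) on $\sphere$ by transferring the setup through the canonical projection provided by Corollary~\ref{good map}. First, I would apply that corollary to obtain a continuous surjection $h \colon \ovl X \to \sphere$ that restricts to a homeomorphism $h|_X \colon X \to \Omega$ onto a domain $\Omega \subseteq \sphere$ with totally disconnected complement, is constant on each boundary component of $X$, and induces a homeomorphism $\mathcal{C}(X) \to \partial\Omega$.

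Next, I would set $A' = h(\ovl A)$ and $B' = h(\ovl B)$. Continuity of $h$ and compactness of $\ovl X$ render $A'$ and $B'$ closed in $\sphere$. The crucial step is to show $\card(A' \cap B') \leq 1$: since $A \cap B = \emptyset$ and $h|_X$ is injective into $\Omega$, one checks that $A' \cap B' \subseteq \partial\Omega$, and each point $p \in A' \cap B'$ lies beneath a unique boundary component $h^{-1}(p) \in \mathcal{C}(X)$ that meets both $\ovl A$ and $\ovl B$. The hypothesis on the collection $\mathcal{C}$ then limits the number of such components to at most one.

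With the cardinality bound in hand, I would proceed as follows. Since $h|_X$ is a homeomorphism, $h(u)$ and $h(v)$ lie in a common component of $\Omega \bslash h(A) \subseteq \sphere \bslash A'$, and likewise for $B$. Janiszewski's theorem then yields a single component $U$ of $\sphere \bslash (A' \cup B')$ containing both $h(u)$ and $h(v)$. To transport this back to $X$, I would use that $\Omega \bslash (A' \cup B') = h(X \bslash (A \cup B))$, and that $U \cap \Omega$ is obtained from the connected open set $U$ by removing the closed, totally disconnected set $U \cap \partial\Omega$. The point-set topology fact \cite[Theorem~IV.4]{Hurewicz} recalled in the proof of Proposition~\ref{ALLC rel sep circ} implies that $U \cap \Omega$ remains connected, and pulling it back via $h|_X$ yields a connected subset of $X \bslash (A \cup B)$ containing both $u$ and $v$.

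The hard part will be the cardinality bound $\card(A' \cap B') \leq 1$; once it is in place, everything else is a formal transfer through $h$ combined with the classical fact that deleting a closed, totally disconnected set from a planar open domain preserves connectedness. The naturality properties assembled in Corollary~\ref{good map}, in particular the identification of $\mathcal{C}(X)$ with the totally disconnected boundary $\partial\Omega$, are precisely what make this reduction go through.
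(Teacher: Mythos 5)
Your argument is correct and reduces to Janiszewski through the projection $h$ of Corollary~\ref{good map}, just as the paper does; the only genuine divergence is in how you transport the resulting component back into $X$. The paper's proof makes one tidy pre-processing move: it sets $\til A = h(\ovl A)$ but $\til B = h(\ovl B)\cup T$, where $T = \sphere\bslash\Omega$. With this choice, $\sphere\bslash(\til A\cup\til B)$ is \emph{literally} $\Omega\bslash(h(A)\cup h(B))$, so the component produced by Janiszewski already lives in $\Omega$ and pulls back via $h|_X$ with no further argument. You instead run Janiszewski on $h(\ovl A)$ and $h(\ovl B)$ alone, obtain a component $U$ of $\sphere\bslash(h(\ovl A)\cup h(\ovl B))$ that may meet $T$, and then invoke the Hurewicz dimension-zero fact (already used in the proofs of Proposition~\ref{ALLC rel sep circ} and Proposition~\ref{crosscut components}) to see that $U\cap\Omega = U\bslash T$ is still connected before pulling back. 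Both routes are valid; yours costs one extra appeal to the dimension-theoretic lemma, but it has the mild advantage that the cardinality condition you must verify is only on $h(\ovl A)\cap h(\ovl B)$ rather than on $h(\ovl A)\cap(h(\ovl B)\cup T)$, which keeps the argument symmetric in $A$ and $B$ and does not implicitly require $\ovl A$ to meet few boundary components.

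One small point to tighten: when you assert that each $p\in A'\cap B'$ "lies beneath a unique boundary component $h^{-1}(p)$ that meets both $\ovl A$ and $\ovl B$," be explicit that what you obtain are (possibly distinct) points $a\in E\cap\ovl A$ and $b\in E\cap\ovl B$, i.e., $E$ meets each of $\ovl A$ and $\ovl B$ separately — the stated hypothesis of the lemma speaks of components meeting $\ovl A\cap\ovl B$, which is a priori a smaller collection. In the only place the lemma is applied (Proposition~\ref{LLC gives ALLC}), the stronger condition that at most one boundary component meets $\ovl A\cup\ovl B$ is verified, so the distinction does not bite there; but your write-up should state exactly which form of the cardinality hypothesis you are using, since your proof (like the paper's) genuinely needs the stronger one.
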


\begin{proof} Let $h \colon \ovl{X} \to \sphere$ be the continuous surjection provided by Corollary \ref{good map}.  In particular $h|_X$ is a homeomorphism onto a domain $\Omega=\sphere \bslash T$, where $T$ is a closed totally disconnected set. Moreover, $h$ maps each component of $\partial{X}$ to a distinct point of $T$. Hence, our assumptions imply that $h(\ovl{A})$ and $h(\ovl{B})$ are closed subsets of $\sphere$ such that $h(\ovl{A}) \cap h(\ovl{B})$ is either empty or a single point of $T$.  Set 
$$\til{A} = h(\ovl{A}) \ \text{and} \ \til{B}=h(\ovl{B}) \cup T.$$
Then $\til{A}$ and $\til{B}$ are again closed subsets of $\sphere$ whose intersection is either empty or a single point, and 
$$\sphere \bslash \til{A} \supeq \Omega \bslash h(A) \ \text{and}\ \sphere \bslash \til{B} =\Omega \bslash h(B).$$
Thus, if $u$ and $v$ are points of $X$ that are in the same component of $X\bslash A$ and in the same component of $X \bslash B$, then $h(u)$ and $h(v)$ are in the same component of $\sphere \bslash \til{A}$ and in the same component of $\sphere \bslash \til{B}$.  Janiszewski's Theorem now implies that $h(u)$ and $h(v)$ are in the same component of 
$$\sphere \bslash (\til{A} \cup \til{B}) = \Omega \bslash (h(A) \cup h(B)).$$
Since $h|_X$ is a homeomoprhism onto $\Omega$, this yields the desired result.
\end{proof}

\begin{proof}[Proof of Proposition \ref{LLC gives ALLC}] By Lemma \ref{ALLC r 2r}, it suffices to consider a point $x \in X$ and radius $r>0$, and suppose that $y$ and $z$ are points of $A(x,r,2r)$. Denote by $\del>0$ the minimum distance between components of $\partial{X}$, and setF $s = \diam X/\del$.  By Proposition \ref{Better LLC} we may assume that $X$ satisfies the $\lambda$-$\til{\rm{LLC}}$ condition for some $\lambda \geq 1$.

We first assume that $r < \del/(4\lambda)$. Set
$$A = S_X(x,2\lambda r) \ \text{and}\ B=\ovl{B}_X(x,r/(2\lambda)).$$
The $\lambda$-$\til{\rm{LLC}}$-condition implies that $y$ and $z$ lie in the same component of $X \bslash A$ (namely, the component containing $x$) and in the same component of $X \bslash B$.  The restriction on $r$ implies that there is at most one component $E$ of $\partial{X}$ that satisfies $E \cap (\ovl{A} \cup \ovl{B}) \neq \emptyset.$  Hence, by Lemma \ref{boundary janiszewski}, the points $y$ and $z$ are in the same component of $X \bslash (A \cup B)$.  Since $(X,d)$ is locally path connected, this implies that they are contained in a continuum in $A(x,r/(2\lambda),2\lambda r).$

Since $A(x,r,2r)$ is empty if $r>\diam X$, we may now assume that $\del/(4\lambda) \leq r \leq \diam X.$ The $\til{LLC_1}$ condition provides an embedding $\gamma \colon [0,1] \to X$ such that $\gamma(0)=y$, $\gamma(1)=z$, and $\im \gamma \subeq B(x,2\lambda r)$.  If there is no $t \in [0,1]$ such that $\gamma(t) \in B(x,r/(16\lambda s))$, then the proof is complete.  Otherwise, let 
$$
\begin{aligned}
t_1&= \min\{t \in [0,1] : \gamma(t) \in S(x,r/(8\lambda s))\},\\ 
t_2&= \max\{t \in [0,1] : \gamma(t) \in S(x,r/(8\lambda s))\}.
\end{aligned}
$$
Since $\gamma$ is an embedding, $X$ is  connected, and $y,z \in A(x,r,2r)$, we see that $t_1 < t_2$.  As $r/(8\lambda s) < \del/(4\lambda)$, we may apply the first case considered above to the points $\gamma(t_1)$ and $\gamma(t_2)$, producing a continuum $\Gamma \subeq A(x,r/(16\lambda^2 s),r/(4s))$ that contains $\gamma(t_1)$ and $\gamma(t_2)$.  Now, the continuum
 $$\gamma([0,t_1]) \cup \Gamma \cup \gamma([t_2,1]) \subeq A(x,r/(16\lambda^2 s), 2\lambda r)$$
contains $y$ and $z$. \end{proof}

\section{Crosscuts}\label{crosscuts}

In this section, we assume that $X$ is a metric space that has compact completion, is homeomorphic to a domain in $\sphere$, and satisfies the $\lambda$-$\rm{LLC}_1$ condition for some $\lambda \geq 1$.

A \textit{crosscut} is an embedding $\gamma \colon [0,1] \to \ovl{X}$ such that $$\gamma([0,1]) \cap \partial{X} = \gamma(0) \cup \gamma(1).$$
Note that if $\gamma$ is a crosscut, then $\gamma|_{(0,1)}$ is a proper embedding.   If $\gamma(0)$ and $\gamma(1)$ are distinct points that belong to the same component $E$ of $\partial{X}$, then we say that $\gamma$ is an \emph{$E$-crosscut}.

The following proposition shows that under the assumption of the $\rm{LLC}_1$ condition, $E$-crosscuts behave as they do in the case of a circle domain in $\sphere$.  

\begin{proposition}\label{crosscut components}  Let $E$ be a component of $\partial{X}$, and let $\gamma$ be an $E$-crosscut. Then $X \bslash \im\gamma$ has precisely two components. Denoting these components by $U$ and $V$, it holds that:
\begin{itemize}
\item[(i)] for every $\ep>0$, there is a closed set $K \subeq X$ with $\dist(E,K)>0$ such that $U\bslash K$ is a connected subset of $\nbhd_{\ovl{X}}(E,\ep) \cap U$, and the same statement is valid for $V$,
\item[(ii)] the sets $\ovl{U} \bslash \im\gamma$ and $\ovl{V} \bslash \im\gamma$ are the components of $\ovl{X} \bslash \im\gamma$, and $\ovl{U} \cap \ovl{V} = \im \gamma$.
\item[(iii)] the sets $\ovl{U} \cap E$ and $\ovl{V} \cap E$ are connected.  
\end{itemize}
\end{proposition}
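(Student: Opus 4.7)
The plan is to transport the problem to the sphere via the continuous surjection $h\colon \ovl X \to \sphere$ provided by Corollary~\ref{good map}, under which $h|_X$ is a homeomorphism onto a domain $\Omega$ with totally disconnected complement $S$ and $h\inv(h(E))=E$. Writing $p=h(E)$, the composition $h\circ\gamma$ is injective on $(0,1)$ into $\Omega$ with both endpoints at $p$, so $J:=h(\im\gamma)$ is a Jordan curve in $\sphere$ and by the Jordan curve theorem splits $\sphere$ into two Jordan domains $\Omega^+$ and $\Omega^-$. Because $S$ is closed and zero-dimensional, Theorem~IV.4 of Hurewicz--Wallman (as used in the proof of Proposition~\ref{ALLC rel sep circ}) gives that $\Omega^\pm\setminus S$ is connected, so pulling back through $h|_X$ identifies the two components of $X\setminus\im\gamma$ as $U=h|_X\inv(\Omega^+\cap\Omega)$ and $V=h|_X\inv(\Omega^-\cap\Omega)$.

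The main technical obstacle is to build a connected open subset $W_\rho\sub U$ that both shrinks to $E$ and contains every $U$-point sufficiently close to $E$, since the naive candidate $B_\sphere(p,\ep')\cap\Omega^+$ need not be connected. I would use the Carath\'eodory homeomorphism $\phi\colon\ovl{\D}\to\ovl{\Omega^+}$ associated to the Jordan domain $\Omega^+$: set $q=\phi\inv(p)\in\partial\D$ and, for small $\rho>0$, let $A_\rho=\phi(B_{\reals^2}(q,\rho)\cap\D)$. Then $A_\rho$ is an open connected subset of $\Omega^+$ whose $\sphere$-diameter tends to $0$ with $\rho$, and removing $S\cap A_\rho$ keeps it connected (Hurewicz--Wallman), so $W_\rho:=h|_X\inv(A_\rho\cap\Omega)\sub U$ is open and connected. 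Uniform continuity of $\phi\inv$ on $\ovl{\Omega^+}$ ensures that any sequence $x_n\in U$ with $x_n\to y\in E$ satisfies $\phi\inv(h(x_n))\to q$, so $x_n\in W_\rho$ for large $n$; equivalently, $U\cap\nbhd_{\ovl X}(E,\eta)\sub W_\rho$ for some $\eta=\eta(\rho)>0$. The estimate in Corollary~\ref{good map} lets us also pick $\rho$ small enough that $W_\rho\sub\nbhd_{\ovl X}(E,\ep)$.

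For (i), take $K$ to be the closure in $X$ of $U\setminus W_\rho$, which is closed in $X$ by definition. The inclusion $U\cap\nbhd_{\ovl X}(E,\eta)\sub W_\rho$ from the previous paragraph rules out any sequence in $K$ converging in $\ovl X$ to a point of $E$, so by compactness of $E$ we get $\dist(E,K)>0$; an elementary calculation shows $U\setminus K=W_\rho\sub\nbhd_{\ovl X}(E,\ep)\cap U$, and $W_\rho$ is connected. The symmetric construction handles $V$. For (iii), the closures $\{\ovl{W_\rho}\}_{\rho>0}$ in $\ovl X$ form a decreasing family of compact connected sets; the approximation argument above gives $\ovl U\cap E\sub\bigcap_\rho\ovl{W_\rho}$, while $A_\rho$ shrinking to $p$ in $\sphere$ together with Corollary~\ref{good map} forces $\bigcap_\rho\ovl{W_\rho}\sub E$, and the inclusion $W_\rho\sub U$ forces it into $\ovl U$. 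Thus $\ovl U\cap E=\bigcap_\rho\ovl{W_\rho}$ is connected as a decreasing intersection of compact connected sets, and the case of $\ovl V\cap E$ is symmetric.

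For (ii), the inclusion $\im\gamma\sub\ovl U\cap\ovl V$ is local: for $t\in(0,1)$, the point $h(\gamma(t))\in J\setminus\{p\}\sub\Omega$ has small $\sphere$-balls contained in $\Omega$ and meeting both $\Omega^\pm$, whose pullbacks through $h|_X$ give points of both $U$ and $V$ arbitrarily close to $\gamma(t)$; the endpoints $\gamma(0),\gamma(1)$ lie in the closure of $\gamma((0,1))$. For the reverse inclusion I argue by cases. A point $x\in X\setminus\im\gamma$ lies in one of the open sets $U,V\sub X$ and hence not in the closure of the other. A point $x\in\partial X\setminus\im\gamma$ in a component $F\neq E$ has $h(F)\in S\setminus J\sub\Omega^+\cup\Omega^-$, and Corollary~\ref{good map} pulls back a $\sphere$-neighborhood of $h(F)$ into that side of $J$, giving an $\ovl X$-neighborhood of $F$ meeting $X$ only in one of $U,V$. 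A point $x\in E\setminus\{\gamma(0),\gamma(1)\}$ satisfies $\dist_{\ovl X}(x,\im\gamma)>0$, and the $\til{\rm{LLC}}_1$ condition (Proposition~\ref{Better LLC}) applied at $x$ provides small balls around $x$ within which every two points of $X$ are joined by an arc missing $\im\gamma$, forcing all of $X$ near enough to $x$ into a single component. Density of $X\setminus\im\gamma$ in $\ovl X$ (via local connectedness) gives $\ovl U\cup\ovl V=\ovl X$, so $\ovl U\setminus\im\gamma$ and $\ovl V\setminus\im\gamma$ are disjoint, cover $\ovl X\setminus\im\gamma$, and each is connected as it sits between the connected set $U$ (resp.\ $V$) and its closure; hence these are the components.
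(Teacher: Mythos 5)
Your proof is correct, and the overall scaffolding is the same as the paper's: transport the crosscut to the sphere via the map $h$ from Corollary~\ref{good map}, apply the Jordan curve theorem, and exploit the zero-dimensionality of the complement of $\Omega$. The differences are worth noting. For the shrinking connected neighborhoods in (i), you invoke the Carath\'eodory boundary extension of a conformal map $\D\to\Omega^+$, whereas the paper uses the Sch\"onflies theorem followed by an intersection-of-two-balls trick; these yield essentially the same object (a homeomorphism straightening $\ovl{\Omega^+}$ onto a closed disk), so this is a cosmetic change, with Sch\"onflies the more elementary choice. Your argument for (iii) is genuinely different: rather than the paper's contradiction argument (splitting $\ovl U\cap E$ into $A\cup B$ and deriving a contradiction with the connectedness of $U\setminus K$), you identify $\ovl U\cap E$ as the nested intersection $\bigcap_\rho \ovl{W_\rho}$ of compact connected sets. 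This is cleaner and arguably more transparent, and correctly uses the injectivity of $h$ on $\mathcal C(X)$ to show the intersection is contained in $E$. For (ii), your case analysis by the type of point ($X$, a component $F\neq E$, or a point of $E\setminus\{\gamma(0),\gamma(1)\}$) is sound, though the paper's argument---pick $\ep<\dist(z,\im\gamma)/\lambda$ and connect $u\in U$, $v\in V$ near $z$ via $\til{\rm LLC}_1$, uniformly in $z\in\ovl X\setminus\im\gamma$---handles all cases at once without the split.
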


\begin{proof} By Proposition \ref{Better LLC}, we may assume that $X$ in fact satisfies the $\lambda$-$\til{\rm{LLC}}_1$ condition. 

Let $h \colon \ovl{X} \to \sphere$ be the continuous surjection provided by Corollary~\ref{good map}, and let $\Omega=h(X)$.  Since $h|_{X}$ is a homeomorphism, the map $h \circ \gamma \colon (0,1) \into \Omega$ is a proper embedding. As $h(E)$ is a single point and $h$ is continuous, 
\begin{equation}\label{cross cut limit} \lim_{t\to 0} h \circ \gamma(t)=h(E)=\lim_{t\to 1} h \circ \gamma(t). \end{equation}

This implies that the continuous map $h\circ \gamma \colon [0,1] \to \Omega \cup \{h(E)\}$ defines a Jordan curve.  The Jordan curve theorem states that $\sphere \bslash\im (h\circ \gamma)$ consists of two disjoint domains $\til{U}$ and $\til{V}$, each homeomorphic to $\reals^2$, that have common boundary $\im (h \circ \gamma)$.  Then $U := h\inv(\til{U} \cap \Omega)$ and $V := h\inv(\til{U} \cap \Omega)$ are disjoint non-empty open sets satisfying $U \cup V = X\bslash \im\gamma$.  As $h|_{X}$ is a homeomorphism, in order to show that $U$ and $V$ are the components of $X \bslash \im\gamma$, we need only show $\til{U} \cap \Omega$ and $\til{V} \cap \Omega$ are connected.   Since $\sphere \bslash \Omega$ is compact and totally disconnected, it has topological dimension $0$ \cite[Section II.4]{Hurewicz}.  Thus $\til{U} \cap \Omega$ homeomorphic to the complement in $\reals^2$ of a set of topological dimension $0$, and hence is connected \cite[Theorem IV.4]{Hurewicz}. The same proof applies to $\til{V} \cap \Omega$.

We procede to the proof of (i).  Let $\ep>0$.  By Corollary \ref{good map}, we may find $\ep'>0$ such that 
\begin{equation}\label{good map recap} h\inv(B_{\sphere}(h(E),\ep')) \subeq \nbhd_{\ovl{X}}(E,\ep).\end{equation}
As $\im (h \circ\gamma)$ is a Jordan curve, by Schoenflies'  theorem there is a homeomorphism $H \colon \sphere \to \sphere$ such that $H(\til{U})$ is a standard ball in $\sphere$ whose boundary contains the point $H \circ h(E)$. By the continuity of $H\inv$ and \eqref{good map recap} we may find $\ep''>0$ so small that 
\begin{equation}\label{H estimate} h\inv \circ H\inv (B_{\sphere}(H\circ h(E),\ep'')) \subeq \nbhd_{\ovl{X}}(E,\ep).\end{equation}
The set 
$$B_{\sphere}(H\circ h(E),\ep'') \cap H(\til{U})$$
is the non-empty intersection of two standard balls in $\sphere$ and hence is itself homeomorphic to $\reals^2$.  As before, \cite[Section II.4 and Theorem IV.4]{Hurewicz} imply that 
$$B_{\sphere}(H \circ h(E), \ep'') \cap H(\til{U}) \cap H(\Omega)$$ 
is connected.  It now follows from \eqref{H estimate}, the fact that $h|_{X}$ is a homeomorphism onto $\Omega$, and the definition of $U$ that 
$$h\inv \circ H\inv (B_{\sphere}(H\circ h(E),\ep'')) \cap U$$ 
is a connected subset of $\nbhd_{\ovl{X}}(E,\ep)$.  We set $$K = X \bslash (h\inv \circ H\inv (B_{\sphere}(H\circ h(E),\ep''))).$$  The continuity of $H \circ h$ now shows that $\dist(E,K)>0$.  An analogous proof applies to $V$. 

We next address (ii). It follows from the definitions that $\ovl{U} \bslash \im\gamma$ and $\ovl{V} \bslash \im\gamma$ are  non-empty, closed in $\ovl{X} \bslash \im\gamma$, and satisfy
$$(\ovl{U} \bslash \im\gamma) \cup (\ovl{V} \bslash \im \gamma) = \ovl{X} \bslash \im\gamma.$$
Moreover, as $\ovl{U}\bslash \im\gamma$ is the topological closure in $\ovl{X}\bslash \im \gamma$ of the connected set $U$, it is connected. Similarly, $\ovl{V}\bslash \im \gamma$ is connected. Hence $\ovl{U}\bslash \im\gamma$ and $\ovl{V}\bslash \im \gamma$ are the components of $\ovl{X}\bslash \im \gamma$.  

As the common boundary of  $\til{U}$ and $\til{V}$ is $\im h\circ \gamma$, we see that $\ovl{U} \cap \ovl{V} \supeq \im \gamma$.  Suppose there is a point $z \in \ovl{X} \bslash \im\gamma$ that is an accumulation point of both $U$ and $V$.  Let $0<\ep<\dist(z,\im\gamma)/\lambda$.  By assumption we may find points $u \in U \cap B_{\ovl{X}}(z,\ep)$ and $v\in V \cap B_{\ovl{X}}(z,\ep)$.  The $\lambda$-$\til{\rm{LLC}}_1$ condition now implies that $u$ and $v$ can be connected in $X\bslash\im\gamma$, a contradiction.  Hence $\ovl{U} \cap \ovl{V} = \im \gamma$.  

To prove (iii), we show that $\ovl{U} \cap E$ is connected; the corresponding statement for $V$ is proven in the same way.  If $\ovl{U} \cap E$ is not connected, then we may find disjoint, non-empty, and compact sets $A$ and $B$ such that $A \cup B = \ovl{U} \cap E$.  Fix $0<\ep<\dist(A,B)/2$.  We first claim that there is a number $\del>0$ such that 
$$\nbhd_{\ovl{X}}(E,\del) \cap U \subeq \nbhd_{\ovl{X}}(A \cup B,\ep)  \cap U.$$
If this claim is false, then for every $n \in \nats$ there are points $u_n \in U$ and $x_n \in E$ such that $d(x_n,u_n) < 1/n$ and $\dist(u_n, A \cup B) \geq \ep$.  Since $E$ is compact, there is a subsequence of $\{x_n\}$ that converges to a point $x \in E$.  It follows that the corresponding subsequence of $\{u_n\}$ converges to $x$ as well. Hence $x \in \ovl{U} \cap E$ but $\dist(x, A \cup B) \geq \ep$, a contradiction.  This proves the claim.

By (i), there is a closed set $K \subeq X$ such that $\dist(E,K)>0$ and $U \bslash K$ is a connected subset of $\nbhd_{\ovl{X}}(E,\del) \cap U,$ and hence, by the claim, of $\nbhd_{\ovl{X}}(A \cup B,\ep)  \cap U.$  However, since $\dist(E,K)>0$, we may find points 
$$a \in \nbhd_{\ovl{X}}(A,\ep)  \cap (U \bslash K) \mand b \in \nbhd_{\ovl{X}}(B,\ep)  \cap (U \bslash K).$$
This is a contradiction since $\dist(A,B) >2\ep$.  
\end{proof}

\section{Uniformization of the boundary components}\label{boundary uniformization section}
In this section, we assume that $X$ is a metric space that has compact completion, is homeomorphic to a domain in $\sphere$, and satisfies the full $\lambda$-$\til{\rm{LLC}}$ condition for some $\lambda \geq 1$. 

We prove that each boundary component $E \in \mathcal{C}(X)$ with at least two points is homeomorphic to $\mathbb{S}^1$ and satisfies the $\lambda'$-$\rm{LLC}$ condition, for some $\lambda'\geq 1$ depending only on $\lambda$. To do so, we employ a recognition theorem of point set topology: a metric space is homeomorphic to $\mathbb{S}^1$ if and only if it is a locally connected continuum such that removal of any one point results in a connected space, while the removal of any two points results in a space that is not connected \cite{Wilder}. This section adapts \cite[Section 4]{QSPlanes} to our setting; we omit certain proofs that need little or no translation.

\begin{proposition}\label{minus 1}  Let $E$ be a component of $\partial{X}$, and let $p \in E$.  Then $E \bslash \{p\}$ is connected.
\end{proposition}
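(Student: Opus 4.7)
The plan is to argue by contradiction using the crosscut machinery of Proposition \ref{crosscut components}. If $E=\{p\}$ the claim is trivial, so I assume $E$ is a non-degenerate continuum (it is closed in the compact space $\ovl{X}$, and connected by definition of a component). Suppose that $E\setminus\{p\}$ admits a separation $E\setminus\{p\}=A\sqcup B$ with $A,B$ non-empty, disjoint, and relatively clopen. My first step is to check that $p\in\ovl{A}\cap\ovl{B}$ (closures taken in $E$): otherwise, say $p\notin\ovl{A}$, then $E=A\cup(B\cup\{p\})$ would realize $E$ as a union of two non-empty disjoint closed sets, contradicting the connectedness of the continuum $E$. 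So I can fix sequences $a_n\in A$ and $b_n\in B$ with $a_n,b_n\to p$.

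Next I will exploit the $\lambda$-$\til{\rm{LLC}}_1$ hypothesis at $p$ to build an $E$-crosscut joining $a_n$ and $b_n$ that is squeezed arbitrarily close to $p$. For $\ep>0$ large enough that $a_n,b_n\in B_{\ovl{X}}(p,\ep)$, the $\til{\rm{LLC}}_1$ condition furnishes an embedding $\gamma_n\colon[0,1]\to\ovl{X}$ with $\gamma_n(0)=a_n$, $\gamma_n(1)=b_n$, $\gamma_n|_{(0,1)}\subeq X$, and $\im\gamma_n\subeq B_{\ovl{X}}(p,\lambda\ep)$. Since both endpoints lie in $E$, this is an $E$-crosscut, and Proposition \ref{crosscut components} produces the two components $U_n,V_n$ of $X\setminus\im\gamma_n$, with $\ovl{U_n}\cap E$ and $\ovl{V_n}\cap E$ both connected and with $\ovl{U_n}\cap\ovl{V_n}=\im\gamma_n$.

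The finish is a short topological deduction. Since $a_n,b_n\in\im\gamma_n\subeq\ovl{U_n}$, the connected set $\ovl{U_n}\cap E$ meets both $A$ and $B$; as $A\sqcup B=E\setminus\{p\}$, any connected subset of $E$ that meets both $A$ and $B$ must contain $p$. Hence $p\in\ovl{U_n}\cap E$, and by symmetry also $p\in\ovl{V_n}\cap E$. Therefore $p\in\ovl{U_n}\cap\ovl{V_n}\cap E=\im\gamma_n\cap E$, which by the crosscut definition equals $\{a_n,b_n\}$, contradicting $p\neq a_n,b_n$. I do not foresee any serious obstacle: once the crosscut is constructed, the contradiction is essentially bookkeeping with Proposition \ref{crosscut components}. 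The only step that deserves a careful sentence is the verification that $p$ really does lie in the closure of both halves of the supposed separation, which is an elementary consequence of the connectedness of $E$.
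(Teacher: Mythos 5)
Your proof is correct and rests on the same crosscut machinery as the paper's argument, namely the $\widetilde{\rm{LLC}}_1$ condition to produce an $E$-crosscut together with Proposition \ref{crosscut components}~(ii)--(iii), but you run it as a proof by contradiction, whereas the paper argues directly: for arbitrary $x,y\in E\setminus\{p\}$ it takes an $E$-crosscut from $x$ to $y$ and observes that the side not containing $p$ is, by (ii) and (iii), a connected subset of $E\setminus\{p\}$ containing both points. One small remark: the sequences $a_n,b_n\to p$ and the ``squeezing'' of $\gamma_n$ near $p$ are never used in your final deduction; a single pair $a\in A$, $b\in B$ and any $E$-crosscut between them already gives that $p$ must lie in $\ovl{U}\cap\ovl{V}\cap E=\{a,b\}$, which is the contradiction.
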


\begin{proof} As a single point set and the empty set are connected, we may assume that $E$ has at least three points. Let $x$ and $y$ be arbitrary distinct points of $E \bslash \{p\}$. It suffices to show that there is a connected subset of $E\bslash \{p\}$ that contains $x$ and $y$.  By the $\til{\rm{LLC}}_1$-condition, there is an $E$-crosscut $\gamma$ with $\gamma(0)=x$ and $\gamma(1)=y$.  By Proposition \ref{crosscut components} (ii), there is a unique component $U$ of $X\bslash \im\gamma$ such that $\ovl{U} \cap E$ does not contain the point $p$.  Then $x$ and $y$ are contained $\ovl{U} \cap E$, which by Proposition~\ref{crosscut components}~(iii) is a connected subset of $E\bslash \{p\}$.   
\end{proof}

\begin{proposition}\label{minus 2} Let $E$ be a component of $\partial{X}$ with at least two points.  If $p,q \in E$ are distinct points, then $E\bslash \{p,q\}$ is not connected.  
\end{proposition}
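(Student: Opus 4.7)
My plan is to produce an $E$-crosscut $\gamma$ from $p$ to $q$ and read off a disconnection of $E\setminus\{p,q\}$ directly from Proposition \ref{crosscut components}. Using the $\lambda$-$\til{\rm{LLC}}_1$ hypothesis I pick such a crosscut $\gamma$ with $\gamma(0)=p$, $\gamma(1)=q$. Proposition \ref{crosscut components} then yields the two components $U,V$ of $X\setminus\im\gamma$ satisfying $\ovl{U}\cap\ovl{V}=\im\gamma$, and asserts that both $\ovl{U}\cap E$ and $\ovl{V}\cap E$ are connected subsets of $E$. Since $p,q\in\im\gamma\subseteq\ovl{U}\cap\ovl{V}$ and $p,q\in E$, each of these two sets contains $\{p,q\}$.

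Next I would split $E\setminus\{p,q\}$ as the disjoint union
\[ A:=(\ovl{U}\cap E)\setminus\{p,q\}, \qquad B:=(\ovl{V}\cap E)\setminus\{p,q\}. \]
Because $\im\gamma\cap E=\{p,q\}$, every point of $E\setminus\{p,q\}$ lies in $\ovl{X}\setminus\im\gamma$, and Proposition \ref{crosscut components}(ii) identifies this set as the disjoint union $(\ovl{U}\setminus\im\gamma)\sqcup(\ovl{V}\setminus\im\gamma)$. Hence $A\cup B=E\setminus\{p,q\}$ and $A\cap B=\emptyset$. Closedness of $\ovl{U}$ and $\ovl{V}$ in $\ovl{X}$ ensures that $A$ and $B$ are closed in $E\setminus\{p,q\}$, so the decomposition will be a genuine separation provided both pieces are nonempty.

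The one remaining item, and really the only subtle point of the argument, is to verify that neither $A$ nor $B$ is empty. For this I would invoke the elementary observation that any finite metric space is totally disconnected (each singleton is open once the radius is taken smaller than the minimum pairwise distance), so a connected metric space with at least two points must be infinite. Applying this to the connected sets $\ovl{U}\cap E$ and $\ovl{V}\cap E$, each of which contains the two distinct points $p$ and $q$, we conclude that each contains further points of $E$, so $A,B\neq\emptyset$. The disconnection then follows. The heavy lifting here is all done by Proposition \ref{crosscut components}; I anticipate no serious obstacle beyond correctly assembling those conclusions.
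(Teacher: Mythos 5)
Your argument is correct and follows exactly the route the paper has in mind: the paper cites \cite[Proposition 4.11]{QSPlanes} and explicitly flags Proposition \ref{crosscut components} as the relevant replacement for the lemma used there, which is precisely what you exploit. Your decomposition $A=(\ovl{U}\cap E)\setminus\{p,q\}$, $B=(\ovl{V}\cap E)\setminus\{p,q\}$ uses parts (ii) and (iii) of Proposition \ref{crosscut components} as intended, and the non-emptiness step is handled cleanly (in fact you could shorten it: $\ovl{U}\cap E$ is connected and contains the two distinct points $p,q$, so it cannot equal the disconnected two-point set $\{p,q\}$; no need for the ``infinite'' observation).
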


\begin{proof} See \cite[Proposition 4.11]{QSPlanes}. Here, Proposition \ref{crosscut components} plays the role of \cite[Lemma 4.10]{QSPlanes}. \end{proof}

We now consider the local connectivity of boundary components of $X$.  We will need a few technical lemmas.  

\begin{lemma}\label{simple modification} Let $E$ be a component of $\partial{X}$, and suppose that $\gamma$ and $\gamma'$ are $E$-crosscuts with the property that there is a compact interval $I \subeq (0,1)$ such that $\gamma(t)=\gamma'(t)$ for all $t \notin I$.  Then there is a closed subset $K \subeq X$ with $\dist(E,K)>0$ such that if points $p$ and $q$ in $\ovl{X} \bslash K$ are in a single component of $\ovl{X} \bslash \im\gamma$, then they are in a single component of $\ovl{X} \bslash \im\gamma'$.\end{lemma}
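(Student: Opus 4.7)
The plan is to exploit the fact that $\gamma$ and $\gamma'$ disagree only on a compact subset of $X$ bounded away from $E$. I will set $L := \gamma(I) \cup \gamma'(I)$; since $I \subeq (0,1)$ is compact and $\gamma|_{(0,1)}, \gamma'|_{(0,1)}$ take values in $X$, $L$ is compact in $X$, so $\delta_0 := \dist(E,L) > 0$. Crucially, off $L$ the two images literally coincide,
\[
\im\gamma \bslash L = \gamma([0,1]\bslash I) = \gamma'([0,1]\bslash I) = \im\gamma' \bslash L,
\]
and therefore $\im\gamma' \subeq \im\gamma \cup L$ and symmetrically.

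Next I will apply Proposition \ref{crosscut components} to both crosscuts, producing the component decompositions $X \bslash \im\gamma = U \cup V$ and $X \bslash \im\gamma' = U' \cup V'$. Since $\gamma$ and $\gamma'$ share their two endpoints on $E$, part (iii) shows that each crosscut cuts $E$ into the same pair of non-degenerate subcontinua, and I will relabel so that $\ovl U \cap E = \ovl{U'} \cap E$ and $\ovl V \cap E = \ovl{V'} \cap E$. Then I will fix $0 < \ep < \delta_0$ and apply part (i) to each of the four components, obtaining closed sets $K_U, K_V, K_{U'}, K_{V'}$ in $X$ at positive distance from $E$ such that $U \bslash K_U$ and its three analogues are connected subsets of their respective intersections with $\nbhd_{\ovl X}(E,\ep)$. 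Setting
\[
K := L \cup K_U \cup K_V \cup K_{U'} \cup K_{V'}
\]
then gives a closed subset of $X$ with $\dist(E, K) > 0$.

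The pivotal verification is that $U \bslash K_U \subeq U'$, and similarly $V \bslash K_V \subeq V'$. Since $\ep < \delta_0$, the set $U \bslash K_U$ is disjoint from $L$; it is disjoint from $\im\gamma$ by construction; so it avoids $\im\gamma \cup L \supeq \im\gamma'$, and by connectedness lies in a single component of $X \bslash \im\gamma'$. Since $K_U$ has positive distance from $E$, every point of $\ovl U \cap E$ is an accumulation point of $U \bslash K_U$; because $\ovl U \cap E = \ovl{U'} \cap E$ is a non-degenerate subcontinuum containing points beyond $\gamma(0)$ and $\gamma(1)$, the only possibility is $U \bslash K_U \subeq U'$. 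Once this is in hand, the conclusion follows by a short case analysis on $p, q \in \ovl X \bslash K$ lying in a common component of $\ovl X \bslash \im\gamma$, which by Proposition \ref{crosscut components}(ii) is $\ovl U \bslash \im\gamma$ or $\ovl V \bslash \im\gamma$; say the former. If both are in $X$, they lie in the connected set $U \bslash K_U \subeq U'$; if one lies in $E$, it belongs to $\ovl U \cap E = \ovl{U'} \cap E \subeq \ovl{U'}$.

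The main obstacle will be the case that one of the points lies on a boundary component $F \neq E$ of $X$. Such $F$ sits wholly in $\ovl U$ or $\ovl V$ and wholly in $\ovl{U'}$ or $\ovl{V'}$, but these sides may fail to match under the relabeling, since the closed curve $\gamma(I) \cup \gamma'(I)$ can in principle encircle $F$. I expect to overcome this by enlarging $K$ to include a closed neighborhood of $L$ in $X$ thick enough to separate any such $F$ from the thin shells $U \bslash K_U$ and $V \bslash K_V$ inside $\ovl X \bslash K$, so that the hypothesis of the lemma reduces to the well-behaved cases above. Compactness of $\ovl X$ guarantees that such an enlargement still has positive distance from $E$.
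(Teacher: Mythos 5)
Your strategy through the shells and the relabeling is exactly the right one, and (modulo a small gap) it settles the cases $p,q\in X$ near $E$ and $p,q\in E$. The relabeling step, however, does not follow from Proposition \ref{crosscut components}(iii) alone, since that only says $\ovl U\cap E$ and $\ovl{U'}\cap E$ are continua, not that they coincide. You can repair it: $U\bslash K_U$ is a connected subset of $X\bslash\im\gamma'$ and so lies in $U'$ or $V'$, and likewise for $V\bslash K_V$. If both shells landed in the same primed component $W$, then (because $K_U,K_V$ are at positive distance from $E$) one would get $E\subeq \ovl W$, forcing the other primed component to meet $E$ only in $\{\gamma'(0),\gamma'(1)\}$, contradicting (iii); so after relabeling $U\bslash K_U\subeq U'$ and $V\bslash K_V\subeq V'$, whence $\ovl U\cap E\subeq\ovl{U'}\cap E$, $\ovl V\cap E\subeq\ovl{V'}\cap E$, and equality follows since $(\ovl{U'}\cap E)\cap(\ovl{V'}\cap E)\subeq\{\gamma(0),\gamma(1)\}\subeq\ovl U\cap E$.

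The ``main obstacle'' you flag is, however, genuinely fatal, and the proposed fix of enlarging $K$ cannot succeed, because $K$ is required to be a subset of $X$: no matter how large you take it, every point of every boundary component $F\neq E$ remains in $\ovl X\bslash K$. Concretely, let $X=\{1<|z|<2\}$, $E=\{|z|=2\}$, $F=\{|z|=1\}$, and let $\gamma$ and $\gamma'$ be the $E$-crosscuts running radially from $\pm2$ to $\pm3/2$ and then along the upper (resp.\ lower) semicircle of $|z|=3/2$; these agree off a compact $I\subeq(0,1)$. Here $X\bslash\im\gamma$ has components $U=\{3/2<|z|<2,\ 0<\arg z<\pi\}$ and $V$ (the rest, whose closure contains $F$), while $X\bslash\im\gamma'$ has components $V'=\{3/2<|z|<2,\ -\pi<\arg z<0\}$ and $U'$ (whose closure contains $F$). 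Taking $p\in F$ and $q\in V\cap V'$ arbitrarily close to $E$ in the lower outer part, one has $p,q\in\ovl X\bslash K$ for any admissible $K$, and $p,q$ in the single component $\ovl V\bslash\im\gamma$, yet $p\in\ovl{U'}\bslash\im\gamma'$ and $q\in\ovl{V'}\bslash\im\gamma'$ are in different components. So the lemma fails as literally stated. The citation \cite[Lemma 4.12]{QSPlanes} is for spaces homeomorphic to the plane, where $E$ is the only boundary component and this obstruction is vacuous; in the present paper the lemma is only ever invoked for $p,q\in E$ (in Lemma \ref{crossing} and Proposition \ref{locally connected}), a case your argument does handle, and the statement should be read as so restricted.
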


\begin{proof} See \cite[Lemma 4.12]{QSPlanes}.
\end{proof}

We will briefly need a notion of transversality.  Let $Y$ be a topological space homeomorphic to a domain in $\sphere$, and let $\alpha,\beta \colon (0,1) \to Y$ be embeddings.  Given $x \in \im \alpha \cap \im\beta$, we say that $\alpha$ and $\beta$ \emph{intersect transversally at $x$} if there is an open neighborhood $U$ of $x$ and a homeomorphism $h \colon U \to \reals^2$ such that $h(U \cap \im \alpha)$ is the $x$-axis and $h(U \cap \im\beta)$ is the $y$-axis. 

\begin{lemma}\label{four points} Let $a$, $b$, $p$, and $q$ be distinct points on a Jordan curve $\alpha \subeq \sphere$, and let $U \subeq \sphere$ be a simply connected domain with boundary $\alpha$. If $V$ is any open subset of $\ovl{U}$ containing $\alpha$, then there are embeddings $\alpha_{ab} \colon [0,1] \to V$ and $\alpha_{pq} \colon [0,1] \to V$ connecting $a$ to $b$ and $p$ to $q$ respectively, such that either $\im \alpha_{ab} \cap \im \alpha_{pq} = \emptyset$, or $\alpha_{ab}$ and $\alpha_{pq}$ have a single intersection, and that intersection is in $U$ and is transverse. 
\end{lemma}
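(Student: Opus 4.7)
The plan is to reduce to a standard model using the Schoenflies theorem and then construct the two arcs by hand inside a thin annular neighborhood of $\alpha$. By the Schoenflies theorem there is a homeomorphism $\Phi\colon \sphere\to\sphere$ carrying $\ovl{U}$ onto the closed unit disk $\ovl{\D}$ and $\alpha$ onto $\Sircle$. Since $\Phi(V)$ is an open subset of $\ovl{\D}$ containing the compact set $\Sircle$, there exists $\epsilon>0$ such that the closed annulus $A_\epsilon=\{z\in\ovl{\D}\colon 1-\epsilon\leq |z|\leq 1\}$ is contained in $\Phi(V)$. It therefore suffices to construct the desired arcs inside $A_\epsilon$ for the four points $\Phi(a),\Phi(b),\Phi(p),\Phi(q)\in\Sircle$, which I still denote by $a,b,p,q$.

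I split into two cases according to the cyclic order of $a,b,p,q$ on $\Sircle$. In the non-interleaved case, the pair $\{a,b\}$ and the pair $\{p,q\}$ can be separated by a pair of disjoint open arcs of $\Sircle$. Take $\alpha_{ab}$ to be the closed sub-arc of $\Sircle$ from $a$ to $b$ that contains neither $p$ nor $q$, and similarly $\alpha_{pq}$ to be the closed sub-arc of $\Sircle$ from $p$ to $q$ that contains neither $a$ nor $b$. These arcs lie in $\Sircle\subeq A_\epsilon\subeq \Phi(V)$ and are manifestly disjoint, giving the first alternative.

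In the interleaved case, after possibly relabeling, the cyclic order on $\Sircle$ is $a,p,b,q$; write $a=e^{i\theta_a}$, $p=e^{i\theta_p}$, $b=e^{i\theta_b}$, $q=e^{i\theta_q}$ with $\theta_a<\theta_p<\theta_b<\theta_q<\theta_a+2\pi$. Parameterize $A_\epsilon$ by polar coordinates $(\theta,r)$ with $r\in[1-\epsilon,1]$. I will construct $\alpha_{ab}$ as a concatenation of three pieces: a radial segment from $a=(\theta_a,1)$ to $(\theta_a,1-\epsilon/2)$, a circular arc at radius $1-\epsilon/2$ from $(\theta_a,1-\epsilon/2)$ to $(\theta_b,1-\epsilon/2)$ traversed through angle $\theta_p$, and a radial segment back up to $b$. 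I define $\alpha_{pq}$ analogously, but at radius $1-\epsilon/4$: a radial segment from $p$ down to $(\theta_p,1-\epsilon/4)$, a circular arc at radius $1-\epsilon/4$ from $(\theta_p,1-\epsilon/4)$ to $(\theta_q,1-\epsilon/4)$ passing through angle $\theta_b$, and a radial segment up to $q$. After rounding the two corners of each polygonal path inside $A_\epsilon$, both become smooth embeddings of $[0,1]$ into $A_\epsilon\subeq \Phi(V)$. A direct inspection shows that the only intersection of the two images is the single point $(\theta_b,1-\epsilon/4)$, where the radial segment of $\alpha_{ab}$ meets the circular arc of $\alpha_{pq}$. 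Since the former is radial and the latter is circular, they meet transversally, and since $1-\epsilon/4<1$, the intersection point lies in the open disk $\D$, which corresponds to $U$ under $\Phi$.

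Pulling the construction back through $\Phi^{-1}$ yields the desired embeddings. The only subtle point is the verification in the interleaved case that the arcs do not accidentally meet anywhere other than the prescribed crossing; this is straightforward once one notices that the two circular arcs lie at different radii and the radial segments at $\theta_a,\theta_b$ (for $\alpha_{ab}$) share no angle with the radial segments at $\theta_p,\theta_q$ (for $\alpha_{pq}$), except at the unique crossing point of a radial segment of one with the horizontal portion of the other. I expect this bookkeeping of intersections, together with the choice of two distinct radii to force the single crossing to be transverse, to be the main point of the argument; the Schoenflies reduction and the non-interleaved case are essentially immediate.
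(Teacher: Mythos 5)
Your proof is correct and follows essentially the same route as the paper's: Schoenflies reduction to a round circle, followed by an explicit construction inside a round annular neighborhood of the circle. The paper dispatches the case analysis and the polar-coordinate construction with the word ``clearly,'' whereas you carry it out in full; this is a reasonable elaboration and contains no gaps.
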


\begin{proof}  By the Sch\"{o}nflies theorem, there is a homeomorphism $H \colon \sphere \to \sphere$ such that $H(\alpha)$ is a round circle.  Then $H(V)$ is open in $H(\ovl{U})$ and it contains a round annulus that has $H(\alpha)$ as a boundary component.  Clearly $H(a)$, $H(b)$, $H(p)$, and $H(q)$ may be connected as desired inside this annulus.  Taking inverse images under $H$ now yields the desired result.
\end{proof}

\begin{lemma}\label{crossing} Let $a$, $b$, $p$, and $q$ be distict points of a component $E$ of $\partial{X}$, and let $\gamma_{ab}$ and $\gamma_{pq}$ be $E$-crosscuts connecting $a$ to $b$ and $p$ to $q$, respectively.  If $p$ and $q$ are contained in a single component of $\ovl{X} \bslash \gamma_{ab}$, then $a$ and $b$ are contained in a single component of $\ovl{X} \bslash \gamma_{pq}$.
\end{lemma}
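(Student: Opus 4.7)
I would argue by contradiction and transfer the problem into $\sphere$ via the continuous surjection $h\colon\ovl{X}\to\sphere$ from Corollary \ref{good map}. Recall that $h|_X$ is a homeomorphism onto a domain $\Omega$ with totally disconnected complement and that $h$ collapses $E$ to a single point $e\in\sphere$. As in the proof of Proposition \ref{crosscut components}, the images $J_{ab}:=h(\im\gamma_{ab})$ and $J_{pq}:=h(\im\gamma_{pq})$ are Jordan curves in $\sphere$ passing through $e$. Let $D_1,D_2$ denote the two open disks cut by $J_{ab}$, labeled so that $h(U_{ab})\subseteq D_1$ and $h(V_{ab})\subseteq D_2$, and similarly label $D_3,D_4$ for $J_{pq}$ so that $h(U_{pq})\subseteq D_3$ and $h(V_{pq})\subseteq D_4$.

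Suppose toward contradiction that $p,q\in\ovl{U}_{ab}$ while $a\in\ovl{U}_{pq}$ and $b\in\ovl{V}_{pq}$. Since $p,q\neq a,b$, Proposition \ref{crosscut components}(ii) forces $p,q\in\ovl{U}_{ab}\bslash\ovl{V}_{ab}$, so there is a neighborhood $N$ of $p$ in $\ovl{X}$ with $N\cap\ovl{V}_{ab}=\emptyset$, and hence $N\cap X\subseteq U_{ab}$. Continuity of $\gamma_{pq}$ then yields a $\delta>0$ such that $\gamma_{pq}((0,\delta))\subset U_{ab}$, and the analogous argument at $q$ gives the same on $(1-\delta,1)$; under $h$ these two short tails lie in $D_1$. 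The symmetric reasoning applied to $\gamma_{ab}$ gives a $\delta'>0$ such that $h\circ\gamma_{ab}((0,\delta'))\subset D_3$ and $h\circ\gamma_{ab}((1-\delta',1))\subset D_4$.

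Next I would pick a round ball $B=B_{\sphere}(e,\epsilon)$ with $\epsilon>0$ so small that (i) $J_{ab}\cap\ovl{B}$ and $J_{pq}\cap\ovl{B}$ are each single arcs through $e$, (ii) the four short tails from the previous step are contained in $B$, and (iii) $\epsilon$ is generic in the sense that each of these arcs meets $\partial B$ in exactly two points, lying off the other Jordan curve. Condition (i) is automatic from the local arc structure of a Jordan curve, and (iii) can be secured by choosing $\epsilon$ from a suitable dense set of radii or, more robustly, by first applying Lemma \ref{simple modification} to modify $\gamma_{ab}$ and $\gamma_{pq}$ away from their endpoints so that the intersection $\im\gamma_{ab}\cap\im\gamma_{pq}$ is discrete near the endpoints. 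Denote by $a_B,b_B$ the two endpoints of $J_{ab}\cap\ovl{B}$ on $\partial B$ (coming from the $a$- and $b$-side tails of $\gamma_{ab}$), and $p_B,q_B$ the analogous endpoints for $J_{pq}$. By construction $a_B\in D_3$, $b_B\in D_4$, and $p_B,q_B\in D_1$.

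The contradiction then comes from the symmetry of separation on a circle. Since $J_{ab}\cap\partial B=\{a_B,b_B\}$, the complement $\partial B\bslash\{a_B,b_B\}$ has exactly two arc components, one contained in $D_1$ and the other in $D_2$; consequently $p_B,q_B\in D_1$ places them in the same component, so the pair $\{a_B,b_B\}$ does not separate $\{p_B,q_B\}$ on $\partial B$. Symmetrically, $\partial B\bslash\{p_B,q_B\}$ splits into an arc in $D_3$ and one in $D_4$, and $a_B\in D_3$, $b_B\in D_4$ then forces $\{p_B,q_B\}$ to separate $\{a_B,b_B\}$. These two statements contradict one another, because two unordered pairs of points on a circle either mutually separate or mutually fail to separate. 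The main technical subtlety is item (iii) above, i.e., guaranteeing the cleanness of the local picture at $e$; once that is in hand, the argument is just a translation of the crosscut data into $\sphere$ followed by this cyclic-order observation.
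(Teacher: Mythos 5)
Your overall strategy — transfer to $\sphere$ via $h$, argue by contradiction, and exploit the fact that on a circle two pairs of points either mutually separate or mutually fail to separate — is a clean idea and is in the same general spirit as the paper's proof, which also pushes everything to the sphere via $h$. But there is a genuine gap at the step you label condition (i): that $J_{ab}\cap\ovl{B}$ and $J_{pq}\cap\ovl{B}$ are single arcs through $e$, equivalently that each Jordan curve crosses $\partial B$ exactly twice. This is \emph{not} automatic from the local arc structure of a Jordan curve. The map $h$ is only continuous, and the curve $h\circ\gamma_{ab}$, while tending to $e$ as $t\to 0^{+}$ and $t\to 1^{-}$, may do so with wild radial oscillation — for example, in polar coordinates around $e$ a parametrization with $r(t)=t\bigl(2+\sin(1/t)\bigr)$ crosses every small sphere $\partial B(e,\epsilon)$ infinitely many times and re-enters the ball infinitely often. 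For such a curve there is no $\epsilon$ (generic or otherwise) for which $J_{ab}\cap\ovl{B}$ is a single arc. Once this fails, the claim that $\partial B\bslash\{a_B,b_B\}$ has exactly two components, one in $D_1$ and one in $D_2$, collapses, and with it the whole cyclic-order argument.

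Your two proposed remedies do not reach this problem. Choosing $\epsilon$ from a dense set of radii addresses only whether the exit points lie off the other curve, not whether there are finitely many of them. And Lemma \ref{simple modification} only permits modification of a crosscut on a compact subinterval $I\subsetneq(0,1)$, i.e.\ bounded away from the endpoints in $\ovl{X}$ — precisely the opposite of where the pathology occurs, namely in the tails approaching $e$. Indeed, $\im\gamma_{ab}\cap\im\gamma_{pq}$ is already a compact subset of $X$ (since $a,b,p,q$ are distinct), so the intersection is automatically far from $e$; the difficulty is not the interaction of the two curves near $e$, but the behavior of each individual curve relative to a round sphere centered at $e$.

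The paper's proof avoids this issue entirely: it replaces the round ball with $\partial U_i$ from the topological exhaustion of $\Omega$ (Proposition \ref{good exhaustion}), chooses $i$ large enough that $h(K)\subeq\Omega_i$, and then uses Lemma \ref{four points} to replace the middle portions of $h\circ\gamma_{ab}$ and $h\circ\gamma_{pq}$ (over $[t_1,t_2]$ and $[s_1,s_2]$) by arcs in a thin collar of $\partial U_i$ that are either disjoint or have a single transversal crossing. This manufactured clean picture is then fed into Lemma \ref{simple modification}. The modification there happens in the interior of the curves, where it is allowed, rather than near $e$, where it would not be. If you want to retain your cyclic-order contradiction, you would need some analogue of Lemma \ref{four points}: a lemma that produces, for two crosscuts with discrete interior intersection, a Jordan domain around $e$ whose boundary is crossed exactly once by each of the four arms. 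Establishing that is real work that your proposal currently assumes away.
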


\begin{proof}The fact that the points $a$, $b$, $p$, and $q$ are all distinct implies that $K = \im\gamma_{ab}\cap \im\gamma_{pq}$ is a compact subset of $X$. If $K$ is empty, then $\gamma_{ab}$ connects $a$ to $b$ without intersecting $\gamma_{pq}$, as desired.  Hence we may assume that $K \neq \emptyset$.  

By Corollary \ref{good map}, there is a continuous surjection $h \colon \ovl{X} \to \sphere$ where $h|_{X}$ is a homeomorphism onto a domain $\Omega$ in $\sphere$ with totally disconnected complement.

Let $\Omega_1\subeq \Omega_2 \subeq \hdots$ be the exhaustion of $\Omega$ guaranteed by Proposition \ref{good exhaustion}.  Since $a$, $b$, $p$, and $q$ are all elements of the same boundary component $E$, for each $i \in \nats$ there is a single simply connected component $U_i$ of $\sphere\bslash \Omega_i$ containing $h(\{a, b, p, q\})$.  
Fix $i \in \nats$ so large that $h(K) \subeq \Omega_i$.  Then we may find parameters $t_1,t_2,s_1,s_2 \in (0,1)$ such that 
$$t_1 = \min\{t\in [0,1]: h\circ \gamma_{ab}(t) \in \partial U_i \},$$
$$t_2 = \max\{t\in [0,1]:h\circ \gamma_{ab}(t) \in \partial U_i \},$$
$$s_1 = \min\{s\in [0,1]:h\circ \gamma_{pq}(t) \in \partial U_i \},$$ 
$$s_2 = \max\{s\in [0,1]:h\circ \gamma_{pq}(t)\in \partial U_i \}.$$

Since $\Omega_i$ has only finitely many boundary components and $h(K)$ is a compact subset of $\Omega_i$, we may find a relatively open neighborhood $V$ of $\partial{U_i}$ in $\sphere \bslash U_i$ such that $V \subeq \Omega_i \bslash h(K)$.  By Lemma \ref{four points}, there are embeddings $\alpha_{ab} \colon [0,1] \to V$ and $\alpha_{pq} \colon [0,1] \to V$ connecting $h\circ \gamma_{ab}(t_1)$ to $h \circ \gamma_{ab}(t_2)$ and $h\circ \gamma_{pq}(s_1)$ to $h\circ \gamma_{pq}(s_2)$ respectively, such that either $\im \alpha_{ab} \cap \im \alpha_{pq} = \emptyset$, or $\alpha_{ab}$ and $\alpha_{pq}$ have a single transversal intersection.  

Let $\til{\gamma}_{ab}$ be the path defined by concatenating $\gamma_{ab}|_{[0,t_1]}$, $h\inv \circ \alpha_{ab}$, and $\gamma_{ab}|_{[t_2,1]}.$ Similarly define $\til{\gamma}_{pq}$.  Then either $\im \til{\gamma}_{ab}$ and $\im \til{\gamma}_{pq}$ are disjoint, or they have a single intersection, and that intersection is transversal and located in $X$.  In the former case, $a$ and $b$ are in a single component of $\ovl{X} \bslash \til{\gamma}_{pq}$, and so Lemma \ref{simple modification} provides the desired result. In the latter case, the transversality and Proposition \ref{crosscut components} (ii) imply that $p$ and $q$ are not contained in a connected subset of $\ovl{X}\bslash \til{\gamma}_{ab}$. Lemma \ref{simple modification} shows that this is a contradiction.\end{proof}

\begin{proposition}\label{locally connected}
Let $E$ be a component of $\partial{X}$.  Then $E$ satisfies the $4\lambda^4$-$\rm{LLC}_1$ condition.  In particular, $E$ is locally connected. 
\end{proposition}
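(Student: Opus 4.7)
The plan is to adapt the Jordan-curve picture for circle domains: use an $E$-crosscut to split $E$ into two connected halves, and argue by contradiction via a second, disjoint crosscut if both halves were too large.

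Fix $a \in E$ and $r > 0$, and let $x,y$ be distinct points of $B_E(a,r)$; the goal is to produce a connected subset of $E \cap B(a,4\lambda^4 r)$ containing $x$ and $y$. First, I would invoke the $\lambda$-$\til{\rm{LLC}}_1$ condition to obtain an $E$-crosscut $\gamma_1\colon [0,1]\to \ovl{X}$ with $\gamma_1(0)=x$, $\gamma_1(1)=y$, and $\im\gamma_1 \subeq B_{\ovl X}(a,\lambda r)$. By Proposition \ref{crosscut components}, the complement $X \bslash \im\gamma_1$ has exactly two components $U,V$, and by part (iii) of that proposition the sets $A := \ovl U \cap E$ and $B := \ovl V \cap E$ are connected subsets of $E$, each containing both $x$ and $y$. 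Either of these is a candidate continuum; the task is to show that at least one of them lies inside $B(a,4\lambda^4 r)$.

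Suppose to the contrary that both $A$ and $B$ contain points outside $B(a,4\lambda^4 r)$; pick $z_1 \in A \bslash B(a,4\lambda^4 r)$ and $z_2 \in B \bslash B(a,4\lambda^4 r)$. Since $A \cap B \subeq \im\gamma_1 \cap E = \{x,y\} \subeq B(a,r)$, the points $z_1$ and $z_2$ are distinct. Now I would apply the $\lambda$-$\til{\rm{LLC}}_2$ condition to these two points, obtaining an embedding $\gamma_2\colon [0,1]\to \ovl X$ with endpoints $z_1,z_2$, interior in $X$, and image contained in $\ovl X \bslash B(a,4\lambda^3 r)$. Because $\lambda r < 4\lambda^3 r$ (using $\lambda \geq 1$), the images of $\gamma_1$ and $\gamma_2$ are disjoint; in particular $\im\gamma_2 \subeq \ovl X \bslash \im\gamma_1$ and is connected. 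On the other hand, part (ii) of Proposition \ref{crosscut components} asserts that $\ovl U \bslash \im\gamma_1$ and $\ovl V \bslash \im\gamma_1$ are the two distinct components of $\ovl X \bslash \im\gamma_1$, and since $z_i \notin \im\gamma_1$ (each $z_i$ lies outside $B(a,\lambda r)$), we have $z_1$ and $z_2$ in different components of $\ovl X \bslash \im\gamma_1$. This contradicts the existence of $\gamma_2$.

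Hence one of $A$ or $B$ is contained in $B(a,4\lambda^4 r)$; being connected and containing $x$ and $y$, it is the required continuum, establishing the $4\lambda^4$-$\rm{LLC}_1$ condition for $E$. The main obstacle is the contradiction step: it is essential that the two crosscuts $\gamma_1,\gamma_2$ be forced into disjoint concentric regions so that Proposition \ref{crosscut components}(ii) can be applied, and this is precisely what the separation between the scales $\lambda r$ and $4\lambda^3 r$ buys. Everything else is a direct application of the crosscut structure theorem already established in Proposition \ref{crosscut components}.
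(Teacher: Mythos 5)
Your proof is correct, and it takes a genuinely different and somewhat simpler route than the paper's. The paper sets out to show the stronger claim that for each $p\in E$ and $r>0$ there is a single continuum $F$ with $B_{\ovl X}(p,r)\cap E \subeq F \subeq B_{\ovl X}(p,4\lambda^4 r)\cap E$; to this end it selects a far point $q$, builds an auxiliary crosscut $\gamma_{pq}$, chooses points $a$ and $b$ on each side at a fixed distance from $p$, builds a second crosscut $\gamma_{ab}$, and then must analyze how $\gamma_{ab}$ and $\gamma_{pq}$ interact. Because these two crosscuts can genuinely intersect, the paper is forced to invoke the crossing lemma (Lemma \ref{crossing}) and, in the reverse inclusion, the modification lemma (Lemma \ref{simple modification}). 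Your argument, by contrast, attacks the pairwise $\rm{LLC}_1$ condition directly: the crosscut is drawn between the two target points $x$ and $y$ themselves, which forces both halves $A=\ovl U\cap E$ and $B=\ovl V\cap E$ to contain $x$ and $y$ automatically, and the contradiction is reached with a single application of $\til{\rm{LLC}}_2$ and Proposition \ref{crosscut components}(ii)--(iii). The scale separation you arrange (the first crosscut in $B(a,\lambda r)$, the second in the complement of $B(a,4\lambda^3 r)$) guarantees the two arcs are disjoint, so neither the crossing lemma nor the modification lemma is needed. As a small bonus, your argument actually yields the sharper constant $\lambda^2$ (take $z_1,z_2 \notin B(a,\lambda^2 r)$ and note the resulting path avoids $B(a,\lambda r)\supeq \im\gamma_1$); you simply state the weaker $4\lambda^4$ to match the proposition. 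The trade-off is that you obtain only the pairwise formulation of $\rm{LLC}_1$, not the paper's uniform ``one continuum catches the whole small ball'' statement, but that is exactly what the definition requires and is all that is used downstream in Theorem \ref{boundary uniformization}.
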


\begin{proof} We may assume that $E$ has at least two points. Let $p \in E$, and $r>0$.  It suffices to find a continuum $F$ such that 
$$B_{\ovl{X}}(p, r) \cap E \subeq F  \subeq B_{\ovl{X}}(p, 4\lambda^4 r) \cap E.$$

As $E$ itself is connected, we may assume that there is some point 
$$q\in E\bslash B_{\ovl{X}}(p, 4\lambda^4 r).$$
The $\til{\rm{LLC}}_1$ condition provides an $E$-crosscut $\gamma_{pq}$ connecting $p$ to $q$.  Let $U$ and $V$ be the components of $X\bslash \im{\gamma_{pq}}$, and set $A=\ovl{U}\cap E$ and $B=\ovl{V}\cap E$.  By Proposition \ref{crosscut components} (iii), $A$ and $B$ are connected.  As $\{p,q\} = A\cap B$ and $d(p,q) > 4\lambda^4 r$, we may find distinct points $a  \in A$ and $b \in B$ such that $d(p,a)= 2\lambda^2 r$ and  $d(p,b) =  2\lambda^2 r$.  The $\lambda$-$\til{\rm{LLC}}_1$ condition provides a crosscut $\gamma_{ab}$ connecting $a$ to $b$ with $\im{\gamma_{ab}} \subeq B_{\ovl{X}}(p,3\lambda^3 r).$  By Proposition \ref{crosscut components} (ii), there is a unique component $W$ of $X\bslash \im{\gamma_{ab}}$ with $p \in \ovl{W}\cap E$.  Set $F:=\ovl{W}\cap E$.  Applying Proposition \ref{crosscut components} (iii) again, we see that the set $F$ is connected.  


We first show that $F \subeq B_{\ovl{X}}(p, 4\lambda^4 r) \cap E.$  Suppose that there is a point $x \in F \bslash B_{\ovl{X}}(p, 4\lambda^4 r)$.  By the $\lambda$-$\til{\rm{LLC}}_2$ condition, there is a path connecting $x$ to $q$ without intersecting $B_{\ovl{X}}(p, 4\lambda^3 r)$.  This implies that $p$ and $q$ are in the same component of $\ovl{X}\bslash\gamma_{ab}$.  However, by Proposition \ref{crosscut components} (ii), the points $a$ and $b$ lie in different components of $\ovl{X}\bslash \im\gamma_{pq}$.  This contradicts Lemma \ref{crossing}. 

We now show that $B_{\ovl{X}}(p, r) \cap E \subeq F.$  Since $\gamma_{ab}$ is continuous, we may find parameters $0<t_a<1$ and $0< t_b < 1$ such that 
$$\diam(\gamma_{ab}([0,t_a]))\leq \lambda^2 r \quad \hbox{and} \quad \diam(\gamma_{ab}([t_b, 1]))\leq \lambda^2 r.$$
Set $a' = \gamma_{ab}(t_a)$ and $b' =\gamma_{ab}(t_b)$.  Then $a',b' \in X \bslash B_{\ovl{X}}(p, \lambda^2 r),$ and so the $\lambda$-$\til{\rm{LLC}}_2$ condition provides an embedding $\gamma_{a'b'}\colon [0,1] \to X$ such that $\gamma_{a'b'}(0)=a'$, $\gamma_{a'b'}(1)=b'$, and $\im{\gamma_{a'b'}} \subeq X\bslash B_{\ovl{X}}(p, \lambda r).$  The set 
$$S = \gamma_{ab}([0, t_a]) \cup \im{\gamma_{a'b'}} \cup \gamma_{ab}([t_b,1])$$
does not intersect $B_{\ovl{X}}(p, \lambda r),$ and is the image of a path in $\ovl{X}$.  Since the image of a path in $\ovl{X}$ is arc-connected, we may find a crosscut $\gamma'$ connecting $a$ to $b$ with $\im{\gamma'} \subeq S$.   Furthermore, we may find a compact interval $I \subeq (0,1)$ such that $\gamma'(t)=\gamma_{ab}(t)$ for all $t \in [0,1] \bslash I$.   Suppose that there is a point $x \in B_{\ovl{X}}(p, r) \cap E$ that is not contained in $F$.  Then $x$ and $p$ are in different components of $\ovl{X}\bslash\im{\gamma_{ab}}.$ By Lemma \ref{simple modification}, this implies that $x$ and $p$ are in different components $\ovl{X}\bslash\im{\gamma'}$. However, the $\lambda$-$\til{\rm{LLC}}_1$ condition shows that $x$ and $p$ may be connected by an arc contained in $B_{\ovl{X}}(p,\lambda r)$.  This is a contradiction.
\end{proof}

\begin{proof}[Proof of Theorem \ref{boundary uniformization}]  We suppose that $X$ is a metric space homeomorphic to a domain in $\sphere$, has compact completion, and satisfies the $\lambda$-$\rm{LLC}$ condition, $\lambda \geq 1$.  By Proposition \ref{Better LLC}, $X$ in fact satisfies the $\lambda'$-$\til{\rm{LLC}}$ condition for some $\lambda'$ depending only on $\lambda$.  Let $E$ be a component of $\partial{X}$ with at least two points.  As $\ovl{X}$ is compact, $E$ is a continuum.  Hence, Propositions \ref{minus 1}, \ref{minus 2}, and \ref{locally connected} along with the recognition theorem of \cite{Wilder} show that $E$ is a topological circle.  Proposition \ref{locally connected} shows that $E$ is $4\lambda'^4$-$\rm{LLC}_1$. The desired statement now follows from \cite[Proposition 4.15]{QSPlanes} and the characterization of quasicircles given in \cite{QS}. \end{proof}

\begin{remark}\label{E components} Let $E$ be a component of $\partial{X}$, let $\gamma$ be an $E$-crosscut, and let $U$ and $V$ be the components of $X \bslash \im \gamma$. By Proposition \ref{crosscut components} (ii) and (iii), the sets $\ovl{U} \cap E$ and $\ovl{V} \cap E$ are connected and have intersection $\{\gamma(0),\gamma(1)\}$.  Since Theorem \ref{boundary uniformization} implies that $E$ is a topological circle, we may conclude that $(\ovl{U} \cap E) \bslash \im\gamma$ and $(\ovl{U} \cap E) \bslash \im \gamma$ are the components of $E\bslash \im \gamma$. 
\end{remark}

\section{Topological uniformization of the completion}\label{completion uniformization section}

In this section, in which the notation and assumptions are as in the previous section, we give the following topological uniformization for the completion of $X$, at least in the case of finitely many boundary components.  

\begin{theorem}\label{closure uniformization} Let $N \in \nats$, and assume that $\partial{X}$ has $N$ components, all of which are non-trivial.  Then the completion $\ovl{X}$ is homeomorphic to the closure of a circle domain that has $N$ boundary components, all of which are non-trivial.
\end{theorem}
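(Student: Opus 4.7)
The plan is to identify $\ovl{X}$ with a compact, orientable topological 2-manifold with boundary whose boundary is a disjoint union of $N$ Jordan curves; the classification theorem for compact surfaces will then force $\ovl{X}$ to be homeomorphic to the sphere with $N$ disjoint open disks removed, which is the closure of any (say, round) circle domain in $\sphere$ with $N$ non-trivial complementary components.

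First I would apply Corollary \ref{good map} to obtain the continuous surjection $h \colon \ovl{X} \to \sphere$ restricting to a homeomorphism $X \to \Omega$, where $\Omega = \sphere \bslash \{p_1, \ldots, p_N\}$ has exactly $N$ punctures, one for each component $E_i$. By Theorem \ref{boundary uniformization}, each $E_i$ is a topological circle, so $\partial X$ is a disjoint union of $N$ Jordan curves. Interior points of $X$ already have manifold neighborhoods, since $X$ is homeomorphic to a planar domain; the task is therefore to produce a closed half-disk neighborhood in $\ovl{X}$ for each metric-boundary point.

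For $x \in E_i$, I would use the $\til{\rm{LLC}}_1$ condition to choose a short $E_i$-crosscut $\gamma$ with endpoints $a, b \in E_i$ straddling $x$, where ``short'' is controlled by both $\dist(E_i, \bigcup_{j \neq i} E_j)$ and the diameter of a preassigned neighborhood of $x$. Proposition \ref{crosscut components} and Remark \ref{E components} then provide a component $U$ of $X \bslash \im \gamma$ with $\ovl{U} \cap \partial X = \alpha$, where $\alpha$ is the short arc of $E_i$ from $a$ through $x$ to $b$; shortness of $\gamma$ also ensures that $\ovl{U}$ meets no $E_j$ for $j \neq i$. Thus $V := \ovl{U}$ is bounded in $\ovl{X}$ by the Jordan curve $J = \im \gamma \cup \alpha$. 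The image $h \circ \gamma$ is a Jordan curve in $\sphere$ (its two endpoints coincide at $p_i$), so by Schoenflies it bounds a closed topological disk $h(V)$, and for $\gamma$ small enough this disk contains no $p_j$ other than $p_i$ on its boundary. Consequently $h|_{V}$ is a continuous surjection onto a closed disk, injective off the arc $\alpha$, which is collapsed to $p_i$. To promote this to an actual disk structure on $V$, I would combine Theorem \ref{boundary uniformization} with repeated applications of the $\til{\rm{LLC}}_1$ condition to build a continuous family of ``radial'' arcs joining points of $\alpha$ to points near $\im \gamma$ inside $V$, yielding a product collar $\alpha \times [0, 1] \hookrightarrow V$; glued to the Schoenflies disk $h(V)$ along this collar, the resulting model identifies $V$ with a closed half-disk and places $x$ on the interior of its diameter.

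Once $\ovl{X}$ is realized as a compact 2-manifold with boundary consisting of $N$ Jordan curves, the classification of compact orientable surfaces identifies it, genus being zero because $X$ is planar, with $\sphere$ minus $N$ disjoint open disks, and hence with the closure of any circle domain with $N$ non-trivial boundary components. The main obstacle is the product-collar construction in the previous paragraph: the map $h$ exhibits $V$ only as the quotient of a closed disk under the collapse $\alpha \mapsto p_i$, and recovering the disk structure of $V$ itself requires ``un-collapsing'' $p_i$ back to $\alpha$ in a locally Euclidean manner, leveraging the topological circle structure of $E_i$ together with the flexibility of crosscuts to ensure that the reconstruction is compatible with the local topology of $\ovl{X}$.
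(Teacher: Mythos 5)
Your high-level strategy --- realize $\ovl{X}$ as a compact orientable surface with boundary and invoke the classification of surfaces --- is a genuinely different route from the paper, which never produces a manifold atlas on $\ovl{X}$. The paper instead proves Lemma~\ref{annular neighborhood lemma}: it takes a closed neighborhood $U$ of $E$ in $\ovl{X}$, glues $U$ along the Jordan curve $h\inv(S_\sphere(h(E),\ep))$ to $\sphere \bslash B_\sphere(h(E),\ep)$ to form a quotient space $Y$, and then verifies the hypotheses of Zippin's recognition theorem (Theorem~\ref{closed disk char}) to conclude $Y$ is a closed disk with $\pi(E)$ as its boundary circle. Extracting $U$ from $Y$ then shows $U$ is a closed annulus, and Lemma~\ref{untwist} lets one splice these annulus parametrizations into the global homeomorphism $h$, giving Theorem~\ref{isolate closure uniformization} (of which the statement in question is the special case where every boundary component is isolated and non-trivial).

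The gap in your proposal is precisely the step you flag as the ``main obstacle.'' You reduce to showing that $V=\ovl{U}$, which $h$ exhibits as a quotient of a closed disk with the arc $\alpha$ collapsed to the single point $p_i$, is itself a closed half-disk; and your proposed mechanism --- ``repeated applications of the $\til{\rm{LLC}}_1$ condition to build a continuous family of radial arcs yielding a product collar $\alpha\times[0,1]\hookrightarrow V$'' --- is not a construction, it is a description of the desired conclusion. Producing such a collar with continuous dependence on the base point of $\alpha$ is exactly the content of a Zippin-style recognition theorem in this setting, and it is not a routine consequence of $\til{\rm{LLC}}_1$: knowing that individual crosscuts exist and that $E$ is a topological circle does not by itself give an ambient product structure, because a priori the arcs could oscillate or accumulate badly near $\alpha$. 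The paper handles this difficulty by outsourcing it to Zippin's theorem; if you want a direct chart construction you would have to reprove that theorem's content from the crosscut theory of Section~\ref{crosscuts} (which includes showing that spanning arcs of $\pi(E)$ disconnect, via the transversality/crossing argument of Lemma~\ref{crossing}), and that work is absent here.

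Two smaller points. First, once charts are in hand, you still owe an argument that $\ovl{X}$ is orientable and of genus zero; your parenthetical ``genus being zero because $X$ is planar'' is correct but should be stated as an actual claim (the interior of a positive-genus compact surface with boundary cannot embed in $\sphere$). Second, ``the closure of any circle domain with $N$ non-trivial boundary components'' should read ``a circle domain with exactly $N$ complementary components, all non-trivial'' --- otherwise the closure may contain isolated points and fail to be a manifold with boundary.
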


\begin{remark}\label{RP2} The homeomorphism type of a metric space $X$ and the homeomorphism type of the boundary $\partial{X}$ do not, in general, determine the homemorphism type of the completion $\ovl{X}$.  The following example demonstrates this. The real projective plane $\reals P^2$, formed by identifying antipodal points of $\sphere$, can be metrized as a subset of $\reals^4$ endowed with the standard metric. Let $X\subeq \reals P^2 \subeq \reals^4$ be the image of the sphere minus the equator under the indentification of antipodal points.  Then $X$ is homeomorphic to the disk, and $\partial{X}$ is homeomorphic to the image of the equator under the indentification of antipodal points, i.e., it is homeomorphic to the circle. However, the completion $\ovl{X}$ is homeomorphic to all of $\reals P^2$, and not the closed disk. The $\rm{LLC}$ condition prevents this phenomena from occurring in the setting we are most interested in.
\end{remark}

The proof of Theorem \ref{closure uniformization} relies on the following topological characterization of the closed disk, due to Zippin \cite{Zippin}.  Here, given a topological space $Y$, an embedding $\alpha \colon [0,1] \to Y$ is said to \emph{span} a subset $J$ of $Y$ if $\im \alpha \cap J = \{\alpha(0),\alpha(1)\}$ and $\alpha(0)\neq \alpha(1	)$.  Note that an embedding spans a component $E$ of the boundary $\partial{X}$ if and only if it is an $E$-crosscut.

\begin{theorem}[Zippin]\label{closed disk char} A locally connected and metrizable continuum $Y$ is homeomorphic to the closed disk $\ovl{\disk}$ if and only if $Y$ contains a topological circle $J$ with the following properties
\begin{itemize}
\item there is an embedding $\alpha \colon [0,1] \to Y$ that spans $J$,
\item for every embedding $\alpha \colon [0,1] \to Y$ spanning $J$, the set $Y \bslash \im \alpha$ is not connected,
\item for every embedding $\alpha \colon [0,1] \to Y$ spanning $J$ and every closed subset $I \subsetneq [0,1]$, the set $Y \bslash \alpha(I)$ is connected.
\end{itemize}
\end{theorem}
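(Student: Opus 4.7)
The plan is to prove the two directions of the equivalence separately. The forward direction is a routine verification. Identifying $Y$ with $\ovl{\disk}$ and $J$ with $\partial\disk$, any diameter yields a spanning embedding, witnessing property (1). For property (2), a chord $\alpha$ together with one of the two arcs of $\partial\disk$ it cuts off forms a Jordan curve in $\sphere$, whose complement separates by the Jordan curve theorem, giving at least two components of $\ovl{\disk}\bslash \im\alpha$. For property (3), removing a proper closed subset $\alpha(I) \subsetneq \im\alpha$ keeps the disk path-connected, since any two remaining points can detour through the retained portion of $\alpha$.

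For the converse direction, which is the substantive content of Zippin's theorem, I would first establish a cut principle: every spanning arc $\alpha$ decomposes $Y$ into exactly two open components $U, V$, with $\ovl{U}\cap \ovl{V} = \im\alpha$, and with the two subarcs of $J\bslash\{\alpha(0),\alpha(1)\}$ distributed one each between $\ovl{U}$ and $\ovl{V}$. The inequality $\#\{\text{components}\} \geq 2$ is exactly property (2). The reverse inequality is the delicate step: assuming a third component $C$ exists, one picks an interior parameter $t_0 \in (0,1)$ with $\alpha(t_0)\in \ovl{C}$, uses local connectivity to perturb $\alpha$ through $C$ across $\alpha(t_0)$, obtaining a new spanning arc $\alpha'$ for which the original separation pattern is still witnessed by a proper subarc of $\alpha'$, contradicting property (3). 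Using local connectivity of $Y$ together with the cut principle, I would then deduce that $Y\bslash J$ is a single open set whose topological frontier in $Y$ is all of $J$.

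With this structure in hand, I would construct a homeomorphism $h\colon \ovl{\disk}\to Y$ by a Schoenflies-type approximation. Fix a countable dense subset $D\subeq J$ and transport it via a homeomorphism $J\to \partial\disk$ (which exists because $J$ is a topological circle) to a countable dense subset $D'\subeq \partial\disk$. Recursively construct a nested sequence of finite families of pairwise non-crossing spanning arcs in $Y$ joining points of $D$, matched with the corresponding chords in $\ovl{\disk}$ joining points of $D'$. Each stage yields a cell decomposition of $Y$ and of $\ovl{\disk}$ with a natural combinatorial correspondence, and by the cut principle each cell is a closed-disk-like region bounded by spanning arcs and subarcs of $J$, to which the cut principle may be applied inductively. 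Using metrizability and local connectivity of $Y$, arrange that the mesh shrinks to zero. The cell-by-cell homeomorphisms then assemble into a continuous bijection $\ovl{\disk}\to Y$ in the limit, which is the desired homeomorphism.

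The main obstacle is the recursive selection in the Schoenflies step: one must simultaneously guarantee that new spanning arcs respect the existing cell decomposition, that the endpoints accumulate densely in $J$, and that the diameters of the cells shrink to zero. This combinatorial book-keeping is the heart of Zippin's original argument, and both local connectivity (to produce spanning arcs within each cell and to make the mesh shrink) and metrizability (to interpret convergence of the partial homeomorphisms) of $Y$ are essential for its execution.
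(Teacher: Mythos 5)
First, a remark on the comparison you were asked to be measured against: the paper does not prove this statement at all; it is quoted as a classical characterization of the closed $2$-cell and attributed to Zippin \cite{Zippin}. So the only question is whether your sketch stands on its own, and as it stands it does not. Your forward direction is fine (modulo standard tameness/Jordan-curve facts in the plane). In the converse, however, the step you call delicate is described in a way that does not work. Suppose $Y\setminus\im\alpha$ has a third component $C$ and you reroute $\alpha$ through $C$ near $\alpha(t_0)$, replacing $\alpha$ on an interval $(t_1,t_2)\ni t_0$ to get $\alpha'$. To contradict property (3) you must exhibit a closed $I\subsetneq[0,1]$ with $Y\setminus\alpha'(I)$ disconnected. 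If $I$ is the retained set $[0,t_1]\cup[t_2,1]$, then $\alpha'(I)=\alpha(I)$ and property (3) applied to the \emph{original} $\alpha$ already says $Y\setminus\alpha(I)$ is connected, so no contradiction can arise; and indeed one can check directly that $U$ and $V$ reconnect through $\alpha(t_0)$, because property (3) forces the frontier of every component of $Y\setminus\im\alpha$ to be all of $\im\alpha$ (if some component had frontier $\alpha(I_0)$ with $I_0\subsetneq[0,1]$ closed, that component would be clopen in $Y\setminus\alpha(I_0)$, contradicting (3)). If instead $I=[t_1,t_2]$ parametrizes the detour, there is no reason $Y\setminus\alpha'(I)$ is disconnected: the pieces of $C$ cut off by the detour may well have frontier meeting $\im\alpha$ away from $\alpha(t_1),\alpha(t_2)$ and hence reattach to the main body. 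So the ``perturb and quote (3)'' argument, in either natural reading, fails; ruling out a third component genuinely requires a more elaborate argument, and in addition you use without justification that interior points of $\im\alpha$ are \emph{accessible} by arcs from $C$ (frontier points of open connected subsets of a Peano continuum need not be accessible; one only gets a dense set of accessible points, which has to be woven into the argument), and you do not argue that the two arcs of $J$ lie in different components or that $\ovl{U}\cap\ovl{V}=\im\alpha$.

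The second half of your plan, the Schoenflies-type approximation — nested non-crossing families of spanning arcs with endpoints dense in $J$, matched chords in $\ovl{\disk}$, verification that each cell again satisfies the hypotheses so the cut principle can be reapplied, mesh tending to zero, and convergence of the partial maps to a continuous bijection (hence homeomorphism, by compactness) — is exactly what you yourself identify as ``the heart of Zippin's original argument,'' and it is left entirely as a programme. So the proposal correctly identifies the architecture of the classical proof but does not constitute one: the two places where the real work lies are precisely the places left open, and the one step you do sketch in detail would fail as written. Given that the paper itself simply cites Zippin, the appropriate course is to cite the classical source (Zippin's paper, or the expositions in Wilder or Hall--Spencer) rather than to claim a proof from this outline.
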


\begin{remark}\label{boundary is J} The homeomorphism constructed in the proof of Theorem \ref{closed disk char} maps the Jordan curve $J$ onto the unit circle. 
\end{remark}

\begin{lemma}\label{annular neighborhood lemma} Let $E \in \mathcal{C}(X)$ be an isolated and non-trivial component of $\partial{X}$, and let $h \colon \ovl{X} \to \sphere$ be the continuous surjection provided by Corollary \ref{good map}. Then for all sufficiently small $\ep>0$, the set
$$U =  h\inv(\ovl{B}_{\sphere}(h(E),\ep))$$
is homeomorphic to a closed annulus.
\end{lemma}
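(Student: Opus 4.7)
The plan is to apply Zippin's theorem (Theorem \ref{closed disk char}) to an enlargement of $U$ obtained by capping off $E$ with an abstract closed disk. First I will choose parameters carefully. Let $\Omega := h(X)$; by Corollary \ref{good map}, $h|_X$ is a homeomorphism onto $\Omega$ and $\mathcal{C}(X)$ is homeomorphic to $\partial\Omega := \sphere \setminus \Omega$ in such a way that $E$ corresponds to the single point $h(E)$. Since $E$ is isolated in $\mathcal{C}(X)$, the point $h(E)$ is isolated in the totally disconnected compact set $\partial\Omega$. Hence I can choose $\ep > 0$ so small that $D_\ep := \ovl B_\sphere(h(E), \ep)$ is a topological closed disk disjoint from $\partial\Omega \setminus \{h(E)\}$, and, using \eqref{good map estimate}, so small that $U = h^{-1}(D_\ep)$ is disjoint from $\partial X \setminus E$. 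Then $h|_U$ is a continuous surjection onto $D_\ep$ that restricts to a homeomorphism of $U \cap X$ onto $D_\ep \setminus \{h(E)\}$ and collapses $E$ to $h(E)$.

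Next I will cap off: by Theorem \ref{boundary uniformization}, $E$ is a topological circle, so I fix a homeomorphism $\phi: \partial\Delta \to E$ from the boundary of an abstract closed topological disk $\Delta$. Form the quotient space $W := U \sqcup \Delta / \sim$ identifying $y \in \partial\Delta$ with $\phi(y)$. Standard quotient-space arguments show $W$ is a compact metrizable continuum, and its local connectedness follows from the local connectedness of $U$ (via Propositions \ref{Better LLC} and \ref{locally connected}) together with that of $\Delta$. Let $J := h^{-1}(\partial D_\ep)$; this is a topological circle in $X \subseteq W$ because $h|_X$ is a homeomorphism onto $\Omega$ and $\partial D_\ep$ is a Jordan curve in $\Omega$.

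I will then apply Zippin's theorem with distinguished circle $J$. A spanning embedding of $J$ in $W$ can be produced by picking two distinct points $a,b \in J$, using the $\til{\rm LLC}_1$ condition (Proposition \ref{Better LLC}) to reach nearby points of $E$ from $a$ and $b$ via disjoint embedded arcs in $U \cap X$, and then concatenating with an embedded arc in $\Delta$ between the corresponding points of $\partial\Delta$. To verify the separation and non-separation conditions, I push any spanning arc $\alpha$ of $J$ forward by the quotient map $q: W \to D_\ep$ defined as $h$ on $U$ and the constant $h(E)$ on $\Delta$. The image $q(\alpha)$ is a path in $D_\ep$ with endpoints on $\partial D_\ep$; completing this with a subarc of $\partial D_\ep$ yields a closed curve in the topological closed disk $D_\ep$, and the Jordan curve theorem applied there controls how $\alpha$ separates $W$ upon lifting via $q$. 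Proposition \ref{crosscut components} handles the bookkeeping for how $\alpha$ interacts with $E$. The hardest step will be this verification, in particular handling spanning arcs that pass through both $E$ and $\Delta$, since $q$ collapses $\Delta$ and so $q(\alpha)$ can self-intersect at $h(E)$.

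Finally, Zippin's theorem together with Remark \ref{boundary is J} will yield that $W$ is homeomorphic to a closed topological disk with boundary $J$. Within $W$, the subset $\Delta$ is a closed topological disk whose boundary $E = \phi(\partial\Delta)$ is a Jordan curve contained in the interior of $W$. The Schoenflies theorem applied inside the closed disk $W$ then shows that $U = W \setminus \mathrm{int}(\Delta)$ is homeomorphic to a closed topological annulus, as desired.
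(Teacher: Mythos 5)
Your high-level strategy — cap off along a circle to form a continuum $W$, apply Zippin's theorem (Theorem~\ref{closed disk char}) to identify $W$ as a closed disk, then remove a disk via Schoenflies to recover an annulus — is exactly the paper's strategy. But you cap off the \emph{inner} circle $E \subset \partial X$ with an abstract disk $\Delta$ and then apply Zippin with the \emph{outer} curve $J=h^{-1}(\partial D_\epsilon)$ as the distinguished circle, whereas the paper does the opposite: it glues $\sphere\setminus B_{\sphere}(h(E),\epsilon)$ to $U$ along the outer curve $\alpha_0=h^{-1}(S_\sphere(h(E),\epsilon))$ and takes the inner circle $\pi(E)$ as Zippin's distinguished circle. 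This difference is not cosmetic. In the paper's construction, both gluing circles $\alpha_0$ and $S_\sphere(h(E),\epsilon)$ lie inside $X$ and $\sphere$ respectively, where $h$ is a genuine homeomorphism, so $Y\setminus\pi(E)$ is immediately recognized as $\sphere\setminus\{h(E)\}\cong\reals^2$. That single fact drives the whole verification: any embedding $\alpha$ spanning $\pi(E)$ has $\alpha|_{(0,1)}$ properly embedded in the plane, so the Sch\"onflies theorem applies directly, and the crosscut machinery of Proposition~\ref{crosscut components} and Remark~\ref{E components} feeds in the $\widetilde{\rm LLC}$ hypothesis to produce two points of $E$ separated by $\alpha$. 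In your construction, $W\setminus J$ is the open set $h^{-1}(B_\sphere(h(E),\epsilon))$ (including $E$) glued to $\Delta$ along $E$; you have no control over the topology of $\ovl X$ near $E$ at this stage, so claiming $W\setminus J$ is a plane is essentially equivalent to the conclusion you are trying to prove. (Remark~\ref{RP2} shows that absent $\rm{LLC}$ this really can fail — $\ovl X$ can be a M\"obius band near $E$, in which case $W$ would be $\reals P^2$ and Zippin's separation condition would be false.)

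Your proposed substitute tool — pushing a spanning arc $\alpha$ of $J$ forward by the quotient $q\colon W\to D_\epsilon$ that collapses $\Delta$ to $h(E)$ — does not close this gap, and you half-concede this yourself. If $\alpha$ enters and leaves $\Delta$ more than once, then $q(\alpha)$ is not an embedded arc (it returns to $h(E)$ repeatedly), and the Jordan curve theorem in $D_\epsilon$ gives you nothing directly; an analogue of Lemma~\ref{simple modification} to replace $\alpha$ by a cleaner arc meeting $\Delta$ in a single chord would itself need justification, since the relevant modification happens in $\Delta$ rather than in $X$, outside the reach of the $E$-crosscut machinery. Nor is it clear where your argument actually uses the $\rm{LLC}$ hypothesis in an essential way — you invoke Proposition~\ref{crosscut components} for ``bookkeeping,'' but that proposition is about $E$-crosscuts (arcs spanning $E$), not arcs spanning $J$, and the translation between the two is precisely what is missing. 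To repair the argument you would either have to re-derive the separation property of $J$-spanning arcs from scratch without the plane model, or switch to the paper's construction, which makes the model available for free.
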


\begin{proof} Corollary \ref{good map} states that $h$ induces a homeomorphism from $\mathcal{C}(X)$ to $\partial(h(X))$. Since $E$ is isolated, it follows that $U \cap \partial{X}=E$ when $\ep$ is sufficiently small.

Fix such an $\ep$, and set $p = h(E) \in \sphere$ and $\alpha_0 =h\inv(S_\sphere(p,\ep)).$ Then $\alpha_0$ is a Jordan curve in $X$. Equip 
$$U \coprod (\sphere \bslash B_{\sphere}(p,\ep))$$
with the disjoint union topology, and consider the quotient topological space
$$Y = \left(U \coprod (\sphere \bslash B_{\sphere}(p,\ep)) \right) / x\sim h(x), \quad x \in \alpha_0.$$
The result will follow once it is shown that $Y$ is homeomorphic to the closed disk, with $\pi(E)$ corresponding the unit circle.  To do so, we employ Theorem \ref{closed disk char}. 

Let $\pi$ denote the usual projection map onto $Y$, and note that 
$$\pi|_{U} \ \text{and}\ \pi|_{\sphere \bslash B_{\sphere}(p_,\ep)}$$
are embeddings, and $\pi|_{\alpha_0}$ is a two-to-one map. After some effort, it can be seen that $Y$ is compact, second-countable, and regular. The $\til{\rm{LLC}}$-condition on $\ovl{X}$  also implies that  $Y$ is connected and locally path-connected. By Urysohn's metrization theorem, it is also metrizable. Moreover, as $h|_X$ is a homeomorphism, the set $U \bslash E$ is homeomorphic to $\ovl{B}_{\sphere}(p,\ep)\bslash \{p\}$.  Thus, elementary point-set topology shows that $Y\bslash \pi(E)$ is homeomorphic to $\sphere \bslash \{p\}$, i.e., to the plane.  

By Theorem \ref{boundary uniformization}, the set $\pi(E)$ is a topological circle in $Y$.  The existence of an embedding $\alpha \colon [0,1] \to Y$ that spans $\pi(E)$ follows from the $\til{\rm{LLC}}$ condition on $X$. We next check that any embedding $\alpha \colon [0,1] \to Y$ that spans $\pi(E)$ disconnects $Y$. Towards a contradiction, suppose that $Y\bslash \alpha$ is connected. Since $Y$ is locally path-connected and Hausdorff, it follows that $Y\bslash \alpha$ is arc-connected. 

We assume that the image of $\alpha$ intersects the circle $\pi(\alpha_0)$; the argument in the case that this does not occur is a simpler version of what follows. Set
$$t_0 = \min\{t \in [0,1]: \alpha(t) \in \pi(\alpha_0)\} \ \text{and}\ t_1 = \max\{t \in [0,1]: \alpha(t) \in \pi(\alpha_0)\}.$$
 Then $0<t_0 \leq t_1 < 1$.  Denote
\begin{align*} &p=\alpha(0) \in \pi(E),\\
						&q=\alpha(1) \in \pi(E),\\
						&p'=\alpha(t_0) \in \pi(\alpha_0),\\
						&q'=\alpha(t_1) \in \pi(\alpha_0),
						\end{align*}
and let $A$ be an arc of the topological circle $\pi(\alpha_0)$ with endpoints $p'$ and $q'$. It is possible that $A$ is a single point.  Let $\alpha' \colon [0,1] \to Y$ be an embedding such that $\alpha'(t)=\alpha(t)$ when $t \in [0,t_0] \cup [t_1,1]$, and such that $\alpha' \colon [t_0,t_1] \to Y$ is an embedding parameterizing $A$.  Then $\im(\alpha') \subeq \pi(U)$, and hence $\alpha'$ defines an $E$-crosscut $\til{\alpha}$ in $\ovl{X}$.  By Remark \ref{E components}, we may find points $x$ and $y$ in $E$ that are in different components of $\ovl{X} \bslash \til{\alpha}$.  Since 
$$\im \alpha \cap \pi(E) = \pi(\im \til{\alpha} \cap E),$$ 
the points $\pi(x)$ and $\pi(x)$ are in $Y\bslash \alpha$.  

Our assumption now provides an embedding $\gamma \colon [0,1] \to Y\bslash \alpha$ such that $\gamma(0)=\pi(x)$ and $\gamma(1)=\pi(y)$.  Since $Y\bslash \pi(E)$ is homeomorphic to the plane $\reals^2$, Sch\"onflies theorem implies there is a homeomorphism $\Phi \colon Y\bslash \pi(E) \to \reals^2$ that sends $\alpha|_{(0,1)}$ to the real line. Since $\pi(x)$ and $\pi(y)$ are in $\pi(E)$, the embedding $\Phi \circ \gamma|_{(0,1)}$ is proper. Moreover, its image does not intersect the real line. Let $B$ be a ball in $\reals^2$ that contains the compact set $\Phi(\pi(\alpha_0))$.  As $\Phi \circ \gamma|_{(0,1)}$ is proper, we may find points $s_0,s_1 \in (0,1)$ such that if $t \notin (s_0,s_1)$, then $\Phi \circ \gamma(t) \notin B$. From the geometry of $\reals^2$, we see that there is a path 
$$\beta \colon [s_0,s_1] \to \reals^2\bslash (B \cup \reals \times \{0\}).$$
Then $\im\beta$ does not intersect $\im(\Phi \circ \alpha').$ Let $\gamma'$ denote the concatenation of $\gamma|_{[0,s_0]}$, $\Phi\inv \circ \beta$, and $\gamma|_{[s_1,1]}.$  Then $\im \gamma'$ contains $\pi(x)$ and $\pi(y)$ and is contained in $\pi(U) \bslash \alpha'$.  It follows that $\im(\pi\inv (\gamma'))$ connects $x$ to $y$ inside of $\ovl{X} \bslash \til{\alpha}$, contradicting the definition of $x$ and $y$.  See Figure \ref{nbhdannulus}.

\begin{figure}[h]
\begin{center}
\psfrag{p}{$p$}
\psfrag{q}{$q$}
\psfrag{p'}{$p'$}
\psfrag{q'}{$q'$}
\psfrag{x}{$\pi(x)$}
\psfrag{y}{$\pi(y)$}
\psfrag{a}{$\alpha$}
\psfrag{pa0}{$\pi(\alpha_0)$}
\psfrag{pE}{$\pi(E)$}
\psfrag{g}{$\gamma$}
\psfrag{A}{$A$}
\psfrag{B}{$B$}
\psfrag{b}{$\beta$}
\psfrag{ph}{$\Phi$}
\psfrag{phg}{$\Phi(\gamma)$}
\psfrag{pha}{$\Phi(\alpha)$}
\psfrag{phA}{$\Phi(A)$}
\psfrag{php}{$\Phi(p')$}
\psfrag{phq}{$\Phi(q')$}
\includegraphics[width=.75\textwidth]{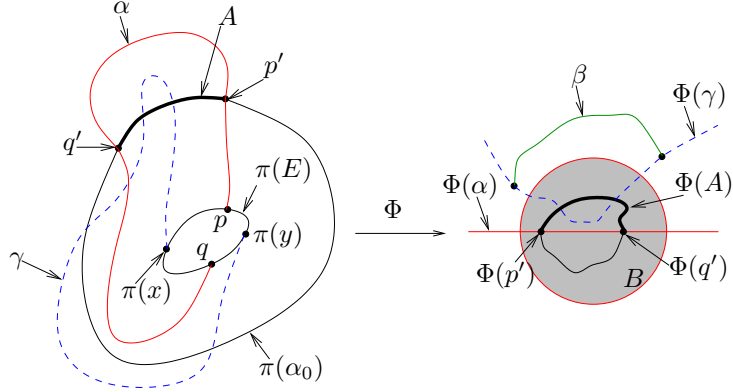}
\caption{The proof of Lemma \ref{annular neighborhood lemma}.}\label{nbhdannulus}
\end{center}
\end{figure}

Finally, we check that if $I \subsetneq [0,1]$ is closed, then $\alpha(I)$ does not separate the $Y$.  Since $Y$ is locally path connected, it suffices to show that any pair of points $x,y \in Y \bslash \pi(E)$ can be connected without intersecting $\alpha(I)$. This follows easily from the Sch\"onflies theorem.
\end{proof}

We will also need the following well-known topological statement, the proof of which is left to the reader.
\begin{lemma}\label{untwist} Let $\ep>0$ and $p \in \sphere$.  Given any homeomorphism $\phi \colon S_{\sphere}(p,\ep) \to  S_{\sphere}(p,\ep) $, there is a homeomorphism $\Phi \colon A_{\sphere}(p,\ep,2\ep) \to A_{\sphere}(p,\ep,2\ep)$ such that $\Phi|_{S_\sphere(p,\ep)}=\phi$ and $\Phi|_{S_\sphere(p,2\ep)}$ is the identity mapping.
\end{lemma}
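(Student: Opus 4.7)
The plan is to exploit geodesic polar coordinates around $p$ to reduce the lemma to the classical topological fact that every orientation-preserving self-homeomorphism of $S^1$ is isotopic, through homeomorphisms, to the identity. The construction then amounts to transporting such an isotopy radially through the annulus.

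First I would identify the closed annulus $\ovl{A_{\sphere}(p,\ep,2\ep)}$ homeomorphically with the cylinder $S^1 \times [\ep, 2\ep]$ via the polar map $(v,r) \mapsto \exp_p(rv)$, where $S^1$ denotes the unit tangent circle at $p$; this identification sends $S_{\sphere}(p,r)$ onto $S^1 \times \{r\}$, so $\phi$ becomes a self-homeomorphism of $S^1$. (When $\ep \geq \pi/2$ the annulus is empty or degenerate and nothing needs to be proved.)

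Second, I would construct an isotopy $\{\phi_r\}_{r \in [\ep, 2\ep]}$ of self-homeomorphisms of $S^1$ satisfying $\phi_\ep = \phi$, $\phi_{2\ep} = \id$, and with $(v,r) \mapsto \phi_r(v)$ jointly continuous. This is the classical path-connectedness of the orientation-preserving homeomorphism group of the circle: one lifts $\phi$ to a homeomorphism $\til{\phi}\colon \reals \to \reals$ satisfying $\til{\phi}(x+1)=\til{\phi}(x)+1$, forms the linear interpolation $\til{\phi}_s(x) = (1-s)\til{\phi}(x) + sx$ (strictly increasing for every $s \in [0,1]$ because $\til\phi$ is), and descends to $S^1$ before rescaling the parameter to $[\ep, 2\ep]$. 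The orientation-preserving hypothesis on $\phi$ is actually forced by the conclusion we are after: if $\Phi|_{S_\sphere(p,2\ep)} = \id$, then a short computation with $H_1$ of the annulus $(\cong \Z)$ shows $\Phi_*$ fixes the generator, so the restriction of $\Phi$ to the other boundary circle must have degree $+1$.

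Finally, I would define $\Phi$ on the cylinder by $\Phi(v,r) = (\phi_r(v), r)$ and transport it back to $\sphere$. Continuity and bijectivity of $\Phi$ are immediate from the corresponding properties of the isotopy, with continuous inverse $(v, r) \mapsto (\phi_r^{-1}(v), r)$, and the required boundary conditions hold by construction. The only substantive ingredient is the path-connectedness of $\operatorname{Homeo}^+(S^1)$, which is the main (and essentially only) obstacle; every other verification is routine.
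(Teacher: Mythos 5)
Your proposal is the standard argument; the paper leaves the lemma without proof (``the proof of which is left to the reader''), so there is no argument of the authors' to compare against. Identifying the closed annulus with $S^1 \times [\ep,2\ep]$ so that the circles $S_\sphere(p,r)$ become levels, lifting $\phi$ to $\reals$, interpolating linearly to the identity, descending, and sweeping radially produces exactly the required $\Phi$. The continuity of $\Phi\inv$ follows as you say, or at once from the fact that a continuous bijection of a compact Hausdorff space is a homeomorphism.

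The substantive point in your write-up is the orientation remark, and it should be stated bluntly: as you observe via $H_1$, any self-homeomorphism of the closed annulus that fixes one boundary circle pointwise restricts on the other boundary circle to a map of degree $+1$, so the lemma as printed --- for \emph{arbitrary} $\phi$ --- is false, and ``orientation-preserving'' (equivalently, ``degree $+1$'') must be added as a hypothesis. This is harmless for the paper's uses: in the proof of Theorem \ref{isolate closure uniformization} and of Lemma \ref{together top} the auxiliary homeomorphisms $h_E$ and $g_i$ may always be post-composed with a reflection so that the induced circle map has degree $+1$. But the hypothesis belongs in the statement, and your proof is a proof of the corrected statement, not of the lemma as written.

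Two minor cosmetic points that do not affect the argument. The metric on $\sphere \subset \reals^3$ in this paper is the chordal (ambient Euclidean) one rather than the geodesic one, so the polar chart should be $(v,r) \mapsto \exp_p\bigl(2\arcsin(r/2)\,v\bigr)$ rather than $\exp_p(rv)$; the two families of concentric metric circles are the same sets, and only the radial parametrization changes. Correspondingly, the degeneracy threshold is $\ep \geq 1$ (so that $2\ep$ reaches the chordal diameter $2$), not $\ep \geq \pi/2$.
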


Given a metric space $Y$ homeomorphic to a domain in $\sphere$, we denote by $\mathcal{N}(Y) \subeq \mathcal{C}(Y)$ the collection of non-trivial components of $\partial{Y}$, and by $\mathcal{I}(Y) \subeq \mathcal{N}(Y)$ the collection of non-trivial components of $\partial{Y}$ that are isolated as points in $\mathcal{C}(Y)$.

Theorem \ref{closure uniformization} is a special case of the following result. 

\begin{theorem}\label{isolate closure uniformization} Denote 
$$\til{X} =  X \cup \left(\bigcup_{E \in \mathcal{I}(X)} E\right).$$
Then there is a circle domain $\Omega' \subeq \sphere$ and a homeomorphism 
$$\til{h} \colon \til{X} \to \Omega' \cup \left(\bigcup_{F \in \mathcal{N}(\Omega')}F\right).$$
Moreover, $\Omega'$ and $\til{h}$ may be chosen so that $\til{h}$ induces a homeomorphism from $\mathcal{C}(X)$ to $\mathcal{C}(\Omega')$.
\end{theorem}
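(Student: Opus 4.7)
The plan is to modify the continuous surjection $h \colon \ovl{X} \to \sphere$ supplied by Corollary \ref{good map} in disjoint neighborhoods of the isolated non-trivial components of $\partial X$. Recall that $h|_X$ is a homeomorphism onto a domain $\Omega \subeq \sphere$ with totally disconnected complement, and that $h$ induces a homeomorphism from $\mathcal{C}(X)$ to $\partial{\Omega}$. For each $E \in \mathcal{I}(X)$, set $p_E := h(E)$ and $P := \{p_E : E \in \mathcal{I}(X)\}$; then each $p_E$ is isolated in $\partial{\Omega}$, and $\mathcal{I}(X)$ is countable because $\mathcal{C}(X) \cong \partial{\Omega}$ is compact metrizable. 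For each $E$, first choose $\epsilon_E > 0$ small enough that both $\epsilon_E < \tfrac{1}{3}\dist(p_E, \partial{\Omega} \bslash \{p_E\})$ and $U_E := h\inv(\ovl{B}_\sphere(p_E, \epsilon_E))$ is homeomorphic to a closed annulus; the first bound is available since $p_E$ is isolated in $\partial{\Omega}$, and the second by Lemma \ref{annular neighborhood lemma}. The first condition also guarantees $\ovl{B}_\sphere(p_E, \epsilon_E) \cap \partial{\Omega} = \{p_E\}$ and that the balls $\{\ovl{B}_\sphere(p_E, \epsilon_E)\}_E$ are pairwise disjoint; write $\alpha_E := h\inv(S_\sphere(p_E, \epsilon_E))$ for the outer boundary Jordan curve of $U_E$.

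For each $E$, set $D_E := B_\sphere(p_E, \epsilon_E/2)$ and $A_E := \ovl{B}_\sphere(p_E, \epsilon_E) \bslash D_E$, a round closed annulus. Define
$$\Omega' := \sphere \bslash \Big( (\partial{\Omega} \bslash P) \cup \bigcup_{E \in \mathcal{I}(X)} \ovl{D_E} \Big).$$
The complementary components of $\Omega'$ are precisely the closed round disks $\ovl{D_E}$ and the singleton components of the totally disconnected set $\partial{\Omega} \bslash P$, so $\Omega'$ is a circle domain with $\mathcal{N}(\Omega') = \{\partial D_E : E \in \mathcal{I}(X)\}$. Next, construct for each $E$ a homeomorphism $\psi_E \colon U_E \to A_E$ with $\psi_E|_{\alpha_E} = h|_{\alpha_E}$: begin with any homeomorphism of the closed annulus $U_E$ onto the closed annulus $A_E$ that sends $\alpha_E$ onto $S_\sphere(p_E, \epsilon_E)$ (and thus $E$ onto $\partial D_E$), and post-compose with a self-homeomorphism of $A_E$ furnished by Lemma \ref{untwist} to correct the parametrization on $\alpha_E$ while leaving $\partial D_E$ pointwise fixed.

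Now define $\til{h} \colon \til{X} \to \sphere$ by $\til{h}(x) = h(x)$ when $x \in \til{X} \bslash \bigcup_E U_E$ and $\til{h}(x) = \psi_E(x)$ when $x \in U_E$. The two rules agree on each $\alpha_E$, so $\til{h}$ is continuous. A direct accounting of images using $h(X)=\Omega$, $\psi_E(U_E)=A_E$, and $\psi_E(E) = \partial D_E$ shows that $\til{h}$ is a bijection onto $\Omega' \cup \bigcup_{F \in \mathcal{N}(\Omega')} F$. Continuity of $\til{h}\inv$ is a local matter at any image point: such a point has a neighborhood lying either inside a single $\im \psi_E$ or inside $h(X \bslash \bigcup_E U_E)$, where the inverse is known to be continuous, and on the overlap $S_\sphere(p_E, \epsilon_E)$ the two local inverses coincide.

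Finally, for the concluding statement, extend $\til{h}$ to a map $\til{h}^\ast \colon \ovl{X} \to \ovl{\Omega'}$ by $\til{h}^\ast|_E := h|_E$ for $E \in \mathcal{C}(X) \bslash \mathcal{I}(X)$. Continuity of this extension at a point $x \in E$ with $E \in \mathcal{C}(X) \bslash \mathcal{I}(X)$ is the subtlest point; it follows from the bound $\epsilon_{E_n} < \tfrac{1}{3}\dist(p_{E_n}, \partial{\Omega} \bslash \{p_{E_n}\})$, which forces $\epsilon_{E_n} \to 0$ whenever distinct $p_{E_n}$'s accumulate on $h(x) \in \partial{\Omega} \bslash P$, and hence $|\til{h}^\ast(x_n) - h(x_n)| \leq 2\epsilon_{E_n} \to 0$ for any sequence $x_n \to x$ with $x_n \in U_{E_n}$. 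Then $\til{h}^\ast$ sends each component of $\partial X$ to a single component of $\partial \Omega'$, inducing a bijection $\mathcal{C}(X) \to \mathcal{C}(\Omega')$ that is continuous by the universal property of the quotient topology and hence a homeomorphism since both spaces are compact Hausdorff. The main technical obstacle in the argument is the construction of the matching homeomorphisms $\psi_E$ with the prescribed behavior on the Jordan curves $\alpha_E$; this is dispatched cleanly by Lemma \ref{untwist}.
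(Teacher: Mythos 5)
Your proof is correct and takes essentially the same route as the paper: both arguments modify the map $h$ from Corollary \ref{good map} in disjoint annular neighborhoods of the isolated non-trivial boundary components, using Lemma \ref{annular neighborhood lemma} to identify each such neighborhood with a closed annulus and Lemma \ref{untwist} to match parametrizations on the common boundary circle. The only cosmetic difference is in the bookkeeping: the paper uses two nested balls $\ovl{B}_\sphere(h(E),\ep_E/2)\subeq \ovl{B}_\sphere(h(E),\ep_E)$ and reserves the outer ring for the untwisting collar, while you use a single ball and apply the untwisting inside the target annulus $A_E$ with the roles of inner and outer circles swapped (a harmless conjugation by an inversion). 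Your final paragraph on the continuity of $\til{h}^\ast$ across non-isolated boundary components, via the estimate $\ep_{E_n}<\tfrac{1}{3}\dist(p_{E_n},\partial\Omega\bslash\{p_{E_n}\})\to 0$, is a welcome elaboration of a step the paper dispatches in one sentence.
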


\begin{proof} Let $h \colon \ovl{X} \to \sphere$ denote the continuous surjection provided by Corollary \ref{good map}. Let $E \in \mathcal{I}(X)$. By Lemma \ref{annular neighborhood lemma}, there is a number $\ep_E > 0$ and a homeomorphism 
$$h_E \colon  h\inv(\ovl{B}_{\sphere}(h(E),\ep_E/2)) \to \ovl{A}_{\sphere}(h(E),\ep_E/4,\ep_E/2).$$
Denote the domain of $h_E$ by $U_E$, and set $V_E= h\inv(\ovl{B}_{\sphere}(h(E),\ep_E)).$
We may assume that the Jordan curve $\beta_E := h\inv(S_\sphere(h(E),\ep_E))$ is mapped onto $S_\sphere(h(E),\ep_E)$. Moreover, we may choose the number $\ep_E$ so small that 
$V_E \cap \partial{X}=E,$ and that the resulting collection $\{V_E\}_{E \in \mathcal{I}(X)}$ is pairwise disjoint. 

According to Lemma \ref{untwist}, for each $E \in \mathcal{I}(X)$, there is a homeomorphism $\Phi_E$ of $A_{\sphere}(h(E),\ep_E/2,\ep_E)$ to itself that agrees with $h_E \circ h\inv$ on $S_\sphere(h(E),\ep_E/2)$ and is the identity on $S_\sphere(h(E),\ep_E)$.  

Define
$$\Omega' = h(X) \bslash \left(\bigcup_{E \in \mathcal{I}(X)}\ovl{B}_{\sphere}(h(E),\ep_E/4)\right).$$
Then the collection of non-trivial boundary components of $\Omega'$ is given by $\mathcal{N}(\Omega') = \{S_{\sphere}(h(E),\ep_E/4)\}_{E \in \mathcal{I}(X)}.$ The map $\til{h} \colon \til{X} \to \Omega' \cup \left(\bigcup_{F \in \mathcal{N}(\Omega')}F\right)$ defined by 
$$\til{h}(x) = \begin{cases}
					h(x) & x \notin \bigcup_{E \in \mathcal{I}(X)} V_E, \\
					\Phi_E\circ h(x) & x \in V_E \bslash U_E,\\
					h_E(x) & x \in U_E\\
					\end{cases}$$
now yields the desired homeomorphism. See Figure \ref{twist}.

\begin{figure}[h]
\begin{center}
\psfrag{U}{$U_E$}
\psfrag{V}{$V_E$}
\psfrag{hE}{$h_E$}
\psfrag{h}{$h$}
\psfrag{hEp}{$h(E)$}
\psfrag{e}{$\ep$}
\psfrag{e2}{$\ep/2$}
\psfrag{e4}{$\ep/4$}
\psfrag{E}{$E$}
\includegraphics[width=.75\textwidth]{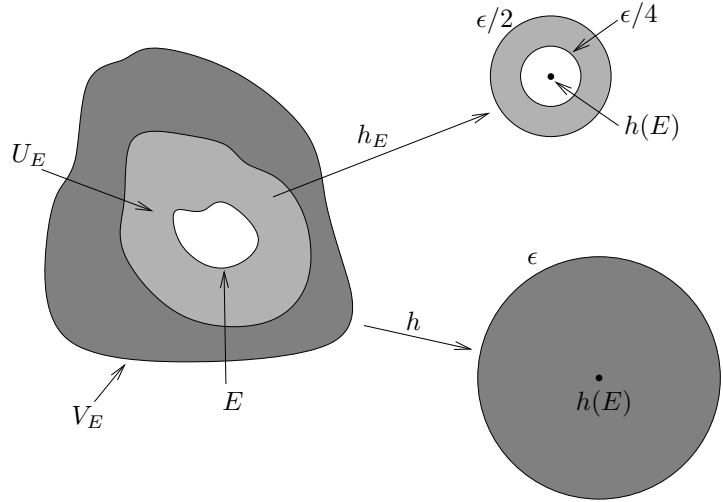}
\caption{Topological uniformization of the completion.}\label{twist}
\end{center}
\end{figure}

The final assertion follows from the construction and the fact that there is a natural homeomorphism from $ \mathcal{C}(h(X))$ to $\mathcal{C}(\Omega')$.
\end{proof}

\section{$\rm{ALLC}$ and porous quasicircles}\label{porosity section}

 A subset $Z$ of a metric space $(X,d)$ is \emph{$C$-porous}, $C \geq 1$, if for every $z \in Z$ and $0<r \leq \diam X$, there is a point $x \in X$ such that 
$$B\left(x, \frac{r}{C}\right) \subeq B(z,r) \bslash Z.$$
Porous subsets are small compared to the ambient space in a quantitative sense. See \cite[Section 5.8]{Fractured} and \cite[Lemma 3.12]{DoubConf} for a proof of the following well-known theorem.

\begin{theorem}\label{porous dim} Let $(X,d)$ be an Ahlfors $Q$-regular metric space, $Q>0$, and let $\Gamma$ be a subset of $X$. Then $\Gamma$ is $C$-porous for some $C \geq 1$ if and only $\Gamma$ is $(\alpha,C')$-homogeneous for some $\alpha<Q$ and $C' \geq 1$, quantitatively.
\end{theorem}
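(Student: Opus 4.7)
The plan is to prove the two directions separately. The implication ``$(\alpha,C')$-homogeneous with $\alpha<Q$ implies porous'' is a direct volume comparison, while the converse requires an iterative argument that exploits porosity at all scales.

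For the first direction, I would fix $z\in\Gamma$ and $0<r\le \diam X$ and cover $\Gamma\cap B(z,r)$ by at most $C'(r/\rho)^{\alpha}$ balls of radius $\rho\le r/2$. The $\rho$-neighborhood of $\Gamma\cap B(z,r)$ inside $B(z,r/2)$ is then contained in a union of balls of total $\Hdim^Q$-measure at most $2^Q K C' r^{\alpha}\rho^{Q-\alpha}$. Since Ahlfors regularity gives $\Hdim^Q(B(z,r/2))\ge (r/2)^Q/K$, choosing $\rho=cr$ with $c$ depending only on the data and small enough to satisfy $c^{Q-\alpha}<1/(2^{2Q}K^2C')$ produces a point $x\in B(z,r/2)$ with $d(x,\Gamma)\ge cr$, and then $B(x,cr)\subeq B(z,r)\bslash\Gamma$ shows that $\Gamma$ is $(1/c)$-porous.

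For the reverse direction, set $V(s)=\Hdim^Q(\nbhd_s(\Gamma)\cap B(z,R))$ and establish the one-scale decay
\[V(s/(2C)) \le (1-\eta)\, V(s)\]
for some $\eta\in (0,1)$ depending only on $C$, $K$, $Q$. I would prove this by choosing a maximal $3s$-separated subset $\{z_i\}\subeq \Gamma\cap B(z,R+s)$, so that the balls $B(z_i,3s/2)$ are pairwise disjoint while $\nbhd_s(\Gamma)\cap B(z,R)\subeq \bigcup_i B(z_i,4s)$. Porosity applied at scale $s$ inside each $B(z_i,s)$ gives a ball $B(x_i,s/C)\subeq B(z_i,s)\bslash\Gamma$, and the concentric ball $B(x_i,s/(2C))$ avoids $\nbhd_{s/(2C)}(\Gamma)$ entirely. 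Since the $B(z_i,3s/2)$ are disjoint, so are the holes $B(x_i,s/(2C))$; Ahlfors regularity then yields the lower bound $N(s/(2C))^Q/K$ on the total removed mass and the upper bound $NK(4s)^Q$ on $V(s)$, producing $\eta=1/(K^2 8^Q C^Q)$. Iterating $m$ times gives $V(R/(2C)^m)\le (1-\eta)^m\Hdim^Q(B(z,R))$, hence
\[\Hdim^Q(\nbhd_\rho(\Gamma)\cap B(z,R)) \lesssim (\rho/R)^{\beta}\,\Hdim^Q(B(z,R))\]
with $\beta=-\log(1-\eta)/\log(2C)>0$. To convert this into a covering estimate, a maximal $\rho$-separated subset $\{y_j\}\subeq \Gamma\cap B(z,R)$ yields disjoint balls $B(y_j,\rho/2)$ contained in $\nbhd_{\rho/2}(\Gamma)\cap B(z,2R)$, so that Ahlfors regularity combined with the decay estimate gives $\#\{y_j\}\lesssim (R/\rho)^{Q-\beta}$, producing a covering of $\Gamma\cap B(z,R)$ by $\lesssim (R/\rho)^{Q-\beta}$ balls of radius $\rho$ with $\alpha=Q-\beta<Q$.

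The main obstacle is arranging the porosity holes to be disjoint: a naive $s$-separated selection produces covering balls $B(z_i,s)$ that may overlap, so the holes $B(x_i,s/(2C))$ cannot be summed. The $3s$-separated choice forces the enclosing $B(z_i,3s/2)$ to be pairwise disjoint, which is just large enough to contain the porosity balls and keep the holes separated; the modest covering loss from enlarging to $B(z_i,4s)$ only multiplies $\eta$ by a harmless constant, and all constants track quantitatively through the iteration.
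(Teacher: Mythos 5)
The paper does not give its own proof of this theorem --- it cites \cite[Section 5.8]{Fractured} and \cite[Lemma 3.12]{DoubConf} --- so I can only assess your argument on its merits. Your first direction (homogeneity $\Rightarrow$ porosity via volume comparison) is correct and is the standard argument.

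The converse direction has a genuine gap in the one-scale decay estimate. You set $V(s)=\Hdim^Q\bigl(\nbhd_s(\Gamma)\cap B(z,R)\bigr)$ with $R$ \emph{fixed} and claim $V(s/(2C))\le (1-\eta)V(s)$. For this you subtract the total mass of the porosity holes $B(x_i,s/(2C))$ from $V(s)$. But those holes satisfy $B(x_i,s/(2C))\subeq B(z_i,s)\subeq\nbhd_s(\Gamma)$, and your $z_i$ range over $\Gamma\cap B(z,R+s)$, so a hole lies only in $B(z,R+2s)$; for $z_i$ in the annulus $A(z,R-s,R+s)$ the hole may sit partly or entirely outside $B(z,R)$ and then contributes nothing to $V(s)-V(s/(2C))$. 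There is no a priori control on what fraction of the $z_i$ are ``good'' (i.e., in $B(z,R-s)$) versus near $\partial B(z,R)$, so $\eta$ is not bounded below by your computation. Note your own final step quietly uses $\nbhd_{\rho/2}(\Gamma)\cap B(z,2R)$ rather than $B(z,R)$, which is a symptom of this mismatch.

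The fix is standard and does not require a new idea, only a different localization: replace $\Gamma$ by $\Gamma'=\Gamma\cap B(z,R)$ at the outset. Since $\Gamma'\subeq\Gamma$, the set $\Gamma'$ is still $C$-porous with the same constant, and the covering number of $\Gamma\cap B(z,R)$ by $\rho$-balls equals that of $\Gamma'$. Now define $W(s)=\Hdim^Q(\nbhd_s(\Gamma'))$ with \emph{no} intersection against a ball. Because $\Gamma'\subeq B(z,R)$, a maximal $3s$-separated set $\{z_i\}\subeq\Gamma'$ is finite, all covering balls and all porosity holes automatically live in $B(z,2R)$ for $s\le R$, the holes are contained in $\nbhd_s(\Gamma')$ and disjoint from $\nbhd_{s/(2C)}(\Gamma')$, and the one-scale decay $W(s/(2C))\le(1-\eta)W(s)$ with $\eta=1/(K^2(8C)^Q)$ goes through verbatim with no boundary terms. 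Starting from $W(R)\le\Hdim^Q(B(z,2R))\le K(2R)^Q$, iteration gives $W(\rho)\lesssim(\rho/R)^\beta R^Q$, and your final conversion to a covering estimate (disjoint balls $B(y_j,\rho/2)\subeq\nbhd_{\rho/2}(\Gamma')$) is then correct. Alternatively, one can keep the intersection with a ball but shrink its radius geometrically along the iteration; that also works but requires summing the radius increments and is more delicate than the localization above.
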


\begin{theorem}\label{porous quasicircles} Let $(X,d)$ be an $\rm{ALLC}$ metric space, and let $\Gamma \subeq X$ be a quasicircle.  Then $\Gamma$ is porous in $X$, quanitatively.
\end{theorem}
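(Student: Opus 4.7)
The plan is to exploit the $\eta$-quasisymmetric parameterization of $\Gamma$ in conjunction with the annular connectivity of $X$ given by the ALLC condition. Fix $z\in\Gamma$ and $0<r\leq\diam X$; the goal is to produce $x\in X$ and a uniform constant $C$ with $B(x,r/C)\subseteq B(z,r)\setminus\Gamma$. I first dispose of the case $\diam\Gamma\leq r/8$: here $\Gamma\subseteq\overline{B}(z,r/8)$, and since ALLC implies $X$ is connected and $r\leq\diam X$ gives $\sup_{y\in X}d(y,z)\geq r/2$ by the triangle inequality, the intermediate value theorem applied to $d(\cdot,z)$ produces $x\in X$ with $d(x,z)=3r/8$, so that $B(x,r/8)\subseteq B(z,r)\setminus\Gamma$.

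Assume henceforth $\diam\Gamma>r/8$, and fix an $\eta$-quasisymmetric parameterization $f\colon\Sircle\to\Gamma$ with $f(p)=z$. The central technical ingredient I would establish is the following \emph{first-hit estimate}: for a sufficiently small scale $s>0$, let $w_1,w_2\in\Gamma$ be the first points with $d(w_i,z)=s$ obtained by tracing $\Gamma$ from $z$ in each of its two parameter directions. Then the arc $\alpha_s$ of $\Gamma$ from $w_1$ through $z$ to $w_2$ lies in $\overline{B}(z,s)$ by construction, while applying the $\eta$-quasisymmetric inequality to the triple $(p,f\inv(w_i),f\inv(y))$ for any $y$ on the complementary arc $\beta_s$ yields $d(y,z)\geq s/\eta(1)$; in particular $\beta_s\cap B(z,s/\eta(1))=\emptyset$.

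Choosing $s=r/(4\lambda)$, the points $w_1,w_2$ lie in $A(z,s/2,2s)$, so the $\lambda$-ALLC condition produces a continuum $F\subseteq A(z,s/(2\lambda),2\lambda s)$ joining them. I would argue that $F$ cannot lie entirely in $\Gamma$: otherwise $F$ would coincide with either $\alpha_s$ or $\beta_s$. The arc $\alpha_s$ is excluded because $z\in\alpha_s$ while $z\notin A(z,s/(2\lambda),2\lambda s)$; the arc $\beta_s$ is excluded in the subcase $\diam\Gamma\gtrsim\lambda r$, for then $\beta_s$ contains a point at distance greater than $2\lambda s$ from $z$ and hence exits the outer radius of the annulus. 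Thus $F$ contains a point $y\in X\setminus\Gamma$ at distance $\sim r$ from $z$.

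The hard part will be the final step: upgrading the existence of $y\in F\setminus\Gamma$ to that of a ball of radius $\sim r$ around $y$ disjoint from $\Gamma$. To do this I would reapply the first-hit estimate at a smaller scale around each point of $\Gamma$ near $y$, concluding that $\Gamma$ is locally contained in a short arc, and then combine this with an iterative application of the ALLC condition around $y$ to produce a uniform lower bound on $d(y,\Gamma)$. The intermediate subcase $r/8<\diam\Gamma\lesssim\lambda r$, in which $\beta_s$ need not exit the annulus, will be handled separately by shrinking the reference scale in the quasisymmetric estimate and combining with the connectivity argument used in the small-diameter case.
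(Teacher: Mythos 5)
Your setup is sound: you correctly reduce the quasicircle hypothesis to control on arcs (your ``first-hit'' estimate plays the role of the three-point condition used in the paper), correctly dispose of the case $\diam\Gamma$ small compared to $r$, and correctly invoke the ALLC condition to obtain a continuum $F$ in an annulus around $z$ joining the two first-hit points. However, the proposal has a genuine gap at exactly the point you flag as ``the hard part,'' and the approach you sketch for closing it is unlikely to succeed.

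The difficulty is that merely locating a point $y\in F\setminus\Gamma$ gives no quantitative lower bound on $\dist(y,\Gamma)$ --- $y$ could be as close to $\Gamma$ as one likes. Your proposed remedy (re-running the first-hit estimate at small scales near $y$, then iterating ALLC to force a uniform gap) does not obviously terminate: you would need to control matters at arbitrarily small scales around a point whose distance to $\Gamma$ is a priori unknown, which is essentially a restatement of porosity and risks circularity. The paper avoids this by reversing the order of quantification. Rather than first finding some $y\notin\Gamma$ and then estimating $\dist(y,\Gamma)$ after the fact, the paper \emph{chooses} a point of the continuum at a prescribed intermediate distance from one of the two subarcs of $\Gamma$: having split $\Gamma$ at $z$ and at a far point $w$ into two arcs $I(u)$ and $I(v)$, and having noted that the continuum $\alpha$ produced by ALLC has one endpoint $u$ on $I(u)$ and the other endpoint $v$ at controlled distance $\gtrsim r/(\lambda\Lambda)$ from $I(u)$ (by the three-point condition), connectedness of $\alpha$ and the intermediate value theorem applied to $\dist(\cdot,I(u))$ produce a point $x\in\alpha$ with $\dist(x,I(u))$ exactly equal to a chosen value $r/s$. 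The three-point condition is then invoked a second time to show that any point of the \emph{other} arc $I(v)$ within distance $r/s$ of $x$ would force $x$ too close to $z$ or $w$, contradicting the location of $x$ in the annulus. Thus $\dist(x,\Gamma)\geq r/s$ falls out automatically, with no iteration and no further appeal to ALLC. The key idea you are missing is this intermediate-value selection of $x$ at a \emph{prescribed} distance from one arc, which converts the one-sided gap into a two-sided one via the quasicircle property; without it, the final step of your argument remains unestablished.
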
 

\begin{proof} We suppose that $(X,d)$ is $\Lambda$-$\rm{ALLC}$, $\Lambda \geq 1$, and that $\Gamma \subeq X$ is a Jordan curve satisfing the following condition: there is $\lambda \geq 1$ such that for each pair of distinct points $x$ and $y$ on $\Gamma$
$$\diam(I) \leq \lambda d(x,y),$$
where $I$ is a component of $\Gamma\bslash \{x,y\}$ of minimal diameter. This condition is quantitatively equivalent to the assumption that $\Gamma$ is a quasicircle; see \cite{QS}.

Let $z \in \Gamma$ and $0 < r \leq \diam X$. We consider three cases.

\textit{Case 1: $0 < r < (\diam \Gamma)/(4\Lambda)$.}  We may find a point $w \in \Gamma$ such that $d(z,w) \geq 2\Lambda r$.  We may also find points $u,v \in \Gamma$ such that $\{z,v,w,u\}$ is cyclically ordered on $\Gamma$, $d(z,u)=r/(4\Lambda)=d(z,v)$, and if $J(z)$ is the component of $\Gamma \bslash \{u,v\}$ that contains $z$, then $J(z) \subeq B(z, r/(4\Lambda)).$ 

The $\Lambda$-$\rm{ALLC}$ condition on the space $X$ implies the existence of a continuum $\alpha \subeq  A(z,r/(8\Lambda^2),r/2)$ that contains $u$ and $v$.  Let $J(u)$ be the component of $\Gamma\bslash \{z,w\}$ containing $u$, and let $I(u) = J(u) \cup \{z,w\}.$ Define $J(v)$ and $I(v)$ similarly. We claim that $\dist(v, I(u)) \geq r/(8\lambda \Lambda).$  If not, then the $\lambda$-three point condition implies that either $z$ or $w$ is within a distance of $r/(8\Lambda)$ of $v$, which is not the case.  Now, the connectedness of $\alpha$ implies that there is a point $x \in \alpha$ such that 
$$\dist(x,I(u)) = \frac{r}{s}, \ \text{where} \ s=8(2\lambda +1)\Lambda^2.$$
Suppose that there is a point $y \in I(v)$ such that $d(x,y) < r/s$.  Then $\dist(y, I(u))<2r/s$, and hence the $\lambda$-three point condition implies that either $z$ or $w$ is within a distance $2\lambda r/s$ of $y$, and hence 
$$\dist(x,\{w,z\}) < \frac{2\lambda r}{s} + \frac{r}{s} = \frac{r}{8\Lambda^2}.$$
Combined with the facts that $x \in \alpha \subeq A(z,r/(8\Lambda^2),r/2)$ and $d(z,w)\geq 2\Lambda r$, this yields a contradiction. Hence $\dist(x,\Gamma) \geq r/s$, and so the fact that $d(x,z) < r/2$ implies
$$B\left(x, \frac{r}{s}\right) \subeq B(z,r) \bslash \Gamma.$$

\textit{Case 2: $8\diam \Gamma \leq r \leq \diam X$.} We may find a point $x \in X$ such that $d(x,z)=r/4$.  Since $\diam \Gamma \leq r/8$ and $z \in \Gamma$, we see that 
$$\dist(x, \Gamma) \geq d(x,z) - \diam\Gamma \geq r/8.$$
Hence $B(x,r/8) \subeq B(z,r) \bslash \Gamma.$ 

\textit{Case 3: $\diam \Gamma/(4\Lambda) \leq r < 8\diam \Gamma.$}  In this case,
$$\frac{r}{32 \Lambda} < \frac{\diam \Gamma}{4\Lambda}.$$
Thus, Case 1 implies that there is a point $x \in X$ such that 
$$B\left(x, \frac{r}{32\Lambda s}\right) \subeq B\left(z, \frac{r}{32\Lambda}\right) \bslash \Gamma \subeq B(z,r)\bslash \Gamma.$$
\end{proof}

%
We combine the results of this section with those of the previous sections in the following statement.

\begin{corollary}\label{bdry porosity ALLC} Let $(X,d)$ be a doubling metric space that is homeomorphic to a domain in $\mathbb{S}^2$, has compact completion, and satisfies the $\rm{ALLC}$ condition.  Then each component of the boundary $\partial{X}$ is a porous subset of $\ovl{X}$, quantitatively.
\end{corollary}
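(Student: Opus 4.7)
The plan is to split into cases according to whether a boundary component $E \in \mathcal{C}(X)$ is trivial (a single point) or non-trivial (a continuum with more than one point), and in the latter case invoke Theorem \ref{porous quasicircles}.

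First, I would verify that $\ovl{X}$ itself is $\rm{ALLC}$ and doubling, with constants depending only on the data of $X$. Since $X$ is homeomorphic to a domain in $\sphere$, it is locally compact and locally path-connected, so Proposition \ref{BetterALLC} produces, for any pair of distinct points in an annulus $A_{\ovl{X}}(a,r,R)$, an embedded arc (hence a continuum) inside $A_{\ovl{X}}(a,r/\lambda',\lambda' R)$, giving the $\lambda'$-$\rm{ALLC}$ condition on $\ovl{X}$. That $\ovl{X}$ is doubling follows because completions preserve the doubling property with the same constant.

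For a trivial boundary component $E = \{z\}$: connectedness of $X$ implies connectedness of $\ovl{X}$, so the continuous function $y \mapsto d(z,y)$ on $\ovl{X}$ attains every value in $[0,\diam \ovl{X}]$. For each $0 < r \leq \diam \ovl{X}$, pick $x \in \ovl{X}$ with $d(z,x) = r/2$; then $B(x,r/4) \subeq B(z,r) \setminus \{z\}$, so $\{z\}$ is $4$-porous in $\ovl{X}$ with a constant independent of the component. For a non-trivial component $E$: by Lemma \ref{ALLC gives LLC}, $X$ is $2\lambda$-$\rm{LLC}$; Theorem \ref{boundary uniformization} combined with the doubling hypothesis then produces a quasisymmetric homeomorphism from $E$ onto $\mathbb{S}^1$ with distortion function depending only on the $\rm{LLC}$ constant and the doubling constant. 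By the Tukia--V\"ais\"al\"a characterization used in the proof of Theorem \ref{boundary uniformization}, $E$ is a metric quasicircle with uniform bounded-turning constant. Applying Theorem \ref{porous quasicircles} to the ambient $\rm{ALLC}$ space $\ovl{X}$ and the quasicircle $E \subeq \ovl{X}$ yields that $E$ is porous in $\ovl{X}$, quantitatively.

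The main subtlety is that Theorem \ref{porous quasicircles} requires the ambient space to be $\rm{ALLC}$, while our hypothesis is the $\rm{ALLC}$ condition on $X$, not on $\ovl{X}$; Proposition \ref{BetterALLC} bridges this gap by promoting $\rm{ALLC}$-connectedness in $X$ to genuine embedded arcs in $\ovl{X}$, whose images are continua realising the $\rm{ALLC}$ property of $\ovl{X}$. Once this is in hand, the two cases above combine to give a single porosity constant depending only on $\lambda$ and the doubling constant of $X$.
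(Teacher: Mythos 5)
Your proof is correct and follows essentially the same route as the paper's (very terse) argument: Lemma \ref{ALLC gives LLC} gives $\rm{LLC}$, Theorem \ref{boundary uniformization} upgrades each non-trivial boundary component to a quasicircle, and Theorem \ref{porous quasicircles} then gives porosity. You are more careful than the paper on two points that the paper passes over silently: first, Theorem \ref{porous quasicircles} is applied with ambient space $\ovl{X}$ (since $E\sub\partial X\sub\ovl{X}$, not $X$), so one genuinely needs $\ovl{X}$ rather than $X$ to satisfy the $\rm{ALLC}$ condition, and your appeal to Proposition \ref{BetterALLC} supplies exactly this; second, Theorem \ref{boundary uniformization} only treats non-trivial components, so the single-point components must be handled separately, and your elementary connectedness argument does so cleanly. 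Your added care makes the argument tighter than the paper's own three-line proof. One small imprecision: the doubling property of $\ovl{X}$ follows quantitatively from that of $X$, but not necessarily with literally the same constant; this does not affect the argument.
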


\begin{proof} By Lemma \ref{ALLC gives LLC}, the space $X$ is also $\rm{LLC}$, quantitatively. Theorem \ref{boundary uniformization} now implies that each component of the boundary $\partial{X}$ is a quasicircle, quantitatively.  Thus Theorem \ref{porous quasicircles} yields the desired result.
\end{proof}

The following theorem states that, up to bi-Lipschitz equivalence, having Assouad dimension strictly less than $2$ characterizes quasicircles in $\sphere$ among the class of all quasicircles \cite{HerronMeyer}.

\begin{theorem}[Herron--Meyer]\label{qcircle filling} Let $(\Gamma,d_\Gamma)$ be a metric circle.  The following statements are equivalent, quantitatively.
\begin{itemize}
\item $\Gamma$ is a quasicircle that is $(\alpha,C)$-homogeneous for some $1\leq \alpha <2$ and $C\geq 1$. 
\item $\Gamma$ is bi-Lipschitz equivalent to a quasicircle in $\sphere$.
\end{itemize}
\end{theorem}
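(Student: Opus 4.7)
The plan is to prove both implications, with the hard direction being the substantial one. For the easy direction, suppose $g\colon \Gamma \to \Gamma' \subseteq \sphere$ is a bi-Lipschitz homeomorphism with $\Gamma'$ a quasicircle. The three-point condition characterizing quasicircles is preserved under quasisymmetries, hence under bi-Lipschitz maps, so $\Gamma$ is itself a quasicircle, quantitatively. For the homogeneity claim, observe that $\sphere$ is Ahlfors $2$-regular and trivially $\rm{ALLC}$, so Theorem \ref{porous quasicircles} shows $\Gamma'$ is porous in $\sphere$. Theorem \ref{porous dim} then provides $\alpha < 2$ and $C' \geq 1$ for which $\Gamma'$ is $(\alpha,C')$-homogeneous. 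Since $(\alpha,\cdot)$-homogeneity is manifestly transferred across bi-Lipschitz maps, the same holds for $\Gamma$.

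For the hard direction I would first fix a quasisymmetric parametrization $\phi \colon \Sircle \to \Gamma$ coming from the three-point characterization of quasicircles, and use it to dyadically subdivide $\Gamma$: for each $n \geq 0$, partition $\Sircle$ into $2^n$ arcs $I^n_k$ of equal length and set $v^n_k = \phi(2\pi k/2^n)$, $J^n_k = \phi(I^n_k)$. Quasisymmetry of $\phi$ yields that $\diam J^n_k \simeq \diam J^n_{k'}$ for adjacent $k, k'$ and that $\diam J^n_k$ decays geometrically in $n$. The goal is then to construct planar Jordan curves $\Gamma_n \subseteq \sphere$ with vertices $w^n_k$ in bijective correspondence with the $v^n_k$, joined by polygonal edges of length comparable to $\diam J^n_k$, where $\Gamma_{n+1}$ is obtained from $\Gamma_n$ by inserting a new midpoint vertex $w^{n+1}_{2k-1}$ near each edge. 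Define $f\colon \Gamma \to \sphere$ as the limit map determined by $v^n_k \mapsto w^n_k$ and extended by continuity.

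The decisive input is the hypothesis $\alpha < 2$. Translated into a summability bound of Assouad-dimension type, it says that within any ball of radius $r$ in $\sphere$, the cumulative contribution $\sum_{n,k}(\diam J^n_k/r)^2$ coming from the dyadic arcs of every generation that intersect the ball is uniformly controlled. This is precisely what is needed for the cumulative perturbations across generations to telescope and yield uniform bi-Lipschitz bounds on $f$, while the three-point condition on $\Gamma$ together with the nested polygonal structure ensures that $f(\Gamma)$ inherits the three-point condition and is therefore a planar quasicircle.

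The main obstacle is executing the inductive planar construction cleanly: at each stage the new midpoint vertex $w^{n+1}_{2k-1}$ must be positioned in $\sphere$ so as to produce edges of the prescribed lengths with no self-intersections against previously placed edges. The assumption $\alpha < 2$ supplies the requisite planar ``room'' because it forces $\Gamma$ to be porous in any $2$-regular ambient space (Theorems \ref{porous quasicircles} and \ref{porous dim} run in reverse), but the bookkeeping is delicate: one must simultaneously track length preservation, transversality of new edges relative to old ones, and control of accumulated distortion across infinitely many scales. Once these are verified, the limit map $f$ yields the desired bi-Lipschitz equivalence.
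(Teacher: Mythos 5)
This statement is cited from the literature rather than proved in the paper: the theorem carries the attribution ``[Herron--Meyer]'' and is introduced with the reference \cite{HerronMeyer}. There is therefore no in-paper proof to compare your argument against. That said, your proposal warrants comment on its own merits.

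Your easy direction is correct and a pleasant internal use of the paper's machinery: since $\sphere$ is Ahlfors $2$-regular and satisfies the $\rm{ALLC}$ condition, Theorem \ref{porous quasicircles} makes $\Gamma'$ porous in $\sphere$, Theorem \ref{porous dim} then gives homogeneity with exponent $\alpha < 2$, and both the three-point condition and $(\alpha,C)$-homogeneity transfer across bi-Lipschitz maps. No complaints there.

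The hard direction, which is the actual content of Herron--Meyer's theorem, is not a proof but a program. You set up the dyadic subdivision and the intended inductive polygonal approximation, and then defer the entire construction to ``delicate bookkeeping'' that is never done. The place where the hypothesis $\alpha < 2$ is supposed to enter is precisely the place left unexecuted, and your heuristic for why it suffices is shaky: you invoke ``Theorems \ref{porous quasicircles} and \ref{porous dim} run in reverse'' to get planar room, but those theorems concern porosity of a set that \emph{already sits inside} a given Ahlfors $2$-regular ambient space. When one is trying to build the embedding from scratch there is no ambient $2$-regular space to be porous in, so this step is circular as phrased. Moreover Theorem \ref{porous quasicircles} is a one-way implication (ALLC plus quasicircle implies porous), not a biconditional, so it does not ``run in reverse'' even in a setting where the ambient space exists. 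Making the $\alpha < 2$ hypothesis actually produce the required non-self-intersecting placement of vertices, with summable accumulated distortion, is where the genuine difficulty of the Herron--Meyer argument lies; the proposal names the obstacles correctly but does not overcome them, so as written the forward implication remains unproved.
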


\begin{remark}\label{Herron Meyer remark} Combined with the classical theory of planar quasiconformal mappings, Theorem \ref{qcircle filling} has the following consequence, which we will use in the proof of our main result. Let $\Gamma$ be a quasicircle that is $(\alpha,C)$-homogeneous for some $1\leq \alpha <2$ and $C\geq 1$. Then there is a domain $D_\Gamma \subeq \sphere$ such that  
\begin{itemize}
\item[(i)] $D_\Gamma$ is Ahlfors $2$-regular,
\item[(ii)] $D_\Gamma$ is quasisymmetrically equivalent to $\disk$,
\item[(iii)] the boundary $\partial{D}_{\Gamma}$ is bi-Lipschitz equivalent to $\Gamma$.
\end{itemize}
\end{remark}

\section{Gluing} \label{gluing section}

We now describe a process for gluing together metric spaces along bi-Lipschitz equivalent subsets. Parts of the basic construction maybe found in \cite{Burago}, and related deeper results are included in \cite{sewing}.

For the remainder of this section, we let $I$ be a possibly uncountable index set, which we extend by one symbol $0$ to create the index set $I_0=I \cup\{0\}$. We consider a collection $\{(X_i,d_i)\}_{i \in I_0}$ of compact metric spaces and a pairwise disjoint collection $\{E_i\}_{i \in I}$ of continua in $X_0$ such that there is a number $L \geq 1$ such that for each $i \in I$, there exists an $L$-bi-Lipschitz homeomorphism $f_i \colon E_i \to f_i(E_i) \subeq X_i$.

\subsection{The basic gluing construction}
We first consider the disjoint union 
$$\til{Z} = \coprod_{i \in I_0} X_i.$$
We then consider the set $Z$ obtained by gluing each space $X_i$ to $X_0$ via $f_i$, i.e., $Z$ is the quotient of $\til{Z}$ by the equivalence relation $\sim$ generated by the condition that for all $i \in I$, if $x \in E_i$, then $x \sim f_i(x)$. 

We wish to define a natural metric on $Z$.  To do so, we define an auxilliary distance function for points $z,w \in \til{Z}$ by 
$$\til{d}(z,w) = \begin{cases}
			d_i(z,w) & z,w \in X_i,\\
			\infty & \text{otherwise}.\\
			\end{cases}$$
We now define a distance function $d$ on $Z$ by setting, for all equivalence classes $a,b \in Z$,
$$d(a,b) = \inf \sum_{k=1}^n \til{d}(z_k,z'_k),$$
where the infimum is taken over all sequences $\mbf{z}=z_1,z'_1,\hdots,z_n,z'_n$ in $\til{Z}$ such that $z_1 \in a$, $z'_n \in b$, and if $n>1$, then $\pi(z'_k)=\pi(z_{k+1})$ for all $i=1,\hdots,n-1$. We say that such a sequence $\mbf{z}$ is an \emph{admissible sequence from $a$ to $b$} if the \emph{$\til{d}$-length of $\mbf{z}$}
$$l_{\til{d}}(\mbf{z})=\sum_{k=1}^n \til{d}(z_k,z'_k)$$
is finite, and if for any $k=1,\hdots, n-1$,
\begin{equation}\label{admissible} z'_k \neq z_{k+1}.\end{equation}
The triangle inequality implies that the infimum in the definition of $d(a,b)$ may be taken over all admissible sequences.  

\begin{proposition}\label{comparison prop} The distance function $d$ is a metric on the set $Z$, and if points $a$ and $b$ of  $Z$ have representatives $z_a$ and $z_b$ satisfying $\til{d}(z_a,z_b)<\infty$, then
\begin{equation}\label{comparison}\frac{\til{d}(z_a,z_b)}{L} \leq d(a,b) \leq \til{d}(z_a,z_b).\end{equation}
\end{proposition}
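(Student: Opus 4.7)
The plan is to handle the metric axioms and the comparison \eqref{comparison} separately, with the lower bound carrying the essential content. Symmetry of $d$ is automatic, since reversing an admissible sequence yields an admissible sequence of equal $\til{d}$-length. The triangle inequality follows by concatenating near-infimal admissible sequences for $d(a,b)$ and $d(b,c)$ and, if necessary, pruning the result to restore condition \eqref{admissible} (which can only shorten the $\til{d}$-length). The upper bound in \eqref{comparison} is immediate from the one-segment sequence $\mbf{z}=z_a,z_b$: the admissibility condition is vacuous when $n=1$, so $l_{\til{d}}(\mbf{z})=\til{d}(z_a,z_b)$.

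The key structural fact underlying the lower bound is that the equivalence classes of $\sim$ are either singletons or pairs of the form $\{x,f_j(x)\}$ with $x\in E_j$, since the continua $\{E_i\}_{i\in I}$ are pairwise disjoint and each $f_j$ is injective. Any transition $z_k'\sim z_{k+1}$ with $z_k'\neq z_{k+1}$ in an admissible sequence is therefore a jump between $X_0$ and a single $X_j$ across the gluing locus $E_j\leftrightarrow f_j(E_j)$; this forces the indices $i_k$ (with $z_k,z_k'\in X_{i_k}$) to alternate between $0$ and elements of $I$, and forces every interior transition endpoint lying in some $X_j$, $j\neq 0$, to in fact lie in $f_j(E_j)$.

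Assuming first that $z_a,z_b\in X_0$, I would define a pulled-back sequence in $X_0$ by
\[
w_k=\begin{cases} z_k, & i_k=0, \\ f_{i_k}^{-1}(z_k), & i_k\neq 0,\end{cases}
\]
and similarly $w_k'$. The structural observation guarantees that the pullbacks are defined, that $w_k'=w_{k+1}$ for every $k<n$, and (after a short case check on whether $i_1=0$ or $i_1\neq 0$ with $z_a\in E_{i_1}$, and similarly for $i_n$) that $w_1=z_a$ and $w_n'=z_b$. Applying the $L$-bi-Lipschitz estimate $d_{i_k}(z_k,z_k')\geq d_0(w_k,w_k')/L$ (which also holds when $i_k=0$ since $L\geq 1$), summing over $k$, and invoking the triangle inequality in $X_0$ along the chain $w_1,w_1'=w_2,w_2'=w_3,\dots,w_n'$ yields
\[
l_{\til{d}}(\mbf{z})\geq \frac{1}{L}\sum_{k=1}^{n} d_0(w_k,w_k')\geq \frac{d_0(z_a,z_b)}{L}=\frac{\til{d}(z_a,z_b)}{L}.
\]
Taking the infimum over admissible sequences gives the desired lower bound. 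The case $z_a,z_b\in X_j$ for some $j\neq 0$ is handled symmetrically, pushing $X_0$-segments through $f_j$ at gluing points and otherwise using the fact that forced excursions out of $X_j$ must leave and re-enter via $f_j(E_j)$.

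The main obstacle is the careful bookkeeping around the pullback — in particular the two endpoint cases, since the representatives $z_a,z_b$ of $a,b$ in $\til{Z}$ are not unique when $a$ or $b$ belongs to a gluing pair. Once this is handled, the bi-Lipschitz pointwise estimate and telescoping produce the lower bound, and strict positivity $d(a,b)>0$ for $a\neq b$ follows: when $a$ and $b$ admit representatives in a common $X_i$ the lower bound applies directly, and otherwise any admissible sequence from $a$ to $b$ must contain at least one transition, forcing a positive lower bound by an analogous argument applied to the first and last segments.
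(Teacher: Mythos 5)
Your proposal follows essentially the same route as the paper's proof: the upper bound comes from the trivial one-segment admissible sequence, and the lower bound from pulling the chain back into $X_0$ via $f_{i_k}^{-1}$, applying the $L$-bi-Lipschitz estimate segment-by-segment, and telescoping with the triangle inequality in $X_0$; positivity for nonequivalent $a,b$ with no common representative is then forced by the first/last segment. The one spot where you are vaguer than you should be is the final positivity step: you should say explicitly that the positive lower bound comes from $\dist_{d_i}(z_a, f_i(E_i)) > 0$, which holds because $f_i(E_i)$ is compact (each $E_i$ is a continuum and $f_i$ is continuous), and without that closedness the infimum could vanish.
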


\begin{proof} Let $a,b \in Z$.  The definitions quickly imply that $d(a,b)=d(b,a)$, that $d(a,a)=0$, and that $d$ satisfies the triangle inequality.   

Before showing that $d(a,b)=0$ implies that $a=b$, we prove \eqref{comparison}. Let $z_a$ and $z_b$ be representatives of $a$ and $b$ respectively, such that $\til{d}(z_a,z_b) <\infty$. The second inequality in \eqref{comparison} follows from the fact that $z_a,z_b$ is an admissible sequence connecting $a$ to $b$.  Towards a proof of the first inequality, let $\mbf{z}=z_1,z'_1,\hdots,z_n,z'_n$ be an admissible sequence connecting $a$ to $b$.  We consider only the case that $z_a$ and $z_b$ are in $X_0$; the other cases are handled similarly. It suffices to assume that $z_1=z_a$ and $z'_n = z_b$ and to show that $l_{\til{d}}(\mbf{z}) \geq \til{d}(z_a,z_b)/L$. For each $k=1,\hdots,n$, let $i_k \in I_0$ be the index such that $z_k,z'_k \in X_{i_k}$.  Since $\mbf{z}$ is admissible, we may assume that if $i_k\neq 0$, then $z_k,z'_k \in f_{i_k}(E_{i_k}).$ Since each $f_{i_k}$ is an $L$-bi-Lipschitz mapping, the triangle inequality implies that 
\begin{align*} l_{\til{d}}(\mbf{z}) &= \sum_{k=1}^{n} d_{i_k}(z_k,z'_k)\\ &\geq \sum_{\{k:  i_k=0\}}d_0(z_k,z'_k) + \sum_{\{k:i_k\neq 0\}}\frac{d_0(f_{i_k}\inv(z_k),f_{i_k}\inv(z'_k))}{L} \\
&\geq \frac{d_0(z_a,z_b)}{L} =\frac{\til{d}(z_a,z_b)}{L},\end{align*}
as desired.

Now, suppose that $d(a,b)=0$. If there are representatives $z_a$ and $z_b$ of $a$ and $b$ respectively that satisfy $\til{d}(z_a,z_b)<\infty$, then \eqref{comparison} shows that $z_a=z_b$, and hence $a=b$.  Suppose no such representatives exist.  Then we may assume without loss of generality that $a$ has a representative $z_a \in X_{i}\bslash f_i(E_{i})$ for some $i \in I$, and that $b$ has no representative in $X_{i}$.  Then any admissible sequence from $a$ to $b$ has $\til{d}$-length at least $\dist_{d_{i}}(z_a,f_i(E_{i}))$.  Since $f_i(E_i)$ is compact, we conclude that $d(a,b)$ is positive, a contradiction. 
\end{proof}

\begin{remark}\label{different topologies remark} The metric $d$ defines a topology on $Z$, which we will refer to as \emph{the metric topology on $Z$}.  There is another natural topology on $Z$, which we will refer to as \emph{the quotient topology on $Z$}.  It is obtained as follows.  First, we equip $\til{Z}$ with the disjoint union topology, i.e., a set $A \subeq \til{Z}$ is open if and only for each $i \in I_0$, the set $A \cap X_i$ is open in $X_i$.  Note that a $\til{d}$-ball in $\til{Z}$ of finite radius is open. Then we consider the quotient topology on $Z$ arising from the equivalence relation $\sim$, i.e., the maximal topology on $Z$ in which the standard projection map $\pi \colon \til{Z} \to Z$ continuous.  It is not hard to check that every open set in the metric topology on $Z$ is open in the quotient topology on $Z$.  Moreover, if $\card I <\infty$, then the topologies coincide.  Simple examples show that the topologies may differ if this is not the case. 
\end{remark}

The following proposition states that away from the gluing sets, the space $Z$ is locally isometric to $\til{Z}$. 

\begin{proposition}\label{interior points estimate}  Let $r>0$, and suppose that $a \in Z$ satifies 
\begin{equation}\label{way inside}\dist_{d}\left(a, \pi\left(\bigcup_{i\in I}E_i\right)\right) \geq 3r.\end{equation}
Then there is unique representative $z_b$ of each $b \in B_d(a,r)$, and $\pi\inv\colon B_d(a,r) \to B_{\til{d}}(z_a,r)$ is a well-defined bijective isometry.
\end{proposition}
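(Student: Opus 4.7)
My plan is to exploit two structural features of the gluing. First, since the sets $\{E_i\}_{i \in I}$ are pairwise disjoint in $X_0$ and the $\{X_i\}_{i \in I_0}$ are disjoint inside $\til{Z}$, each equivalence class of $\sim$ has at most two elements: a point $x \in E_i$ and its partner $f_i(x) \in X_i$. Consequently a point $c \in Z$ admits more than one representative precisely when $c \in \pi(\bigcup_{i \in I} E_i)$. Under the hypothesis $\dist_d(a,\pi(\bigcup_i E_i)) \geq 3r$, this forces every $b \in B_d(a,r)$ — including $a$ itself — to have a unique representative in $\til{Z}$, giving a well-defined map $\pi\inv \colon B_d(a,r) \to \til{Z}$ by $b \mapsto z_b$.

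Second, I would analyze any short admissible sequence from $a$ to $b \in B_d(a,r)$. Let $\mbf{z} = z_1, z_1',\ldots, z_n, z_n'$ be such a sequence with $l_{\til{d}}(\mbf{z}) < r$; by the uniqueness above, $z_1 = z_a$ and $z_n' = z_b$. I claim $n=1$. Otherwise $z_1' \neq z_2$ while $\pi(z_1') = \pi(z_2)$, so this common image has two distinct representatives and therefore lies in $\pi(\bigcup_i E_i)$. But $z_1, z_1'$ is itself an admissible sequence from $a$ to $\pi(z_1')$, giving
$$d(a,\pi(z_1')) \leq \til{d}(z_1,z_1') \leq l_{\til{d}}(\mbf{z}) < r,$$
which contradicts $\dist_d(a,\pi(\bigcup_i E_i)) \geq 3r$. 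Hence $n=1$ and $l_{\til{d}}(\mbf{z}) = \til{d}(z_a,z_b)$.

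Combining these observations, every admissible sequence from $a$ to $b$ has $\til{d}$-length either equal to $\til{d}(z_a,z_b)$ (when $n=1$) or at least $r$ (when $n\geq 2$). Since $d(a,b) < r$, the infimum defining $d(a,b)$ is realized by the first value, so $d(a,b) = \til{d}(z_a,z_b) < r$. Thus $\pi\inv$ sends $B_d(a,r)$ isometrically into $B_{\til{d}}(z_a,r)$. For surjectivity, given any $z \in B_{\til{d}}(z_a,r)$, the admissible sequence $z_a, z$ yields $d(a,\pi(z)) \leq \til{d}(z_a,z) < r$, so $\pi(z) \in B_d(a,r)$; the uniqueness of its representative then forces $\pi\inv(\pi(z)) = z$.

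The only point requiring real care is the second paragraph: one must recognize that every interior vertex $\pi(z_k')$ of an admissible sequence automatically lies in $\pi(\bigcup_i E_i)$, so that the distance hypothesis rules out all sequences of length less than $r$ except the trivial one-step sequence $z_a, z_b$. Everything else is bookkeeping built on the comparison inequality \eqref{comparison} and the infimum definition of $d$.
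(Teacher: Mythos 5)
Your structural observations are correct and match the spirit of the paper's argument: the equivalence classes are at most pairs $\{x, f_i(x)\}$, a point of $Z$ has multiple representatives exactly when it lies in $\pi(\bigcup_i E_i)$, and the key insight that every interior vertex $\pi(z_k')$ of an admissible sequence must lie in $\pi(\bigcup_i E_i)$ is exactly right.

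However, there is a gap in the isometry claim. You only analyze admissible sequences from $a$ to $b$, and conclude $d(a,b) = \til{d}(z_a,z_b)$ for each $b \in B_d(a,r)$. But the proposition asserts that $\pi^{-1} \colon B_d(a,r) \to B_{\til{d}}(z_a,r)$ is an isometry, which requires $d(b,c) = \til{d}(z_b,z_c)$ for \emph{every} pair $b,c \in B_d(a,r)$, not just pairs with one endpoint equal to $a$. Preserving distances from a single basepoint does not in general imply preserving all pairwise distances, so the sentence ``Thus $\pi\inv$ sends $B_d(a,r)$ isometrically into $B_{\til{d}}(z_a,r)$'' is an overclaim relative to what you have established.

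The fix uses the same mechanism, and this is exactly what the paper does: it takes two arbitrary points $b,c \in B_d(a,r)$ from the outset. By the triangle inequality, $\dist_d(b, \pi(\bigcup_i E_i)) > 3r - r = 2r$, while $d(b,c) < 2r$; so any admissible sequence from $b$ to $c$ of $\til{d}$-length below $2r$ cannot have an interior vertex (otherwise that vertex would lie in $\pi(\bigcup_i E_i)$ at distance less than $2r$ from $b$). Hence the sequence is the one-step chain $z_b, z_c$ and $d(b,c) = \til{d}(z_b,z_c)$. The paper phrases this via the estimate $\dist_{\til{d}}(\{z_b,z_c\}, \bigcup_i(f_i(E_i) \cup E_i)) \geq 2r$, which packages the same observation; the approach is the same as yours once you quantify over general pairs $b,c$ rather than just over $b$ with $a$ fixed.
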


\begin{proof} Let $b$ and $c$ be points of $B_d(a,r)$.   Then Proposition \ref{comparison prop} and \eqref{way inside} imply that if $z_b$ and $z_c$ are representatives of $b$ and $c$ respectively, then 
\begin{equation}\label{way inside 2}\dist_{\til{d}}\left(\{z_b,z_c\}, \bigcup_{i\in I}(f_i(E_i)\cup E_i)\right) \geq 2r.\end{equation}
This immediately implies that $b$ and $c$ have unique representatives $z_b$ and $z_c$, respectively. Since $d(b,c) < 2r$, there is admissible sequence $\mbf{z}$ from $b$ to $c$ that has $\til{d}$-length less than $2r$.  The conditions \eqref{admissible} and \eqref{way inside 2} now imply that $\mbf{z} =z_b,z_c$, and hence that $d(b,c)=\til{d}(z_b,z_c)$.  A similar argument shows that $z_b$ and $z_c$ are in $B_{\til{d}}(z_a,r)$.  These facts together imply the desired statement.  
\end{proof}

\subsection{Preservation of the $\rm{ALLC}$ condition}\label{preserve ALLC section}
In this and the following subsection, we make the following assumption on the spaces in the collection $\{X_i\}_{i \in I}$.
\begin{itemize}
\item[(A)]\label{no bubbles} there is a constant $C\geq 1$ such that $\diam_{d_i}(X_i) \leq C\diam_{d_i}f_i(E_i)$ for all $i \in I$.
\end{itemize}
Heuristically, this assumption means that the spaces $(X_i,d_i)$ are ``flat".

 We now show that if each space in the collection $\{X_i\}_{i \in I_0}$ satisfies the $\rm{ALLC}$ condition with a uniform constant, then the glued space $(Z,d)$ also satisfies the $\rm{ALLC}$ condition, quantitatively.

\begin{lemma}\label{scoot} Suppose that there is a constant $\lambda \geq 1$ such that for each $i \in I_0$, the space $(X_i,d_i)$ is $\lambda$-$\rm{ALLC}$. Then there is a quantity $\Lambda \geq 1$, depending only on the data, with the following property.  Let $i \in I$, $a \in Z$, and $r>0$.  Then at least one of the following two statements holds: 
\begin{itemize}
\item[(i)] the annulus $A_d(a,r,2r)$ is contained in $\pi(X_i)$,
\item[(ii)] there is a point $b' \in A_d(a,r/\Lambda, 2\Lambda r) \cap \pi(E_i) $ such that for each point $b \in A_d(a,r,2r) \cap \pi(X_i)$, there is a continuum $E$ containing $b$ and $b'$ satisfying 
$$E \subeq A_d(a,r/\Lambda,2\Lambda r) \cap \pi(X_i).$$
\end{itemize}
\end{lemma}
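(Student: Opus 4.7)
The approach is to recognize that the subspace $(\pi(X_i), d)$ is itself a $\Lambda_0$-ALLC metric space with $\Lambda_0 := L\lambda$, and then apply its annular connectivity directly. Indeed, $\pi|_{X_i}$ is a bijection onto $\pi(X_i)$ (the equivalence class of any $z \in X_i$ under $\sim$ meets $X_i$ only at $z$), and pulling $d_i$ back through this bijection yields a metric $d_i'$ on $\pi(X_i)$ for which $(\pi(X_i), d_i')$ is isometric to $(X_i, d_i)$ and hence $\lambda$-ALLC. Proposition~\ref{comparison prop} provides the bi-Lipschitz comparison $d \leq d_i' \leq Ld$ on $\pi(X_i)$, and a standard transfer then promotes the $\lambda$-ALLC property of $d_i'$ to the $L\lambda$-ALLC property of $d$ on $\pi(X_i)$. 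We will take $\Lambda$ to be a large constant of the form $C'L\lambda(1+CL)$ for a suitable absolute $C'$, depending only on $\lambda, L$ and $C$.

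Suppose case (i) fails. We may assume $A_d(a, r, 2r) \cap \pi(X_i)$ contains some point $b_0$ (otherwise the ``for each'' clause of (ii) is vacuous and the existence of $b'$ follows from a simpler version of the argument below). Fix $c \in A_d(a, r, 2r) \setminus \pi(X_i)$. An admissible sequence from $b_0$ to $c$ of $\til d$-length close to $d(b_0, c) < 4r$ must leave $\pi(X_i)$, which happens only through a step identifying a point of $f_i(E_i)$ with a point of $E_i \subseteq X_0$; the first such exit projects to a point $p_0 \in \pi(E_i)$ with $d(b_0, p_0) < 4r$ and hence $d(a, p_0) < 6r$. Next, we claim $\pi(E_i) \not\subseteq B_d(a, r\Lambda_0/\Lambda)$: were the containment to hold, assumption (A) together with $d \leq d_i' \leq Ld$ would give
$$\diam_d \pi(X_i) \leq \diam_{d_i} X_i \leq C\diam_{d_i} f_i(E_i) \leq CL \diam_d \pi(E_i) \leq 2CLr\Lambda_0/\Lambda,$$
so $d(a, b_0) \leq (1+2CL)r\Lambda_0/\Lambda$, contradicting $d(a, b_0) > r$ as soon as $\Lambda > (1+2CL)\Lambda_0$. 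Combining this with $p_0 \in B_d(a, 6r)$ and the connectedness of $\pi(E_i)$ (a continuous image of the continuum $E_i$), the intermediate value theorem applied to $d(a, \cdot)$ yields $b' \in \pi(E_i) \cap A_d(a, r\Lambda_0/\Lambda, 6r)$, possibly after a slight perturbation for strict inequalities.

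It remains to construct the continuum. Assume first $a \in \pi(X_i)$. For any $b \in A_d(a, r, 2r) \cap \pi(X_i)$, both $b$ and $b'$ lie in the annulus $A_{\pi(X_i)}(a, r\Lambda_0/\Lambda, 6r)$ measured in $d$, so the $\Lambda_0$-ALLC property of $(\pi(X_i), d)$ produces a continuum $E \subseteq A_{\pi(X_i)}(a, r/\Lambda, 6r\Lambda_0)$ containing $b$ and $b'$; for $\Lambda \geq 3\Lambda_0$ this annulus sits inside $A_d(a, r/\Lambda, 2\Lambda r) \cap \pi(X_i)$, as required. The main obstacle is the case $a \notin \pi(X_i)$. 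Letting $a_0 \in \pi(X_i)$ be a point closest to $a$ with $\delta = d(a, a_0) < 2r$: when $\delta \geq r/\Lambda$ the inner annulus condition is automatic since $\pi(X_i) \cap B_d(a, r/\Lambda) = \emptyset$, and a suitable continuum follows from the $2\Lambda_0$-LLC of $(\pi(X_i), d)$ (Lemma~\ref{ALLC gives LLC}) centered at $a_0$; when $\delta < r/\Lambda$ we rerun the argument with center $a_0$, choosing $b'$ slightly farther from $a$ (say $d(a, b') \geq 3r\Lambda_0/\Lambda$, obtainable by enlarging $\Lambda$ in the diameter argument) so that the $\delta$-shift of annular radii is absorbed. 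Arranging all of these annular inequalities to close with a single $\Lambda$ depending only on the data is then a matter of careful bookkeeping.
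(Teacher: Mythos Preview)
Your approach is essentially the paper's: transfer the $\lambda$-$\rm{ALLC}$ property of $(X_i,d_i)$ to $(\pi(X_i),d)$ via the bi-Lipschitz comparison of Proposition~\ref{comparison prop}, then split cases on the position of $a$ relative to $\pi(X_i)$.  The organization differs in that you locate $b'$ once via an intermediate-value argument on the connected set $\pi(E_i)$ (using condition~(A) to forbid $\pi(E_i)\subeq B_d(a,r\Lambda_0/\Lambda)$), whereas the paper constructs $b'$ separately in each sub-case; and your secondary split is on $\dist_d(a,\pi(X_i))$ versus $r/\Lambda$ rather than the paper's $r$ versus $32LC\lambda\,\dist_d(a,\pi(E_i))$.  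Both variants work and the bookkeeping you defer does close.

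One small correction: your parenthetical asserts that when $A_d(a,r,2r)\cap\pi(X_i)=\emptyset$, the point $b'$ still exists by ``a simpler version of the argument below''.  This is not true in general---if $\pi(E_i)$ lies entirely outside $B_d(a,2\Lambda r)$, no such $b'$ exists.  The paper simply treats statement~(ii) as vacuously satisfied in that case, which is harmless since the only use of the lemma (in the proof of Theorem~\ref{LLC gluing}) invokes $b'$ solely to connect a given $b\in A_d(a,r,2r)\cap\pi(X_i)$ to $\pi(X_0)$.
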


\begin{proof}  We first assume that $a \notin \pi(X_i)$, and will show that the second statement above holds. We consider two sub-cases. 

\textit{Case 1: $r>32LC\lambda\dist_d(a,\pi(E_i))$.}  Let $a'$ be a point of $\pi(E_i)$ such that 
$$d(a,a') < \frac{r}{32LC\lambda}.$$  The triangle inequality implies that  
\begin{equation}\label{scoot 1} A_d(a,r,2r) \subeq A_d\left(a', \frac{r}{2},3r\right). \end{equation}
If $\diam_{d_i} f_i(E_i) < r/(2C)$, then Proposition \ref{comparison prop} and condition (A) show that $\pi(X_i) \subeq B_d\left(a',\frac{r}{2}\right).$
By \eqref{scoot 1}, this now implies that $A_d(a,r,2r) \cap \pi(X_i)$ is empty and hence the claim is vacuously true.  Thus we may assume that 
\begin{equation}\label{scoot 2} \diam_{d_i} f_i(E_i) \geq \frac{r}{2C}.\end{equation} 
Let $z_{a'}$ be the representative of $a'$ in $f_i(E_i)$. The connectedness of $f_i(E_i)$ and \eqref{scoot 2} imply that there is a point $z_{b'} \in f_i(E_i)$ such that 
$$d_i(z_{a'},z_{b'})=\frac{r}{8C}.$$
Set $b'=\pi(z_{b'}).$ Let $b \in A_d(a,r,2r)$, and denote by $z_{b}$ the representative of $b \in X_i$.  Proposition \ref{comparison prop}, the above equality, and \eqref{scoot 1} show that 
 $$z_b, z_{b'} \in A_{d_i}\left(z_{a'},\frac{r}{16C}, 3Lr\right).$$
 The $\lambda$-$\rm{ALLC}$ condition in $X_i$ now provides a continuum $E'$ containing $z_b$ and $z_b'$ such that 
 $$E' \subeq A_{d_i}\left(z_{a'},\frac{r}{16C\lambda}, 3L\lambda r\right).$$
 Proposition \ref{comparison prop} shows that 
 $$\pi(E') \subeq A_d\left(a', \frac{r}{16CL\lambda}, 3L\lambda r \right).$$ 
The triangle inequality now shows that 
$$
\pi(E')\subseteq A_d\left(a, \frac{r}{32CL\lambda}, \left(3L\lambda +1\right)r \right).
$$
Clearly $\pi(E')$ is a continuum containing $b$ and $b'$. 

\textit{Case 2: $r \leq 32LC\lambda \dist_d(a,\pi(E_i))$}. We may assume that there is a point $b_0 \in A_d(a,r,2r) \cap \pi(X_i)$, for otherwise statement (ii) above is vacuously true.  The definition of an admissible chain shows that this implies the existence of a point $b' \in \pi(E_i)$ satisfying $d(a,b') < 2r$ and $d(b,b') < 2r$. Given a point $b \in A_d(a,r,2r) \cap \pi(X_i)$, Proposition \ref{comparison prop} shows that we may find representatives $z_{b}$ and $z_{b'}$ in $X_i$ of $b$ and $b'$ respectively such that $d_i(z_b,z_{b'})<2Lr$.  By Lemma \ref{ALLC gives LLC}, the $\lambda$-$\rm{ALLC}$ condition in $X_i$ provides a continuum $E' \subeq B_{d_i}(z_b,4L\lambda r)$ that connects $b$ and $b'$.  Proposition \ref{comparison prop} implies that 
$$\pi(E') \subeq B_d(b,4L\lambda r) \subeq B_d(a,(4L\lambda + 2)r).$$
The restriction that $r \leq 32LC\lambda \dist_d(a,\pi(E_i))$ implies that $B_d(a,r/(32LC\lambda)$ does not intersect $\pi(E_i)$. The definition of an admissible sequence now shows that $B_d(a,r/(32LC\lambda)$ does not intersect $\pi(X_i)$.  Thus $\pi(E')$ is a continuum containing $b$ and $b'$ and satisfying $$\pi(E')\subseteq A_d\left(a,\frac{r}{32LC\lambda},(4L\lambda + 2)r\right).$$

Now, we assume that $a \in \pi(X_i)$, that the first statement above does not hold, and that the second statement above is not trivially true. That is, we assume that $A_d(a,r,2r)$ intersects both $\pi(X_i)$ and $Z \bslash \pi(X_i)$. 

We claim that $A_d(a,r/(4LC),2r)$ intersects $\pi(E_i)$. Since $\pi(E_i)$ is connected, if this is not the case, then either $\pi(E_i) \subeq \ovl{B}_d(a,r/(4LC))$ or $\pi(E_i) \subeq Z \bslash B_d(a,2r)$.  If the first possibility occurs, then condition (A) yields a contradiction with the assumptions that $A_d(a,r,2r)$ meets $\pi(X_i)$ and that $a \in \pi(X_i)$.  If the second possibility occurs, then the assumption that $A_d(a,r,2r)$ meets $Z \bslash \pi(X_i)$ and the definition of admissible chain yield a contradiction. 

Thus we may find a point $b' \in A_d(a,r/(4LC),2r) \cap \pi(E_i)$.  That this point satisfies the requirements of the second statement of lemma is left to the reader. 
\end{proof}

\begin{lemma}\label{X0 scoot} Suppose that there is a constant $\lambda \geq 1$ such that for each $i \in I_0$, the space $(X_i,d_i)$ is $\lambda$-$\rm{ALLC}$.  There is a quantity $\Lambda \geq 1$, depending only on the data, with the following property.  Let $a \in Z$ and $r>0$.  If $u$ and $v$ are points in $A_d(a,r,2r) \cap \pi(X_0)$, then there is a continuum $E$ containing $u$ and $v$ satisfying 
$$E \subeq A_d\left(a,\frac{r}{\Lambda},2\Lambda r\right).$$  
\end{lemma}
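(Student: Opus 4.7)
The plan is to split on whether $a\in\pi(X_0)$ and reduce to the $\lambda$-$\rm{ALLC}$ condition on $X_0$, using the $L$-bi-Lipschitz comparison in Proposition \ref{comparison prop}.

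If $a=\pi(z_a)$ for some $z_a\in X_0$, I pick representatives $z_u,z_v\in X_0$ of $u,v$; Proposition \ref{comparison prop} gives $z_u,z_v\in A_{d_0}(z_a,\,r,\,2Lr)$, the $\lambda$-$\rm{ALLC}$ condition on $X_0$ produces a continuum $E'\subeq A_{d_0}(z_a,\,r/\lambda,\,2L\lambda r)$ joining $z_u$ and $z_v$, and Proposition \ref{comparison prop} applied to $\pi(E')$ places this continuum in the annulus $A_d(a,\,r/(L\lambda),\,2L\lambda r)$.

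Now suppose $a\notin\pi(X_0)$; then $a$ has a unique representative $z_a\in X_i\setminus f_i(E_i)$ for exactly one $i\in I$. A key observation is that any admissible chain from $a$ to a point of $\pi(X_0)$ must leave $X_i$ through the gluing set $f_i(E_i)\sim E_i$, because condition \eqref{admissible} prevents trivial transitions within $X_i$ and $X_i$ has no other identifications with the rest of $Z$. Combined with a one-step chain realising the matching upper bound, this gives
$$\delta_0\;:=\;\dist_d(a,\pi(X_0))\;=\;\dist_d(a,\pi(E_i))\;=\;\dist_{d_i}(z_a,\, f_i(E_i)).$$

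I then consider two subcases based on the size of $\delta_0$, with threshold $r/(8L\lambda)$.  When $\delta_0$ is large, every point of $\pi(X_0)$ is automatically at $d$-distance at least $\delta_0$ from $a$, so the lower annular bound is free and I simply apply the $2\lambda$-$\rm{LLC}_1$ condition on $X_0$ (supplied by Lemma \ref{ALLC gives LLC}) inside $B_{d_0}(z_u,\,4Lr)$ to join $z_u$ to $z_v$.  When $\delta_0$ is small, I pick a nearest gluing point $\hat a\in E_i\subset X_0$ with $d(a,\pi(\hat a))=\delta_0$; triangle inequalities make $\hat a$ a surrogate for $a$ in the sense that $d_0(\hat a,z_u)$ and $d_0(\hat a,z_v)$ are comparable to $r$, and the $\lambda$-$\rm{ALLC}$ condition on $X_0$ centered at $\hat a$ produces a continuum whose $\pi$-image lies in an annulus around $a$ once the $\delta_0$-perturbation is absorbed into the constants.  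Choosing $\Lambda$ as a uniform multiple of $L\lambda$ covers every case.

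The main obstacle is the second case, in which $a$ has no representative in $X_0$ and so the $\rm{ALLC}$ condition on $X_0$ cannot be applied around $a$ directly.  The dichotomy resolves it: when the gluing set is far from $a$, $\pi(X_0)$ is itself far from $a$ and the lower annular bound is automatic, while when the gluing set is close, a nearest gluing point serves as a surrogate center close enough to $a$ that an ALLC annulus in $X_0$ transfers to a controlled annulus around $a$ in $Z$.
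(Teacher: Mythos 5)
Your proposal is correct and follows essentially the same route as the paper: reduce to the $\rm{ALLC}$ condition on $X_0$ via Proposition \ref{comparison prop}, treating $a\in\pi(X_0)$ directly, and for $a\notin\pi(X_0)$ splitting on whether the gluing set is near or far (the paper uses threshold $r/s$ with $s=(L\lambda+1)/2$, you use $r/(8L\lambda)$, but the dichotomy is the same). Your explicit identity $\dist_d(a,\pi(X_0))=\dist_d(a,\pi(E_i))=\dist_{d_i}(z_a,f_i(E_i))$ and the ``nearest gluing point $\hat a$'' match the paper's choice of $a'\in\pi(X_0)$ achieving $\dist_d(a,\pi(X_0))$, and the remaining estimates are the same triangle-inequality bookkeeping.
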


\begin{proof}  We claim that $\Lambda =4L\lambda +2$ fulfills the requirements of the lemma.  If $a \in \pi(X_0)$, this follows from Proposition \ref{comparison prop} and the $\lambda$-$\rm{ALLC}$ condition on $X_0$; the details are left to the reader.  Hence we assume that $a \notin \pi(X_0)$, and set 
$$s = \frac{L\lambda +1}{2}.$$
First, we consider the case that $r \leq s\dist_d(a,\pi(X_0))$.  Then $B_d(a,r/s)$ does not intersect $\pi(X_0)$, and so the existence of the desired continuum follows from Proposition \ref{comparison prop} and Lemma \ref{ALLC gives LLC}; again, the details are left to the reader. 

Now suppose that $r > s \dist_d(a, \pi(X_0))$.  Then there is a point $a' \in \pi(X_0)$ such that $d(a,a') =\dist_d(a,\pi(X_0)).$  The triangle inequality yields
$$A_d(a,r,2r) \subeq A_d\left(a',\frac{(s-1)r}{s}, \frac{(2s+1)r}{s}\right).$$
Let $z_u$, $z_v$, and $z_{a'}$ be representatives of $u$, $v$, $a'$, respectively, that are contained in $X_0$. Proposition \ref{comparison prop} and the $\lambda$-$\rm{ALLC}$ condition in $X_0$ imply that there is a continuum $E' \subeq X_0$ containing $z_u$ and $z_v$ satisfying 
$$ E' \subeq A_{d_0}\left(z_{a'},\frac{(s-1)r}{\lambda s}, \frac{(2s+1)L\lambda r}{s}\right).$$ 
Proposition \ref{comparison prop}, the triangle inequality, and the definition of $s$ now show that 
\begin{align*} \pi(E') &\subeq A_d\left(a', \frac{(s-1)r}{L\lambda s}, \frac{(2s+1)L\lambda r}{s}\right)\\ &\subeq A_d\left(a,\frac{r}{2L\lambda},(2L\lambda+2) r\right), \end{align*}
proving the claim in this case as well. 
\end{proof}

\begin{theorem}\label{LLC gluing} Suppose that there is a constant $\lambda \geq 1$ such that for each $i \in I_0$, the space $(X_i,d_i)$ is $\lambda$-$\rm{ALLC}$.  Then $(Z,d)$ is $\Lambda$-$\rm{ALLC}$, where $\Lambda \geq 1$ depends only on the data.
\end{theorem}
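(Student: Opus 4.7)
The plan is to apply the criterion of Lemma \ref{ALLC r 2r}, so it suffices to show that there is a constant $\Lambda \geq 1$, depending only on the data, such that for every $a \in Z$ and $r>0$, any two distinct points $u,v \in A_d(a,r,2r)$ may be joined by a continuum in $A_d(a,r/\Lambda,2\Lambda r)$. First I would note that $Z$ is connected: since $X_0$ is $\lambda$-$\rm{ALLC}$ it is connected, each $\pi(X_i)$ is connected, and by condition (A) each $\pi(X_i)$ meets $\pi(X_0)$ via $\pi(E_i)$. So Lemma \ref{ALLC r 2r} may be invoked after the continua are produced.

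Since $Z=\bigcup_{i\in I_0}\pi(X_i)$, every point of the annulus $A_d(a,r,2r)$ belongs to some $\pi(X_i)$. I would split into three cases according to where $u$ and $v$ live. If both $u$ and $v$ lie in $\pi(X_0)$, apply Lemma \ref{X0 scoot} directly to obtain a continuum in $A_d(a,r/\Lambda_0,2\Lambda_0 r)$ for the constant $\Lambda_0$ furnished by that lemma. If $u\in\pi(X_i)$ with $i\in I$, apply Lemma \ref{scoot}: either option (i) places the whole annulus inside $\pi(X_i)$, in which case we invoke $\lambda$-$\rm{ALLC}$ of $X_i$ and transfer the continuum to $Z$ via Proposition \ref{comparison prop}, or option (ii) produces a scoot-point $b'_i\in A_d(a,r/\Lambda_1,2\Lambda_1 r)\cap \pi(E_i)\subeq \pi(X_0)$ and a continuum connecting $u$ to $b'_i$ inside the enlarged annulus. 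Similarly, if $v\in\pi(X_j)$ with $j\in I$, we obtain a continuum joining $v$ to a point $b'_j\in\pi(X_0)$ in a controlled annulus. Finally, concatenate these with a continuum in $\pi(X_0)$ between $b'_i$ and $b'_j$ produced by Lemma \ref{X0 scoot}. (If only one of $u,v$ is in $\pi(X_0)$, use that point in place of the corresponding $b'$.)

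Adjusting the radii slightly at each step (doubling and halving to absorb the new points $b'_i$, $b'_j$ into a common annulus), the three continua concatenate to a single continuum in $A_d(a,r/\Lambda,2\Lambda r)$ for a constant $\Lambda$ depending only on $\lambda$, $L$, and $C$ via $\Lambda_0$ and $\Lambda_1$. Setting this $\Lambda$ as the required constant and appealing to Lemma \ref{ALLC r 2r} completes the proof.

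The main obstacle I anticipate is purely bookkeeping: Lemmas \ref{scoot} and \ref{X0 scoot} produce continua in annuli with slightly different inner/outer radii than the target annulus $A_d(a,r/\Lambda,2\Lambda r)$, and one must verify that after up to three concatenations the resulting dilation of the annulus is still controlled by a single uniform constant $\Lambda$. There are no substantial geometric difficulties beyond this; the two scoot lemmas have already absorbed all the real work (in particular, the case when $a$ lies outside $\pi(X_i)$ and the case when the annulus crosses the glued boundary).
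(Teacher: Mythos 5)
Your proposal is correct and follows essentially the same route as the paper's proof: reduce to annuli of the form $A_d(a,r,2r)$, split into cases according to whether each of $u$ and $v$ lies in $\pi(X_0)$ or in some $\pi(X_i)$ with $i\in I$, use Lemma \ref{scoot} to either lift the whole picture to a single $X_i$ (option (i), which only occurs when $a\in\pi(X_i)$) or to scoot $u$ and $v$ to points of $\pi(X_0)$ inside a controlled annulus (option (ii)), and then bridge through $\pi(X_0)$ with Lemma \ref{X0 scoot}. The bookkeeping concern you flag is real but minor and is likewise glossed over in the paper; the constant $\Lambda$ is simply absorbed by taking a suitable maximum of the constants from the two scoot lemmas.
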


\begin{proof}  Let $a \in Z$ and $r>0$.  We show that each pair of points $u,v \in A_d(a,r,2r)$ is contained in continuum $E \subeq Z$ satifying
$$ E \subeq A_d\left(a, \frac{r}{\Lambda}, 2\Lambda r\right),$$
where $\Lambda \geq 1$ is now defined to be the maximum of the quantities provided by Lemmas \ref{scoot} and \ref{X0 scoot}. 

Choose representatives $z_u \in X_{i_u}$ and $z_v \in X_{i_v}$ of $u$ and $v$, respectively. If $i_u=i_v \in I$, then Lemma \ref{scoot} provides the desired continuum. If $i_u=i_v=0$, then Lemma \ref{X0 scoot} provides the desired continuum.  If $i_u \neq i_v$ and neither are $0$, we employ Lemma \ref{scoot}. If the first possibility in Lemma \ref{scoot} holds, the desired continuum is easily constructed using Proposition \ref{comparison prop}. If the second possibility holds, we are provided with continua $E_u$ and $E_v$ that connect $u$ and $v$ to points $u' \in \pi(X_0)$ and $v' \in \pi(X_0)$ respectively, where 
$$E_u \cup E_v \subeq A_d(a,r/\Lambda,2\Lambda r).$$
If $i_u = 0$, then we instead set $u'=u$, and similarly define $v'=v$ if $i_v=0$. Applying Lemma \ref{X0 scoot} to $u'$ and $v'$ and concatenating now produces the desired continuum. 
\end{proof}

\subsection{Preservation of Ahlfors regularity}\label{preserve reg section}
In this subsection only, we add to condition (A) two assumptions on the geometry of the base space $X_0$. First, we assume the uniform relative separation of the gluing sets:
\begin{itemize}
\item[(B)] \label{rel sep} there is a constant $c>0$ such that $\bigtriangleup(E_i,E_j)\geq c$ for all $i \neq j \in I$.
\end{itemize}
We also assume, without loss of generality, that $c \leq 1$. 

Our final assumption corresponds to condtion \eqref{planarity} in the statement of Theorem~ \ref{finite rank}. Namely, for each integer $k \geq 0$, set 
$$n_k := \sup \card \left\{i \in I:  E_i \cap B_{d_0}(z,r)\neq \emptyset \ \text{and}\ 2^{-k} < \frac{\diam_{d_0}(E_i)}{r} \leq 2^{-k+1}\right\},$$
where the supremum is taken over all $z \in X_0$ and $0<r \leq 2\diam_{d_0}X_0$.  We assume that there are numbers $Q>0$ and $1 \leq M \leq \infty$ such that
\begin{itemize}
\item[(C)]\label{not too many holes} $\sum_{k \in \nats} n_k (2^{-k})^Q \leq M.$
\end{itemize}

\begin{theorem}\label{regular gluing} Suppose that there is a constant $K \geq 1$ such that for each $i \in I_0$, the space $(X_i,d_i)$ is Ahlfors $Q$-regular with constant $K$. Then $(Z,d)$ is Ahlfors $Q$-regular, quantitatively.  \end{theorem}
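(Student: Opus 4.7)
The plan is to define a Borel measure $\mu$ on $Z$ by
$$\mu(A) = \sum_{i \in I_0} \mathcal{H}^Q_{X_i}(\pi\inv(A) \cap X_i),$$
which, since $f_i$ is $L$-bi-Lipschitz, differs only by a universal constant from the measure that counts each identified pair just once. By the criterion following \eqref{Ahlfors reg def} (see \cite[Exercise 8.11]{LAMS}), it suffices to verify $\mu(B_d(a, r)) \simeq r^Q$ for all $a \in Z$ and $0 < r < \diam Z$. Note that condition (A) gives $\diam Z \leq (1 + 2C)\diam X_0$, so scales on $Z$ and on $X_0$ are comparable.

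For the lower bound, since $\pi$ is injective on each $X_j$, an appropriate lift of $a$ produces a ball $B_{d_j}(z_a, r')$ (with $r' \simeq r$) that projects injectively into $B_d(a, r)$ and contributes the full $\mathcal{H}^Q_{X_j}$-mass. Concretely, if $a = \pi(a_0)$ with $a_0 \in X_0$, Proposition~\ref{comparison prop} gives $B_{d_0}(a_0, r) \subseteq \pi\inv(B_d(a, r))$, so Ahlfors regularity of $X_0$ yields $\mu(B_d(a, r)) \geq r^Q/K$. If $a = \pi(a_i)$ with $a_i \in X_i \bslash f_i(E_i)$, $i \in I$, the analogous argument using $X_i$ works when $r \leq L\diam X_i$; otherwise condition (A) together with an admissible sequence produces a point $a' \in \pi(X_0) \cap B_d(a, r/2)$, reducing to the previous case with the ball $B_d(a', r/2) \subseteq B_d(a, r)$.

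For the upper bound (the main work), we estimate each $X_i$'s contribution separately. Proposition~\ref{comparison prop} implies that $\pi\inv(B_d(a, r)) \cap X_i$ has $d_i$-diameter at most $2Lr$, so its $\mathcal{H}^Q_{X_i}$-measure is at most $K(2Lr)^Q$; combined with the bound $\mathcal{H}^Q_{X_i}(X_i) \leq K\diam(X_i)^Q \leq K(CL\diam E_i)^Q$ coming from condition (A), each contribution is at most a constant times $\min(r^Q, \diam(E_i)^Q)$. An admissible sequence argument then shows that whenever the $X_i$-contribution is nonzero for some $i \in I$, $E_i$ must intersect a fixed ball $B_{d_0}(x_a, Lr) \subseteq X_0$ (after first reducing to the case $a \in \pi(X_0)$, as above, by locating a nearby $a' \in \pi(X_0)$). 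Partitioning the relevant indices dyadically as $I_k = \{i \in I : 2^{-k}r < \diam E_i \leq 2^{-k+1}r\}$ and applying condition (C) at scale $Lr$, the cardinality $|I_k|$ is controlled by a constant multiple of $n_k$, so
$$\sum_{i \in I}\mathcal{H}^Q_{X_i}(\pi\inv(B_d(a, r)) \cap X_i) \lesssim \sum_{k \geq 0} n_k (2^{-k}r)^Q \lesssim Mr^Q,$$
and adding the $X_0$-contribution (bounded by $K(Lr)^Q$ via Proposition~\ref{comparison prop}) completes the estimate.

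The main obstacle is precisely the upper bound: controlling how many glued pieces $X_i$ can simultaneously meet a single ball $B_d(a, r)$ is the combinatorial content of condition (C), with condition (A) guaranteeing that each piece contributes on the correct scale and condition (B) ensuring that the counts $n_k$ are finite so that (C) is meaningful. The remaining difficulty is careful bookkeeping between the metric $d$ on $Z$ and the underlying metrics $d_i$, for which Proposition~\ref{comparison prop} is the central tool.
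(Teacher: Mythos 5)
Your overall strategy --- pushing forward the Hausdorff measures to a Borel measure $\mu$ on $Z$, invoking the criterion that any comparable Borel measure implies $Q$-regularity, splitting the ball by the pieces $\pi(X_i)$ it meets, and controlling the count of small pieces via a dyadic decomposition and condition (C) --- is essentially the paper's argument, repackaged. The lower bound and the per-piece estimate $\Hdim^Q_{X_i}(\pi\inv(B_d(a,r)) \cap X_i) \lesssim \min(r^Q, \diam(E_i)^Q)$ are both fine.

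However, there is a real gap in the summation that closes the upper bound. Condition (C) bounds only $\sum_{k \geq 0} n_k 2^{-kQ}$, i.e., it controls the count of pieces with $\diam E_i$ at most comparable to the scale $r$. When you write
$$\sum_{i \in I}\mathcal{H}^Q_{X_i}(\pi\inv(B_d(a,r)) \cap X_i) \lesssim \sum_{k \geq 0} n_k (2^{-k}r)^Q,$$
you are silently discarding the indices $i$ with $\diam E_i$ \emph{large} relative to $r$. Each such index contributes $\lesssim r^Q$ by your $\min$ bound, but you have not bounded how many there are; nothing in (C) addresses them, and infinitely many would destroy the estimate. This is exactly where condition (B) enters: uniform relative separation forces at most \emph{one} index $j$ with $E_j$ meeting a fixed $O(r)$-ball in $X_0$ and $\diam E_j \gtrsim r$ (two such would have relative distance below $c$). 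You have instead assigned (B) the role of ``ensuring $n_k$ is finite so that (C) is meaningful,'' which is not right --- finiteness of each $n_k$ already follows from (C) itself. Without the at-most-one argument from (B), the upper bound is unproved. A secondary, minor issue: your parenthetical ``reducing to $a \in \pi(X_0)$ by locating a nearby $a' \in \pi(X_0)$'' fails when $B_d(a,r)$ lies entirely inside a single $\pi(X_i)$, far from the gluing sets; that case needs to be split off (as the paper does via Proposition~\ref{interior points estimate}), though it is the easy case.
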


\begin{proof} As noted in the definition of Ahlfors regularity, it suffices to show that for $a \in Z$ and $0<r \leq 2\diam_d Z,$ 
\begin{equation}\label{reg preserve goal}\Hdim^Q_{(Z,d)}(B_d(a,r)) \simeq r^Q.\end{equation}

First suppose that $a$ satisfies 
$$\dist_{d}\left(a, \pi\left(\bigcup_{i\in I}E_i\right)\right) \geq 3r.$$
Then Proposition \ref{interior points estimate} implies that 
$$\Hdim^Q_{(Z,d)}(B_d(a,r)) = \Hdim^Q_{(\til{Z},\til{d})}(B_{\til{d}}(z_a,r)) =\Hdim^Q_{(X_i,d_i)}(B_{d_i}(z_a,r)),$$
where $z_a \in X_i$ is the unique representative of $a$.  It also follows that $r$ can be no larger than twice the diameter of $(X_i,d_i)$. Since $(X_i,d_i)$ is Ahlfors $Q$-regular, the desired estimate follows.

Next we suppose that $a \in \pi(E_i)$ for some $i \in I$. Let $z^0_a \in E_i$ and $z_a=f_i(z^0_a) \in f_i(E_i) \subeq X_i$ be the representatives of $a$.  Set $B_0 = B_d(a,r) \cap \pi(X_0)$. 
Proposition \ref{comparison prop} implies that $\pi|_{X_0}$ is an $L$-bi-Lipschitz mapping, and that 
$$\pi\left(B_{\til{d}}(z^0_a,r)\right) \subeq B_0 \subeq \pi\left(B_{\til{d}}(z^0_a,Lr)\right).$$ 
Note that by the triangle inequality, Proposition \ref{comparison prop}, and condition (A),
$$ \diam_d Z  \simeq \diam_{d_0} X_0.$$
Hence, we may apply the Ahlfors $Q$-regularity of $(X_0,d_0)$ to see that
\begin{equation}\label{X0 estimate}\Hdim^Q_{(Z,d)}(B_0) \simeq \Hdim^Q_{(X_0,d_0)}(B_{d_0}(z^0_a,r)) \simeq r^Q.\end{equation}
Thus 
$$\Hdim^Q_{(Z,d)}(B_d(a,r)) \gtrsim r^Q.$$ 

We now work towards an upper bound for $\Hdim^Q_{(Z,d)}(B_d(a,r))$. 
For $j \in I$ set $B_j = B_d(a,r) \cap \pi(X_j),$ and let $J \subeq I$ be the set of indices such that $B_j \neq \emptyset$. Furthermore, for $k \in \ints$, define
$$J^k = \left\{j \in J : 2^{-k} < \frac{\diam_{d_0}(E_j)}{Lc\inv r} \leq 2^{-k+1}\right\}.$$
Now, we may write
$$B_d(a,r) = B_0 \cup \bigcup_{k \in \ints}\bigcup_{j \in J^k} B_j.$$

By definition, $B_j \subeq \pi(X_j)$ for any $j \in J$.  As before, by Proposition \ref{comparison prop}, $\pi|_{X_j}$ is an $L$-bi-Lipschitz mapping. Thus the Ahlfors $Q$-regularity of each $X_j$ and condition (A) imply that 
\begin{equation}\label{typical}\Hdim^Q_{(Z,d)}(B_j) \leq \Hdim^Q_{(Z,d)}(\pi(X_j)) \simeq \Hdim^Q_{(X_j,d_j)}(X_j)  \simeq (\diam_{d_0} E_j)^Q.\end{equation}

Fix $j \in J$. We claim that $E_j \cap B_{d_0}(z^0_a,Lr) \neq\emptyset$. By definition, we may find a point $b \in \pi(X_j)$ and an admissible chain $\mbf{z}$ from $a$ to $b$ of $\til{d}$-length less than $r$.  It follows from the definitions that there is an admissible sub-chain $\mbf{z}'$ connecting $a$ to a point $b' \in \pi(E_j)$, and the $\til{d}$-length of $\mbf{z}'$ is also less than $r$.  Let $z^0_{b'}$ be the representative of $b'$ in $E_j$.  By Proposition \ref{comparison prop}, it holds that
$$d_0(z^0_a,z^0_{b'}) < Lr,$$
proving the claim. Since we have assumed that $c \leq 1$, the claim implies that $\card J^k \leq n_k$ for each integer $k \geq 0$.

It now follows from condition (B) that there is at most one index $j \in J$ with the property that $\diam_{d_0} E_j > 2Lc\inv r$, and hence 
$$\card\left(\bigcup_{k \leq -1} J^k\right) \leq 1.$$
Suppose $j_0 \in J$ is an index with the above property, and let $a_{j_0}$ be a point of $B_{j_0}$. Then by the triangle inequality, 
$$B_{j_0} \subeq B_d(a_{j_0},2r) \cap \pi(X_{j_0}).$$
Proposition \ref{comparison prop} implies that $\pi|_{X_{j_0}}$ is an $L$-bi-Lipschitz mapping onto $\pi(X_{j_0})$.  Hence the Ahlfors $Q$-regularity of $X_{j_0}$ implies that 
\begin{equation}\label{big one}\Hdim^Q_{(Z,d)}(B_{j_0}) \leq \Hdim^Q_{(Z,d)}\left(B_{d}(a_{j_0},2r) \cap \pi(X_{j_0})\right) \lesssim r^Q.\end{equation}

Thus, inequalities \eqref{X0 estimate}, \eqref{typical}, and \eqref{big one}, along with condition (C), imply that 
\begin{align*} \Hdim^Q_{(Z,d)}(B_d(a,r)) &\leq \Hdim^Q_{(Z,d)}(B_0) + \sum_{k \in \ints}\sum_{j \in J^k}\Hdim^Q(B_j) \\ &\lesssim r^Q + \sum_{k=0}^\infty \sum_{j \in J^k}\diam_{d_0}(E_j)^Q \\ &\lesssim r^Q +  r^Q\left(\sum_{k=0}^\infty \card(J^k)(2^{-k})^Q\right) \lesssim r^Q,\end{align*}
as desired. Note that this upper bound is also valid when $r>2\diam_d Z$. 

Finally, we consider the full case that 
$$\dist_{d}\left(a, \pi\left(\bigcup_{i\in I}E_i\right)\right) < 3r.$$
We may find an index $i_0 \in I$ and a point $b \in \pi(E_{i_0})$ such that $d(a,b) < 3r.$
Thus the triangle inequality and the previous case show that 
$$\Hdim^Q_{(Z,d)}(B_d(a,r)) \leq  \Hdim^Q_{(Z,d)}(B_d(b,4r)) \lesssim r^Q.$$ 
To get the desired lower bound, we consider two subcases. If 
\begin{equation}\label{r half}\dist_{d}\left(a, \pi\left(\bigcup_{i\in I}E_i\right)\right) < r/2,\end{equation}
then, as above, we may find an index $i_0 \in I$ and a point $b \in \pi(E_{i_0})$ such that 
$d(a,b)<r/2$. The triangle inequality and the previous case now show that
$$\Hdim^Q_{(Z,d)}(B_d(a,r))  \geq \Hdim^Q_{(Z,d)}(B_d(b,r/2)) \gtrsim r^Q.$$ 
If \eqref{r half} does not hold, then setting $r'=r/6$, we see that
$$\dist_{d}\left(a, \pi\left(\bigcup_{i\in I}E_i\right)\right) \geq 3r',$$
and we may apply the first case considered in the proof to conclude that 
$$\Hdim^Q_{(Z,d)}(B_d(a,r))  \geq \Hdim^Q_{(Z,d)}(B_d(a,r')) \gtrsim (r')^Q \simeq r^Q,$$
as desired. \end{proof}

\section{putting it together}\label{putting it together}
In this section, we synthesize the results of the previous sections to produce a proof of our main result.  We begin by setting up an induction.

Let $Y$ be a metric space. Given a subset $\mathcal{S}$ of $\mathcal{C}(Y)$, denote by $\cl{\mathcal{S}}$ the topological closure of $\mathcal{S}$ in $\mathcal{C}(Y)$. Let $\mathcal{N}(Y) \subeq \mathcal{C}(Y)$ denote the collection of non-trivial components of $\partial{X}$, and let $\mathcal{I}(Y)$ denote the points of $\mathcal{N}(Y)$ that are isolated points of $\mathcal{C}(Y)$. 

\begin{lemma}\label{together top}Let $(X,d)$ be a metric space, homeomorphic to a domain in $\sphere$, such that conditions \eqref{2 reg condition}-\eqref{rel sep condition} of Theorem \ref{finite rank} hold. Then $(X,d)$ bi-Lipschitzly embeds into a metric space $(Z,d_Z)$ that is homeomorphic to a domain in $\sphere$, satisfies conditions \eqref{2 reg condition}-\eqref{rel sep condition} of Theorem \ref{finite rank} quantitatively, and such that $\mathcal{C}(Z)$ is homeomorphic to $\mathcal{C}(X)\bslash \mathcal{I}(X)$. 
\end{lemma}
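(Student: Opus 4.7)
The plan is to construct $Z$ by attaching a planar Ahlfors $2$-regular quasidisk along each isolated non-trivial boundary component of $X$, using the gluing construction of Section \ref{gluing section}. This amounts to applying that construction with $X_0 = \ovl{X}$ and gluing sets indexed by $\mathcal{I}(X)$, and then restricting to those points that do not lie on a non-isolated component of $\partial X$.

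To manufacture the pieces to be glued: Lemma \ref{ALLC gives LLC} upgrades $\rm{ALLC}$ to $\rm{LLC}$, so Theorem \ref{boundary uniformization} shows that each $E \in \mathcal{I}(X)$ is a quasicircle with data depending only on the hypotheses. Since $\ovl{X}$ is Ahlfors $2$-regular by Remark \ref{extend reg to boundary}, Corollary \ref{bdry porosity ALLC} gives porosity of $E$ in $\ovl{X}$, and Theorem \ref{porous dim} then yields $(\alpha,C)$-homogeneity of $E$ for some uniform $\alpha < 2$. Remark \ref{Herron Meyer remark} now provides, for each $E \in \mathcal{I}(X)$, an Ahlfors $2$-regular planar domain $D_E \subeq \sphere$ with $\ovl{D_E}$ compact, quasisymmetrically equivalent to $\ovl{\disk}$, and whose boundary $\partial D_E$ is $L$-bi-Lipschitz equivalent to $E$ via some $f_E$, all with uniform constants. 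Applying the gluing construction to the family $\{\ovl{D_E}\}_{E \in \mathcal{I}(X)}$ glued to $\ovl{X}$ along the $f_E$ produces a metric space $W$ with projection $\pi$. I then define $Z = \pi(X \cup \bigcup_{E \in \mathcal{I}(X)} D_E)$ equipped with the restriction of the metric on $W$. Proposition \ref{comparison prop} directly gives that $\pi|_X$ is an $L$-bi-Lipschitz embedding of $X$ into $Z$, and that $\pi|_{\ovl{X}}$ is $L$-bi-Lipschitz, a fact I will use repeatedly to transfer boundary quantities from $X$ to $Z$.

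Verification of conditions \eqref{2 reg condition}--\eqref{rel sep condition} proceeds as follows. Theorem \ref{regular gluing} yields Ahlfors $2$-regularity of $Z$, with hypotheses (A), (B), (C) given respectively by the equality $\diam \ovl{D_E} = \diam \partial D_E$ for planar Jordan disks, condition \eqref{rel sep condition} on $X$ applied inside $\ovl{X}$, and condition \eqref{planarity} on $X$ with $Q=2$. Theorem \ref{LLC gluing} yields $\rm{ALLC}$, once one observes that each $\ovl{D_E}$ is $\rm{ALLC}$ with uniform constant as a quasisymmetric image of the $\rm{ALLC}$ space $\ovl{\disk}$. Compactness of $\ovl{Z} = W$ follows from a sequential argument: any sequence in $W$ either has a subsequence landing in $\ovl{X}$ or in a single $\ovl{D_E}$, in which case compactness of those pieces gives a convergent subsequence, or it distributes across distinct $\ovl{D_E}$ whose diameters tend to zero by condition \eqref{planarity}, permitting us to project to boundary points in $\ovl{X}$ and extract a convergent subsequence there. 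Finally, since the non-trivial components of $\partial Z$ are precisely those in $\mathcal{C}(X) \bslash \mathcal{I}(X)$, conditions \eqref{planarity} and \eqref{rel sep condition} transfer from $X$ to $Z$ via the bi-Lipschitz identification $\pi|_{\ovl{X}}$, which distorts diameters, distances, and hence relative distances by at most the factor $L$.

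The main obstacle I anticipate is the topological step of confirming that $Z$ is homeomorphic to a domain in $\sphere$ and that $\mathcal{C}(Z)$ is naturally homeomorphic to $\mathcal{C}(X) \bslash \mathcal{I}(X)$. My plan here is to invoke Theorem \ref{isolate closure uniformization} to realize $\til{X} = X \cup \bigcup_{E \in \mathcal{I}(X)} E$ homeomorphically as $\Omega' \cup \bigcup_{F \in \mathcal{N}(\Omega')} F$ for a circle domain $\Omega' \subeq \sphere$, together with a natural bijection between $\mathcal{I}(X)$ and $\mathcal{N}(\Omega')$. Because each $\ovl{D_E}$ is homeomorphic to the closed disk, attaching $\ovl{D_E}$ along the corresponding Jordan curve $F$ topologically fills that hole in $\sphere$. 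Performing this filling for all $E \in \mathcal{I}(X)$ simultaneously, and then removing the images of the non-isolated components of $\partial X$ to pass from $\ovl{Z}$ to $Z$, yields a space homeomorphic to a domain in $\sphere$ whose remaining non-trivial boundary components correspond exactly to $\mathcal{C}(X) \bslash \mathcal{I}(X)$. The naturality of the induced homeomorphisms at the level of $\mathcal{C}$ then gives the required identification of $\mathcal{C}(Z)$.
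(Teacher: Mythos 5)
Your proposal is correct and follows the same approach as the paper's proof: upgrade $\rm{ALLC}$ to $\rm{LLC}$ to get quasicircles, derive porosity from Corollary \ref{bdry porosity ALLC} and hence $(\alpha,C)$-homogeneity with $\alpha<2$, use Remark \ref{Herron Meyer remark} to produce the quasidisks $D_E$, glue via Section \ref{gluing section} with $X_0=\ovl{X}$ and $X_i=\ovl{D_E}$, verify conditions (A)--(C), and apply Theorems \ref{LLC gluing}, \ref{regular gluing} and \ref{isolate closure uniformization}. Two small points of care: take $Z=\pi(X)\cup\bigcup_{E\in\mathcal{I}(X)}\pi(\ovl{D_E})$ so that the glued circles $\pi(E)$ are included as interior points (otherwise $Z$ is disconnected); and in the topological step the homeomorphism onto the filled-in domain $\Psi$ must be assembled piecewise, which requires an untwisting homeomorphism (Lemma \ref{untwist}) in a collar annulus about each filled circle to reconcile the competing parameterizations coming from $\til{h}$ and from $g_E\circ\iota_E^{-1}$.
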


\begin{proof} We leave the verification of the quantitativeness of the statement to the reader, as it follows easily from the quantitativeness of the results proven thus far.

Denote $\mathcal{I}(X) =\{E_i\}_{i \in I}$. Since $(X,d)$ has compact completion, the index set $I$ has cardinality no larger than countably infinite. 

Fix $i \in I$.  Since $X$ is Ahlfors $2$-regular, it is doubling. Lemma \ref{ALLC gives LLC} implies that $X$ is $\rm{LLC}$. Hence, Theorem \ref{boundary uniformization} and Corollary \ref{bdry porosity ALLC} imply that $E_i$ is a quasicircle that is porous in $\ovl{X}$.  Remark \ref{Herron Meyer remark} provides an Ahlfors $2$-regular quasidisk $D_i \subeq \sphere$ with the property that there is a bi-Lipschitz map $f_i \colon E_i \to \partial D_i$. It is easily seen by using \cite[Proposition 10.10]{LAMS} that the $\rm{ALLC}$ property is preserved by quasisymmetric mappings. Hence, $D_i$ is $\rm{ALLC}$. Note that none of the data of the conditions discussed in this paragraph depend on $i$. 

We now apply the results of Section \ref{gluing section}. Let $X_0$ be the completion $\ovl{X}$, and for each $i \in I$ set $X_i = \ovl{D_i}.$ We employ the bi-Lipschitz maps $f_i$ defined above as the gluing maps $f_i \colon E_i \to f_i(E_i) \subeq X_i.$  The conditions stated at the beginning of Section \ref{gluing section} are met by construction, and hence we may consider the resulting glued metric space $(Z,d_Z)$. 

We first show that $Z$ is homeomorphic to a domain in $\sphere$. The proof is similar in spirit to that of Theorem \ref{isolate closure uniformization}. Denote 
$$\til{X} = X \cup \left( \bigcup_{i \in I} E_i \right).$$
By Theorem \ref{isolate closure uniformization}, there is a circle domain $\Omega' \subeq \sphere$ and a homeomorphism 
$$\til{h} \colon \til{X} \to \Omega' \cup \left(\bigcup_{F \in \mathcal{N}(\Omega')} F \right).$$
Denote the image of $\til{h}$ by $\til{\Omega'}.$ Since $\Omega'$ is a circle domain, for each $i \in I$, we may write $\til{h}(E_i) = S_\sphere(p_i,r_i),$ where $p_i \in \sphere$ and $r_i>0$. Since $\til{h}$ induces a homeomorphism from $\mathcal{C}(X)$ to $\mathcal{C}(\til{\Omega'})$, for each $i \in I$ there is a number $\ep_i > 0$ such that 
$$\ovl{B}_\sphere(p_i,r_i+\ep_i) \cap \partial{\Omega'} = S_\sphere(p_i,r_i)$$ and such that the resulting collection $\{\ovl{B}_\sphere(p_i,r_i+\ep_i)\}_{i \in I}$ is pairwise disjoint. Moreover, the set  
$$\Psi = \Omega' \cup \left(\bigcup_{i \in I} \ovl{B}_{\sphere}(p_i,r_i)\right)$$
is a domain in $\sphere$.

Proposition \ref{comparison prop} implies that for each $i \in I_0$, there is a bi-Lipschitz homeomorphism $\iota_i \colon X_i \to \pi(X_i)$.  Fix $i \in I$.  There is a homeomorphism $g_i \colon X_i \to \ovl{B}_\sphere(p_i,r_i)$.  By Lemma \ref{untwist}, there is a homeomorphism $\Phi_i$ of  $\ovl{A}_\sphere(p_i,r_i,r_i+\ep_i)$ to itself such that $\Phi_i$ coincides with the identity on $S_\sphere(p_i,r_i+\ep_i)$ and coincides with $g_i \circ \iota_i\inv \circ \iota_0 \circ \til{h}\inv$ on $S_\sphere(p_i,r_i)$.  
Now, the map $H \colon Z \to \Psi$ defined by 
$$H(a) = \begin{cases}
				\til{h} \circ \iota_0\inv(a) & a \in \pi(X_0)\bslash \left(\bigcup_{i \in I} \iota_0\circ \til{h}\inv(\ovl{B}_\sphere(p_i,r_i+\ep_i))\right), \\
				 \Phi_i\circ \til{h} \circ \iota_0\inv(a) & a \in \iota_0 \circ \til{h}\inv(\ovl{A}_\sphere(p_i,r_i, r_i+\ep_i)),\\
				 g_i \circ \iota_i \inv(a) & a \in \pi(X_i), \end{cases}$$
is the desired homeomorphism. See Figure \ref{glue}.

\begin{figure}[h]
\begin{center}
\psfrag{pi}{$p_i$}
\psfrag{ei}{$\ep_i+r_i$}
\psfrag{ri}{$r_i$}
\psfrag{ph}{$\Phi_i$}
\psfrag{X0}{$X_0$}
\psfrag{Xi}{$X_i$}
\psfrag{pX0}{$\pi(X_0)$}
\psfrag{pXi}{$\pi(X_i)$}
\psfrag{i0}{$\iota_0$}
\psfrag{ii}{$\iota_i$}
\psfrag{gi}{$g_i$}
\psfrag{Ei}{$E_i$}
\psfrag{th}{$\til{h}$}
\includegraphics[width=.75\textwidth]{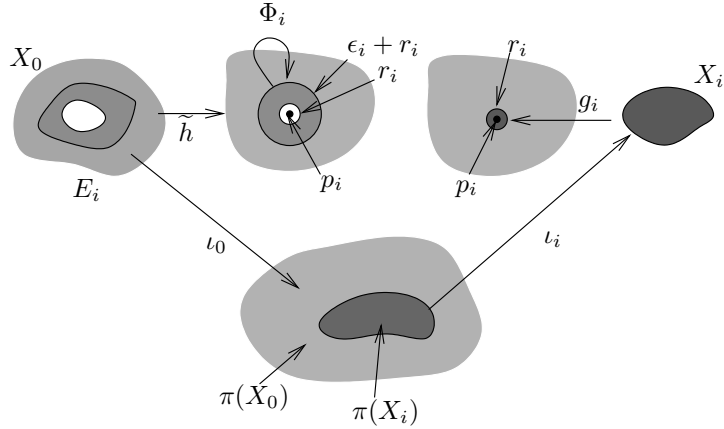}
\caption{Filling in holes.}\label{glue}
\end{center}
\end{figure}

We now verify that $Z$ satisfies conditions \eqref{2 reg condition}-\eqref{rel sep condition} of Theorem \ref{finite rank}. Condition (A), which was imposed at the beginning of Subsection \ref{preserve ALLC section}, is equivalent to the assertion that $\diam D_i \lesssim \diam \partial{D_i}$, which follows from the fact that $D_i$ is a quasidisk or from the fact that $D_i$ is planar. Conditions (B) and (C), which were imposed at the beginning of Subsection \ref{preserve reg section}, follow from the assumptions \eqref{rel sep condition} and \eqref{planarity} in the statement of Theorem \ref{finite rank}. 
 
Hence, Theorems \ref{LLC gluing} and \ref{regular gluing}  imply that $(Z,d_Z)$ is $\rm{ALLC}$ and Ahlfors $2$-regular.  That $Z$ satisfies the remaining conditions \eqref{planarity}, \eqref{compact condition}, \eqref{rel sep condition} of Theorem \ref{finite rank} and that $\mathcal{C}(Z)$ is homeomorphic to $\mathcal{C}(X)\bslash \mathcal{I}(X)$ follow from the construction and Proposition \ref{comparison prop}; we leave the details to the reader.
\end{proof}

\begin{proof}[Proof of Theorem \ref{finite rank}] The necessity of conditions \eqref{compact condition}-\eqref{rel sep condition} follows easily from the basic properties of quasisymmetric mappings and Proposition \ref{ALLC rel sep circ}.

Now, let $(X,d)$ be a metric space, homeomorphic to a domain in $\sphere$, such that the closure of the non-isolated components of $\mathcal{C}(X)$ is countable and has finite rank, and such that conditions \eqref{2 reg condition}-\eqref{rel sep condition} hold. We will show that $(X,d)$ is quasisymmetrically equivalent to a circle domain whose collection of boundary components are uniformly relatively separated. Again, we leave the issue of quantitativeness to the reader.

We first reduce to the case that $\cl\mathcal{N}(X) = \mathcal{C}(X)$.   Let $\mathcal{T}=\mathcal{C}(X)\bslash \cl\mathcal{N}(X)$.  Consider the subspace $\til{X}$ of $\ovl{X}$ defined by 
$$\til{X} = X \cup \left(\bigcup_{E \in \mathcal{T}} E\right).$$
Let $h \colon \ovl{X} \to \sphere$ be the continuous surjection provided by Corollary \ref{good map}.  Since each $E \in \mathcal{T}$ is trivial, Corollary \ref{good map} implies that the map $h|_{\til{X}}$ is a homeomorphism. Moreover, $h$ induces a homeomorphism of $\mathcal{C}(X)$ onto the totally disconnected set $\sphere\bslash h(X)$. Hence, the set $\{h(E)\}_{E \in \mathcal{T}}$ is open in $\sphere\bslash h(X)$.  It follows that the image $h(\til{X})$ is a domain in $\sphere$.  Remark \ref{extend reg to boundary} and Propostion \ref{BetterALLC} imply that $\til{X}$ is Ahlfors $2$-regular and $\rm{ALLC}$, quantitatively. Moreover, the space $\til{X}$ clearly satisfies the remaining assumptions of Theorem \ref{finite rank}, since $\mathcal{C}(\til{X}) = \mathcal{C}(X) \bslash \mathcal{T}$ and hence 
$\cl\mathcal{N}(\til{X}) = \cl\mathcal{N}(X) = \mathcal{C}(\til{X}).$

Thus, if Theorem \ref{finite rank} is valid for spaces such that the non-trivial boundary components are dense in the space of all boundary components, then applying the theorem to $\til{X}$ and restricting the resulting quasisymmetric mapping to $X$ proves the theorem for $X$.

We now assume without loss of generality that $\cl \mathcal{N}(X) = \mathcal{C}(X)$. Our assumptions now imply that $\mathcal{C}(X)$ is countable and has finite rank, and we proceed by induction on the rank. If the rank of $\mathcal{C}(X)$ is $0$, then every boundary component is isolated. By Lemma \ref{together top}, there is a bi-Lipschitz embedding $\iota \colon X \into Z$ where $(Z,d_Z)$ is complete, homeomorphic to a domain in $\sphere$, and satisfies conditions \eqref{2 reg condition}-\eqref{rel sep condition}. Condition \eqref{compact condition} implies that $Z$ is compact, and hence homeomorphic to $\sphere$.  Lemma \ref{ALLC gives LLC} implies that $Z$ is $\rm{LLC}$. Bonk and Kleiner's uniformization result, Theorem \ref{two sphere}, now provides a quasisymmetric homeomoprhism $f \colon Z \to \sphere$.  By Proposition \ref{comparison prop}, there is a bi-Lipschitz embedding $\iota \colon X \into Z$.  The composition $f \circ\iota$ is a quasisymmetric homeomorphism onto its image $\Omega :=f \circ \iota(X)$.  This mapping extends to a quasisymmetric homeomorphism of $\ovl{X}$ onto $\ovl{\Omega}$ \cite[Proposition 10.10]{LAMS}. As discussed in the proof of Lemma \ref{together top}, each of the components $\Gamma_1,\hdots,\Gamma_N$ of $\partial{X}$ is a quasicircle. Hence $\partial{\Omega}$ consists of finitely many quasicircles $\{f\circ \iota(\Gamma_1),\hdots,f\circ \iota(\Gamma_N)\}$, and by Remark \ref{rel sep preservation}
$$\min_{i\neq j \in \{1,\hdots,N\}} \bigtriangleup(f\circ \iota(\Gamma_i),f\circ \iota(\Gamma_j)) \simeq \min_{i\neq j \in \{1,\hdots,N\}} \bigtriangleup(\Gamma_i, \Gamma_j).$$
Thus Bonk's uniformization result in $\sphere$, Theorem \ref{two sphere Koebe}, provides a quasisymmetric homeomorphism $g \colon \sphere \to \sphere$ with the property that $g \circ f \circ \iota(X)$ is a circle domain. Again, the relative separation of the boundary components of this domain is controlled by Remark \ref{rel sep preservation}. This completes the proof in the case that the rank of $\mathcal{C}(X)$ is $0$. 

We now assume that the desired statement is true in the case that the rank of $\mathcal{C}(X)$ is an integer $k \geq 1$, and suppose that the rank of $\mathcal{C}(X)$ is $k+1$.   Theorem \ref{together top} now states that $X$ bi-Lipschitzly embedds into a metric space $(Z,d_Z)$ that is homeomorphic to a domain in the sphere, satisfies conditions \eqref{2 reg condition}-\eqref{rel sep condition}, and such that $\mathcal{C}(Z)$ has rank $k$. By induction, $Z$ is quasisymmetrically equivalent to a circle domain. The remainder of the proof proceeds as in the base case. \end{proof}

\begin{proof}[Proof of Theorem \ref{finite case}] This follows from Proposition \ref{LLC gives ALLC} and Theorem \ref{finite rank}, after noting that the minimal relative separation of components of the boundary is controlled by the ratio of the minimal distance between components of the boundary to the diameter of the space. 
\end{proof}

\begin{remark}\label{non compact} We have defined a circle domain to be a subset of $\sphere$; one may also consider circle domains in $\reals^2$, which need not have compact completion.  An analogous version of Theorem \ref{finite rank} for such domains can easily be derived from Theorem \ref{finite rank} and the techniques of \cite[Section 6]{QSPlanes}. 
\end{remark}

\section{The counter-example}\label{counter-example section}

In this section, we prove Theorem \ref{example}. 

The desired space $(X,d)$ is obtained as follows. First we define a sequence  of multiply connected domains $(Q_n)$.
Let $Q_0$ denote the open unit square $(0,1)\times(0,1)$ in the plane. Let $Q_1$ be the domain obtained by removing the vertical line segment $\{1/2\}\times[1/4,3/4]$ from $Q_0$. We define $Q_{n+1}$ by subdividing $Q_{n}$ into $2^n\times2^{n}$ dyadic subsquares of equal size in the obvious way, and replacing each square in the subdivision by a copy of $Q_1$ that has been scaled by $1/2^n$.
See~Figure~\ref{F:Seq} for $Q_1, Q_2$, and $Q_3$. We denote by $\ovl Q_n$ the completion of $Q_n$ in the path metric $d_{Q_n}$ induced from the plane. 
The induced metric on the completion is denoted by $d_{\ovl Q_n}$. 
The boundary components of $Q_n$ give rise to the metric boundary components of $(Q_n,d_{Q_n})$. The components of the metric boundary of  $(Q_n,d_{Q_n})$ that correspond to the boundary components of $Q_n$ other than the outer boundary will be called the \emph{slits} of $\ovl Q_n$.

\begin{figure}
[htbp]
\begin{center}
\includegraphics[width=.3\textwidth]{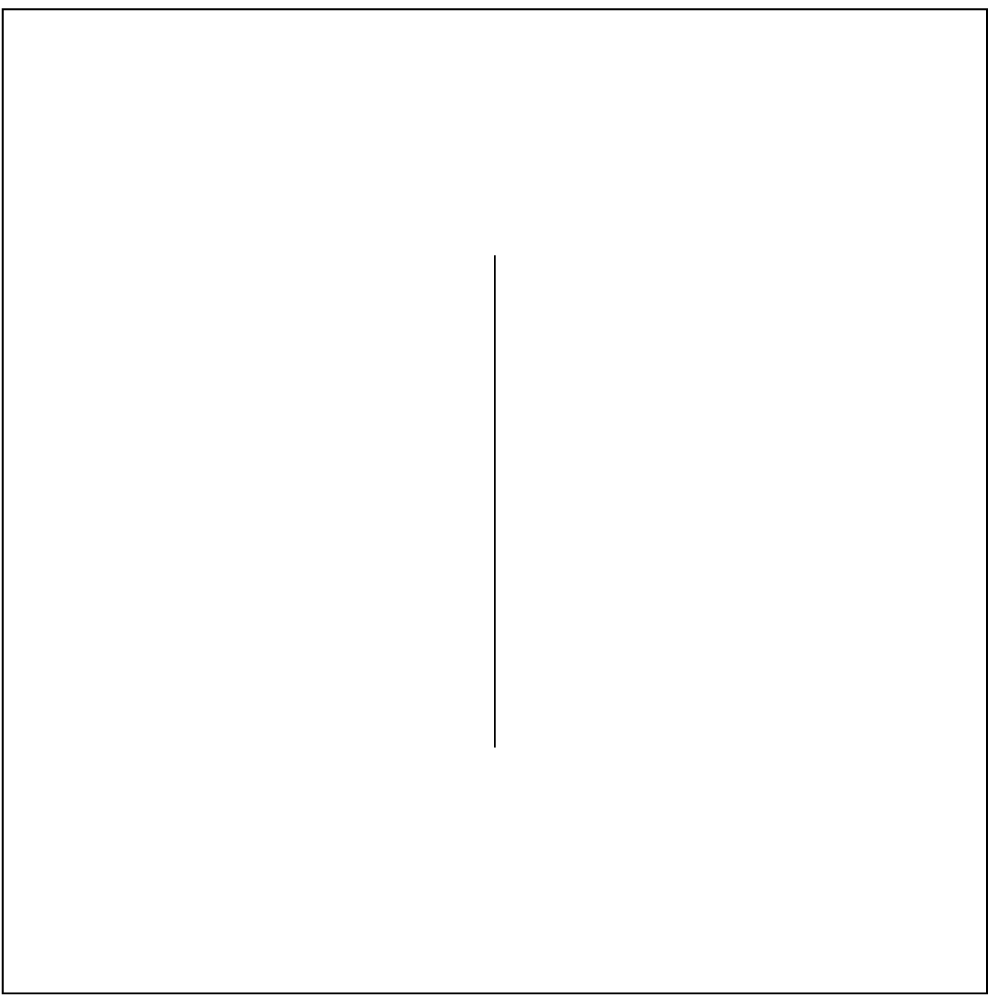}
\hspace{10pt}
\includegraphics[width=.3\textwidth]{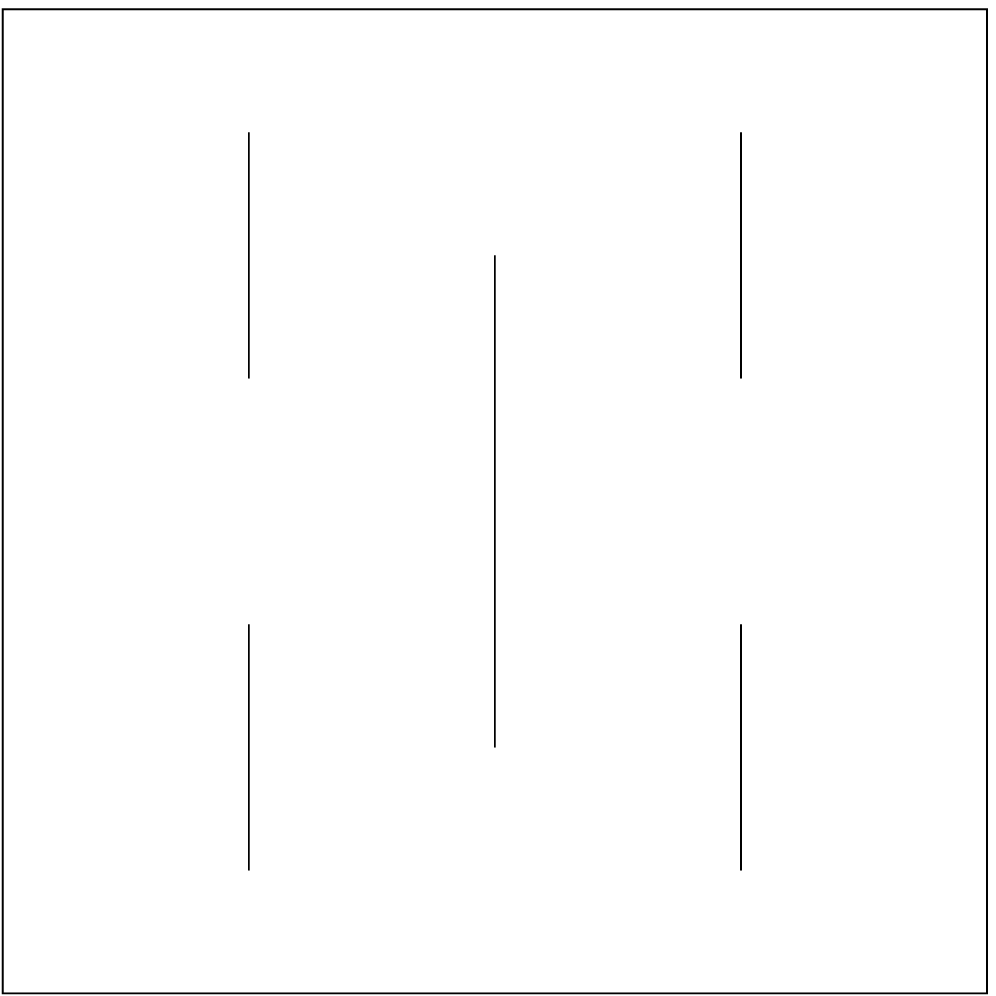}
\hspace{10pt}
\includegraphics[width=.3\textwidth]{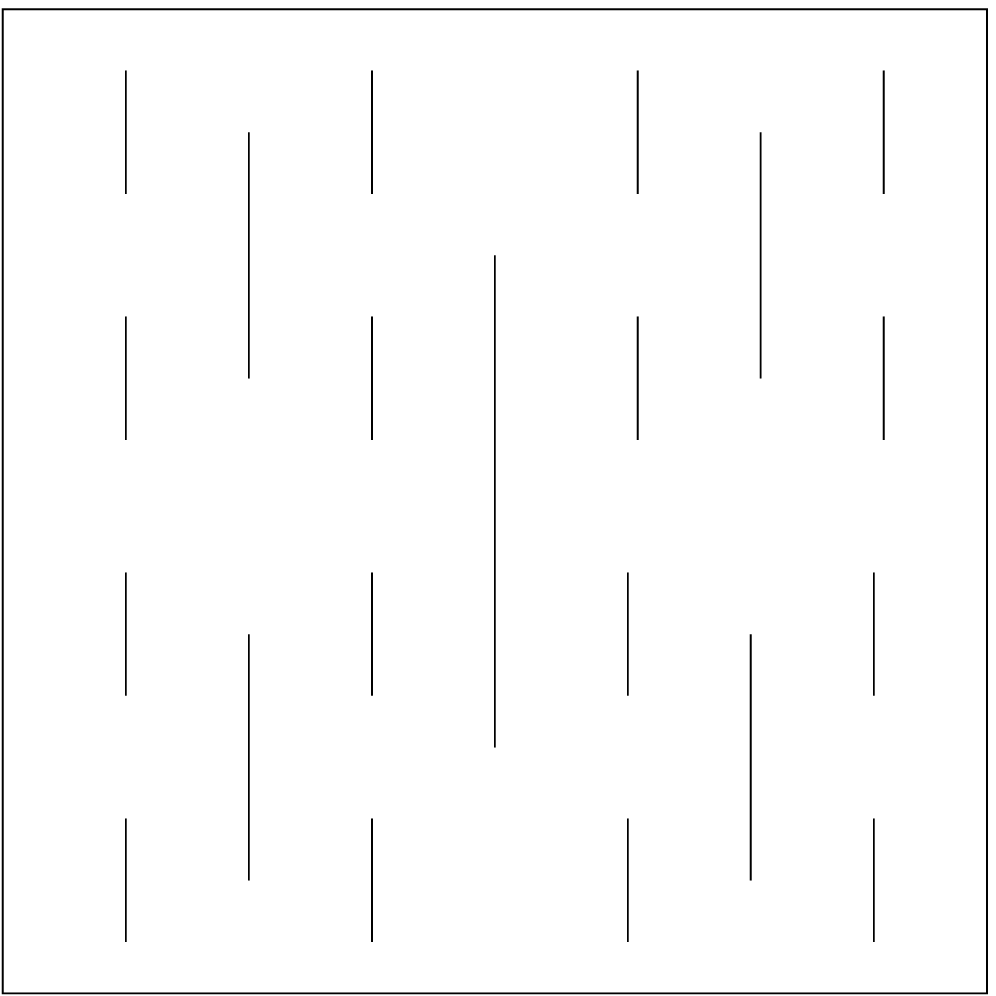}
\caption{Domains $Q_1, Q_2$ and $Q_3$.}
\label{F:Seq}
\end{center}
\end{figure}

Next we define a planar domain $Q$ inductively as follows. We start with $Q_1$ and replace the left-lower subsquare $(0,1/2)\times(0,1/2)$ by a copy of $Q_2$ that has been scaled by a factor of $1/2$. The resulting domain is denoted by $R_1$. Then we replace $R_1\cap((0,1/4)\times(0,1/4))$ by a copy of $Q_3$ that has been scaled by a factor of $1/4$. The resulting domain is denoted by $R_2$. We continue in this fashion and at $n$th step we replace $R_{n-1}\cap((0,1/2^n)\times(0,1/2^n))$ by a copy of $Q_{n+1}$ that has been scaled by a factor of $1/2^n$. The resulting domain is denoted by $R_n$. The countably connected domain that  results after infinitely many such replacements is denoted by $Q$, see Figure~\ref{F:SpaceZ}.

\begin{figure}
[htbp]
\begin{center}
\includegraphics[height=40mm]{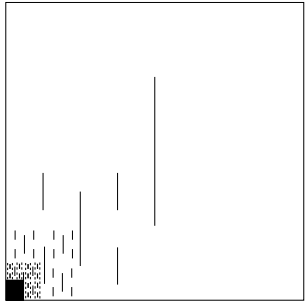}
\caption{Domain $Q$.}
\label{F:SpaceZ}
\end{center}
\end{figure}

The desired metric space $(X,d)$ is the domain $Q$ endowed with the path metric $d_{Q}$ induced from the plane.  We denote by $\pi_{\ovl{X}}$ the projection of $\ovl{X}$ onto $\ovl Q_0$, i.e., the map that identifies the points in $\ovl{X}$ that correspond to the same point of $\ovl Q_0$. The map $\pi_{\ovl{X}}$  is clearly 1-Lipschitz since the path metric on $Q$ dominates the Euclidean metric. 

Alternatively, the space $\ovl{X}$ can be defined as an inverse limit of the sequence of metric spaces $(\ovl R_n,p_{mn}),\ m\leq n$. Here $\ovl R_n$ is the completion of $R_n$ in the path metric induced from the plane and $p_{mn}$ is the projection of $\ovl R_n$ onto $\ovl R_m$ that identifies the points of $\ovl R_n$ that correspond to the same point of $\ovl R_m$. The map $\pi_{\ovl{X}}$ is then the natural projection of $\ovl{X}$ onto $\ovl Q_0$. The extended metric $d$ on $\ovl{X}$ now satisfies the equation
$$d(p,q)=\lim d_{\ovl R_n}(p_n,q_n),
$$
where $\{p_n \in \ovl{R_n}\}$ and $\{q_n \in \ovl{R_n}\}$ are sequences corresponding to $p$ and $q$ in the inverse limit system. 

To establish the desired properties of the space $X$, we consider the slit carpet $S_2$ that has been studied in~\cite{sM09}. The space $S_2$ is the inverse limit of the system $(\ovl Q_n, \pi_{mn}),\ m\leq n$, where $\pi_{mn}$ is the projection of $\ovl Q_n$ onto $\ovl Q_m$ that identifies the points on the slits of $\ovl Q_n$  that correspond to the same point of $\ovl Q_m$.
See Figure~\ref{F:Slitc}.

\begin{figure}
[htbp]
\begin{center}
\includegraphics[height=40mm]{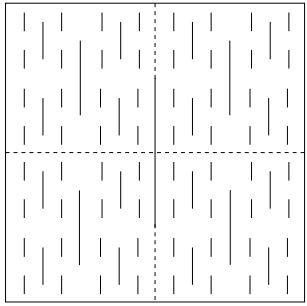}
\caption{Slit carpet $S_2$.}
\label{F:Slitc}
\end{center}
\end{figure}

We make $S_2$ into a metric space by endowing it with the metric 
$$
d_{S_2}(p,q)=\lim d_{\ovl Q_n}(p_n,q_n),
$$
where $\{p_n \in \ovl{Q_n}\}$ and $\{q_n \in \ovl{Q_n}\}$ are sequences corresponding to $p$ and $q$ in the inverse limit system. The space $S_2$ is a geodesic metric space which is a \emph{metric Sierpi\'nski carpet}, i.e., a metric space homeomorphic to the well known standard Sierpi\'nski carpet, see~\cite[Lemma~2.1]{sM09}. The inverse limits of slits of $\ovl Q_n$ form the family of \emph{peripheral circles} of $S_2$, i.e., embedded simple closed curves whose removal does not separate $S_2$. The natural projection $\pi_{S_2}$ of $S_2$ onto $\ovl Q_0$ factors through the projection $\pi_{S_2\to \ovl{X}}$ of $S_2$ onto $\ovl{X}$, i.e., 
$$
\pi_{S_2}=\pi_{\overline X}\circ\pi_{S_2\to\overline X}.
$$
The projection $\pi_{S_2\to \ovl{X}}$ is clearly 1-Lipschitz.

\begin{lemma}\label{L:Incl}
 There exists a constant $c>0$ such that for every $p\in \ovl{X}$ and every $0\leq r<2{\rm diam}(X)$, there exists $q\in\ovl Q_0$ with
$$
B_{\ovl Q_0}(q,c\cdot r)\subseteq \pi_{\ovl{X}}(B_{\ovl{X}}(p,r))\subseteq B_{\ovl Q_0}(\pi_{\ovl{X}}(p),r).
$$
\end{lemma}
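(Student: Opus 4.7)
The right-hand inclusion $\pi_{\ovl X}(B_{\ovl X}(p,r))\subseteq B_{\ovl Q_0}(\pi_{\ovl X}(p),r)$ is immediate from the $1$-Lipschitz property of $\pi_{\ovl X}$ already noted in the text: by construction of $d$ as a limit of path metrics induced from the plane, the extended metric on $\ovl X$ dominates the pullback of the Euclidean metric from $\ovl Q_0$.

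The substantive content is the left-hand inclusion, and my plan is to reduce it to the analogous property for the slit carpet $S_2$ by means of the factorization $\pi_{S_2}=\pi_{\ovl X}\circ\pi_{S_2\to\ovl X}$. Since $\pi_{S_2\to\ovl X}$ is a $1$-Lipschitz surjection, for any $p\in\ovl X$ one can pick a preimage $\til p\in S_2$; then $\pi_{S_2\to\ovl X}$ sends $B_{S_2}(\til p,r)$ into $B_{\ovl X}(p,r)$, and applying $\pi_{\ovl X}$ and invoking the factorization yields
\[
\pi_{S_2}(B_{S_2}(\til p,r))\ \subseteq\ \pi_{\ovl X}(B_{\ovl X}(p,r)).
\]
Consequently, it suffices to produce a universal constant $c>0$ with the property that for every $\til p\in S_2$ and every $r\in[0,2\diam(S_2))$ one can find $q\in\ovl Q_0$ with $B_{\ovl Q_0}(q,cr)\subseteq\pi_{S_2}(B_{S_2}(\til p,r))$; any such $q,c$ then also works in the lemma, since the diameter of $S_2$ is at least that of $\ovl X$.

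I would prove this reduced statement for $S_2$ by a dyadic scale-selection argument exploiting the self-similar structure of $S_2$ described in \cite{sM09}. Setting $p_0=\pi_{S_2}(\til p)$, the idea is to locate a closed dyadic sub-square $D\subseteq\ovl Q_0$ of side length comparable to $r$ that is contained in the Euclidean disk $B_{\ovl Q_0}(p_0,r/2)$ and is \emph{pristine} in the sense that $D$ occupies a corner of each of its larger ancestor dyadic sub-squares, so that the central slits of those ancestors do not meet $\ovl D$. By the self-similar construction, the preimage in $S_2$ of the interior of such a $D$ is a rescaled isometric copy of the slit carpet restricted to the open unit square; in particular it projects surjectively onto the interior of $D$, and it is reachable from $\til p$ by a path in $S_2$ whose length is bounded by a universal multiple of $r$, the only overhead over the Euclidean distance from $p_0$ to $D$ arising from navigating around finitely many slits of scale $\lesssim r$. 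A Euclidean disk of radius $\simeq r$ at a generic interior point of $D$ is then contained in $\pi_{S_2}(B_{S_2}(\til p,r))$, furnishing the required $q$. The principal obstacle is the combinatorial bookkeeping needed to find a pristine $D$ of the correct scale near $p_0$ and to bound uniformly the path-length overhead in reaching it from $\til p$; this is the standard type of scale-by-scale comparison used for $S_2$ in \cite{sM09}, and once the statement for $S_2$ is in hand, the lemma for $\ovl X$ follows at once via the reduction above.
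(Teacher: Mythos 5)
Your proof follows exactly the paper's route: the right-hand inclusion from the $1$-Lipschitz property of $\pi_{\ovl X}$, and the left-hand inclusion by factoring through $S_2$ via $\pi_{S_2}=\pi_{\ovl X}\circ\pi_{S_2\to\ovl X}$, reducing to the ball-inclusion property of $\pi_{S_2}$ (your observation that $\diam(S_2)\geq\diam(\ovl X)$ correctly ensures the range of $r$ is admissible). The only departure is that the paper simply cites this property of $S_2$ as \cite[Lemma~2.2]{sM09}, whereas you sketch a dyadic scale-selection proof of it from scratch; your sketch is plausible but not needed, since the statement is already established in that reference.
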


\begin{proof}
Fix $p\in\ovl{X}$ and $0<r<2{\rm diam}(X)$. The  second inclusion follows since $\pi_{\ovl{X}}$ is 1-Lipschitz. 
To show the first inclusion, we use the corresponding property of $S_2$ proved in~\cite{sM09}. Since $\pi_{S_2\to\ovl{X}}$ is 1-Lipschitz, for any $p'\in \pi_{S_2\to\ovl{X}}^{-1}(p)$ we have
$$
\pi_{S_2\to\ovl{X}}(B_{S_2}(p',r))\subseteq B_{\ovl{X}}(p,r).
$$  
By~\cite[Lemma~2.2]{sM09}, there exists $c>0$ such that for any $p'$ as above, there is $q\in\ovl Q_0$ with 
$$
B_{\ovl Q_0}(q,c\cdot r)\subseteq\pi_{S_2}(B_{S_2}(p',r)).
$$
Combining these inclusions with the factorization of $\pi_{S_2}$ yields the desired inclusion.
\end{proof}

The following lemma implies that the Lipschitz map $\pi_{\ovl{X}}$ is David--Semmes regular, see~\cite[Definition~12.1]{Fractured}.
\begin{lemma}\label{L:Reg}
 There exists $C\geq 1$ such that for every $q\in \ovl Q_0$ and $r>0$, the preimage $\pi_{\ovl{X}}^{-1}(B(q,r))$ can be covered by at most $C$ balls in $\ovl{X}$ of radii at most $C\cdot r$.
\end{lemma}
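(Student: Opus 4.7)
The plan is to deduce the lemma from the corresponding David--Semmes regularity of the natural projection $\pi_{S_2}\colon S_2 \to \ovl Q_0$, which is the core technical content of~\cite{sM09}: there exists a constant $C' \geq 1$ such that for every $q \in \ovl Q_0$ and $r > 0$, the preimage $\pi_{S_2}\inv(B(q,r))$ can be covered by at most $C'$ balls in $S_2$ of radii at most $C' r$. Once this is available, the lemma follows by transporting the covering through the 1-Lipschitz quotient $\pi_{S_2 \to \ovl X}$.

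The execution is as follows. The space $\ovl X$ is obtained from $S_2$ by gluing back those slits of $S_2$ which do not correspond to slits of any $\ovl R_n$: in inverse-limit language, $\ovl X$ collapses more equivalence classes than $S_2$ does, so the induced map $\pi_{S_2 \to \ovl X}\colon S_2 \to \ovl X$ is surjective (and 1-Lipschitz, as already recorded). Combining the surjectivity with the factorization $\pi_{S_2} = \pi_{\ovl X} \circ \pi_{S_2 \to \ovl X}$ gives the identity
$$\pi_{\ovl X}\inv(B(q,r)) = \pi_{S_2 \to \ovl X}\bigl(\pi_{S_2}\inv(B(q,r))\bigr).$$
Choose balls $B_{S_2}(x_i, C' r)$, $i=1,\hdots,N \leq C'$, covering $\pi_{S_2}\inv(B(q,r))$. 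The 1-Lipschitz property of $\pi_{S_2 \to \ovl X}$ yields
$$\pi_{S_2 \to \ovl X}\bigl(B_{S_2}(x_i, C' r)\bigr) \subeq B_{\ovl X}\bigl(\pi_{S_2 \to \ovl X}(x_i), C' r\bigr),$$
so the $N$ balls $B_{\ovl X}(\pi_{S_2 \to \ovl X}(x_i), C' r)$ in $\ovl X$ cover $\pi_{\ovl X}\inv(B(q,r))$. Taking $C = C'$ finishes the proof.

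The only non-formal ingredient is the David--Semmes regularity of $\pi_{S_2}$. If that statement is not cited in the exact form above, it can be established directly using the exact self-similarity of $S_2$: for dyadic $r = 2^{-k}$, the preimage under $\pi_{S_2}$ of any dyadic subsquare of $\ovl Q_0$ of side $r$ is an isometric scaled copy of $S_2$ of diameter $\simeq r$, a bounded number of such copies covers $\pi_{S_2}\inv(B(q,r))$, and the general case is handled by interpolating between consecutive dyadic scales. This should be viewed as the companion statement to~\cite[Lemma~2.2]{sM09}, which provides the opposite ``balls-contain-balls'' inclusion used in the proof of Lemma~\ref{L:Incl}.
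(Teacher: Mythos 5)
Your proof is correct and follows essentially the same route as the paper: cite the David--Semmes regularity of $\pi_{S_2}$ (which the paper references as Lemma~2.3 of~\cite{sM09}) and push a covering forward through the 1-Lipschitz surjection $\pi_{S_2\to\ovl{X}}$ using the factorization $\pi_{S_2}=\pi_{\ovl{X}}\circ\pi_{S_2\to\ovl{X}}$. You spell out the set-theoretic identity and the Lipschitz transport more explicitly than the paper, but the argument is the same.
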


\begin{proof}
By Lemma~2.3 in~\cite{sM09}, there exists $C\geq1$ such that $\pi_{S_2}^{-1}(B(q,r))$ can be covered by at most $C$ balls with radii at most $C\cdot r$. If $(B(p_i',r_i))$ is a family of such balls in $S_2$, then $(B(\pi_{S_2\to\ovl{X}}(p_i'),r_i))$ is the desired family in $\ovl{X}$.
\end{proof}

\begin{lemma}\label{L:Z}
The metric space $X$ is homeomorphic to the planar domain $Q$ and satisfies conditions \eqref{2 reg condition} and \eqref{compact condition}-\eqref{rel sep condition} of Theorem \ref{finite rank}. Moreover, the rank of $\partial{X}$ is $1$.
\end{lemma}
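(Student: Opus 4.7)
The plan is to verify each property in turn, exploiting the close parallel with the slit carpet $S_2$ from~\cite{sM09} and using the projection $\pi_{\overline X}\colon \overline X\to\overline Q_0$ to transfer geometric data from the Euclidean square to $\overline X$. For the homeomorphism to $Q$, note that $X=Q$ as sets, and near any interior point $p\in Q$ there is a Euclidean ball in $Q$ disjoint from every slit, on which the path metric $d_Q$ coincides with the Euclidean metric; hence the identity map from $X=(Q,d_Q)$ onto $Q$ is a local isometry, and so a homeomorphism. Compactness of $\overline X$ follows from its description as an inverse limit of the compact metric spaces $\overline R_n$ under $1$-Lipschitz projections, or alternatively, once Ahlfors $2$-regularity is in hand, from the fact that $\overline X$ is then complete, bounded, and doubling.

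For Ahlfors $2$-regularity, I would mirror the argument of \cite{sM09}. The lower bound on $\mathcal{H}^2(B_{\overline X}(p,r))$ is immediate from Lemma~\ref{L:Incl} and the $1$-Lipschitz property of $\pi_{\overline X}$: since $\pi_{\overline X}(B_{\overline X}(p,r))$ contains a ball of radius $c\cdot r$ in the Ahlfors $2$-regular space $\overline Q_0$,
\[
\mathcal{H}^2\bigl(B_{\overline X}(p,r)\bigr)\;\geq\;\mathcal{H}^2\bigl(\pi_{\overline X}(B_{\overline X}(p,r))\bigr)\;\gtrsim\;r^2.
\]
For the matching upper bound, Lemma~\ref{L:Reg} shows that $\pi_{\overline X}$ is David--Semmes regular in the sense of \cite[Section~12]{Fractured}, and the combination of David--Semmes regularity with Ahlfors $2$-regularity of the target yields Ahlfors $2$-regularity of the source, exactly as in the proof for $S_2$.

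The $\rm{ALLC}$ condition is then obtained as for $S_2$: given an annulus in $\overline X$, one projects a continuum witnessing the (trivial) $\rm{ALLC}$ property of $\overline Q_0$ and lifts it back to $\overline X$ by choosing consistent sides across each slit it crosses, with uniformly bounded scale distortion guaranteed by Lemma~\ref{L:Incl}. Uniform relative separation of $\mathcal{C}(X)$ is verified by direct inspection of the construction: each non-trivial component of $\partial X$ is either (the lift of) a slit or the outer Jordan curve, and the dyadic nature of the construction places each slit centrally in a sub-cell of the same scale, giving a uniform lower bound on $\bigtriangleup(E,F)$.

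Finally, for the rank, every slit inserted at step $n$ lies inside the corner square $(0,2^{-n})^2$, so each slit component is an isolated point of $\mathcal{C}(X)$, while any sequence of distinct slits has a subsequence Hausdorff-accumulating at the single point $(0,0)$, which lies on the outer boundary of $Q_0$. Because $d_Q$ is bi-Lipschitz equivalent to the Euclidean metric on $Q$ (with constants independent of the iteration), the outer boundary persists in $\overline X$ as a single non-trivial component $E_0$ containing $(0,0)$, and the slits converge to $E_0$ in the quotient topology on $\mathcal{C}(X)$. Hence $\mathcal{C}(X)^1=\{E_0\}$ is finite and non-empty, yielding rank $1$. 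The main technical obstacle will be the Ahlfors $2$-regularity upper bound, which cannot be obtained by a naive packing estimate from Lemma~\ref{L:Reg} (such an argument would be circular) and must instead be extracted from David--Semmes regularity in the manner of \cite{sM09}; a secondary delicate point is the verification that the outer boundary does not split in $\overline X$, which reduces to the bi-Lipschitz equivalence of $d_Q$ with the Euclidean metric near $(0,0)$.
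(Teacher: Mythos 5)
Your overall outline tracks the paper closely for the homeomorphism, compactness, and Ahlfors $2$-regularity parts (the paper also uses Lemma~\ref{L:Incl} for the lower bound and Lemma~\ref{L:Reg} for the upper bound, and it proves compactness by pushing forward from $S_2$, while you offer the equivalent inverse-limit argument). However, there are two genuine problems.

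First, the $\rm{ALLC}$ sketch is wrong as stated. You propose to project a continuum witnessing $\rm{ALLC}$ in $\ovl{Q}_0$ and then ``lift it back to $\ovl{X}$ by choosing consistent sides across each slit it crosses.'' But $\pi_{\ovl{X}}$ is not a covering map and has no path-lifting property: a path in $\ovl{Q}_0$ that touches the interior of a slit has two disconnected preimages in $\ovl{X}$, one on each side, and there is no continuous choice of side. Near the corner $(0,0)$, a generic path in $\ovl{Q}_0$ meets infinitely many slits, so the proposed lift does not even make sense as a continuum. The paper avoids this by constructing continua directly in $\ovl{X}$ out of pieces that \emph{a priori} project injectively (vertical segments, arcs of $c(K)$, pieces of the outer boundary) and by a careful separation/length argument; there is no lifting of a given planar continuum. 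Your approach would need to be replaced by this explicit construction, not merely fleshed out.

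Second, the assertion that ``$d_Q$ is bi-Lipschitz equivalent to the Euclidean metric on $Q$ (with constants independent of the iteration)'' is false. Two points on opposite sides of a slit and near its midpoint have Euclidean distance $O(\epsilon)$ but path distance comparable to the length of the slit; this is exactly what makes $S_2$ (and the weak tangent of $\ovl{X}$) not bi-Lipschitz, indeed not even quasisymmetrically, embeddable in the plane, which is the whole point of Section~\ref{counter-example section}. The conclusion you are after (that the outer boundary survives as a single non-trivial component $E_0$ in $\mathcal{C}(X)$ and the slit components accumulate only at $E_0$, giving rank $1$) is correct, but it should be argued by noting that the outer boundary of $Q_0$ never meets a slit, so the restriction of $d_Q$ to it agrees with arc length and hence it remains connected in $\ovl{X}$; one should not invoke a global bi-Lipschitz comparison that does not hold.
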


\begin{proof}
The first assertion of the lemma follows from the fact that $Q$ is locally geodesic. We leave it to the reader to verify that each non-trivial component of $\partial{X}$ is a scaled copy of  $\mathcal{S}^1$, that the collection of components of $\partial{X}$ is uniformly relatively separated, and that $\mathcal{C}(X)$ has a single limit point, which implies that the rank of $\partial{X}$ is $1$.  The completion $\ovl{X}$ is compact since $S_2$ is compact and $\pi_{S_2\to\ovl{X}}$ is continuous.

We now establish the Alhfors $2$-regularity of $X$. Since the boundary of $X$ has zero 2-measure, it is enough to show Ahlfors 2-regularity of the completion $\ovl{X}$. Let $B_{\ovl{X}}(p,r)$ be any ball with $0<r<2{\rm diam}(\ovl{X})$. Then, since $\pi_{\ovl{X}}$ is 1-Lipschitz,   
$$
\mH^2(B_{\ovl{X}}(p,r))\geq\mH^2(\pi_{\ovl{X}}(B_{\ovl{X}}(p,r))),
$$
and by the first inclusion in Lemma~\ref{L:Incl}, the right-hand side is at least $r^2/C$ for some $C\geq 1$. 

For the other inequality, we observe that by Lemma~\ref{L:Reg}, every cover of \linebreak $\pi_{\ovl{X}}(B_{\ovl{X}}(p,r))$ by open balls $\tilde B_i$ of radii $\tilde r_i$ at most some $\delta>0$ induces a cover of $B_{\ovl{X}}(p,r)$ by balls $B_j$ of radii $r_j$ at most $C\cdot \delta$ with
$$
\sum_jr_j^2\leq C^3\sum_i\tilde r_i^2.
$$
Since $\pi_{\ovl{X}}(B_{\ovl{X}}(p,r))$ is contained in the Euclidean ball of radius $r$, the Ahlfors regularity now follows.

We will check the $\rm{ALLC}$ condition in several steps.
We first check the $\rm{LLC}_1$ condition. Let $B_{X}(p,r)$ be an arbitrary ball and let $x,y\in B_X(p,r)$. Since $X$ is endowed with the path metric induced from the plane and $d(x,y)<2r$, there is a curve $\gamma$ in $X$ that connects $x$ and $y$ and such that its length is at most $2r$. Thus $E=\gamma$ is the desired continuum contained in $B(p,3r)$, i.e., $X$ satisfies the 3-LLC$_1$ condition.

Now we show that $\ovl{X}$ satisfies the LLC$_2$ condition. Let $B(p,r)$ be any ball in ${\ovl{X}}$ and let $x,y\in {\ovl{X}}\setminus B(p,r)$. We may assume that $r\leq1$.  
Let $v_x$ denote a continuum in ${\ovl X}$ that contains $x$ and projects by $\pi_{\ovl{X}}$ one-to-one onto a closed vertical  interval $I_x$ that satisfies the following properties. The end points of $I_x$ are $\pi_{\ovl{X}}(x)$ and $\pi_{\ovl{X}}(x')$ for some $x'\in v_x$ with $\pi_{\ovl{X}}(x')$ contained in the boundary of $Q_0$, so that the length of $I_x$ is not larger than the Euclidean distance from $\pi_{\ovl{X}}(p)$ to the horizontal side of the boundary of $Q_0$ that contains $\pi_{\ovl{X}}(x')$. We define $v_y$ and $y'$ similarly.  It follows from the choice of $v_x$ and $v_y$ that the distance from $p$ to $v_x$, respectively $v_y$, is at least $r/2$. Indeed, suppose the distance from $p$ to, say, $v_x$ is less than $r/2$. Then there exists $x''\in v_x$ so that $d_{\ovl{X}}(p, x'')<r/2$. Since $d_{\ovl{X}}$ is the induced metric on the completion of $(Q,d_Q)$ and $d_Q$ is the path metric induced from the plane, $d_{\ovl{X}}(x,x'')$ equals the Euclidean distance between $\pi_{\ovl{X}}(x)$ and $\pi_{\ovl{X}}(x'')$, and $d_{\ovl{X}}(p, x'')$ is at least the Euclidean distance between $\pi_{\ovl{X}}(p)$ and $\pi_{\ovl{X}}(x'')$. From the choice of $v_x$ we conclude that $d_{\ovl{X}}(x,x'')\leq d_{\ovl{X}}(p, x'')$. The triangle inequality yields a contradiction.

The points $x'$ and $y'$ are contained in some closed horizontal intervals $h_x$ and $h_y$ respectively, on the \emph{outer boundary} of $X$ (i.e., the metric boundary component of $(Q,d_Q)$ that corresponds to the boundary of $Q_0$) whose lengths are $r/4$. The distances from $h_x$ and $h_y$ to $p$ are then at least $r/4$. If $v_x\cup v_y\cup h_x\cup h_y$ is connected, we are done. Otherwise, let $l$ and $l'$ denote the two complementary components of $h_x\cup h_y$ in the outer boundary of $X$. We claim that at least one of $l$ or $l'$ is at a distance at least $r/8$ from $p$. Indeed, the sum of the distances from $p$ to $l$ and $l'$ must be at least $r/4$ because the length of every curve in $\ovl Q_0$ separating $\pi_{\ovl{X}}(v_x\cup h_x)$ from $\pi_{\ovl{X}}(v_y\cup h_y)$ must be at least $r/4$. Thus either $v_x\cup v_y\cup h_x\cup h_y\cup l$ or $v_x\cup v_y\cup h_x\cup h_y\cup l'$ is the desired continuum $E$ in $\ovl{X}$ in the LLC$_2$ condition with $\lambda=8$. 

The next step is to establish the ALLC property for  $\ovl{X}$. Let $A_{\ovl{X}}(p,r)$ denote $\ovl B_{\ovl{X}}(p,2r)\setminus B_{\ovl{X}}(p,r)$ and let $x,y\in A(p,r)$. We may assume that $r\leq 1$. If $1/2\leq r\leq 1$, then the continuum $E$ found in the proof of the LLC$_2$ condition works to conclude ALLC in this case, because the diameter of $E$ is at most 2 and thus it is at most $4r$. If  $0<r<1/2$, the proof of the existence of a desired continuum follows the lines similar to those in the proof of LLC$_2$,  but we first need to localize that argument. Indeed, first we can find a unique $n\in\N$ such that $1/2\leq 2^n r<1$. Without loss of generality we my assume that $n$ is at least three. We consider the dyadic subdivision $\mathcal D$ of $Q_0$ into squares of side length $4/2^n$. 
Let $s$ be the interior of a square in this subdivision. We denote by $s_{\ovl{X}}$ the preimage of $s$ under $\pi_{\ovl{X}}$.  Let also $\dee s_{\ovl{X}}$ denote the metric boundary of $s_{\ovl{X}}$, i.e., the closure of $s_{\ovl{X}}$ in $\ovl{X}$ less $s_{\ovl{X}}$. Note that from the definitions of $\ovl{X}$ and $\pi_{\ovl{X}}$ it follows that $\dee s_{\ovl{X}}$ is the union of four closed arcs, each isometric via $\pi_{\ovl{X}}$ to a side of the boundary of $s$. 

From the choice of $n$ and the fact that $\pi_{\ovl{X}}$ is 1-Lipschitz, it follows that the projection $\pi_{\ovl{X}}(\ovl B_{\ovl{X}}(p,2r))$ can be covered by four squares from $\mathcal D$. Moreover, they can be chosen to be the first generation dyadic subsquares of a single square of side length $8/2^n$, not necessarily dyadic.  
Let $\mathcal F$ be the family of the interiors of these four squares  
and let $K$ denote the closure in $\ovl{X}$ of
$$
\cup_{s\in\mathcal F}(s_{\ovl{X}}).
$$
The set $K$ is compact and it contains $\ovl B_{\ovl{X}}(p,2r)$.
The \emph{contour} of $K$, denoted $c(K)$, is the closure in $\ovl{X}$ of the set of all points $q$ such that $q$ belongs to $\dee s_{\ovl{X}}$ for  a unique $s\in \mathcal F$. The contour $c(K)$ is thus a union of closed arcs each of which is isometric to a horizontal or vertical side of the boundary of $s$ for some $s\in\mathcal F$. Since elements of $\mathcal F$ are interiors of the first generation dyadic subsquares of a single square, it is easy to see that $c(K)$ is connected, and thus it is a continuum. Considering various combinatorial possibilities for $c(K)$ one can easily conclude  that $c(K)$ does not have global cut points. The rest of the proof of the ALLC property for $\ovl{X}$ follows essentially the same lines as the proof of the LLC$_2$ condition where the boundary of $Q_0$ should be replaced by $c(K)$. The diameter of the resulting continuum $E$ is at most $16/2^n\leq32r$. 

Finally, given a point $p \in X$, a radius $r>0$, and points $x,y \in A_X(p,r,2r)$, we modify the above continuum $E$ to obtain one in $X$ as follows. If $E$ does not pass through the point $p_0$ in $\ovl{X}$ that projects to $(0,0)$, then it has a neighborhood that intersects only finitely many boundary components of $X$, and thus $E$ can be modified slightly so that these boundary components are avoided. If $E$ does pass though $p_0$, then $p\neq p_0$, and we can first modify $E$ in an arbitrarily small neighborhood of $p_0$ to avoid this point and then apply the above. \end{proof}

If $(X,d)$ is a metric space and $\lambda>0$, we denote by $\lambda X$ the metric space $(X, \lambda\cdot d_X)$. The following lemma and its corollary show that the space $\ovl{X}$ has a weak tangent space that contains $S_2$, see~\cite[Chapters~7,~8]{Burago}, \cite[Chapter~9]{Fractured} for the background on Gromov--Hausdorff convergence and weak tangent spaces.
\begin{lemma}\label{L:GH}
The slit carpet $S_2$ is the Gromov--Hausdorff limit of the sequence 
$
(\ovl{X}_n=2^n\pi_{\ovl{X}}^{-1}([0,1/2^n]\times[0,1/2^n])).
$
\end{lemma}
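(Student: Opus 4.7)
To establish the claimed Gromov--Hausdorff convergence, my plan is to construct, for each $n$, a natural $1$-Lipschitz surjection $\iota_n \colon S_2 \to \ovl{X}_n$ and show that its one-sided metric distortion
$$\epsilon_n := \sup_{p, q \in S_2}\bigl(d_{S_2}(p, q) - d_{\ovl{X}_n}(\iota_n(p), \iota_n(q))\bigr)$$
tends to zero as $n \to \infty$; this suffices for Gromov--Hausdorff convergence.

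The map $\iota_n$ will come from the natural projection $\pi_{S_2 \to \ovl{X}}$. By the self-similarity of the slit-carpet construction, the preimage $\pi_{S_2}^{-1}([0, 1/2^n]\times [0, 1/2^n])$ in $S_2$ is isometric, after rescaling by $2^n$, to $S_2$ itself. The restriction of the $1$-Lipschitz map $\pi_{S_2 \to \ovl{X}}$ to this preimage takes values in $\pi_{\ovl{X}}^{-1}([0, 1/2^n] \times [0, 1/2^n])$, and rescaling by $2^n$ then produces the desired $1$-Lipschitz map $\iota_n \colon S_2 \to \ovl{X}_n$. Surjectivity follows by the standard two-sided argument: any point of $\ovl{X}_n$ lying on a slit of $S_2$ that has been collapsed in passing to $\ovl{X}_n$ is hit by two distinct preimages in $S_2$, one approaching from each side of the slit.

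To estimate $\epsilon_n$, I would exploit the recursive self-similarity of the construction. In the rescaled picture, the subregion of $\ovl{X}_n$ projecting to $[0, 1/2]\times [0, 1/2]$ is isometric to $\tfrac{1}{2}\ovl{X}_{n+1}$, while the corresponding subregion of $S_2$ is isometric to $\tfrac{1}{2} S_2$; outside this subregion, $\ovl{X}_n$ coincides with $\ovl{Q}_{n+1}$ as a metric subspace, whereas $S_2$ carries additional slits of scale at most $1/2^{n+2}$. Splitting the estimate of $\epsilon_n$ along this decomposition would yield a recursion of the form
$$\epsilon_n \leq \tfrac{1}{2}\epsilon_{n+1} + \frac{C}{2^n},$$
and since $(\epsilon_n)$ is bounded above by $\diam(S_2)$, iterating this recursion produces $\epsilon_n = O(1/2^n)$. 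The main technical obstacle will be establishing the additive $C/2^n$ term: this requires showing that adjoining slits of scale at most $1/2^{n+2}$ to the outside region changes path distances by at most $O(1/2^n)$. I anticipate that this will follow from a local detour argument adapted from techniques used in the study of $S_2$ in \cite{sM09}, exploiting that the missing slits occur at geometrically decaying scales and hence contribute a summable total of detour lengths.
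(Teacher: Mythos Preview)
Your approach via a $1$-Lipschitz surjection $\iota_n$ and a recursive distortion bound is reasonable in spirit, but it is substantially more involved than the paper's argument and contains genuine gaps.

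The paper's proof is very short: it invokes the $\epsilon$-net criterion for Gromov--Hausdorff convergence (Theorem~7.4.12 of \cite{Burago}). One pulls back the $1$-skeleton of the dyadic subdivision of $\ovl{Q}_0$ at scale $1/2^{N+1}$ to obtain graphs $D_N \subset S_2$ and $D_{N,n} \subset \ovl{X}_n$ that are identical whenever $n \geq N$; since the vertex sets are $\epsilon$-nets with $\epsilon \simeq 1/2^N$, convergence follows immediately. The underlying observation is that a slit at scale strictly finer than $1/2^{N+1}$ has $x$-coordinate an odd multiple of a strictly finer dyadic scale, so it meets no grid vertex at scale $1/2^{N+1}$; hence the graph depends only on those slits common to $S_2$ and $\ovl{X}_n$ for $n \geq N$.

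There are two specific problems with your recursion $\epsilon_n \leq \tfrac{1}{2}\epsilon_{n+1} + C/2^n$.

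\emph{Geodesics need not respect the decomposition.} You split $S_2$ and $\ovl{X}_n$ into the piece over $[0,1/2]^2$ and its complement and claim the distortion on the inner piece equals $\tfrac{1}{2}\epsilon_{n+1}$. But $d_{S_2}$ and $d_{\ovl{X}_n}$ are ambient path metrics: a geodesic between two points projecting into $[0,1/2]^2$ may leave that square (the level-$1$ slit even lies on its boundary), so the restricted metric need not coincide with $\tfrac{1}{2}$ times the metric of $S_2$ (respectively $\ovl{X}_{n+1}$). The mixed case, with $p$ and $q$ in different pieces, is not addressed at all. Repairing this would require a separate argument that geodesics can always be confined to a dyadic square containing both endpoints, which is not obvious.

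\emph{The additive term is the heart of the matter, and you leave it unproven.} You explicitly defer the estimate that adjoining slits of scale $\le 1/2^{n+2}$ changes distances by $O(1/2^n)$. A crude detour count fails: at each scale $1/2^m$ with $m \geq n+2$ a unit-length path can meet on the order of $2^m$ slits, each forcing a detour of order $1/2^m$, so the naive bound is $O(1)$ per scale and does not sum over $m$. A correct argument must exploit a more careful accounting---essentially establishing $d_{\ovl{Q}_m} \to d_{S_2}$ with an explicit rate---which is comparable in difficulty to the statement you are trying to prove.

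In short, your route is not wrong in principle, but it trades the paper's one-line combinatorial observation (the dyadic graphs stabilize once $n \geq N$) for an analytic estimate that you have not carried out and that is not obviously easier than the lemma itself.
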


\begin{proof}
We use Theorem~7.4.12 from~\cite{Burago}. Let $\epsilon>0$ and $N\in\N$ be chosen so that $1/2^{N+1}<\epsilon$. The 1-skeleton graph of the dyadic subdivision $\tilde D_N$ of $\ovl Q_0$ into $2^{N+1}\times2^{N+1}$ subsquares \emph{pulls back} to a graph $D_N$ in $S_2$ via $\pi_{S_2}$ and graphs $D_{N,n}$ in each $\ovl{X}_n$ via $\pi_{\ovl{X}}\circ 2^{-n}$. This means that $D_N$ is a graph embedded in $S_2$ and $D_{N,n}$ is a graph embedded in $\ovl{X}_n$ such that the sets of vertices are the sets of preimages of the vertices of $\tilde D_N$ by $\pi_{S_2}$ and $\pi_{\ovl{X}}\circ 2^{-n}$ respectively. Two vertices are connected by an edge in $D_N$ or $D_{N,n}$ if and only if they are connected by an edge in $\tilde D_N$. Note that there are pairs of distinct vertices of $D_N$, respectively $D_{N,n}$, that get mapped to the same vertex of $\tilde D_N$. We do not connected such pairs of vertices by an edge. If $n\geq N$, the graphs $D_N$ and $D_{N,n}$ are identical. Since the vertices of these graphs form $\epsilon$-nets, the lemma follows.
\end{proof}

The following corollary is immediate. 
\begin{corollary}\label{C:WT}
The completion of $X$ in the path metric has a weak tangent space that contains $S_2$.
\end{corollary}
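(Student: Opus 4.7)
The plan is to derive the corollary directly from Lemma \ref{L:GH} by recognizing the sequence $\ovl{X}_n$ as consisting of uniformly bounded subsets of a suitable rescaling of $\ovl{X}$ around a natural basepoint. Let $p_0 \in \ovl{X}$ denote the point satisfying $\pi_{\ovl{X}}(p_0) = (0,0)$. By the inductive construction of $X$, the preimage $\pi_{\ovl{X}}^{-1}([0,1/2^n]\times[0,1/2^n])$ is a closed neighborhood of $p_0$ of $d$-diameter comparable to $1/2^n$. Hence, after rescaling the metric by $2^n$, the sets $\ovl{X}_n = 2^n \pi_{\ovl{X}}^{-1}([0,1/2^n]\times[0,1/2^n])$ embed isometrically into $(\ovl{X}, 2^n d)$ as uniformly bounded subsets containing $p_0$.

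Next, I would invoke a standard compactness argument to extract a weak tangent. By Lemma \ref{L:Z}, the completion $\ovl{X}$ is Ahlfors $2$-regular, hence doubling, and each rescaled pointed space $(\ovl{X}, 2^n d, p_0)$ has the same doubling constant. Gromov's compactness theorem for pointed doubling metric spaces (see \cite[Theorem 8.1.10]{Burago}) then produces a subsequence $(n_k)$ and a complete pointed metric space $(T, d_T, q_\infty)$ such that
$$(\ovl{X}, 2^{n_k} d, p_0) \longrightarrow (T, d_T, q_\infty)$$
in the pointed Gromov--Hausdorff sense. By definition, $T$ is a weak tangent space of $\ovl{X}$ at $p_0$.

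Finally, I would combine the two convergence statements. By Lemma \ref{L:GH}, the sequence $\ovl{X}_{n_k}$ converges to $S_2$ in the Gromov--Hausdorff sense, and each $\ovl{X}_{n_k}$ is a closed, uniformly bounded subset of $(\ovl{X}, 2^{n_k} d)$ containing the basepoint $p_0$. A standard compatibility lemma between bounded Gromov--Hausdorff convergence and pointed Gromov--Hausdorff convergence (obtained by chasing $\epsilon$-nets through the two definitions) shows that the limit $S_2$ of the subsets must embed isometrically as a closed subset of the pointed limit $T$. Thus $T$ contains an isometric copy of $S_2$, which is the claim.

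The only point requiring any care is this last compatibility step, i.e., verifying that the Hausdorff-type limit of bounded basepoint-containing sets sits inside the pointed limit. This is routine from the definitions and the fact that the $\ovl{X}_{n_k}$ have uniformly bounded diameter, so I would only sketch it; everything else is a direct citation of earlier results in the paper.
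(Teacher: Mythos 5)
Your argument is correct and is precisely what the paper leaves implicit: the paper simply states that the corollary is immediate from Lemma \ref{L:GH}, and your write-up supplies the standard details -- rescaling at the basepoint $p_0$, Gromov precompactness via the doubling property from Ahlfors $2$-regularity, and the compatibility of the bounded Gromov--Hausdorff limit with the pointed limit. No gap; this is the intended reasoning.
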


A metric Sierpi\'nski carpet $S$ is called \emph{porous} if there exists $C\geq1$ such that for every $p\in S$ and $0<r\leq{\rm diam}(S)$, there exists a peripheral circle $J$ in $S$ with $J\cap B(p,r)\neq\emptyset$ and
$$
\frac{r}{C}\leq {\rm diam}(J)\leq C\cdot r.
$$ 

\begin{lemma}\label{L:S2}
The slit carpet $S_2$ cannot be quasisymmetrically embedded into the standard plane $\R^2$.
\end{lemma}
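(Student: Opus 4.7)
The plan is by contradiction: suppose there is a quasisymmetric embedding $f\colon S_2 \to \R^2$. My first step is to verify, using the self-similar construction of $S_2$, that the peripheral circles of $S_2$ form a family of uniform quasicircles in the path metric (each such circle is the ``doubling'' of a straight-line slit and therefore satisfies Ahlfors's three-point condition with a constant independent of the generation of the slit) and that the family is uniformly relatively separated (this is built into the $2^n\times 2^n$ subdivision scheme). By Remark~\ref{rel sep preservation} and the quasisymmetric invariance of the three-point characterization of quasicircles, the images $\{f(J_i)\}_{i\in\N}$ form a family of uniform quasicircles in $\R^2$ that bound pairwise disjoint Jordan domains and are uniformly relatively separated.

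Next I would apply Bonk's uniformization of collections of quasicircles (Theorem~\ref{two sphere Koebe}) to post-compose $f$ with a quasisymmetric self-homeomorphism $g$ of $\sphere$ that sends each $f(J_i)$ to a round circle. Setting $h := g \circ f$, this yields a quasisymmetric homeomorphism from $S_2$ onto a round Sierpi\'nski carpet $T \subseteq \sphere$ whose complementary open spherical disks $\{D_i\}_{i\in\N}$ are pairwise disjoint. The finiteness of the area of $\sphere$ then gives
$$
\sum_{i \in \N}\diam(D_i)^2 \;\lesssim\; \sum_{i \in \N}\area(D_i) \;\leq\; \area(\sphere) \;<\; \infty.
$$

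The contradiction should then come from a localized packing argument comparing $S_2$ and $T$. A direct count using the self-similar construction of $S_2$ yields $4^{n-1}$ peripheral circles of diameter comparable to $2^{-n}$ at each generation $n\ge 1$, so $\sum_i \diam(J_i)^2 = \infty$, and this divergence is realized \emph{locally}: in every ball of $S_2$ a positive portion of the sum is concentrated at every sufficiently small scale. Using the Ahlfors $2$-regularity of $S_2$, the uniform quasicircle structure and uniform relative separation of the $J_i$, and the quasisymmetry of $h$, one aims to transport this local divergence into a local divergence of $\sum \diam(h(J_i))^2$ in $T$, contradicting the finite spherical area bound above. The most delicate step, and the one I expect to be the main obstacle, is precisely this transport: quasisymmetric maps can rescale absolute diameters in nonlinear ways, but the combination of uniform relative separation, uniform quasicircle structure, and Ahlfors $2$-regularity in the source should force a quantitative local comparison of squared-diameter sums that cannot be reconciled with the finite area of $\sphere$. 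If the direct squared-diameter argument proves too fragile, a natural alternative is a $2$-modulus argument, exploiting the QS invariance (up to a bounded factor) of the carpet modulus of a cascade of peripheral circles, which is bounded below in $S_2$ by AR-$2$ and bounded above in $T$ by the finite spherical area.
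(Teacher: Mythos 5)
Your proposal takes a genuinely different route from the paper's, and the route has a real gap. The paper does not invoke Bonk's uniformization (Theorem~\ref{two sphere Koebe}) at all, and it never attempts a squared-diameter comparison. Instead it argues via porosity and dimension: $S_2$ is a porous metric Sierpi\'nski carpet whose peripheral circles are uniform quasicircles (in fact isometric to round circles), and both properties are quasisymmetrically invariant, so $f(S_2)$ is a porous carpet in $\R^2$ with uniform-quasicircle peripheral curves. Theorem~\ref{porous quasicircles} of this paper then shows $f(S_2)$ is porous as a \emph{subset of $\R^2$}, and Theorem~\ref{porous dim} gives that its Assouad, hence Hausdorff, dimension is strictly less than $2$. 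This is then played against the facts from \cite{sM09} that $S_2$ is Ahlfors $2$-regular \emph{and contains a curve family of positive $2$-modulus}, via \cite[Theorem~15.10]{LAMS}, to produce a contradiction. No carpet need be straightened, and no diameter sums appear.

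The gap in your main line is exactly where you flag it: the ``transport'' of the divergent sum $\sum_i \diam(J_i)^2$ from $S_2$ to $\sum_i \diam(h(J_i))^2$ in $T$ does not follow from quasisymmetry. Quasisymmetric maps control only \emph{relative} sizes at a common scale and basepoint; they can rescale absolute diameters in wildly nonuniform ways across locations and scales, and the fact that peripheral circles of a fixed generation all have the same diameter in $S_2$ gives no control on the diameters of their images. Making the ``local divergence at every scale'' idea rigorous would require an argument of the same depth as the porosity/modulus route, so as stated this is not a proof. Your proposed fallback also misstates the input: Ahlfors $2$-regularity alone does not furnish a curve family of positive $2$-modulus (snowflakes are Ahlfors regular but carry no rectifiable curves); the positive $2$-modulus in $S_2$ is a substantive standalone fact \cite[Lemma~4.2]{sM09}, not a consequence of regularity. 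And ``carpet modulus'' (transboundary modulus) is a different quantity from the ordinary $2$-modulus the paper uses; invoking its quasi-invariance would be a genuinely different, and unverified, argument.
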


\begin{proof}
The metric Sierpi\'nski carpet $S_2$ is porous, see~\cite[Proposition~2.4]{sM09}, and its peripheral circles are uniform quasicircles, in fact they are isometric to circles. Assume that there is a quasisymmetric embedding $f\: S_2\into\R^2$. An easy application of Proposition~10.8 in~\cite{LAMS} implies that the image $f(S_2)$ is a porous metric Sierpi\'nski carpet in the restriction of the Euclidean metric. The peripheral circles of $S_2$, and hence the boundaries of the complementary components of $f(S_2)$, are uniform quasicircles. Applying Theorem \ref{porous quasicircles} to each complementary component of $f(S_2)$ now shows that $f(S_2)$ is porous as a subset of $\R^2$. Theorem \ref{porous dim} now states that the Assouad dimension, and hence the Hausdorff dimension, of $f(S_2)$ is strictly less than two. On the other hand, according \cite[Proposition 2.4]{sM09}, $S_2$ is Ahlfors 2-regular, and it contains a curve family of positive 2-modulus~\cite[Lemma~4.2]{sM09}. This contradicts \cite[Theorem 15.10]{LAMS}.
\end{proof}

\begin{proof}[Proof of Theorem~\ref{example}]
By Lemma~\ref{L:Z}, it suffices to show that $X$ cannot be quasisymmetrically embedded into the standard 2-sphere $\Sph^2$. Suppose such a quasisymmetric embedding $f\: X\to \Sph^2$ can be found. It extends to a quasisymmetric embedding $\ovl f$ of the completion $\ovl{X}$ into $\Sph^2$ \cite[Proposition 10.10]{LAMS}. The map $\ovl f$ then induces a quasisymmetric embedding of every weak tangent space of $\ovl{X}$ into the standard plane. Corollary~\ref{C:WT} provides a weak tangent space of $\ovl{X}$ that contains $S_2$. This contradicts Lemma~\ref{L:S2}.
\end{proof}

\section{Open questions}\label{problems}
\begin{question} In Theorem \ref{finite rank}, can the assumption that $\partial{X}$ have finite rank be removed? It seems likely that this is the case. An affirmative answer is implied by an affirmative answer to the following question. Let $(X,d)$ be a metric space, homeomorphic to a domain in $\sphere$, that satisfies conditions \eqref{2 reg condition}-\eqref{rel sep condition} of Theorem \ref{finite rank}, and  has no isolated trivial boundary components. Consider the glued space $(Z,d_Z)$ formed from $\ovl{X}$ and a collection of quasidisks $\{D_i\}$ corresponding to (all) the components of $\partial{X}$, as in the proof of Theorem \ref{finite rank}. Is it true that $(Z,d_Z)$ is homeomorphic to $\sphere$? \end{question}

\begin{question} Suppose that $(X,d)$ is a metric space, homeomorphic to a domain in $\sphere$, that satisfies the $\rm{ALLC}$ condition. Are the components of $\partial{X}$ uniformly relatively separated, quanitatively? Using the techniques of Section \ref{TotDisCon}, it can be shown that the answer is ``yes" in the case that $(X,d)$ is a domain in $\sphere$. We suspect that the answer is ``no" in general. \end{question}

\begin{question} Is there a quantitative statement, analogous to Theorem \ref{finite rank}, that uniformizes onto the class of all circle domains? Given a particular circle domain $\Omega$ that does not have uniformly relatively separated boundary components, can one give sufficient intrinsic conditions for a metric space to be quasisymmetrically equivalent to $\Omega$?
\end{question}

\bibliographystyle{plain}
\bibliography{RealKoebe}
\end{document}